\DeclareFontFamily{OT1}{pzc}{}
\DeclareFontShape{OT1}{pzc}{m}{tt}{<-> s * [1.10] pzcmi7t}{}
\DeclareMathAlphabet{\mathpzc}{OT1}{pzc}{m}{tt}
\newcommand{\Conv}{{\rm Conv}}
\newcommand{\hHom}{{\mathcal Hom}}
\newcommand{\myRed}[1]{\textcolor{red}{#1}}
\newcommand{\myBlue}[1]{\textcolor{blue}{#1}}
\newcommand{\myGreen}[1]{\textcolor{green}{#1}}
\newtheorem{thm}{Theorem}[section]
\newtheorem{cor}[thm]{Corollary}
\newtheorem{lem}[thm]{Lemma}
\newtheorem{prop}[thm]{Proposition}
\newtheorem{defn}[thm]{Definition}
\newtheorem{Conjecture}[thm]{Conjecture}
\theoremstyle{definition}
\newtheorem{prop-defn}[thm]{Proposition and Definition}
\theoremstyle{remark}
\newtheorem{rem}[thm]{Remark}
\newtheorem{rems}[thm]{Remarks}
\newtheorem{example}[thm]{Example}
\newtheorem{examples}[thm]{Examples}
\newtheorem{Question}[thm]{Question}
\newtheorem*{Question*}{\bf Question}
\newcommand{\Id}{{\rm Id}}
\newcommand{\Image}{{\rm Im}}
\newcommand{\Hom}{{\rm Hom}}
\newcommand{\Homeo}{{\rm Homeo}}
\newcommand{\Diff}{{\rm Diff}}
\newcommand{\osc}{{\rm osc}}
\newcommand{\Mor}{{\rm Mor}}
\newcommand{\F}{{\mathcal F}}
\newcommand{\G}{{\mathcal G}}
\newcommand{\Hh}{{\mathcal H}}
\newcommand{\I}{{\mathcal I}}
\newcommand{\cI}{{\mathcal I}^{\bullet}}
\newcommand{\cH}{{\mathcal H}^{\bullet}}
\newcommand{\J}{{\mathcal J}}
\newcommand \id {{\rm id}}
\newcommand{\supp}{{\rm supp}}
\newcommand {\gr}{\mathrm {gr}}
\newcommand {\gra}{\mathrm {gr}}
\newcommand {\GFQI}{G.F.Q.I.\xspace}
\newcommand{\DHam }{\mathfrak{DHam}}
\newcommand{\HH }{\mathfrak{Ham}}
\newcommand {\LL}{{\mathfrak L}}
\DeclareMathOperator{\gammasupp}{\gamma-supp}
\def \dispdot {}
\def \dispdot {.}
\DeclareMathAlphabet{\mathpzc}{OT1}{pzc}{m}{tt}
\newcommand\blfootnote[1]{
    \begingroup
    \renewcommand\thefootnote{}\footnote{#1}
    \addtocounter{footnote}{-1}
    \endgroup
}
\title[On the supports in the  Humili{\`e}re completion]{On the supports in the  Humili{\`e}re completion \\and\\ $\gamma$-coisotropic sets}
\author{C. Viterbo}
\email{claude.viterbo@universite-paris-saclay.fr}
\address{Laboratoire de mathématiques d’Orsay, Université Paris-Saclay and UMR 8628 du CNRS, 91405-Orsay, France.}
\thanks{ Part of this paper was written as the author was a member of DMA, \'Ecole Normale Sup\'erieure, 45 Rue d'Ulm, 75230 Cedex 05, FRANCE. We also acknowledge support from ANR MICROLOCAL (ANR-15-CE40-0007) and COSY (ANR-21-CE40-0002)}
\begin{document}
\def \Z {\mathbb Z}
\def \H {\mathcal H}
\def \cF {\F^{\bullet}}
\def \cG {\G^{\bullet}}
\def \cI {\I^{\bullet}}
\def \cJ {\J^{\bullet}}
\def \Char {{\rm Char}}
\def \card {{\rm card}}
\def \cstar {\star}

\begin{abstract}
The symplectic spectral metric on the set of Lagrangian submanifolds or Hamiltonian maps can be used to define a completion of these spaces. For an element of such a completion, we define its $\gamma$-support. We also define the notion of
$\gamma$-coisotropic set and prove that every 
$\gamma$-support is 
$\gamma$-coisotropic. We then establish numerous properties of 
$\gamma$-supports and 
$\gamma$-coisotropic sets.
We construct examples of Lagrangians in the completion having large $\gamma$-support and we study those (called “regular Lagrangians”) having small $\gamma$-support. Finally we try to understand which singular Hamiltonians (i.e. a Hamiltonian continuous on the complement of a set) have a well-defined flow in the Humilière completion. 
\end{abstract}

\maketitle
\blfootnote{AMS classification: 53D05, 53D12, 53D35, 37J11. Keywords: Symplectic topology, spectral invariants, Lagrangian submanifolds, Coisotropic submanifolds, Hamiltonian maps}

\tableofcontents
\section{Introduction}\index{$\LL(T^*N)$} \index{$\DHam_c (T^*N)$} \index{$\widehat \DHam_c (T^*N)$} \index{$\widehat \LL (T^*N)$}

In \cite{Viterbo-STAGGF}, a metric $\gamma$ was introduced on the space ${\LL}_0 (T^*N)$ of Lagrangians Hamiltonianly isotopic to the zero section in $T^*N$, where $N$ is a compact manifold, and on $\DHam_c (T^*N)$, the group of compactly supported Hamiltonian diffeomorphisms, for $N = {\mathbb R}^n$ or $T^n$. A similar metric on $\DHam_c (T^*N)$ for general compact $N$ was subsequently defined in \cite{Viterbo-Montreal}. This metric was extended to general symplectic manifolds $(M, \omega)$ by Schwarz and Oh via Floer cohomology in the Hamiltonian setting (see \cite{Schwarz, Oh-spectrum1}), and by Leclercq--Zapolsky \cite{Leclercq, Leclercq-Zapolsky} in the Lagrangian setting\footnote{Under the asphericity assumptions $[\omega]\pi_2(M,L)=0$ and $\mu_L \pi_2(M,L)=0$.}. In particular, this distance is defined for elements within the same Floer--Fukaya class --- for instance, on ${\LL}_{L_0}(M, \omega)$, the set of exact Lagrangians in $(M, \omega)$ Hamiltonianly isotopic to $L_0$ --- and can sometimes be extended to the full space ${\LL} (M, \omega)$ of exact Lagrangians, notably when $M = T^*N$ with $N$ compact, where \cite{Fukaya-Seidel-Smith} guarantees a single Floer--Fukaya class.

The completions of $\LL_0(M, \omega)$ and $\DHam_{c} (M, \omega)$ with respect to $\gamma$ were first studied\footnote{Primarily in the Hamiltonian case and for ${\mathbb R}^{2n}$, though most results of \cite{Humiliere-completion} extend to the general setting.} in \cite{Humiliere-completion}. We denote these completions by $\widehat {\LL}_0(M, \omega)$ and $\widehat {\DHam}(M, \omega)$, and refer to them as the \emph{Humili\`ere completions}.

One of the main goals of this paper is to deepen the study of these completions. We work with the $\gamma$-distance defined via Floer cohomology; however, for $M = T^*N$, we show in Section~\ref{Section-6} that this distance admits an equivalent definition via sheaf theory, which proves useful when applying results from \cite{Viterbo-inverse-reduction}.

Elements of the Humili\`ere completion arise naturally in several contexts within symplectic topology. One example is symplectic homogenization (see \cite{SHT}), where flows of merely continuous Hamiltonians appear as homogenized Hamiltonians. Another is the graph of $df$ for a continuous function $f$, which gives rise to a notion of subdifferential that will be shown in \cite{AGHIV} to coincide with the one defined by Vichery \cite{Vichery} via the microlocal theory of sheaves. More generally, a continuous Hamiltonian whose singular set is sufficiently small admits a well-defined flow in $\widehat{\DHam}_c (T^*N)$ (see \cite{Humiliere-completion} and Section~\ref{Section-10}).

The main results of the paper are presented in Sections~\ref{Section-Defining-gamma-support} and~\ref{Section-Coisotropic}. In Section~\ref{Section-Defining-gamma-support}, we introduce the notion of \emph{$\gamma$-support} for a Lagrangian in $\widehat {\LL}(M, \omega)$ (see Definition~\ref{Def-support}) and establish its basic properties. Notably, although an element of $\widehat {\LL}(M, \omega)$ is a priori only an abstract Cauchy sequence, we can associate to it a concrete geometric object: the $\gamma$-support, a  subset of the ambient manifold.

In Section~\ref{Section-Coisotropic}, we introduce \emph{$\gamma$-coisotropic subsets} (see Definition~\ref{Def-coisotropic}), a notion originally proposed in a slightly different form by Usher \cite{Usher} under the name ``locally rigid.'' We establish a number of their properties (see Proposition~\ref{Prop-7.5}). In particular, we prove that the $\gamma$-support of any element of $\LL(M,\omega)$ is $\gamma$-coisotropic, and that many $\gamma$-coisotropic subsets arise as $\gamma$-supports --- including examples of Lagrangians containing an open $\gamma$-support. Characterizing which $\gamma$-coisotropic sets occur as $\gamma$-supports remains an open problem.

In Section~\ref{Section-Regular}, we study elements of $\LL(M,\omega)$ with minimal $\gamma$-support, i.e., those whose $\gamma$-support is itself a Lagrangian. When the $\gamma$-support is smooth, it is natural to ask whether $L$ coincides with $\gammasupp(L)$; we are able to prove this for a certain class of manifolds\footnote{Further results in this direction can be found in \cite{AGIV}.}. This perspective also yields a new definition of $C^0$-Lagrangian submanifold: an $n$-dimensional topological submanifold is called a \emph{$C^0$-Lagrangian} if it is the $\gamma$-support of some element of $\LL(M,\omega)$. We further show that our notion of $\gamma$-coisotropic is strictly more restrictive than previously considered ones (see Proposition~\ref{Prop-various-coisotropic} and \cite[Section 8.2]{Guillermou-Viterbo}).

In Section~\ref{Section-10}, we show that singular Hamiltonians whose singular set is nowhere $\gamma$-coisotropic admit a well-defined flow in $\widehat {\DHam}(M,\omega)$, extending earlier results of \cite{Humiliere-completion}.

Let us also mention several related works in which the ideas introduced here have been further developed. 

With St\'ephane Guillermou \cite{Guillermou-Viterbo}, we prove that singular supports of sheaves in $D^b(N)$ are $\gamma$-coisotropic, replacing the ``involutivity'' condition of \cite{K-S} by one that is invariant under symplectic homeomorphisms. 

Together with Tomohiro Asano, St\'ephane Guillermou, Vincent Humili\`ere, and Yuichi Ike \cite{AGHIV}, we prove that for the quantization $\mathcal F_{L}$ of an element $\widetilde L \in \widehat {\mathcal L}(T^*N)$ (see \cite{Guillermou-Viterbo} for the definition), the $\gamma$-support of $\widetilde L$ coincides with the singular support of $\mathcal F_{L}$. In \cite{AGIV}, we prove that if $L \in \widehat{\LL}(T^*N)$ has $\gamma$-support equal to an exact Lagrangian in $T^*N$, then $L$ coincides with that $\gamma$-support. 

In joint work with M.-C. Arnaud and Vincent Humili\`ere \cite{MCA-VH-CV}, we explore connections between the $\gamma$-support and conformal symplectic dynamics, showing in particular that $\gamma$-supports provide a natural generalization of the Birkhoff attractor --- a notion originally defined in dimension two \cite{Birkhoff-attractor, Charpentier-1} --- to arbitrary dimensions. We also establish that any compact $\gamma$-support in $T^*N$:
\begin{enumerate}
    \item has cohomology containing an injective image of the cohomology of $N$ \cite{MCA-VH-CV};
    \item is connected \cite{AGIV}.
\end{enumerate}

\section{Acknowledgements}I am grateful to St\'ephane Guillermou, Vincent Humili\`ere, Alexandru Oancea, Sobhan Seyfaddini, and Michele Stecconi for stimulating conversations related to this work. I am particularly indebted to Vincent Humili\`ere for writing \cite{Humiliere-these}. I also thank the anonymous referee and Maral Hatamizadeh for carefully pointing out a number of errors and misprints.

\section{Notations}

\begin{tabularx}{1.\textwidth}{l X}
$\mathscr{FD}(M,\omega)$ & the Fukaya--Donaldson category of Lagrangian branes (see Definition~\ref{Def-4.1})\\
${\mathbf{FD}}(M,\omega)$ & the set of Floer--Fukaya isomorphism classes (see Definition~\ref{Def-4.1})\\
$(T^*N, d\lambda)$ or $T^*N$ & the cotangent bundle of a closed manifold $N$, with Liouville form $\lambda$ (written $p\,dq$ in local coordinates)\\
$(T^*N, -d\lambda)$ or $\overline{T^*N}$ & the cotangent bundle of a closed manifold equipped with the opposite symplectic form\\
$\gr(df)$ & for $f \in C^1(N, \mathbb{R})$, the graph of $df$ in $T^*N$, i.e.\ $\gr(df) = \{(x, df(x)) \mid x \in N\}$\\
$(M,\omega)$ & an aspherical symplectic manifold satisfying $[\omega]\pi_2(M)=0$ and $c_1(TM)\pi_2(M)=0$, either closed or convex at infinity\\
$(M, d\lambda)$ & an exact symplectic manifold with Liouville form $\lambda$, convex at infinity\\
${\LL}(M,\omega)$ & the set of compact Lagrangian submanifolds of $(M,\omega)$
(see Definition~\ref{Def-5.1})\\
$\mathcal{L}(M,d\lambda)$ & the set of compact Lagrangian branes $\widetilde{L}$ in $(M,d\lambda)$ (see Definition~\ref{Def-5.1})\\
${\LL}_A(M,\omega)$ & a component of ${\LL}(M,\omega)$ in a fixed Floer-Fukaya class\\
$\mathcal{L}_A(M,d\lambda)$ & the subset of $\mathcal{L}(M,d\lambda)$ consisting of branes whose underlying Lagrangian lies in ${\LL}_A(M,\omega)$ (see Definition~\ref{Def-5.1})\\
$c_{+}(\widetilde{L}_1, \widetilde{L}_2),\ c_{-}(\widetilde{L}_1, \widetilde{L}_2)$ & spectral numbers associated to pairs of elements in $\mathcal{L}_A(M,\omega)$\\
$c$ & the spectral metric on $\mathcal{L}_A(M,\omega)$ from Definition~\ref{Def-5.1}\,(\ref{Def-5.1-d})\\
$\gamma$ & the spectral metric on ${\LL}_A(M,\omega)$\\
$\widehat{\LL}(M,\omega)$ (resp. $\widehat{\LL}_A(M,\omega)$) & the Humili\`ere completion of $\LL(M,\omega)$ (resp. $\LL_A(M,\omega)$) with respect to $\gamma$\\
$\widehat{\mathcal{L}}(M,d\lambda)$ (resp. $\widehat{\mathcal{L}}_A(M,d\lambda)$) & the Humili\`ere completion of $\mathcal{L}(M,d\lambda)$ (resp. ${\mathcal{L}}_A(M,d\lambda)$) with respect to the metric $c$ from Definition~\ref{Def-5.1}\,(\ref{Def-5.1-d})\\
$\DHam_c(M,\omega)$ & the group of time-one maps of compactly supported Hamiltonian flows, endowed with the $\gamma$-metric\\
$\widehat{\DHam}(M,\omega)$ & its Humili\`ere completion with respect to $\gamma$\\
$\mathscr{DHam}_c(M,\omega)$ & the group of compactly supported Hamiltonian diffeomorphisms (see p.~\pageref{Def-Hamiltonian-maps})\\
$\widehat{\mathscr{DHam}}(M,\omega)$ & its Humili\`ere completion with respect to the metric $c$ from Definition~\ref{Def-5.1}\,(\ref{Def-5.1-d})\\
$\mathcal{H}_\gamma(M,\omega)$ & the group of homeomorphisms preserving $\gamma$ (see the discussion preceding Corollary~\ref{Cor-6.23})
\end{tabularx}

  \section{Spectral invariants for sheaves and Lagrangians}\label{Section-6}
 
 Readers familiar with the content of this and the next section  can jump to Section \ref{Section-Defining-gamma-support}. 
 
Let $(M,\omega)$ be a symplectic manifold such that $2c_1(M)=0$. Let $\Lambda(M)$ be the bundle of Lagrangians subspaces of the tangent bundle to $M$, with fiber the Lagrangian Grassmannian $\Lambda (T_xM)\simeq \Lambda (n)$ and denote by $\widetilde \Lambda (M)$ the bundle induced by the universal cover $\widetilde \Lambda (n) \longrightarrow \Lambda(n)$ (see \cite{Seidel-graded}). 
Given a Lagrangian $L$, we assume we have a lifting of the Gauss map $G_L: L \longrightarrow \Lambda(M)$ given by $x\mapsto T_xL$ to a map $\widetilde G_L: L \longrightarrow \widetilde\Lambda (M)$. This is called a {\bf grading of $L$} (see \cite{Seidel}). 
When using coefficients in a field of characteristic  different from $2$ we assume the Lagrangians have a relative spin structure (see \cite{Seidel}). 

We assume the characteristic of the fields is $2$ in order to avoid discussions of Spin structures, but extending our results to the other characteristics is easy. 

Given a graded $L$, the canonical automorphism of the covering  induces a new grading and we denote it as $T(L)$ or $L[1]$, and its $q$-th iteration as $T^q(L)$ or $L[q]$. The grading yields an absolute grading for the Floer homology of a pair $(L_1,L_2)$ and hence for the complex of sheaves in the Theorem stated below. We shall almost never mention explicitly the grading, but notice that for exact Lagrangians in $T^*N$, a grading always exists since the obstruction to its existence is given by the Maslov class, and for exact Lagrangians in $T^*N$ the Maslov class vanishes, as was proved by  Kragh and Abouzaid (see \cite{Kragh-Abouzaid}, and also the sheaf-theoretic proof by \cite{Guillermou, Guillermou-Asterisque}).
For $(M, d\lambda)$ we consider the set $\mathcal L (M,d\lambda)$ of {\bf Lagrangian branes}, that is triples $\widetilde L=(L,f_L, \widetilde G_L)$ where $L$ is a compact exact graded Lagrangian, and  $f_L$ a primitive of $\lambda_{\mid L}$. We sometimes talk about an {\bf exact Lagrangian}, and this is just the pair $(L, f_L)$.
  
When $f_L$ is implicit we only write $L$, for example $\gra (df)$ means $(\gra (df), f)$ and in particular  $0_N$ means $(0_N, 0)$. In both cases there is an obvious canonical grading.  For $\widetilde L=(L, f_L)$ and $c$ a real constant, we write $\widetilde L+c$ for $(L, f_L+c)$. 
 Considering compact supported  Hamiltonian diffeomorphisms as special correspondences, that is Lagrangians in $\overline{M}\times M$ we can consider the corresponding branes, and denote this space by $\mathscr{DHam}_{c}(M, \omega)$\label{Def-Hamiltonian-maps}. 

More generally 
\begin{defn}\label{Def-4.1} The  Fukaya-Donaldson category $\mathscr{FD}(M,d\lambda)$ has for objects the elements of $\mathcal L (M,d\lambda)$
and for morphisms $\Mor_{\mathscr{FD}}(L,L')=FH^*(L,L')$. We set $\mathbf{FD}(M,d\lambda)$ to be any subcategory containing a single object for each  isomorphism class in $\mathscr{FD}(M,d\lambda)$ : this is also called the {\bf skeleton} of $\mathscr{FD}(M,d\lambda)$. It has for objects the set of Floer-Fukaya classes. 
\end{defn} 
In fact two exact Lagrangians $L_0,L_1$  are in the same Floer-Fukaya class if the PSS map yields isomorphisms $H^*(L_0)  \longrightarrow FH^*(L_0,L_1) \longrightarrow H^*(L_1)$. In particular in $T^*N$ according to \cite{Fukaya-Seidel-Smith, Kragh3} there is a single equivalence class. 
By abuse of language if $L$ is in $\mathcal L (M,d\lambda)$ and $A$ is an object in $\mathbf{FD}(M,d\lambda)$ we write $L\in A$ if $L$ is isomorphic to $A$. 

\subsection{Sheaf theoretic approach in \texorpdfstring{$T^{*}N$}{T*N}.}
Let $N$ be a closed and $k$-oriented manifold of dimension $n$. 
Let $(L,f_L)$ be  an exact  Lagrangian in $\mathcal L(T^*N)$ and $$\widehat L=\left\{(q,\tau p, -f_L(q,p), \tau) \mid (q,p)\in L, \tau > 0\right \}$$
the homogenized Lagrangian in $T^*(N\times {\mathbb R})$. 
We denote by $D^b(N)$ the derived category of bounded complexes of sheaves on $N$. On $D^b(N\times {\mathbb R} )$ we  define  $\cstar$ as follows. First we set $s: N\times {\mathbb R}\times N \times {\mathbb R} \longrightarrow N\times N \times {\mathbb R}  $ given by $s(x_1,t_1, x_2,t_2)=(x_1,x_2,t_1+t_2)$
and $d: N\times {\mathbb R} \longrightarrow N\times N \times {\mathbb R} $ given by $d(x,t)=(x,x,t)$ and now
$$ \cF \cstar \cG = d^{-1}(Rs)_!(\cF \boxtimes \cG)$$
Then $R\hHom^\cstar$ is the adjoint of $\cstar$ in the sense that 
$$\Mor_{D^b} (\cF, R\hHom^\cstar(\cG, \cH))=\Mor_{D^b} (\cF \cstar \cG, \cH)$$

According to \cite{Guillermou, Guillermou-Asterisque} (for \ref{TQ1}), (\ref{TQ2}), (\ref{TQ4})) and  \cite{Viterbo-Sheaves} (for the other properties),  we have
\begin{thm} \label{Thm-Quantization}
To each $\widetilde L\in \mathcal L(T^*N)$ we can associate $\cF_L \in D^b(N\times \mathbb R)$ such that 
\begin{enumerate} 
\item\label{TQ1} $SS(\cF_L)\cap \dot{T}^*(N\times {\mathbb R})=\widehat L$
\item\label{TQ2} $\cF_L$ is pure (cf. \cite{K-S} page 309), $\F_L^\bullet=0$ near $N\times \{-\infty\}$ and $\F_L^\bullet=k_N$ near $N\times \{+\infty\}$
\item\label{TQ3} We have an isomorphism
$$FH^\bullet(L_0,L_1;a,b)=H^*\left (N\times [a,b[, R\hHom^{\cstar}(\cF_{L_0},\cF_{L_1})\right)$$
\item \label{TQ4}  $\cF_L$ is unique satisfying properties (\ref{TQ1}) and (\ref{TQ2}). 
\item \label{TQ5} There is a natural product map  $$
 R\hHom^\cstar(\cF_{L_1},\cF_{L_2}) \otimes R\hHom^\cstar(\cF_{L_2},\cF_{L_3}) \longrightarrow  R\hHom^\cstar(\cF_{L_1},\cF_{L_3})
$$
inducing in cohomology a map
\begin{gather*}
H^*(N\times [\lambda , +\infty [, R\hHom^{\cstar}(\cF_{L_1},\cF_{L_2})) \otimes H^*(N\times [\mu , +\infty [, R\hHom^{\cstar}(\cF_{L_2},\cF_{L_3}))
\\  \Big\downarrow \cup_{\cstar} \\ H^*(N\times [\lambda + \mu , +\infty [, R\hHom^{\cstar}(\cF_{L_1},\cF_{L_3}))
\end{gather*}
that coincides through the above identifications to the triangle product in Floer cohomology.\end{enumerate} 
\end{thm}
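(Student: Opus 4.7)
The plan is to establish all five properties by building $\cF_L$ via Guillermou's sheaf-theoretic quantization of Hamiltonian isotopies in $T^*N$. I would start from the reference sheaf $\cF_{0_N} = k_{N \times [0,\infty)}$, whose singular support in $T^*(N \times \mathbb{R})$ is precisely $\widehat{0_N}$ and which trivially satisfies (TQ2). Quantizing a compactly supported Hamiltonian isotopy $\{\varphi_t\}$ from the zero section to $L$ as a sheaf kernel on $N \times N \times \mathbb{R}$, then convolving with $\cF_{0_N}$, yields a candidate $\cF_L$; the primitive $f_L$ enters as the constant of integration fixing the translation in the $\mathbb{R}$-direction. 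Since quantized contact isotopies preserve both microsupport and purity, and behave well under the limits $t \to \pm\infty$, properties (TQ1) and (TQ2) follow at once. For general exact Lagrangians (not necessarily Hamiltonianly isotopic to $0_N$), one invokes the Abouzaid--Kragh vanishing of the Maslov class cited in the excerpt together with Guillermou's existence theorem applied directly to $\widehat L$.

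Uniqueness (TQ4) is the key rigidity statement. Given two candidates $\cF, \cG$, I would study $R\hHom^{\cstar}(\cF, \cG)$: by the Kashiwara--Schapira microsupport estimates, its singular support is controlled by the self-intersection of $\widehat L$ in $\dot T^*(N \times \mathbb{R})$, forcing it to be locally constant in $t$ outside a bounded range. Combined with the boundary condition at $+\infty$ provided by (TQ2), this locally constant sheaf is isomorphic to $k_N$ near $+\infty$, and a standard propagation argument (non-characteristic deformation) extends the resulting isomorphism to all of $N \times \mathbb{R}$.

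For (TQ3), both sides should be identified with spectral filtrations of the same total complex. On the Floer side, $FH^\bullet(L_0,L_1;a,b)$ is obtained from the action filtration of the Floer complex of the pair $(L_0,L_1)$. On the sheaf side, the variable $t$ in $N \times [a,b[$ plays the role of the action, and the stalks of $R\hHom^{\cstar}(\cF_{L_0}, \cF_{L_1})$ jump exactly at the action values of the intersection points of $L_0$ and $L_1$. One then matches the two chain complexes either by a PSS-type comparison as in \cite{Viterbo-Sheaves}, or by reducing both to the same generating-function presentation and invoking the sheaf-theoretic description of Floer cohomology already established in that reference.

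Property (TQ5) follows from Yoneda composition in the derived category, with the shift from $[\lambda,\infty)$ and $[\mu,\infty)$ to $[\lambda+\mu,\infty)$ reflecting the additivity of the action under $\cstar$-convolution in the $\mathbb{R}$-direction. The main obstacle, in my view, is matching this sheaf-theoretic product with the Floer triangle product: the former is defined purely categorically while the latter counts holomorphic triangles, and comparing them requires either a cobordism-type interpolation between the two structures or an explicit computation in a model case where both can be read off a common generating function. Uniqueness (TQ4) is the natural lever here, since once one has a candidate product compatible with (TQ3) on either side, rigidity forces the two to agree; nevertheless, verifying that compatibility at a single base point still requires genuinely geometric input and is where I expect the proof to be most delicate.
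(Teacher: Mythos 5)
This theorem is not proved in the paper at all: it is imported, with the text explicitly attributing (\ref{TQ1}), (\ref{TQ2}), (\ref{TQ4}) to \cite{Guillermou} and (\ref{TQ3}), (\ref{TQ5}) to \cite{Viterbo-Sheaves}. So there is no in-paper argument to compare yours against; the relevant comparison is with those references, and your outline does track their strategy faithfully: Guillermou's quantization of Hamiltonian isotopies by sheaf kernels convolved with $k_{N\times[0,\infty)}$, uniqueness via the microsupport estimate on $R\hHom^{\cstar}(\cF,\cG)$ forcing local constancy in $t$ together with the normalization at $+\infty$, and the identification of the filtered $R\hHom^{\cstar}$ with the action-filtered Floer complex.

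Two caveats. First, your phrase about ``general exact Lagrangians (not necessarily Hamiltonianly isotopic to $0_N$)'' is the right instinct, but the logic should be made precise: the isotopy-based construction only covers the Hamiltonian orbit of the zero section, and for arbitrary closed exact Lagrangians one needs Guillermou's existence theorem for $\widehat L$ itself (the Abouzaid--Kragh vanishing of the Maslov class is what guarantees the grading obstruction vanishes so that a globally defined pure sheaf exists; it does not by itself produce the sheaf). Second, and more importantly, your treatment of (\ref{TQ5}) is where the proposal stops being a proof: you correctly identify that matching the Yoneda/convolution product with the holomorphic-triangle product is the delicate point, but the appeal to ``uniqueness as a lever'' does not close it, since (\ref{TQ4}) rigidifies objects, not the comparison of two a priori different bilinear pairings on their cohomologies. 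That comparison is precisely the content of the cited section 9 of \cite{Viterbo-Sheaves} and requires a genuine geometric argument (a PSS-type or generating-function interpolation), not a formal consequence of the other properties. As a reconstruction of the strategy your write-up is sound; as a self-contained proof it leaves (\ref{TQ3}) and especially (\ref{TQ5}) at the level of a plan.
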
 
\begin{rem} 
The grading $\widetilde G_L$ defines the grading of $\cF_{L}$, hence of the Floer cohomology. 

\end{rem} 
Note that for $X$ open, we denote by $H^*(X\times [\lambda, \mu[, \cF)$  the relative cohomology of sections on $X \times ]-\infty, \mu[$ vanishing on $X \times ]-\infty, \lambda[$ and fitting in the exact sequence
$$
H^*(X\times [\lambda, \mu[, \cF) \longrightarrow H^*(X\times ]-\infty, \mu[, \cF) \longrightarrow H^*(X\times ]-\infty, \lambda[, \cF)
$$
It is also equal to the cohomology associated to the derived functor $R\Gamma_Z$ where $Z$ is the locally closed set $X\times [\lambda, \mu[$. We should have  written $H^*_{X\times [\lambda, \mu[}(X\times {\mathbb R}, \cF)$   and $\cF_{\widetilde L}$ instead of $\cF_{L}$ but this would be too cumbersome and our abuse of notation should be harmless.

\begin{defn}\label{Def-mock-Tamarkin}
We denote by $\mathcal T_0(N)$ the set of 
 $\cF$ in $D^b(N \times {\mathbb R} )$ such that  $SS(\cF) \subset \{\tau \geq 0\}$,  $\cF=0$ near $N\times \{-\infty\}$ and $\cF=k_N$ near $N\times \{+\infty\}$. \end{defn} 
Note that the set of $\cF$ such that $SS(\cF)  \subset \{\tau \geq 0\}$ contains the Tamarkin category (see \cite{Tamarkin, Guillermou-Asterisque}).

 \begin{defn}[see \cite{Vichery-these}, Section 8.3]\label{Def-4.4} Let $\cF$be an element in $\mathcal T_0(N)$. 
Let $\alpha \in H^*(N \times {\mathbb R}, \cF)\simeq H^*(N)$ be a nonzero class. We define 
$$c(\alpha; \cF)= \sup \left\{ t \in {\mathbb R}  \mid \alpha \in \Image ( H^*(N \times [t, +\infty[, \cF))\right \} \dispdot $$
We denote $c(\alpha; \cF, \cG)=c(\alpha; R\hHom^{\cstar}(\cF,\cG))$ and 

$$c(\cF, \cG)=\max\{c(\mu_N;\cF, \cG),0\}-\min\{c(1_N; \cF, \cG),0\}$$ where $1_N, \mu_N$are the generators of $H^0(N,k), H^n(N,k)$ respectively. 
Finally we set $$\gamma(\cF, \cG)=c(\mu_N;\cF, \cG)-c(1_N; \cF, \cG)= \inf _{c \in \mathbb R} c(\cF, T_c\cG)\dispdot $$
\end{defn} 

Note that $\cF_L$ satisfies (\ref{TQ2}), so $H^*(N \times {\mathbb R}, \cF)\simeq H^*(N)$ and thus we have the following Proposition, using the canonical map
$$H^*(N \times [t, +\infty[, \cF) \longrightarrow H^*(N \times  {\mathbb R}, \cF)$$
and Theorem \ref{Thm-Quantization}, (\ref{TQ3})

\begin{prop}\label{Prop-4.5}
For $\cF_L$ given by Theorem \ref{Thm-Quantization} then $c(\alpha, \cF_L)$ coincides with the spectral invariant $c(\alpha,L)$ associated to $\alpha$ by using Floer cohomology (see \cite{Schwarz, Oh-spectrum1}). 
\end{prop}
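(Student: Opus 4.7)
My plan is to reduce the statement to a direct application of Theorem~\ref{Thm-Quantization}\,(\ref{TQ3}) with $L_{0}=0_{N}$ taken as the zero section (equipped with primitive $f_{L_{0}}=0$), and then match the two descriptions of the action filtration.

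First I would identify the sheaf quantization of the zero section: a direct verification of conditions (\ref{TQ1}) and (\ref{TQ2}) shows that $k_{N\times [0,+\infty[}$ satisfies the defining properties (its singular support above $\dot T^{*}(N\times{\mathbb R})$ is precisely $\widehat{0_{N}}$, and it has the correct behaviour near $N\times\{\pm\infty\}$), so by the uniqueness statement (\ref{TQ4}) we have $\cF_{0_{N}}\cong k_{N\times[0,+\infty[}$. Next I would compute the right adjoint $R\hHom^{\cstar}(\cF_{0_{N}},\cF_{L})$. Using the adjunction
$$\Mor\bigl(\cG,R\hHom^{\cstar}(k_{N\times[0,+\infty[},\cF_{L})\bigr)=\Mor\bigl(\cG\cstar k_{N\times[0,+\infty[},\cF_{L}\bigr)$$
together with the fact that $\cF_{L}$ lies in $\mathcal T_{0}(N)$ (its microsupport is contained in $\{\tau\geq 0\}$), one checks that convolution with $k_{N\times[0,+\infty[}$ acts as the identity on the Tamarkin-type subcategory containing $\cF_{L}$, so
$$R\hHom^{\cstar}(\cF_{0_{N}},\cF_{L})\cong \cF_{L}.$$

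Substituting this identification into (\ref{TQ3}) with $a=t$ and $b=+\infty$ yields
$$FH^{*}(0_{N},L;\,t,+\infty)\cong H^{*}(N\times [t,+\infty[,\cF_{L}),$$
and these isomorphisms are compatible with the natural maps to the full cohomology $FH^{*}(0_{N},L)\cong H^{*}(N\times{\mathbb R},\cF_{L})\cong H^{*}(N)$, where the rightmost isomorphism comes from (\ref{TQ2}). The inclusion map $FH^{*}(0_{N},L;t,+\infty)\to FH^{*}(0_{N},L)$ is precisely the one used in the Floer-theoretic definition
$$c(\alpha,L)=\sup\{t\mid \alpha\in\Image\bigl(FH^{*}(0_{N},L;t,+\infty)\to FH^{*}(0_{N},L)\bigr)\},$$
and it translates under the isomorphism above into the defining expression for $c(\alpha,\cF_{L})$ in Definition~\ref{Def-4.4}. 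The two invariants therefore coincide.

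The main technical obstacle is the identification $R\hHom^{\cstar}(\cF_{0_{N}},\cF_{L})\cong \cF_{L}$: the adjoint on the left is defined abstractly, and one must verify that it simplifies to the identity on sheaves whose microsupport lies in $\{\tau\geq 0\}$. A subsidiary but nontrivial point is to confirm that the canonical identification of $FH^{*}(0_{N},L)$ with $H^{*}(N)$ provided by (\ref{TQ3}) and (\ref{TQ2}) agrees with the PSS-type isomorphism implicit in the Floer-theoretic definition of $c(\alpha,L)$, so that the same cohomology class $\alpha$ is being tested on both sides of the equality.
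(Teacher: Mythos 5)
Your proposal is correct and follows the same route the paper takes: the paper treats the proposition as an immediate consequence of Theorem~\ref{Thm-Quantization}\,(\ref{TQ2}) and (\ref{TQ3}) together with the canonical restriction map $H^*(N\times[t,+\infty[,\cF)\to H^*(N\times{\mathbb R},\cF)$, and offers no further argument. You merely make explicit the two steps the paper leaves implicit, namely $\cF_{0_N}\cong k_{N\times[0,+\infty[}$ and $R\hHom^{\cstar}(\cF_{0_N},\cF_L)\cong\cF_L$ (the latter holding because $\cF_L$ lies in the genuine Tamarkin subcategory, not merely in $\mathcal T_0(N)$, where convolution with $k_{N\times[0,+\infty[}$ is the identity), which is a reasonable and standard filling-in of details.
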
  
As a consequence the $c(\alpha, \cF)$ satisfy the properties of the Floer homology Lagrangian spectral invariants, and in particular the triangle inequality, since this holds in Floer homology (see \cite{Hum-Lec-Sey3}, theorem 17). However we shall sometimes need to extend the triangle  inequality to situations where $\cF$ is in $\mathcal T_0(X)$ but does not necessarily  correspond to an exact embedded Lagrangian.

\begin{prop} [Triangle inequality for sheaves (see \cite{Vichery-these}, proposition 8.13)]\label{Prop-triangle}
Let  $\cF_1, \cF_2, \cF_3$ be sheaves on $X\times {\mathbb R} $  such that $\cF_j \in \mathcal T_0(X)$. 
Then we have 
$$ c(\alpha \cup_\cstar \beta; \cF_1,\cF_3) \geq c(\alpha ; \cF_1,\cF_2) + c( \beta; \cF_2,\cF_3)$$
\end{prop}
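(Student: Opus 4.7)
The plan is to imitate the construction recalled in Theorem~\ref{Thm-Quantization}\,(\ref{TQ5}) for quantizations of Lagrangians, but at the level of arbitrary objects $\cF_j \in \mathcal T_0(X)$. Here the invariant $c(\alpha;\cF_i,\cF_j)$ is to be read as $c(\alpha, R\hHom^\cstar(\cF_i,\cF_j))$ in the sense of Definition~\ref{Def-4.4}.

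The first step is to construct a natural composition morphism
$$R\hHom^\cstar(\cF_2,\cF_3) \cstar R\hHom^\cstar(\cF_1,\cF_2) \longrightarrow R\hHom^\cstar(\cF_1,\cF_3)$$
from the adjunction $(\cstar, R\hHom^\cstar)$. Adjunction provides evaluation maps $R\hHom^\cstar(\cF_i,\cF_{i+1}) \cstar \cF_i \to \cF_{i+1}$; composing them and using the associativity of $\cstar$ yields a morphism $R\hHom^\cstar(\cF_2,\cF_3) \cstar R\hHom^\cstar(\cF_1,\cF_2) \cstar \cF_1 \to \cF_3$, which by adjunction corresponds to the desired composition.

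Next, I would take sections of this morphism over the locally closed sets $X \times [\lambda,+\infty[$ and $X \times [\mu,+\infty[$. The key geometric point is that the addition map $s(x_1,t_1,x_2,t_2) = (x_1,x_2,t_1+t_2)$ satisfies $s([\lambda,+\infty[ \times [\mu,+\infty[) \subset [\lambda+\mu,+\infty[$. Combined with proper base change and the definition $\cF \cstar \cG = (Rs)_! d^{-1}(\cF \boxtimes \cG)$, this produces a product map
\begin{gather*}
H^*(X \times [\lambda,+\infty[, R\hHom^\cstar(\cF_1,\cF_2)) \otimes H^*(X \times [\mu,+\infty[, R\hHom^\cstar(\cF_2,\cF_3)) \\
\xrightarrow{\;\cup_\cstar\;} H^*(X \times [\lambda+\mu,+\infty[, R\hHom^\cstar(\cF_1,\cF_3))
\end{gather*}
compatible with the ambient cup product on $X \times {\mathbb R}$.

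Finally, fix $\eps > 0$ and pick $\lambda < c(\alpha;\cF_1,\cF_2)$ and $\mu < c(\beta;\cF_2,\cF_3)$ with $\lambda+\mu > c(\alpha;\cF_1,\cF_2) + c(\beta;\cF_2,\cF_3) - \eps$. By Definition~\ref{Def-4.4}, $\alpha$ and $\beta$ lift to classes over $[\lambda,+\infty[$ and $[\mu,+\infty[$ respectively, and their image under $\cup_\cstar$ is a lift of $\alpha \cup_\cstar \beta$ to $H^*(X \times [\lambda+\mu,+\infty[, R\hHom^\cstar(\cF_1,\cF_3))$; hence $c(\alpha \cup_\cstar \beta;\cF_1,\cF_3) \geq \lambda+\mu$, and letting $\eps \to 0$ yields the stated inequality. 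The main obstacle I anticipate is the construction of the composition morphism for general $\cF_j \in \mathcal T_0(X)$ (rather than for Lagrangian quantizations, where Theorem~\ref{Thm-Quantization}\,(\ref{TQ5}) already supplies it), together with the proper-base-change bookkeeping needed to ensure compatibility with the filtration by $\{t \geq \lambda\}$ on the $R\hHom^\cstar$ side.
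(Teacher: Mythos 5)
Your proposal follows essentially the same route as the paper: construct a composition product on the $R\hHom^\cstar$ sheaves, observe it is compatible with the filtration by $X\times[\lambda,+\infty[$ via the additivity of the map $s$, and conclude by lifting $\alpha$ and $\beta$ and pushing them through the product. The only difference is that you sketch the construction of the composition morphism from the adjunction, whereas the paper simply invokes it from \cite{Viterbo-Sheaves}, section 9; the rest of the argument (the commuting square of restriction maps and cup-products) is identical.
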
 
\begin{proof} 
We set $\cF_{i,j}= R\hHom^{\cstar}(\cF_{L_i},\cF_{L_j})$ and we have a product $$\cup_{\cstar} :\cF_{1,2} \otimes \cF_{2,3} \longrightarrow \cF_{1,3}$$ inducing the cup-product 
$$H^*( X\times [s,+\infty[ ; \cF_{1,2}) \otimes H^*( X\times [t,+\infty[ ; \cF_{2,3}) \longrightarrow H^*( X\times [s+t,+\infty[ ; \cF_{1,3})$$
(see \cite{Viterbo-Sheaves}, section 9).
Then we have the diagram

\xymatrix{
H^*( X\times [s,+\infty[ ; \cF_{1,2}) \otimes H^*( X\times [t,+\infty[ ; \cF_{2,3})\ar[d] \ar[r]& H^*( X\times [s+t,+\infty[ ; \cF_{1,3}) \ar[d]\\
H^*( X\times  {\mathbb R}  ; \cF_{1,2}) \otimes H^*( X\times {\mathbb R}  ; \cF_{2,3}) \ar[r]& H^*( X\times {\mathbb R}  ; \cF_{1,3})
}

where horizontal arrows are cup-products and vertical arrows restriction maps. So if $\alpha\otimes \beta$ is in the image of the left-hand side, which is equivalent to $s\leq c(\alpha, \cF_{1}, \cF_2), t \leq  c(\beta, \cF_{2},\cF_3)$, we have $\alpha\cup \beta$ is in the image of the right hand side, so that
$ s+t\leq c(\alpha\cup \beta , \cF_{1,3})$. 
 \end{proof} 
  Similarly we have
 \begin{prop} [Lusternik-Shnirelman for sheaves]
 Let $\cF$ as above. Let $\alpha \in H^*(N\times {\mathbb R}, \cF_{})$ and $\beta \in H^*(N\times {\mathbb R}, \cG)$. We have a product $\alpha \cup_{\cstar} \beta \in H^*(N\times {\mathbb R}, \cF_{}\cstar \cG)$.
 Then $$c(\alpha \cup_{\cstar} \beta, \cF\cstar \cG) \geq c(\alpha, \cF)$$ and equality implies that $\beta \neq 0$ in $H^*(\pi(SS(\cF) \cap  \{t=c\}), \cG)$ where $c=c(\alpha, \cF)=c(\alpha \cup_{\cstar} \beta, \cF\cstar \cG)$ is the common critical value and $\pi: T^*(N\times \mathbb R) \longrightarrow N\times \mathbb R$ the standard projection. In particular if $c(1; \cF)=c(\mu_N;\cF)=c$ and $\cF$ is constructible, then  $SS(\cF)\supset 0_N\times T^*_{\{c\}} {\mathbb R}\cap \{\tau >0\}$.
 \end{prop}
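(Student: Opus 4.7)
The plan is to mirror the proof of Proposition~\ref{Prop-triangle} for the first inequality, and then handle the equality statement via a microlocal argument localizing the classes at the critical level $t=c$.

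First I would establish the inequality $c(\alpha\cup_{\cstar}\beta,\cF\cstar\cG)\geq c(\alpha,\cF)$ by the same diagram chase as in Proposition~\ref{Prop-triangle}, with the twist that one factor is not shifted. Concretely, the product $\cup_{\cstar}$ is functorial with respect to restriction of support, so for any $s<c(\alpha,\cF)$ there is a commutative square
\[
\xymatrix{
H^*(N\times[s,+\infty[,\cF)\otimes H^*(N\times\R,\cG) \ar[r]\ar[d] & H^*(N\times[s,+\infty[,\cF\cstar\cG)\ar[d]\\
H^*(N\times\R,\cF)\otimes H^*(N\times\R,\cG) \ar[r] & H^*(N\times\R,\cF\cstar\cG)
}
\]
whose vertical maps are the relative-to-absolute restrictions. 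Lifting $\alpha$ to the top-left and pairing with $\beta$ shows $\alpha\cup_{\cstar}\beta$ lifts to $H^*(N\times[s,+\infty[,\cF\cstar\cG)$, so $c(\alpha\cup_{\cstar}\beta,\cF\cstar\cG)\geq s$. Taking $s\to c(\alpha,\cF)$ gives the inequality.

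Next, for the equality case, set $c:=c(\alpha,\cF)$ and assume $c(\alpha\cup_{\cstar}\beta,\cF\cstar\cG)=c$. For small $\eps>0$ consider the distinguished triangle
\[
R\Gamma(N\times[c+\eps,+\infty[,\cF)\to R\Gamma(N\times[c-\eps,+\infty[,\cF)\to R\Gamma(N\times[c-\eps,c+\eps[,\cF)\xrightarrow{+1}.
\]
By definition of $c$, the lift of $\alpha$ to $H^*(N\times[c-\eps,+\infty[,\cF)$ has nonzero image in $H^*(N\times[c-\eps,c+\eps[,\cF)$; otherwise $\alpha$ would lift past level $c$. The Kashiwara--Schapira microsupport estimates imply that for $\eps$ small the relative cohomology sheaf $R\Gamma_{N\times[c-\eps,c+\eps[}(\cF)$ has support contained in $\pi(SS(\cF)\cap\{t=c\})$. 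By functoriality of $\cup_{\cstar}$, if $\beta$ restricts to $0$ in $H^*(\pi(SS(\cF)\cap\{t=c\}),\cG)$ then the localized product $\alpha\cup_{\cstar}\beta$ vanishes in $H^*(N\times[c-\eps,c+\eps[,\cF\cstar\cG)$, so the analogous triangle for $\cF\cstar\cG$ lifts $\alpha\cup_{\cstar}\beta$ to $H^*(N\times[c+\eps,+\infty[,\cF\cstar\cG)$, giving $c(\alpha\cup_{\cstar}\beta,\cF\cstar\cG)\geq c+\eps$ and contradicting equality. Hence $\beta$ is nonzero on $\pi(SS(\cF)\cap\{t=c\})$.

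For the final corollary, I apply the LS inequality with $\alpha=1$, $\beta=\mu_N$, and $\cG=k_{N\times[0,+\infty[}$, which lies in $\mathcal T_0(N)$ and is the unit for $\cstar$, so $\cF\cstar\cG=\cF$ and $\alpha\cup_{\cstar}\beta=\mu_N$. The hypothesis $c(1,\cF)=c(\mu_N,\cF)=c$ forces the equality case, whence $\mu_N$ restricts nontrivially to $X:=\pi(SS(\cF)\cap\{t=c\})\subset N$. Since $\cF$ is constructible, $X$ is a closed constructible subset of $N$; if $X$ were a proper subset of the closed connected $n$-manifold $N$ it would have Lebesgue covering dimension less than $n$ and hence $H^n(X)=0$, contradicting $\mu_N|_X\neq 0$. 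Thus $X=N$. Finally, at every $x\in N$ the existence of a covector in $SS(\cF)$ above $(x,c)$ together with $SS(\cF)\subset\{\tau\geq 0\}$ and the microlocal propagation theorem in the $t$-direction (for $\tau=0$ the sheaf would be locally constant in $t$ near $(x,c)$, killing the jump at $c$) forces $(x,c;0,\tau)\in SS(\cF)$ for some $\tau>0$, i.e.\ $SS(\cF)\supset 0_N\times T^*_{\{c\}}\R$.

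The main obstacle is the microlocal identification in the middle paragraph: justifying that $R\Gamma_{N\times[c-\eps,c+\eps[}(\cF)$ has support in $\pi(SS(\cF)\cap\{t=c\})$ together with the compatibility of this localization with the $\cup_{\cstar}$-product. The final step reducing a base-projection statement to one about conormal directions in $SS(\cF)$ is the other delicate point and requires the standard Kashiwara--Schapira propagation results for sheaves with $SS\subset\{\tau\geq 0\}$.
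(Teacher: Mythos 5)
Your treatment of the inequality and of the equality criterion follows essentially the same route as the paper: a filtration-compatibility argument for $\cup_{\cstar}$ gives the inequality, and the equality case is handled by contradiction via the Kashiwara--Schapira non-characteristic deformation lemma. You package the latter as a support estimate for $R\Gamma_{N\times[c-\eps,c+\eps[}(\cF)$ inside a distinguished triangle, whereas the paper deforms the open set $N\times]-\infty,c-\eps[\,\cup\,(N\setminus U)\times]-\infty,c+\eps[$ onto $N\times]-\infty,c+\eps[$; these are two presentations of the same mechanism and both leave the same microlocal verification implicit. Two shared caveats: for fixed $\eps$ the local cohomology is supported only near $\pi(SS(\cF)\cap\{c-\eps\leq t\leq c+\eps\})$, which shrinks to a neighbourhood of the level-$c$ set as $\eps\to 0$, and the unshifted compatibility $H^*(N\times[s,+\infty[,\cF)\otimes H^*(N\times{\mathbb R},\cG)\to H^*(N\times[s,+\infty[,\cF\cstar\cG)$ silently uses that $\cG$ is concentrated in $\{t\geq 0\}$ up to the identifications in $\mathcal T_0(N)$ (convolving $[s,+\infty[$ with a support reaching down to $t=-T$ only gives $[s-T,+\infty[$). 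The paper is equally informal on both points.

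The genuine gap is in your proof of the final clause, which is part of the statement even though the paper's proof does not spell it out. From $X:=\pi(SS(\cF)\cap\{t=c\})=N$ you only know that over every $(x,c)$ there is a nonzero covector $(\xi,\tau)\in SS(\cF)$; your propagation argument rules out $\tau=0$ but gives no control on $\xi$, so it does not produce $(x,0;c,\tau)\in SS(\cF)$, i.e.\ it does not place the covector in $0_N\times T^*_{\{c\}}{\mathbb R}$. The repair is to notice that the vanishing mechanism in the equality case is governed only by covectors conormal to the slices $\{t=\mathrm{const}\}$: the non-characteristic deformation fails over $x$ precisely when $SS(\cF)$ meets $\{(x,0;t,\tau)\mid \tau>0\}$ near $t=c$, so the equality criterion in fact localizes $\beta$ on the smaller set $\pi\bigl(SS(\cF)\cap(0_N\times \dot{T}^*{\mathbb R})\cap\{t=c\}\bigr)$, and once that set equals $N$ the conclusion $SS(\cF)\supset 0_N\times T^*_{\{c\}}{\mathbb R}$ is immediate. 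Separately, your justification that $X=N$ via covering dimension is incorrect (a closed ball is a proper closed subset of $S^n$ of full covering dimension); the correct reason is that for a proper closed $X\subsetneq N$ the restriction $H^n(N)\to\check H^n(X)$ factors through $H^n(N\setminus\{p\})=0$ for any $p\notin X$.
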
 
 \begin{proof} 
 By assumption $\alpha$ vanishes in $H^*(N\times ]-\infty, c- \varepsilon [, \cF)$ but not in  $H^*(N\times ]-\infty, c+ \varepsilon [, \cF)$. Assume $\beta$ vanishes in $(N\setminus U)\times ]c- \varepsilon , c+ \varepsilon [$. Then $\alpha\cup \beta$ vanishes in    $H^*(N\times ]-\infty, c- \varepsilon [ \cup (N\setminus U)\times ]c- \varepsilon , c+ \varepsilon [, \cF\cstar \cG)$ and by assumption does not vanish in 
 $$H^*(N\times ]-\infty, c+ \varepsilon [  , \cF\cstar \cG)$$
But deforming $N\times ]-\infty, c- \varepsilon [ \cup (N\setminus U) \times ]-\infty, c+ \varepsilon [ $ to $N\times ]-\infty, c+ \varepsilon [$ can be done through a family of hypersurfaces bounding $W_t$ such that 
$$W_0= N\times ]-\infty, c- \varepsilon [ \cup (N\setminus U) \times ]-\infty, c+ \varepsilon [$$ while
$W_1= N\times ]-\infty, c+ \varepsilon [$,  and we assume 
$$SS(\cF\cstar \cG)\cap \overline{\{(x,p) \mid x \in \bigcap_{s>t} W_{s}\setminus W_{t}\}} \subset 0_{N\times {\mathbb R} } \dispdot$$
According to the microlocal deformation lemma (\cite[proposition 2.7.2, p. 117 and corollary 5.4.19, p. 239]{K-S}), this implies that the natural map $H^*(W_1, \cF\cstar \cG) \longrightarrow H^*(W_0, \cF\cstar \cG)$ is an isomorphism. In our case this implies that $\alpha \cup_{\cstar}\beta$ is zero in 
$$H^*(N\times ]-\infty, c+ \varepsilon [; \cF\cstar\cG)$$ hence $c(\alpha\cup \beta; \cF\cstar \cG) \geq c+ \varepsilon $ a contradiction. 
 \end{proof} 
 
 Obviously when $\cF=\cF_{L}$ the equality $c(1; \cF_{L})=c(\mu_N;\cF_{L})$ implies $\widehat L \supset 0_{N}\times T_{c}^{*}{\mathbb R}$ hence $L=0_{N}$. 
\begin{cor} 
As a result  $\gamma$ defines a pseudo-metric on $\mathcal T_0(N)$.  It restricts to a metric on the image of $\mathcal L(T^*N)$ by the embedding $L \mapsto \cF_{L}$ which yields a bi-Lipschitz embedding. 
\end{cor}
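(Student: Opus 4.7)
The plan is to lift the Floer-theoretic definition of $\gamma$ to the sheaf setting via Theorem \ref{Thm-Quantization}(\ref{TQ3}). For $\cF, \cG \in \mathcal T_0(N)$, I would define $c_\pm(\cF, \cG) := c(\alpha_\pm, R\hHom^\cstar(\cF, \cG))$ with $\alpha_+ = 1_N$ and $\alpha_- = \mu_N$, and set $\gamma(\cF, \cG) := c_+(\cF, \cG) - c_-(\cF, \cG)$. A preliminary point is to check that $R\hHom^\cstar(\cF, \cG)$ still sits in a category where Definition \ref{Def-4.4} applies: its singular support must lie in $\{\tau \geq 0\}$, and $H^*(N \times \mathbb R, R\hHom^\cstar(\cF, \cG)) \simeq H^*(N)$. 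This is a formal consequence of how $\cstar$ and its adjoint interact with the asymptotic normalization at $\pm\infty$ and the constraint on $\tau$ built into $\mathcal T_0(N)$.

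Having made sense of $\gamma$, the three pseudo-metric axioms are checked in order. Vanishing on the diagonal is $c_\pm(\cF, \cF) = 0$, which comes from the canonical identity morphism of $R\hHom^\cstar(\cF, \cF)$ sitting at level $0$. Symmetry follows from the Poincar\'e-duality isomorphism $R\hHom^\cstar(\cG, \cF) \simeq D(R\hHom^\cstar(\cF, \cG))$, which swaps the roles of $1_N$ and $\mu_N$ in the same way as on the Floer side. Non-negativity $c_+ \geq c_-$ follows from an LS-type comparison (a direct consequence of the previous Proposition applied to the unit). The triangle inequality $\gamma(\cF_1, \cF_3) \leq \gamma(\cF_1, \cF_2) + \gamma(\cF_2, \cF_3)$ reduces to Proposition \ref{Prop-triangle} applied to the identities $1 \cup_\cstar 1 = 1$ and $1 \cup_\cstar \mu_N = \mu_N$ (and the analogous relations with the factors swapped), then subtracting the two one-sided inequalities.

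For the second assertion, Theorem \ref{Thm-Quantization}(\ref{TQ3}) identifies $H^*(N \times [t, +\infty[, R\hHom^\cstar(\cF_{L_1}, \cF_{L_2}))$ with the truncated Floer cohomology $FH^*(L_1, L_2; t, +\infty)$, so Definition \ref{Def-4.4} yields $c_\pm(\cF_{L_1}, \cF_{L_2}) = c_\pm(L_1, L_2)$ and hence $\gamma(\cF_{L_1}, \cF_{L_2}) = \gamma(L_1, L_2)$. This generalizes Proposition \ref{Prop-4.5} to pairs and shows that the embedding $L \mapsto \cF_L$ is in fact an isometry, a fortiori bi-Lipschitz. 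Injectivity, which promotes the pseudo-metric to a genuine metric on the image, follows from Theorem \ref{Thm-Quantization}(\ref{TQ4}): since $\cF_L$ is characterized by its singular support $\widehat L$, distinct branes produce distinct sheaves, and the Floer $\gamma$ is already known to be a metric on $\mathcal L(T^*N)$.

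The main obstacle I anticipate, beyond the preliminary bookkeeping placing $R\hHom^\cstar(\cF, \cG)$ in the right subcategory of $D^b(N \times \mathbb R)$, is pinning down the Poincar\'e-duality conventions so that symmetry and the two one-sided estimates of Proposition \ref{Prop-triangle} assemble into a genuine triangle inequality for $\gamma$, rather than just a one-sided bound. Once this is sorted out, the remaining pseudo-metric axioms are formal and the bi-Lipschitz (even isometric) comparison with the Floer $\gamma$ is an immediate consequence of Theorem \ref{Thm-Quantization}(\ref{TQ3}).
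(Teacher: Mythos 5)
Your argument follows the paper's own route: the triangle inequality of Proposition \ref{Prop-triangle} applied to $1\cup_\cstar 1=1$ and $1\cup_\cstar\mu_N=\mu_N$ yields non-negativity and subadditivity of $\gamma$ on $\mathcal T_0(N)$, and Theorem \ref{Thm-Quantization} (\ref{TQ3}) together with Proposition \ref{Prop-4.5} identifies the sheaf invariants with the Floer ones on the image of $L\mapsto\cF_L$, so the embedding is isometric and a fortiori bi-Lipschitz. The only slip is the labeling $\alpha_+=1_N$, $\alpha_-=\mu_N$, which is opposite to the paper's convention $c_+=c(\mu_N,\cdot)$, $c_-=c(1_N,\cdot)$ and would make your $\gamma$ non-positive as written; with the labels swapped back your inequalities are exactly those in the paper's proof.
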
 
\begin{proof} 
We have $$c(1;\cF_1,\cF_3) \geq c(1;\cF_1,\cF_2)+ c(1;\cF_2,\cF_3)$$
and 
$$0=c(\mu; \cF_1,\cF_1)\geq c(1;\cF_1,\cF_2)+ c(\mu;\cF_2,\cF_1)$$
$$c(\mu; \cF_1,\cF_2) \geq c(1;\cF_1,\cF_2) + c(\mu; \cF_2,\cF_2)= c(1;\cF_1,\cF_2)$$
so that $\gamma(\cF_1,\cF_2)\geq 0$. 
\end{proof} 
\begin{rem} 
We refer to \cite{Guillermou-Viterbo, Asano-Ike} for different definitions of  the metric $\gamma$ on $D^{b}(N\times {\mathbb R})$. 
As pointed out in \cite{Guillermou-Viterbo}, the map $Q: L\mapsto \mathcal F_{L}^\bullet$ extends to an isometry $\widehat Q: \widehat {\mathcal L}(T^{*}N) \longrightarrow D^{b}(N\times {\mathbb R})$. In \cite{AGHIV} it is proved that $\gammasupp(\widetilde L)=RS(\widehat Q (L))$, where for a homogeneous set $X$ in $T^{*}(N\times {\mathbb R})$, $RS ({X})=X\cap \{\tau=1\}/(t)$ ($(t,\tau)$ are the canonical coordinates in $T^*{\mathbb R}$). 
\end{rem} 
 Let $\mu_N\in H^n(N)$ be the fundamental class of $N$ and $1_N\in H^0(N)$ the degree $0$ class. 
 \begin{defn} We set for $\cF$ in $\mathcal T_0(N)$
 $$c_+(\cF)=c(\mu_N; \cF)$$
 $$c_-(\cF)=c(1_N; \cF)$$
 $$\gamma (\cF)=c_+(\cF)-c_-(\cF) \dispdot$$
 We set $\mathbb D \cF$ to be the Verdier dual of $\cF$ and $s(x,t)=(x,-t)$ and $\check{\cF}$ is quasi-isomorphic to  $0 \to k_{N\times {\mathbb R} } \to s^{-1}(\mathbb D \cF)\to 0 $.
 \end{defn} 
 We notice that $SS(\mathbb D \cF)=-SS(\cF)$ where for $A \subset T^*(N\times {\mathbb R})$, we set $-A=\{(x,-p, t, -\tau) \mid (x,p,t,\tau)\in A\}$ (see \cite[Exercise V.13, p. 247]{K-S} ). As a result,  $\check{\cF}_L=\cF_{-L}$ where $-L=\{(q,-p) \mid (q,p)\in L\}$. We then have $\gamma (\cF)=\gamma(k_{[0,+\infty[}, \cF)$. The triangle inequality then implies
 \begin{prop} \label{Prop-3.6}
 We have for $\cF$ constructible in $\mathcal T_0(N)$
  \begin{enumerate} 
 \item \label{Prop-3.6-i} $c_+(\cF) \geq c_-(\cF)$ 
 \item \label{Prop-3.6-ii}
 $c_+(\cF)=-c_-(\check{\cF})$ 
 $c_-(\cF)=-c_+(\check{\cF})$ 
 so that $\cF \longrightarrow \check{\cF}$ is a $\gamma$-isometry. 
 \end{enumerate} 
 And of course if $L\in \mathcal L(T^*N)$ we have  
 $c_\pm(\cF_L)=c_\pm(L)$ and $\gamma(\cF_L)=\gamma(L)$, where $c_+(L)=c(\mu_N;L), c_-(L)=c(1_N;L), \gamma(L)=c_+(L)-c_-(L)$. 
 \end{prop}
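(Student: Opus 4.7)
The plan is to derive (i) directly from the Lusternik--Shnirelman inequality just established, and to derive (ii) from Verdier duality combined with the reflection $s(x,t)=(x,-t)$. For (i), I take $\cG = k_{N \times [0, +\infty)}$, which lies in $\mathcal T_0(N)$ and serves as a unit for the $\cstar$-convolution, so $\cF \cstar \cG \simeq \cF$ for any $\cF \in \mathcal T_0(N)$. Under the canonical identifications $H^*(N \times {\mathbb R}, \cF) \simeq H^*(N) \simeq H^*(N \times {\mathbb R}, \cG)$ and the compatibility of $\cup_\cstar$ with the ordinary cup product on $N$, the class $1_N \cup_\cstar \mu_N$ becomes $\mu_N$ in $H^n(N \times {\mathbb R}, \cF \cstar \cG) = H^n(N \times {\mathbb R}, \cF)$. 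Applying the Lusternik--Shnirelman inequality proved just above, one obtains
\[
c_+(\cF) = c(\mu_N, \cF) = c(1_N \cup_\cstar \mu_N, \cF \cstar \cG) \geq c(1_N, \cF) = c_-(\cF),
\]
which is (i).

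For (ii), I would work from the distinguished triangle
\[
k_{N \times {\mathbb R}} \To s^{-1}(\mathbb{D}\cF) \To \check{\cF} \To k_{N \times {\mathbb R}}[1]
\]
implicit in the definition of $\check{\cF}$. The reflection $s$ converts a filtration by half-spaces $[t, +\infty)$ into one by $(-\infty, -t]$, and Verdier duality supplies a perfect pairing between $H^*(N \times [t, +\infty), \cF)$ and $H^{n+1-*}_{N \times [t, +\infty)}(N \times {\mathbb R}, \mathbb{D}\cF)$, interchanging $\mu_N$ and $1_N$ up to a degree shift. The adjustment by $k_{N \times {\mathbb R}}$ in the cone is the precise correction needed to restore the $\mathcal T_0(N)$ boundary conditions (since $s^{-1}\mathbb{D}$ exchanges the values $0$ near $-\infty$ and $k_N$ near $+\infty$), so that $\check{\cF}$ again belongs to $\mathcal T_0(N)$ with $H^*(N \times {\mathbb R}, \check{\cF}) \simeq H^*(N)$. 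Tracing $\mu_N \in H^n(N \times {\mathbb R}, \cF)$ through the resulting chain of isomorphisms carries the spectral window $[t, +\infty)$ for $\cF$ to the window $[-t, +\infty)$ for $\check{\cF}$ and the class $\mu_N$ to $1_N$, yielding $c_+(\cF) = -c_-(\check{\cF})$; the symmetric computation gives $c_-(\cF) = -c_+(\check{\cF})$. The $\gamma$-isometry is then just subtraction:
\[
\gamma(\check{\cF}) = c_+(\check{\cF}) - c_-(\check{\cF}) = -c_-(\cF) + c_+(\cF) = \gamma(\cF).
\]

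Finally, for $L \in \mathcal L(T^*N)$ the equalities $c_\pm(\cF_L) = c_\pm(L)$ and $\gamma(\cF_L) = \gamma(L)$ are immediate from Proposition \ref{Prop-4.5} applied to $\alpha = 1_N$ and $\alpha = \mu_N$. The hard part is (ii): one must carefully bookkeep the interaction of Verdier duality, the reflection $s$, the cone correction, and the grading conventions, ensuring that (a) under the duality the top class $\mu_N$ is indeed paired with a generator of $H^0$ (up to the degree shift absorbed by the cone), and (b) the filtration parameter is genuinely negated, not merely shifted. Constructibility of $\cF$ enters here to guarantee that $\mathbb{D}\cF$ is well-behaved and that the defining triangle induces honest isomorphisms on the relevant spectral filtrations.
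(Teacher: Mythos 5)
Your part (i) is correct and complete: using the convolution unit $\cG=k_{N\times[0,+\infty[}$ together with the Lusternik--Shnirelman inequality is a valid route, essentially equivalent to what the paper does (the paper instead runs the triangle inequality, as in the corollary following Proposition \ref{Prop-triangle}, writing $c(\mu;\cF_1,\cF_2)\geq c(1;\cF_1,\cF_2)+c(\mu;\cF_2,\cF_2)$; the unit sheaf is hiding in that argument too, so the two derivations differ only in packaging). For the final statement, invoking Proposition \ref{Prop-4.5} is exactly right.

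Part (ii), however, is a plan rather than a proof, and you say so yourself. Two remarks. First, the paper's entire written proof of the proposition is the single observation that $\check{\check{\cF}}=\cF$ for $\cF$ cohomologically constructible; this is not decoration but the symmetrizing device. The duality pairing most readily yields one-sided statements (a class survives past level $t$ in $\cF$ provided its dual pairs nontrivially at level $-t$ in $\check{\cF}$), i.e.\ an inequality such as $c_+(\cF)\geq -c_-(\check{\cF})$; applying the same inequality to $\check{\cF}$ and using $\check{\check{\cF}}=\cF$ is what closes the loop to equality. Your proposed ``chain of isomorphisms'' would have to establish a genuinely perfect pairing compatible with the filtration to get equality in one pass, and that is precisely the bookkeeping you defer. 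You should either carry out that perfect-pairing argument (this is where constructibility is used) or explicitly add the double-duality step. Second, for the case you actually need downstream, namely $\cF=\cF_L$, the paper has already recorded $\check{\cF}_L=\cF_{-L}$ just before the statement, so (ii) reduces to the Floer-theoretic duality $c_\pm(-L)=-c_\mp(L)$; mentioning this shortcut would make your argument both shorter and closer to the author's intent.
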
 
 \begin{proof} 
 Note that for $\cF$ cohomologically  constructible, we have $\check{\check \cF}=\cF$
 
 \end{proof} 
 Of course if $L\in \mathfrak L (T^*N)$ the $c_\pm(L)$ are not well defined (they are only defined up to constant) however, $\gamma(L)$ is well-defined. 
\subsection{Persistence modules and Barcodes as sheaves on the real line}

By a {\bf  Persistence module}, we mean a constructible sheaf on $( {\mathbb R}, \leq)$ the real line with the topology having for open sets the $]t,+\infty[$.  We refer to \cite{Barannikov, Chazal-CS-G-G-O, E-L-Z, K-S-distance, Z-C} for the theory and applications. 
This persistence module is uniquely defined by the spaces $V_t= \mathcal F (]-\infty, t[)$ and the linear maps $r_{s,t}: V_s \longrightarrow V_t$ defined for $s\leq t$, such that 
\begin{enumerate} 
\item for $s<t<u$ we have $r_{t,u} \circ r_{s,t}=r_{s,u}$
\item $\lim_{t>s} V_s=V_t$ where the limit is that of the directed system given by the $r_{s,t}$
\item $r_{t,t}=\Id$
\item the spaces $V_t$ are finite dimensional and the set of $s$ such that $\lim_{t<s} V_t\neq V_s$ is finite
\end{enumerate} 
Obviously $k_{[a,b[}$ is such a persistence module. As a consequence of Gabriel's theorem on quivers (see \cite{Gabriel}), we have
\begin{prop} [\cite{Crawley-Boevey}]
Any persistence module is isomorphic to a unique sum $$\bigoplus_j k_{[a_j, b_j[}$$
where $a_j \in {\mathbb R} \cup \{-\infty\}, b_j \in {\mathbb R} \cup \{+\infty\}$ and $a_j<b_j$.
\end{prop}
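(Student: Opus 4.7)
My plan is to reduce the statement to a finite-dimensional quiver representation problem and then apply Gabriel's theorem, as the statement itself suggests. The first step would be to exploit the constructibility of $\mathcal{F}$: there exists a finite set of jump points $t_1 < \cdots < t_n$ such that on each open stratum $]t_i, t_{i+1}[$ (with the convention $t_0 = -\infty$, $t_{n+1} = +\infty$) the restriction maps $r_{s,t}$ are isomorphisms. Setting $W_i$ to be the common value of $V_t$ on the stratum $]t_i, t_{i+1}[$ and $f_i \colon W_{i+1} \to W_i$ the restriction across the jump at $t_{i+1}$, all of the data of $\mathcal{F}$ is captured by the representation
\[
W_n \xrightarrow{f_{n-1}} W_{n-1} \to \cdots \to W_1 \xrightarrow{f_0} W_0
\]
of the linearly oriented $A_{n+1}$ quiver, together with the choice of partition points $t_1, \ldots, t_n$. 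I would then check that this assignment is an equivalence of categories between constructible persistence modules adapted to the stratification $\{t_i\}$ and representations of this quiver.

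Next I would invoke Gabriel's theorem to decompose the resulting representation, uniquely up to permutation of summands, as a direct sum of indecomposables. For a linearly oriented $A_{n+1}$ quiver the indecomposables are exactly the interval representations $I_{[p,q]}$ placing $k$ at vertices $p, p+1, \ldots, q$ with identity transitions and $0$ elsewhere. Translating each $I_{[p,q]}$ back through the equivalence of the first step produces a single interval sheaf $k_{[a_j, b_j[}$ with $a_j = t_p$ (or $-\infty$ when $p = 0$) and $b_j = t_{q+1}$ (or $+\infty$ when $q = n$). Because the equivalence preserves direct sums, this yields the decomposition $\mathcal{F} \simeq \bigoplus_j k_{[a_j, b_j[}$ asserted in the proposition, and uniqueness of the multiset $\{(a_j, b_j)\}$ follows from uniqueness of indecomposable summands in Gabriel's theorem together with the Krull--Schmidt property.

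The step requiring the most care is the bookkeeping of sheaf-theoretic conventions in the equivalence of the first step: one must check that the left-continuity axiom (2) combined with the identification $V_t = \mathcal{F}(]-\infty, t[)$ forces each indecomposable summand to appear as $k_{[a,b[}$ rather than $k_{]a,b]}$ or $k_{[a,b]}$, and that the boundary vertices of the quiver correspond correctly to the endpoints $-\infty$ and $+\infty$. Once this translation of conventions is pinned down, the remainder of the argument is a formal consequence of Gabriel's theorem and Krull--Schmidt.
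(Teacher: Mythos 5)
The paper offers no proof of this proposition: it is quoted as a known result, attributed to Crawley--Boevey and presented as ``a consequence of Gabriel's theorem on quivers.'' Your proposal is therefore an attempt to actually supply the argument that the citation stands in for, and for the case of finitely many jump points it is essentially correct: the reduction to a linearly oriented $A_{n+1}$ quiver, the classification of its indecomposables as interval representations, and the Krull--Schmidt uniqueness are all sound, and you are right that the only delicate point in that regime is matching the sheaf conventions (the axiom $\lim_{t>s}V_t=V_s$ together with $V_t=\mathcal F(]-\infty,t[)$ is what forces the half-open form $k_{[a,b[}$).

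There is, however, a genuine gap in the first step. A constructible sheaf on $\mathbb{R}$ is only required to be constructible with respect to a \emph{locally} finite stratification, so the set of jump points is a discrete subset of $\mathbb{R}$ that may well be infinite (jumps at every integer, say); indeed the proposition allows an infinite index set $j$, and infinite barcodes do occur in this paper. In that situation your quiver is of type $A_\infty^\infty$ rather than $A_{n+1}$, and neither Gabriel's theorem (which classifies indecomposables of \emph{finite} Dynkin quivers) nor the classical Krull--Schmidt theorem applies; existence of a decomposition into indecomposables is no longer automatic and uniqueness requires an Azumaya-type argument. This is precisely why the result is attributed to Crawley--Boevey: his theorem establishes the interval decomposition for pointwise finite-dimensional persistence modules over an arbitrary totally ordered index set, by a direct linear-algebra argument (building the summands from ``cuts'' of the images and kernels of the structure maps) rather than by reduction to a finite quiver. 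To close the gap you should either restrict your argument to sheaves constructible with respect to a globally finite stratification, or replace the appeal to Gabriel plus Krull--Schmidt by an appeal to (or a proof along the lines of) Crawley--Boevey's decomposition theorem.
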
 
It will be useful to remind the reader that 
\begin{lem}[see \cite{K-S-distance}, (1.10)]
$$\Mor (k_{[a,b[}, k_{[c,d[})=\left\{\begin{array}{ll} k\; \text{for \;} a\leq c <b\leq d \\ 0 \; \text{otherwise}  \end{array}\right . \dispdot $$
\end{lem}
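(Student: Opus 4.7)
The idea is to unwind the statement into an explicit computation of families of scalars satisfying compatibility relations. First I would make the persistence module associated to $\mathcal{F} = k_{[a, b[}$ explicit by computing $V_t(\mathcal{F}) = \Gamma(]-\infty, t[, \mathcal{F})$. Using either the short exact sequence
\[
0 \to k_{[b, +\infty[} \to k_{[a, +\infty[} \to k_{[a, b[} \to 0
\]
or the explicit description of sections of an extension by zero from a locally closed subset, one checks that $V_t = k$ precisely for $t \in ]a, b]$ (and zero otherwise), and that the restriction map $r_{s, t} : V_t \to V_s$ is the identity whenever both $s, t \in ]a, b]$ and zero otherwise.

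A morphism $\phi : k_{[a, b[} \to k_{[c, d[}$ then corresponds to a family $(\phi_t)_{t \in \mathbb{R}}$, with each $\phi_t$ a scalar $\lambda_t \in k$ (or zero when source or target vanishes), subject to the compatibility $r^{(c, d)}_{s, t} \circ \phi_t = \phi_s \circ r^{(a, b)}_{s, t}$ for all $s \leq t$. A nonzero $\phi_t$ already forces $t \in ]a, b] \cap ]c, d] = ]\max(a, c), \min(b, d)]$; moreover, compatibility between pairs $s \leq t$ both lying in this intersection (where both restrictions are identities on $k$) pins down the nonzero $\lambda_t$ to a single common value $\lambda$.

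The crux is then the boundary analysis at the two endpoints. Suppose first $b > d$: pick $t^* \in ]d, b]$ and any $s \in ]\max(a, c), d]$; then $s \leq t^*$, the source restriction is the identity on $k$ while the target restriction is the zero map $0 \to k$, and compatibility forces $0 = \lambda \cdot \id$, hence $\lambda = 0$. Symmetrically, if $c < a$, pick $s^* \in ]c, a]$ and $t \in ]a, \min(b, d)]$; now the source restriction is $k \to 0$ while the target restriction is the identity on $k$, so compatibility again yields $\lambda = 0$. Thus a nonzero morphism requires $a \leq c$, $b \leq d$, and (by nonemptiness of the intersection) $c < b$, that is $a \leq c < b \leq d$. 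Conversely, under these inequalities, setting $\phi_t := \lambda$ on $]c, b]$ and $\phi_t := 0$ elsewhere satisfies all compatibility relations and yields $\Mor(k_{[a, b[}, k_{[c, d[}) \cong k$. The main obstacle is purely a matter of bookkeeping around the half-open conventions and around the fact that the sheaf-theoretic direction of restriction reverses the usual persistence direction; no deeper idea is required.
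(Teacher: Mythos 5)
Your argument is correct. The paper itself gives no proof of this lemma --- it is quoted directly from Kashiwara--Schapira \cite{K-S-distance}, (1.10) --- so there is no in-paper argument to compare against; your direct verification is a perfectly adequate substitute and is consistent with the paper's own description of a persistence module via $V_t=\mathcal F(]-\infty,t[)$ and the downward maps $r_{s,t}:V_t\to V_s$. Concretely: $V_t(k_{[a,b[})=k$ exactly for $t\in\,]a,b]$ with identity transition maps there, a morphism is a compatible family of scalars which is constant on the overlap $]a,b]\cap]c,d]$, and the two boundary checks (a $t^*\in\,]d,b]$ when $b>d$, an $s^*\in\,]c,a]$ when $c<a$) kill the scalar unless $a\leq c<b\leq d$, in which case the single scalar $\lambda$ on $]c,b]$ does define a morphism. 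The only point worth flagging is that you are computing $\Mor$ in the category of sheaves on $(\mathbb R,\leq)$ (equivalently, of persistence modules), not in $\mathrm{Sh}(\mathbb R)$ for the Euclidean topology --- these differ --- but that is indeed the category the paper intends, so no gap.
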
 

There is a graded version, when we consider $D^b( ({\mathbb R}, \leq))$. We denote by $k_{[a,b[}[n]$ the element in $D^b( ({\mathbb R}, \leq))$ given by 
the complex $0 \longrightarrow k_{[a,b[}[n] \longrightarrow 0$ concentrated in degree $n$. We set $D_{c}^{b}(N)$ to be the category of constructible sheaves on $N$

\begin{prop} [see \cite{Guillermou-Asterisque, K-S-distance}]\label{Prop-Decomposition-2}
Any element $\cF$ in $D_{c}^b( ({\mathbb R}, \leq))$ is isomorphic to a unique sum $$\bigoplus_j k_{[a_j, b_j[}[n_j]$$
where $a_j \in {\mathbb R} \cup \{-\infty\}, b_j \in {\mathbb R} \cup \{+\infty\}$,  $a_j<b_j$ and $n_j\in \mathbb Z$. 
If $\cF$ vanishes at $-\infty$ and is equal to $k_X^d$ at $+\infty$, then all the $a_j$ are different from $-\infty$ and the
number of $j$ such that $b_j=+\infty$ is exactly $d$. 
\end{prop}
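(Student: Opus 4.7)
The plan is to reduce the problem to Gabriel's theorem for quiver representations of type $A$ by exploiting the constructibility of $\cF$, and to exploit the fact that the relevant abelian category is hereditary in order to pass from a decomposition of the cohomology sheaves to a decomposition of $\cF$ itself.

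First, since $\cF\in D^b_c(({\mathbb R},\leq))$, there exist finitely many points $t_1<\cdots<t_N$ outside of which each cohomology sheaf $H^n(\cF)$ is locally constant with perfect finite-dimensional stalks, and only finitely many $H^n(\cF)$ are nonzero. A constructible sheaf on $({\mathbb R},\leq)$ adapted to this stratification is equivalent to a finite-dimensional representation of a quiver of type $A$, whose vertices correspond to the points $t_i$ and the open intervals $]t_i,t_{i+1}[$ and whose arrows are the specialization morphisms. Gabriel's theorem then decomposes each $H^n(\cF)$ canonically as a direct sum $\bigoplus_{j\in J_n} k_{[a_j,b_j[}$ of interval modules, which are exactly the summands appearing in the statement.

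For the passage to the derived category, the path algebra of a type-$A$ quiver is hereditary (global dimension $1$), so any bounded complex is formal, i.e. quasi-isomorphic to the direct sum of its cohomology sheaves with shifts: $\cF\simeq\bigoplus_n H^n(\cF)[-n]$. Combined with the previous step, this yields the desired decomposition $\cF\simeq\bigoplus_j k_{[a_j,b_j[}[n_j]$. For uniqueness, the Hom computation in the lemma above gives $\mathrm{End}(k_{[a,b[}[n])=k$, so each summand is indecomposable with local endomorphism ring, and Krull--Schmidt applies since $\mathrm{End}_{D^b}(\cF)$ is finite-dimensional by constructibility and boundedness.

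The boundary behaviour is read off directly from the decomposition: the stalk of $\cF$ at $-\infty$ equals $\bigoplus_{a_j=-\infty} k[n_j]$, which must vanish by hypothesis, so $a_j>-\infty$ for every $j$; the stalk at $+\infty$ equals $\bigoplus_{b_j=+\infty} k[n_j]$ and must equal $k^d$ concentrated in degree $0$, which forces exactly $d$ indices to satisfy $b_j=+\infty$, each with $n_j=0$. The main obstacle I anticipate is justifying the hereditary-splitting step cleanly in the full derived category of constructible sheaves on $({\mathbb R},\leq)$ rather than in the finite-dimensional quiver category; if this feels technical, a clean alternative is Guillermou's inductive peeling argument, which explicitly constructs a morphism $k_{[a_j,b_j[}[n_j]\to\cF$ for the index $j$ realising the smallest $a_j$ (using the Hom-lemma above), exhibits an explicit retraction splitting it as a direct summand, and inducts on the total cohomological dimension.
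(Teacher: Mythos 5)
Your argument is correct in outline, and it should be said up front that the paper itself offers no proof of this proposition: it is stated as a citation to Guillermou and to Kashiwara--Schapira, so the only meaningful comparison is with those references. Your two-step strategy --- (i) decompose each cohomology sheaf $H^n(\cF)$ via the equioriented $A_N$-quiver description and Gabriel's theorem (which is really just the paper's preceding proposition of Crawley--Boevey, and which also accounts for why the intervals come out half-open of the form $[a_j,b_j[$, via the sheaf/limit axiom on $({\mathbb R},\leq)$), and (ii) split the complex as $\bigoplus_n H^n(\cF)[-n]$ using vanishing of $\mathrm{Ext}^{\geq 2}$ --- is essentially the Kashiwara--Schapira route, while the ``peeling'' fallback you describe is precisely Guillermou's argument. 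The one step that genuinely requires care is the one you flag: the splitting in (ii) must use $\mathrm{Ext}^i$ computed in the ambient derived category of sheaves on $({\mathbb R},\leq)$, not merely in the finite quiver category, so you need either the standard full-faithfulness of $D^b(\mathrm{Mod}_\Sigma)\to D^b_c$ for a fixed stratification $\Sigma$, or a direct computation that $\mathrm{Hom}_{D^b}(k_{[a,b[},k_{[c,d[}[i])=0$ for $i\geq 2$; both are true and either closes the gap, and your inductive argument with the truncation triangles (the connecting map lands in $\bigoplus_m\mathrm{Ext}^{n-m+1}(H^n,H^m)$ with $n-m+1\geq 2$) is the right way to organize it. The uniqueness via Krull--Schmidt (local endomorphism rings of the summands plus finite-dimensionality of $\mathrm{End}(\cF)$) and the reading-off of the behaviour at $\pm\infty$ are both fine; note that your observation that the summands with $b_j=+\infty$ must all have $n_j=0$ is slightly stronger than what the statement asks for, and depends on the normalization that $k_X^d$ sits in degree $0$, which is the convention the paper uses.
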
 
We refer to \cite{Guillermou-Viterbo} for extensions to the non-constructible case. 
It is quite clear that elements of $D_{c}^b( ({\mathbb R}, \leq))$ are obtained by considering graded persistence modules $V_t^d$ for $d$ in a finite range (in $\mathbb Z$) and with the maps $r_{s,t}$ we have maps $\delta_t : V_t^d \longrightarrow V_t^{d+1}$ so that 
\begin{enumerate} 
\item $\delta_t^2=0$
\item $\delta_s \circ r_{s,t}=r_{s,t}\circ \delta_t$ \dispdot
\end{enumerate} 
 \begin{rem} 
 One has to be careful, it is {\bf not true} that \\ $\Mor_{D^b(( {\mathbb R}, \leq))}(k_{[a, b[}[m], k_{[c, d[}[n])=0$ for $m\neq n$ ! 
 \end{rem} 
 
In particular, recall that given two elements $L_0,L_1 \in \mathcal L (M,\omega)$ the Floer complex of $L_0,L_1$ is generated by the intersection points of $L_0\cap L_1$. This complex is filtered by $f_{L_0,L_1}(x)=f_{L_1}(x)- f_{L_0}(x)$ for $x\in L_0\cap L_1$. Since the coboundary map is given by counting holomorphic strips, it increases the filtration, and 
we can define   $V_t=FC^*(L_0,L_1; t)$ the subspace of the Floer complex generated by the elements with filtration  greater than $t$. There is a natural map for $s<t$ from $V_s=FC^*(L_0,L_1; s)$ to $V_t=FC^*(L_0,L_1; t)$ that defines a persistence module. 
This is the persistence module associated with $V_t=FC^*(L_0,L_1; t)$ and yields an isomorphism between this persistence module and some $\bigoplus_j k_{[a_j, b_j[}[n_j]$. 

Since $V_t$  vanishes for $t$ small enough, the $a_j$ must all be finite. Since up to a shift in grading we have by Floer's theorem  $FH^k(L_0,L_1;t)=H^k(L_0)=H^j(L_1)$ for $t$ large enough, the number of $j$ such that $b_j=+\infty$ and $n_j=k$ is given by $\dim H^k(L_0)=\dim H^k(L_1)$. Finally, if $\cF_j$ is associated to $L_j$ by Theorem \ref{Thm-Quantization}
 we have that the above persistence module is given by the sheaf $(Rt)_*(R\hHom^\cstar(\cF_1, \cF_2))$. We thus get
 
 \begin{prop} 
 Let us set (according to Proposition \ref{Prop-Decomposition-2}) for $\cF_j=\cF_{L_j}$ and
 $$(Rt)_*(R\hHom^\cstar(\cF_1, \cF_2))=\bigoplus_j k_{[a_j, b_j[}[n_j] \dispdot $$
 Then there is a unique  $j_-$ such that $b_{j_-}=+\infty$ and $n_{j_-}$ is minimal and then $a_{j_-}=c_-(L_0,L_1)$ and a unique $j_+$ such that $b_{j_+}=+\infty$ and $n_{j_+}$ is maximal  and then $a_{j_+}=c_+(L_0,L_1)$. Moreover $n_{j_+}-n_{j_-}=n=\dim (L_0)=\dim (L_1)$.
 In particular if $\Omega $ is a connected open set with smooth boundary, we have 
 $$H^{k}(\Omega \times {\mathbb R} , R\hHom^\cstar(\cF_1, \cF_2))= H^{k-n_{j-}}(\Omega)$$  
 and
 \begin{gather*} H^{k}_c(\Omega \times {\mathbb R}; R\hHom^\cstar(\cF_1, \cF_2))=\\ H^{k}_c(\Omega \times {\mathbb R}, \partial \Omega \times {\mathbb R}; R\hHom^\cstar(\cF_1, \cF_2))= H^{k-n_{j-}}(\Omega)\dispdot 
 \end{gather*}  
 \end{prop}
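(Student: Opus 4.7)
The plan is to combine the sheaf-theoretic computation of Floer cohomology from Theorem~\ref{Thm-Quantization}\,(\ref{TQ3}) with the bar decomposition of Proposition~\ref{Prop-Decomposition-2}. First I would apply that decomposition to $\mathcal{G}:=(Rt)_*R\hHom^{\cstar}(\cF_1,\cF_2)\in D^b_c(({\mathbb R},\leq))$, so that $\mathcal{G}\simeq\bigoplus_j k_{[a_j,b_j[}[n_j]$. Since the cohomology $H^*(N\times[a,b[,R\hHom^{\cstar}(\cF_1,\cF_2))$ is computed by pushing down along the projection $t:N\times{\mathbb R}\to{\mathbb R}$, Theorem~\ref{Thm-Quantization}\,(\ref{TQ3}) gives the identification
$$FH^\bullet(L_0,L_1;a,b)\simeq H^*([a,b[,\mathcal{G})\simeq \bigoplus_{j}H^*([a,b[,k_{[a_j,b_j[}[n_j]).$$
For $a=-\infty,\,b=+\infty$ the right-hand side picks out precisely the bars with $b_j=+\infty$, each contributing a copy of $k$ in degree $n_j$, which must therefore reproduce $FH^*(L_0,L_1)\simeq H^*(L_0)$ via Floer's theorem. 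Because $L_0$ is a connected closed manifold of dimension $n$, $H^0(L_0)$ and $H^n(L_0)$ are one-dimensional, so there is a unique infinite bar at the minimal degree $n_{j_-}$ and a unique one at the maximal degree $n_{j_+}$, with $n_{j_+}-n_{j_-}=n$.

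Next I would identify the endpoints. By Definition~\ref{Def-4.4}, Proposition~\ref{Prop-4.5} and the form of the spectral numbers, $c_-(L_0,L_1)=c(1_N,R\hHom^{\cstar}(\cF_1,\cF_2))$ is the supremum of those $t$ for which the unit class lies in the image of the truncated cohomology $H^*(N\times[t,+\infty[,R\hHom^{\cstar}(\cF_1,\cF_2))$. Under the bar decomposition, for a single bar $k_{[a_j,+\infty[}[n_j]$ the map $H^*([t,+\infty[,\cdot)\to H^*({\mathbb R},\cdot)$ is an isomorphism iff $t\leq a_j$, and vanishes otherwise. Hence the spectral value of the degree-$n_{j_-}$ generator (the image of $1_N$) is exactly $a_{j_-}$, and similarly the spectral value of the degree-$n_{j_+}$ generator (the image of $\mu_N$) is $a_{j_+}$. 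This yields $c_-(L_0,L_1)=a_{j_-}$ and $c_+(L_0,L_1)=a_{j_+}$.

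For the last assertion concerning a connected open $\Omega\subset N$ with smooth boundary, I would push down along $t$ restricted to $\Omega\times{\mathbb R}$ and apply the Leray formula
$$H^k(\Omega\times{\mathbb R},R\hHom^{\cstar}(\cF_1,\cF_2))\simeq H^k(\Omega,Rt_{\Omega*}R\hHom^{\cstar}(\cF_1,\cF_2)|_{\Omega\times{\mathbb R}}).$$
The key observation is that near $t=+\infty$ both $\cF_{L_i}$ are isomorphic to $k_N$ by Theorem~\ref{Thm-Quantization}\,(\ref{TQ2}), so $R\hHom^{\cstar}(\cF_1,\cF_2)$ is locally constant in the $N$-direction once $t$ is bigger than all finite bar endpoints. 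Thus the restriction of the pushforward to $\Omega$ is the constant sheaf associated with the minimal-degree infinite bar $k_{\Omega}[-n_{j_-}]$, giving both cohomological identifications; the compactly supported version is then a consequence of the excision isomorphism $H^k_c\simeq H^k_c(\cdot,\partial\Omega\times{\mathbb R})$ on the tube $\Omega\times{\mathbb R}$.

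The main obstacle will be bookkeeping the grading conventions consistently: the isomorphism $FH^*(L_0,L_1)\simeq H^*(L_0)$ in Floer theory depends on the chosen grading $\widetilde G_L$, and one has to check that the absolute shift it introduces is precisely $n_{j_-}$ (so that $n_{j_+}=n_{j_-}+n$). A second subtlety lies in justifying the last ``in particular'' step, since localizing from $N\times{\mathbb R}$ to $\Omega\times{\mathbb R}$ requires controlling the microsupport of $R\hHom^{\cstar}(\cF_1,\cF_2)$ above $\partial\Omega$; here one uses that away from the (compact) projection of $\widehat{L_0}\cup\widehat{L_1}$ to $N$, the sheaf is locally constant in $N$, so the excision and Künneth-type reductions are legitimate.
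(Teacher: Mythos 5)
Your argument is correct and follows essentially the same route as the paper: the bar decomposition of Proposition \ref{Prop-Decomposition-2} together with Floer's theorem pins down the infinite bars (one in the minimal and one in the maximal degree, differing by $n$), the definition of the spectral invariants identifies $a_{j_\pm}$ with $c_\pm$, and the final localized computation rests on the microsupport condition $SS(R\hHom^\cstar(\cF_1,\cF_2))\subset\{\tau\geq 0\}$ forcing the cohomology over $\Omega\times{\mathbb R}$ to agree with its value at $+\infty$, where the sheaf is constant up to the shift $n_{j_-}$. The paper packages this last step as a direct propagation isomorphism $H^{k}(\Omega\times{\mathbb R},\cdot)\simeq H^{k}(\Omega\times\{+\infty\},\cdot)$ rather than via a pushforward to $\Omega$ (note your map should be the projection to $\Omega$, not $t$), but the content is the same.
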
 
 \begin{rem} 
 The actual values of $n_j^+,n_j^-$ depend on the grading of $L_1,L_2$: shifting the grading shifts the values by the same quantity. Note that $\cF=R\hHom^\cstar(\cF_1, \cF_2)$ satisfies 
 $SS(\cF)\subset \{\tau \geq 0\}$ and $\cF$ equals $k_X[n_{j^-}]$ over $X \times \{+\infty\}$ (we will shorten this by writing $\cF=k_X[n_{j^-}]$ at $+\infty$). Note that we sometimes assume $\cF=k_X$, which is equivalent to normalizing $n_{j^-}=0$.
 \end{rem} 
 \begin{proof} 
 Only the last statement needs a proof. But by assumption  $$SS(R\hHom^\cstar(\cF_1, \cF_2)) \subset \{ \tau \geq 0\}$$ so 
 $$H^{k}(\Omega \times {\mathbb R} , R\hHom^\cstar(\cF_1, \cF_2))= H^{k}(\Omega \times \{+\infty\} , R\hHom^\cstar(\cF_1, \cF_2))= H^{k-n_{j_-}}(\Omega)  \dispdot $$
  \end{proof} 

 \section{Various spaces, metrics and completions}
  
We remind the reader that according to \cite{Fukaya-Seidel-Smith, Kragh-Abouzaid}, $\mathbf {FD}(T^*N,d\lambda)$ has a single object.  
We shall use the following. 

 \begin{defn} \label{Def-5.1}
 We set for $A\in \mathbf {FD}(M,d\lambda)$ \begin{enumerate} 
 \item \label{Def-5.1-a} $\mathcal L_A(M,d\lambda)$ to be the set of triples $\widetilde L =(L,f_L, \widetilde G_{L})$ of graded exact closed Lagrangians branes $L\in A$ where  $f_L$ is a primitive of $\lambda_{\mid L}$, i.e. $df_L=\lambda_L$. 
 \item  \label{Def-5.1-b}  The action of ${\mathbb R}$ on $\mathcal L_A(M, d\lambda)$  given by $(L, f_L,\widetilde G_{L}) \longrightarrow (L, f_L+c,\widetilde G_{L})$ is denoted $T_c$. The action of the generator of the deck transformation $\widetilde \Lambda (M, d\lambda) \longrightarrow  \widetilde \Lambda (M, d\lambda)$ is denoted by $\widetilde L \longrightarrow \widetilde L[1]$. 
 \item  \label{Def-5.1-c}  For $\widetilde L_1,\widetilde L_2\in \mathcal L(T^*N)$, and $\alpha \in H^*(N)$, we denote the spectral invariants obtained  using Floer cohomology (see \cite{Hum-Lec-Sey3}) by $c(\alpha, \widetilde L_1, \widetilde L_2)$. They   are usually shortened in $c(\alpha; L_1,L_2)$ if $f_{L_1}, f_{L_2}$ are implicit. 
 \item  \label{Def-5.1-d}  On  $\mathcal L(M,d\lambda)$ we define $c_+(\bullet, \bullet) =c(\mu;\bullet, \bullet)$,  $c_-(\bullet, \bullet)=c(1;\bullet, \bullet)$  and $\gamma (\bullet, \bullet)=c_+(\bullet,\bullet)-c_-(\bullet, \bullet)$.  The metric $c$ given by
 $$c(\widetilde L_1, \widetilde L_2)= \max\{ c_+(\widetilde L_1, \widetilde L_2), 0 \} - \min \{c_-(\widetilde L_1, \widetilde L_2),0\} \dispdot  $$
 \item  \label{Def-5.1-e}  We denote by $\LL (M,d \lambda)$ the set of exact  closed Lagrangians. There is a forgetful functor ${\rm unf}: \mathcal L(M,d\lambda) \longrightarrow \LL (M,d\lambda)$ obtained by forgetting $f_L$ and the grading. The quantity $\gamma$ descends to a metric on $\LL (M, d\lambda)$, still denoted $\gamma$.
 \item We denote by $\gamma-\lim_k (L_k)$ and $c-\lim_k (\widetilde L_k)$ the  $\gamma$-limits and $c$-limits of the sequences $(L_k)_{k\geq 1}$ and $(\widetilde L_k)_{k\geq 1}$. 
 \end{enumerate} 
 \end{defn} 
 \begin{rem} 
 We let the reader check that $c(\widetilde L_1, \widetilde L_2)$ indeed defines a metric. The definition of $c$ ensures that it is      always non-negative and satisfies the triangle inequality. When $c_-(\widetilde L_1,\widetilde L_2)\leq 0\leq c_+(\widetilde L_1,\widetilde L_2)$, $ \gamma(\widetilde L_1, \widetilde L_2) = c(\widetilde L_1, \widetilde L_2)$. 
 \end{rem} 
 As we mentioned, Proposition \ref{Prop-4.5} implies that for complexes of sheaves of the form $\cF_L$, the spectral invariants defined by Definition \ref{Def-4.4} coincide with those defined using Floer cohomology. 
 It will be useful to define some modified metrics and completions.
 
 In the sequel we fix a class $A \in \mathbf{FD}(M,d\lambda)$ and write $\LL (M,d \lambda)$ and $\mathcal L(M,d\lambda)$ instead of $\mathfrak L_A(M,d\lambda)$ and $\mathcal L_A(M,d\lambda)$.
 
 We may now state 
 \begin{prop}\label{Prop-5.2} 
 The following holds
 \begin{enumerate} 
 \item Set $\HH_c(M,\omega)$ to be the metric space of compact supported Hamiltonians on $[0,1]\times M$ and $\DHam_{c}(M,\omega)$ the space of compact supported Hamiltonian diffeomorphisms. Then the compact supported  isotopy $(\varphi_H^t)_{t\in [0,1]}$  uniquely defines the Hamiltonian, so there is a  fibration $\HH_c(M,\omega) \longrightarrow \DHam_c(M,\omega)$ (which is essentially the path-space fibration). It factors through $\mathscr{DHam}_{c}(M, \omega)$, so we have in fact 
 $$\HH_c(M,\omega) \longrightarrow \mathscr{DHam}_{c}(M, \omega) \longrightarrow \DHam_c(M,\omega) \dispdot $$
 \item There is an action of $\HH_c(M,d\lambda)$ on $\mathcal L(M,d\lambda)$ given by 
 $$\varphi_H(L,f_L)=(\varphi_H(L), H\#f_L)$$ where 
 $$(H\#f_L)(z)=f_L(z)+\int_0^1 [p(t)\dot q(t)-H(t,q(t),p(t))]dt$$ where $q(t),p(t))=\varphi_H^t(z)$ factors through $\mathscr{DHam}_{c}(M, \omega)$. 
 The action does is the obvious action on the grading (and of course commutes with the shift $L \mapsto L[k]$). 
This action descends to the canonical action of $\DHam_c(M,d\lambda)$ on $\LL (M,d\lambda)$. 
 \item The above action commutes with $T_c$. We have $$c_\pm (T_c\widetilde L_1, \widetilde L_2)= c_\pm (\widetilde L_1, T_{-c}\widetilde L_2)= c_\pm (\widetilde L_1, \widetilde L_2)+c$$
 \item The relationship between the metric $c$
on $\mathcal L(M,d\lambda)$
 (which remembers the primitive) and the metric
$\gamma$ on $\LL (M,d\lambda)$
 (which forgets the primitive) is given by
 $$\gamma (L_1,L_2)= \inf \left\{c(\widetilde L_1, \widetilde L_2) \mid unf(\widetilde L_1)=L_1, unf(\widetilde L_2)=L_2 \right\}$$
 \item For $H\leq K$ we have $c_+(\varphi_H(L_1), L_2) \leq c_+(\varphi_K(L_1), L_2)$ and the same holds for $c_-$. 
 \end{enumerate} 
\end{prop}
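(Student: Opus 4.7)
My plan is to address the five items in order, since they build on each other.

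For (1), the uniqueness of the generating Hamiltonian is classical: given the isotopy $(\varphi_H^t)$ one recovers $X_t = \partial_t \varphi_H^t \circ (\varphi_H^t)^{-1}$, and $H_t$ is determined up to an additive function of $t$ by $\iota_{X_t}\omega = dH_t$, with compact support fixing the constants. The factorization $\HH_c \to \mathscr{DHam}_c \to \DHam_c$ records that the isotopy remembers both the time-one symplectomorphism and the extra brane data on its graph in $\overline M \times M$, while the bare diffeomorphism gives only the former; the first arrow attaches the brane structure on $\mathrm{graph}(\varphi_H^1)$ coming from the action functional, the second forgets it.

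For (2), the core computation is $d(H\#f_L) = \lambda|_{\varphi_H^1(L)}$. Using the classical identity $(\varphi_H^t)^*\lambda - \lambda = d \int_0^t (p\dot q - H)\circ \varphi_H^s\,ds$, the pullback of $H\#f_L$ by $\varphi_H^1$ restricted to $L$ is $f_L + \int_0^1(p\dot q - H)\circ \varphi_H^s\,ds$, whose differential equals $\lambda_L + ((\varphi_H^1)^*\lambda - \lambda)|_L = (\varphi_H^1)^*\lambda|_L$. For the factoring through $\mathscr{DHam}_c$, I would interpret $\varphi_H(L,f_L)$ as the composition of the Lagrangian correspondence $(\mathrm{graph}(\varphi_H^1), \text{action primitive})$ with the brane $(L,f_L)$: two Hamiltonians producing the same correspondence brane therefore induce the same image brane. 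Descent to the action of $\DHam_c$ on $\LL$ is automatic since the forgetful functor ignores $f_L$ and the grading.

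Parts (3) and (4) are then short formal consequences. For (3), by Proposition \ref{Prop-4.5} the spectral invariants come from filtered Floer cohomology whose action functional shifts by $+c$ (with the sign convention of Definition \ref{Def-5.1}) when $f_{L_1}$ is replaced by $f_{L_1}+c$, so $c_\pm$ shifts by $c$; the second equality follows from the symmetry of the filtration between the two primitives. For (4), since any two liftings of $L_1$ differ by a $T_a$,
\[
\inf_a c(T_a\widetilde L_1,\widetilde L_2) = \inf_a\bigl[\max(c_+ + a,0) - \min(c_- + a,0)\bigr],
\]
the infimum is attained exactly for $a\in[-c_+,-c_-]$ and equals $c_+ - c_- = \gamma(\widetilde L_1,\widetilde L_2)$; a symmetric argument shows the value is independent of the lifting of $L_2$, so it descends to $\LL$.

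For (5), pointwise $H\leq K$ produces a monotone homotopy $H_s$ with $\partial_s H_s \geq 0$ from $H$ to $K$; the associated Floer continuation map between $FC(\varphi_H(L_1), L_2)$ and $FC(\varphi_K(L_1), L_2)$ respects the filtration with a shift whose sign delivers the claimed inequality. A sheaf-theoretic alternative is to interleave the quantizations $\cF_{\varphi_H(L_1)}$ and $\cF_{\varphi_K(L_1)}$ as in Section \ref{Section-6} and read off the shift on the persistence module. The main obstacle is really inside (2): one must identify the action integral in $H\#f_L$ precisely with the primitive of $\lambda\ominus\lambda$ on $\mathrm{graph}(\varphi_H^1)$ recorded by the brane, with signs, normalizations, and gradings all compatible with the composition-of-correspondences formula. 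The remaining items are either classical or short formal consequences of results already stated in the excerpt.
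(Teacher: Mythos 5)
Your proposal is correct, and for the parts the paper actually proves it mostly coincides with the paper's argument. Items (1)--(3) are simply ``left to the reader'' in the paper, so your filled-in details (uniqueness of the generating Hamiltonian, the computation $d(H\#f_L)=\lambda|_{\varphi_H^1(L)}$ via $(\varphi_H^t)^*\lambda-\lambda=d\int_0^t(p\dot q-H)\,ds$, and the shift of the filtration under $T_c$) are exactly the expected standard arguments. For item (4) your argument is the same as the paper's: both reduce to minimizing $a\mapsto \max(c_++a,0)-\min(c_-+a,0)$ over the single real parameter by which the two liftings can be adjusted, and both observe that the minimum $c_+-c_-$ is attained precisely when $c_-+a\le 0\le c_++a$. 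The only genuine divergence is item (5): you invoke a monotone homotopy $H_s$ from $H$ to $K$ and the filtration-shift of the associated Floer continuation map, whereas the paper reduces the statement to the monotonicity of $t\mapsto c(\alpha,\varphi_H^t(\widetilde L_1),\widetilde L_2)$ for $H\ge 0$ and then quotes the first-variation formula $\frac{d}{dt}c(\alpha,\varphi_H^t(\widetilde L_1),\widetilde L_2)=H(t,z)$ for some $z\in\varphi_H^t(L_1)\cap L_2$ (from \cite{Viterbo-STAGGF}). Both routes are standard and correct; the continuation-map argument is more robust (it needs no almost-everywhere differentiability of the spectral invariant and adapts directly to the sheaf-theoretic setting you mention), while the derivative formula gives the slightly sharper quantitative information that the paper's reference already contains. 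No gaps.
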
 
 \begin{proof} 
 Proofs of the first three  statements  are left to the reader. For the fourth, we just notice that we may change $f_{L_1}, f_{L_2}$ to $f_{L_1}+c_1, f_{L_2}+c_2$ for any two constants $c_1,c_2$. Then $c_\pm(\widetilde L_1, \widetilde L_2)$ is changed to $c_\pm(\widetilde L_1, \widetilde L_2)+c_1-c_2$, and  
 $c(\widetilde L_1, \widetilde L_2)$ is changed to $$\max\{c_+(\widetilde L_1, \widetilde L_2)+c_1-c_2,0\} - \min \{ c_-(\widetilde L_1, \widetilde L_2)+c_1-c_2, 0\} $$
 and this is minimal when $c_-(\widetilde L_1, \widetilde L_2)<c_2-c_1<c_+(\widetilde L_1, \widetilde L_2)$ and takes the value
 $$c_+(\widetilde L_1, \widetilde L_2)-c_-(\widetilde L_1, \widetilde L_2)=\gamma (L_1,L_2)$$
 The last statement is equivalent to $c(\alpha, \varphi_H^t(\widetilde L_1), \widetilde L_2)$ is increasing for $H\geq 0$. But this follows from the formula (see \cite{Viterbo-STAGGF}, prop 4.6 and Lemma 4.7) for almost  all $t\in [0,1]$ we have 
 $$ \frac{d}{dt}c(\alpha, \varphi_H^t(\widetilde L_1), \widetilde L_2)=H(t,z)$$ for some  point $z$ of $\varphi_H^t(L_1)\cap L_2$. 
 \end{proof} 
  We may now set
 \begin{defn}\label{Defn-5.4} \leavevmode %
  \index{$\widehat \LL_c (M, \omega)$} We define the Humili{\`e}re completions as
 \begin{enumerate} 
 \item \label{Defn-5.4-i} $\widehat {\mathcal L}(M,d\lambda)$ is the $c$-completion of $\mathcal L (M,d\lambda)$ and $\widehat \LL (M, \omega)$  the $\gamma$-completion of $\LL (M, \omega)$. 
 \item \label{Defn-5.4-ii} $\widehat{\mathscr{DHam}}(M, \omega)$   is the $c$-completion of $\mathscr{DHam}_{c} (M, \omega)$ and
 $\widehat \DHam (M, \omega)$ is the $\gamma$-completion of $\DHam_{c} (M, \omega)$
 \end{enumerate} 
 \end{defn}
 \begin{rems} \leavevmode
 \begin{enumerate} 
\item Elements in  $\widehat {\mathcal L}(M,d\lambda)$ or $\widehat \DHam (M, \omega)$ are not necessarily compact supported : they could be limits of sequences with larger and larger support. 
\item According to the continuity of the spectral distance in terms of the $C^0$-distance, proved in \cite{BHS-C0}, an element in the group of Hamiltonian homeomorphisms, that is a $C^0$-limit of elements of $\DHam_{c} (M, \omega)$, belongs to $\widehat {\DHam}(M, \omega)$. Moreover an element in $\widehat \LL (T^*N)$ (or $\widehat {\DHam}(M, \omega)$) has a barcode, as follows from the Kislev-Shelukhin theorem (see \cite{Ki-Sh} and the Appendix in \cite{Viterbo-inverse-reduction}) and was pointed out in \cite{BHS-C0}. 
\end{enumerate} 
 \end{rems} 

The relationship between the two completions $\widehat {\mathcal L}(M, d\lambda)$ and $\widehat\LL (M, d\lambda)$ is clarified by the following: 
 \begin{prop}\label{Prop-5.6} \leavevmode
 \begin{enumerate} 
 \item \label{Prop-5.6a} There is a   continuous map   $$\widehat{\rm unf}: \widehat{\mathcal L} (M, d\lambda) \longrightarrow \widehat\LL (M, d\lambda)$$ extending  $\rm {unf}$ 
 and a continuous map $$\widehat T_c : \widehat {\mathcal L} (M, d\lambda) \longrightarrow \widehat {\mathcal L} (M, d\lambda)$$ extending $T_c$. 
 \item \label{Prop-5.6b} If $ L\in \widehat\LL (M, d\lambda)$, there exists $\widetilde L \in  \widehat {\mathcal L} (M, d\lambda)$ such that $\widehat{\rm unf}(\widetilde L)=L$ (i.e  $\widehat{\rm unf}$ is onto !) 
 \item \label{Prop-5.6c}  $\widehat{\rm unf}(\widetilde L_1)=\widehat{\rm unf}(\widetilde L_2)$ if and only if $\widetilde L_2=\widehat T_c \widetilde L_1$ for some $c\in {\mathbb R} $. 
 \end{enumerate} \end{prop} 
 \begin{proof} 
 \begin{enumerate} 
 \item This is obvious since Lipschitz maps extend to the completion. Now $\rm unf$ is $1$-Lispchitz, while $T_c$ is an isometry. 
 \item Let $(L_j)$ be a Cauchy sequence in $\LL(M, d\lambda)$. We may assume that $\gamma (L_j,L_{j+1})<2^{-j}$. There is a lift $\widetilde L_j$ of $L_j$, well determined up to a constant, and we can recursively adjust $\widetilde L_{j+1}$ so that $c(L_j,L_{j+1})< 2^{-j}$. Indeed, assume the $\widetilde L_j$ are defined for $1\leq j \leq k$. Since if $\widetilde L'_{k+1}$ is in the preimage of $L_{k+1}$ we have $\gamma (L_k, L_{k+1})= \inf_{c\in {\mathbb R} } c(\widetilde L_k, \widetilde L'_{k+1}+c)$ we just choose $\widetilde L_{k+1}=\widetilde L'_{k+1}+c_{k+1}$, so that $c(\widetilde L_k, \widetilde L_{k+1})<2^{-k}$. As a result $(\widetilde L_j)_{j\in \mathbb N}$ is a Cauchy sequence so defines an element in $\widehat {\mathcal L} (T^*N)$. Moreover its limit $\widetilde L$ projects on $L$.  
\item  Set $L_j = \widehat{\rm unf}(\widetilde L_j)$ with $L_2=L_1$. Let $(\widetilde L_1^k)_{k\geq 1}, (\widetilde L_2^k)_{k\geq 1}$ be Cauchy sequences in $\mathcal L (T^*N)$, converging to $\widetilde L_1, \widetilde L_2$, so that the $L_j^k$ converge to $L_j$. We showed that $\gamma (L_1^k,L_2^k)= \inf_{c\in {\mathbb R} } c(\widetilde L_1^k, \widetilde L_2^k+c)$ and since by assumption, $\gamma(L_1^k,L_2^k)$ goes to zero as $k$ goes to infinity, we have a sequence $c_k$ such that 
 $\lim_k c(\widetilde L_1^k, \widetilde L_2^k+c_k)=0$. But  $c_\pm(\widetilde L_1^k, \widetilde L_2^k+c_k)=c_\pm(\widetilde L_1^k, \widetilde L_2^k)+c_k$, and since 
 $\lim_k c_\pm(\widetilde L_1^k, \widetilde L_2^k)=c_\pm(\widetilde L_1,\widetilde L_2)$, we must have $\lim_k c_k=-c_\pm(\widetilde L_1,\widetilde L_2)$. As a result $$\widetilde L_1=\widetilde L_2-c(\widetilde L_1,\widetilde L_2)$$
 \end{enumerate} 
 \end{proof} 
 \begin{rem} 
 Notice that to a pair $\widetilde L_1,\widetilde L_2$ in $\widehat{\mathcal L}(M,\omega)$  we may associate a Floer homology as a filtered vector space. Indeed, by the Kislev-Shelukhin inequality (see \cite{Ki-Sh}, or the Appendix of \cite{Viterbo-inverse-reduction}), the bottleneck distance between the persistence module, denoted $V(L_1^k,L_2^k)$ associated to $FH^*(L^k_1, L^k_2 ; a,b)$ satisfies 
 $$\beta (V(L_1^k,L_2^k), V(L_1^l,L_2^l) \leq  \gamma (L_1^k, L_1^l)+\gamma (L_2^k, L_2^l)$$ where $\beta$ is the bottleneck distance (depending on the exact definition of $\beta$, there could be an extra factor $2$  on the right hand-side, which is however irrelevant for our purposes) so we get a Cauchy sequence of
 persistence modules, and this has a limit as well (however if we started with  persistence modules with finite barcodes, the limit will only have finitely many bars of size $>  \varepsilon >0$, but possibly an infinite number in total).
 \end{rem}

 \section{Defining the \texorpdfstring{$\gamma$}{gamma}-support}\label{Section-Defining-gamma-support}
 An element $L$ in $\widehat {\LL}(M, \omega)$ is not a subset of $M$ (and an element in $\widehat{\DHam} (M, \omega)$ does not define a map), but we may define its {\bf $\gamma$-support} as follows:

\begin{defn}[The $\gamma$-support]\label{Def-support}\index{$\gamma$-support}\leavevmode
\begin{enumerate} 
\item
Let $L\in \widehat {\LL}(M, \omega)$. Then $x\in \gammasupp (L)$ if for any neighbourhood $U$ of $x$, there exists a Hamiltonian map $\varphi$ supported in $U$ such that $\gamma (\varphi(L),L)>0$.    
\item 
Let $\widetilde L$ is in $\widehat{\mathcal L} (M, d\lambda)$. Then  $x\in c-\supp(\widetilde L)$ if  for any neighbourhood $U$ of $x$ there exists a Hamiltonian $H$ supported in $U$ such that $c_+ (\varphi_H(\widetilde L),\widetilde L)>0$. 
\end{enumerate} 
\end{defn} 
\begin{rems} 
\begin{enumerate} 
\item Observe that if $c(\varphi_H(\widetilde L),\widetilde L)>0$, then either $ c_+(\varphi_H(\widetilde L),\widetilde L)>0$ or 
$c_-(\varphi_H(\widetilde L),\widetilde L)<0$ and then 
$$c_+(\varphi_H^{-1}(\widetilde L),\widetilde L,)=c_+(\widetilde L,\varphi_H(\widetilde L))= -c_-(\varphi_H(\widetilde L),\widetilde L) >0$$ Thus our definition is unchanged if we replace $c_+$ by $c$.
\item The definition automatically implies that $\gammasupp(L)$ and $c-\supp(\widetilde L)$ are closed.
 \item  Notice that for $\widehat{\rm unf}(\widetilde L)=L$ we may have $c(\varphi_H(\widetilde L),\widetilde L)>0$ but $\gamma (\varphi_H(L),L)=0$. For example if $H\equiv c$ on $L$(but $H$  is compact supported)  then $c_+(\varphi_H(\widetilde L), \widetilde L)=c, c_-(\varphi_H(\widetilde L), \widetilde L)=c$ so that $c(\varphi_H(\widetilde L), \widetilde L) =2c$, while of course $\gamma (\varphi_H(L),L)=0$.  Note that in this case $\varphi_H(\widetilde L)=\widetilde L+c$ hence $c_+(\varphi_H^k(\widetilde L),\widetilde L)=k\cdot c$. 
\end{enumerate} 
 \end{rems} 
We can however  get rid of the second definition
\begin{prop} \label{Prop-6.3}
We have for  $\widehat {\rm unf}(\widetilde L)=L$,  $$c-\supp(\widetilde L)=\gammasupp (L)$$
\end{prop}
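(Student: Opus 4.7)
The plan is to prove the equality of sets by establishing both inclusions, with the hard work concentrated in the reverse inclusion, which I would tackle by contrapositive.

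For the inclusion $\gammasupp(L)\subseteq c\text{-}\supp(\widetilde L)$, I would start from the formula of Proposition~\ref{Prop-5.2}~(4),
\[
\gamma(\varphi_H L, L) = c_+(\varphi_H\widetilde L,\widetilde L) - c_-(\varphi_H\widetilde L,\widetilde L).
\]
If this is positive then either $c_+(\varphi_H\widetilde L,\widetilde L)>0$ (and $H$ itself witnesses $x\in c\text{-}\supp(\widetilde L)$), or $c_-(\varphi_H\widetilde L,\widetilde L)<0$. In the second case Poincar\'e duality $c_-(\varphi_H\widetilde L,\widetilde L)=-c_+(\varphi_H^{-1}\widetilde L,\widetilde L)$, combined with the observation that the reversed Hamiltonian $\bar H(t,x)=-H(1-t,x)$ generating $\varphi_H^{-1}$ is supported in $U$ as well, still produces a witness.

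For the converse $c\text{-}\supp(\widetilde L)\subseteq \gammasupp(L)$ I would argue by contrapositive. Suppose $x\notin \gammasupp(L)$, and pick a connected open neighborhood $U$ of $x$ such that $\gamma(\varphi L,L)=0$ for every $\varphi$ with Hamiltonian supported in $U$. Since $\gamma$ is a genuine metric on $\widehat{\LL}(M,d\lambda)$ this forces $\varphi L=L$ for all such $\varphi$, and Proposition~\ref{Prop-2.4}~(\ref{Prop-2.4c}) then provides, for any lift $\varphi_H$, a real number $a(\varphi_H)$ with $\varphi_H\widetilde L=\widehat T_{a(\varphi_H)}\widetilde L$. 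Proposition~\ref{Prop-5.2}~(3) identifies $c_\pm(\varphi_H\widetilde L,\widetilde L)=a(\varphi_H)$, and the commutation of $\widehat T_c$ with Hamiltonian flows makes $a$ a real-valued group homomorphism on $\mathscr{DHam}_c(U)$. The remaining task is to show that $a(\varphi_H)\le 0$ for every $H$ supported in a possibly smaller neighborhood $V\subseteq U$.

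To achieve this I would shrink $V$ inside a Darboux ball and approximate $\widetilde L$ by smooth branes $\widetilde L_k\in\mathcal L(M,d\lambda)$ with $\widetilde L_k\to\widetilde L$ in the $c$-metric, using the continuity $c_\pm(\varphi_H\widetilde L_k,\widetilde L_k)\to c_\pm(\varphi_H\widetilde L,\widetilde L)$. For a smooth connected exact Lagrangian $L_k$ not entirely contained in the bounded set $V$, any compactly supported $H$ in $V$ whose flow preserves $L_k$ set-theoretically must be locally constant on $L_k$, and by compact support identically zero on $L_k$; a direct computation of the primitive along the trajectory in $L_k$, using $\lambda|_{L_k}=df_{L_k}$, then gives that $\varphi_H\widetilde L_k=\widetilde L_k$ and hence $a_k(H)=0$ in the smooth approximation. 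Passing to the limit yields $a(H)=0$, and therefore $c_+(\varphi_H\widetilde L,\widetilde L)=0$ for all $H$ supported in $V$, showing $x\notin c\text{-}\supp(\widetilde L)$.

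The main obstacle is precisely this last step: the Remark following Definition~\ref{Def-support} exhibits a compactly supported Hamiltonian $H\equiv c$ on a smooth Lagrangian for which $\gamma(\varphi_H L,L)=0$ while $c_\pm(\varphi_H\widetilde L,\widetilde L)=c\neq 0$, so the obstruction to equating the two supports is genuine and is only circumvented by ensuring that the neighborhood $V$ is strictly smaller than $L$ in a sense meaningful inside the completion $\widehat{\mathcal L}(M,d\lambda)$. Making this smallness precise and combining it with the approximation argument is the delicate point of the proof.
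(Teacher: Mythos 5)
Your first inclusion is correct, and your reduction of the second inclusion to showing that the constant $a(\varphi_H)$ with $\varphi_H\widetilde L=\widehat T_{a(\varphi_H)}\widetilde L$ vanishes is exactly where the paper's proof also arrives. The gap is in how you kill this constant. Approximating $\widetilde L$ by smooth branes $\widetilde L_k$ and arguing that the flow of $H$ preserves $L_k$ set-theoretically does not work: $\gamma$-convergence of $L_k$ to $L$ gives no control whatsoever on the set $\varphi_H(L_k)$, so there is no reason for $\varphi_H$ to preserve any of the approximating Lagrangians, and the identities $\varphi_H\widetilde L_k=\widetilde L_k$ and $a_k(H)=0$ are unjustified. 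For finite $k$ the numbers $c_+(\varphi_H\widetilde L_k,\widetilde L_k)$ and $c_-(\varphi_H\widetilde L_k,\widetilde L_k)$ will in general differ, and only their limits coincide. Note also that the homomorphism property of $a$ cannot suffice by itself, since $\mathscr{DHam}_c(U)$ carries the nontrivial Calabi homomorphism; some genuinely symplectic input is needed, and you correctly flag that your argument does not supply it.

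The missing ingredient is displaceability of the support. You already observed that $a$ is a homomorphism, so $\varphi_H^k(\widetilde L)=\widetilde L+k\,a$ and hence $c_+(\varphi_H^k(\widetilde L),\widetilde L)=k\,a$ for every $k$. On the other hand the triangle inequality gives $c_+(\varphi_H^k(\widetilde L),\widetilde L)\leq c_+(\varphi_H^k)$, and since $\supp(H)$ is contained in a small ball, hence in a displaceable set, one has $\sup_k c_+(\varphi_H^k)\leq c(\supp(H))<+\infty$ (see \cite{Viterbo-STAGGF}, prop.~4.12). Letting $k\to\infty$ forces $a\leq 0$; applying the same bound to $\varphi_H^{-1}$, whose generating Hamiltonian is also supported in $U$, gives $-a\leq 0$, hence $a=0$ and $c_+(\varphi_H(\widetilde L),\widetilde L)=0$. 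This is the paper's argument; it closes the gap with no approximation by smooth Lagrangians and no shrinking of the neighbourhood beyond requiring it to be displaceable.
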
 
\begin{lem} \label{Lemma-6.4}
Let $U$ be an open set containing the support of the isotopy $(\varphi^t)_{t\in [0,1]}$ and $\psi$ be a Hamiltonian map displacing $L$ outside of  $U$ (i.e. $\psi(L)\cap U=\emptyset$): 
$$c(\mu; \varphi^1(\widetilde L), \widetilde L)\leq c(\mu, \psi(\widetilde L),\widetilde L)-c(1,\psi(\widetilde L),\widetilde L)$$
\end{lem}
\begin{proof} 
Since $\psi\varphi^t(L)\cap L=\psi (\varphi^t(L) \cap \psi^{-1}(L))$ does not depend on $t$, $c(\mu; \psi\varphi^t(\widetilde L),\widetilde L)$ is constant. The triangle inequality implies
\begin{gather*} c(\mu;\psi(\widetilde L), \widetilde L)=c(\mu;\psi\varphi^1(\widetilde L), \widetilde L)= c(\mu; \varphi^1(\widetilde L),\psi^{-1}\widetilde L) \geq \\  c(\mu;  \varphi^1(\widetilde L),\widetilde L) +c(1; \widetilde L, \psi^{-1}\widetilde L)
\\ 
\end{gather*}  hence
$$c(\mu;  \varphi^1(\widetilde L),\widetilde L) \leq c(\mu;\psi(\widetilde L), \widetilde L)-c(1; \psi(\widetilde L), \widetilde L)
$$
\end{proof}  
\begin{proof} Let us first prove the existence of $\psi$ when $U$ is a ball. Indeed, we may assume $L$ avoids the center of the ball, so that the conformal vector field given in local coordinates by  $X(x)=x$ (with flow inside the ball given by $x \mapsto e^t\cdot x$) extends to a conformal vector field on $M$. This problem is equivalent to extending a primitive $\alpha$ of $\omega$ on $U$ to a primitive of $\omega$ on $M$ coinciding with $\lambda$ near infinity. Since $\lambda-\alpha$ is closed hence exact on $U$, there is a function $f$ such that $df=\lambda-\alpha$. By truncating $f$ inside $U$, we get that $\alpha+ df$ coincides with $\alpha$ in $U$ and with $\lambda$ at infinity. Then the dual vector field $Y$, defined by $i_Y\omega=\alpha$ has a flow $\rho^t$ coinciding with  $x \mapsto e^t\cdot x$ inside $U$, so for $t$ large enough $\rho^t(L)\cap U=\emptyset$. By rescaling we may assume $t=1$. 
By Banyaga's ``extension of symplectic isotopies's theorem''  (\cite[theorem II.2.3]{Banyaga}),    $\rho^t$ induces on the exact Lagrangian $L$ a Hamiltonian isotopy, that we denote $\psi^t$. Note that $\psi=\psi^1$ only depends on $L$ and $U$, but not on $\varphi$. 

We, may now conclude the proof of Proposition \ref{Prop-6.3}. Assume by contradiction that  for some $\widetilde L$ the $c$-support and $\gamma$-support were different.  We then would have a point $z$ and a Hamiltonian $H$ supported in a small ball centered at $z$ such that $c(\varphi_H(\widetilde L),\widetilde L)>0$ while $L=\varphi_H(L)$. According to Proposition \ref{Prop-5.6} (\ref{Prop-5.6c}), this implies that $\varphi_H(\widetilde L)=\widetilde L+c$. This is impossible  for $H$ supported in the ball $U$, since we would  have $\varphi_H^k(\widetilde L) =\widetilde L +k\cdot c$. Possibly replacing $H$ by $-H$, we may assume $c>0$. Then according to Lemma \ref{Lemma-6.4}, we would have 
 $$k\cdot c=c(\mu; \varphi_H^k(\widetilde L),\widetilde L)\leq c(\mu;\varphi_H^k(\widetilde L), \widetilde L) \leq c(\mu;\psi(\widetilde L), \widetilde L)-c(1; \psi(\widetilde L), \widetilde L) < +\infty $$
a contradiction. 
\end{proof} 

We  now define compact supported Lagrangians and Hamiltonians in the completion
  \begin{defn} \label{Def-compact-support}
  \begin{enumerate} 
   \item We define $\widehat {\mathcal L}_{c}(M,\omega)$ (resp. $\widehat \LL_c (M,\omega)$) to be the set of $\widetilde L \in \widehat {\mathcal L}(M, \omega)$ (resp. $L\in \widehat \LL (M,\omega)$)  such that $\gammasupp(L)$ is bounded (hence compact). 
\item %
We define $\widehat \DHam_c (M, \omega)$ to be the set of $\varphi \in \widehat {\DHam} (M, \omega)$ 
such that the $\gamma$-support of  $\Gamma(\varphi)$ is contained in the union of $\Delta_{M}$ and a bounded subset of $M\times \overline M$.
  \end{enumerate} 
  \end{defn} 
  \begin{rem} 
  For an element in $\widehat {\DHam} (M, \omega)$, the $\gamma$-support is just the closure of the $\gamma$-support of  its graph  in $(M\times M, \omega\ominus \omega)$  with the diagonal removed. However there is a natural smaller support that one can define, using the action of $\DHam (M, \omega)$ by conjugation on 
   $\widehat{\DHam} (M, \omega)$. We say that $z$ is in the {\bf restricted $\gamma$-support} of $\varphi \in \widehat{\DHam} (M, \omega)$ if for all $ \varepsilon >0$ there exist $\rho \in \DHam_{c}(B(z, \varepsilon ))$ such that $\rho\varphi\rho^{-1}\neq \varphi$. It is easy to show that the restricted $\gamma$-support is contained in the
   $\gamma$-support. 
  \end{rem} 
  Clearly, by Proposition \ref{Prop-lim-support}, given a sequence $\widetilde L_{k}$ such that the $\gammasupp(L_{k})$ are contained in a fixed bounded set and $\gamma$-converge to $\widetilde L_{\infty}$,  $\widetilde L_{\infty}$ is in 
  $\widehat {\mathcal L}_{c}(M, d\lambda)$.  However the converse is not clear.
    \begin{Question}
  Is an element in  $\widehat {\mathcal L}_{c}(M, d\lambda)$ the limit of a sequence $(L_{k})_{k\geq 1}$ in  ${\mathcal L}(M, d\lambda)$ such that their support is uniformly bounded ?  Same question for $\widehat \DHam_c (M, \omega)$. 
  \end{Question}

We shall make repeated use of  the fragmentation lemma 
\begin{lem} [\cite{Banyaga} Lemma III.3.2] \label{Lemma-frag}
Let $(M, \omega)$ be a closed symplectic manifold and $(U_j)_{\in [1,N]}$ an open cover of $M$. Then any Hamiltonian isotopy $(\varphi^t)_{t\in [0,1]}$ can be written as a product of Hamiltonian isotopies $(\varphi_j^t)_{t\in [0,1]}$ with Hamiltonian supported in some $U_{k(j)}$. The same holds for compact supported Hamiltonian isotopies. 
\end{lem}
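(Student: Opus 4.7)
The plan is to follow the classical Banyaga fragmentation argument, which combines a refinement in time with a spatial decomposition via a partition of unity.

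\textbf{First step (temporal refinement).} By uniform continuity of the map $(t,x)\mapsto \varphi^{t}(x)$ on the compact set $[0,1]\times M$, I would subdivide $[0,1]$ into $0=t_0<t_1<\cdots<t_m=1$ so that each short piece
$$\psi_i^{s}\;=\;\varphi^{t_{i-1}+s(t_i-t_{i-1})}\circ(\varphi^{t_{i-1}})^{-1}$$
displaces every point by less than a Lebesgue number $\delta$ of the cover $\{U_j\}$, that is, $d(\psi_i^s(x),x)<\delta$ for all $s\in[0,1]$ and $x\in M$ (with respect to any auxiliary Riemannian metric). Since $\varphi^{1}=\psi_m^1\circ\cdots\circ\psi_1^1$ and each $\psi_i$ is itself a Hamiltonian isotopy, it suffices to fragment a single short Hamiltonian isotopy with this displacement property; by the Lebesgue number condition, the whole orbit of each point $x$ under $\psi_i^s$ is contained in some $U_{j(i,x)}$.

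\textbf{Second step (spatial decomposition).} For such a short isotopy $\psi^s$ generated by $K(s,x)$, refine $\{U_j\}$ to a cover $\{V_\ell\}_{\ell=1}^{N}$ chosen fine enough that the $\delta$-thickening of each $V_\ell$ still lies in some $U_{k(\ell)}$, and pick a partition of unity $\{\rho_\ell\}$ subordinate to $\{V_\ell\}$. The idea is to recursively peel off the contribution localized near $V_\ell$: set $G_1(s,x)=\rho_1(x)K(s,x)$, whose flow $\varphi_{G_1}^s$ is supported in $U_{k(1)}$, and then consider the residual isotopy $\eta_1^s=(\varphi_{G_1}^s)^{-1}\circ\psi^s$, which is again a short Hamiltonian isotopy generated by an explicit pullback of $K-G_1$. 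One now iterates the construction using $\rho_2,\ldots,\rho_N$ applied to the generator of $\eta_1^s$, and so on.

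\textbf{Main obstacle.} The delicate point is controlling the residuals $\eta_\ell^s$ during the induction — the naive identity $\psi^s=\varphi_{G_1}^s\circ\cdots\circ\varphi_{G_N}^s$ fails because Hamiltonian flows do not commute, so one must show that the residuals remain short enough to lie in the correct $U_{k(\ell)}$ and eventually become trivial. The standard way to close the argument is either (i) a geometric decay estimate of Baker--Campbell--Hausdorff type showing that each residual is of order $O(|K|^2)$, which, combined with a further temporal refinement, forces convergence in finitely many steps, or (ii) a direct inductive construction in which the $\ell$-th stage is designed so that the residual vanishes on the set where $\rho_1+\cdots+\rho_\ell\equiv 1$, terminating trivially at $\ell=N$. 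In either approach the $\delta$-smallness established in the first step is exactly what keeps all intermediate flows inside the prescribed $U_{k(\ell)}$'s. The compactly supported variant (used implicitly in the rest of the paper for non-compact $M$) follows by the same argument applied inside a fixed compact neighborhood of $\bigcup_t\supp(\varphi^t)$.
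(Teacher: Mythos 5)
The paper offers no proof of this lemma: it is quoted verbatim from Banyaga (Lemma III.3.2), with only a remark explaining how to pass to arbitrary covers and to the compactly supported case. Your sketch reproduces the standard argument from that reference — temporal subdivision against a Lebesgue number, then spatial peeling via a partition of unity — and your option (ii) is exactly how Banyaga closes it: taking the $\ell$-th partial flow to be generated by $(\rho_1+\cdots+\rho_\ell)K$ makes the telescoping increments Hamiltonian isotopies supported in thickenings of $\supp\rho_\ell$ (this is where the $\delta$-smallness enters) and forces the residual to be the identity at $\ell=N$, with no need for the BCH-type estimates of your option (i). So the proposal is correct and consistent with the proof the paper delegates to the literature.
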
 
\begin{rems} 
\begin{enumerate} 
\item The number of isotopies is not bounded by the number of open sets: we may have more than one isotopy for each open set. 
\item The lemma is stated in \cite{Banyaga} for compact manifolds, and for $U_j$ symplectic images of ball, but a covering can always be replaced by a finer one by balls. Moreover the proof works for compact supported isotopies with fixed support inside an open manifold and this is how it is stated in \cite{Banyaga-book}, p. 110. 
\item In the sequel, by support of an isotopy we mean the closure of the set $\{z \in M \mid \exists t, \varphi^t(z)\neq z\}$. If the complement of the support is connected and the isotopy is generated by a Hamiltonian $H(t,z)$, this is also the projection on $M$ of the  support of $H$  in $[0,1]\times M$. When the isotopy is implicit, we still write $\supp (\varphi)$ for the support of the (implicit) isotopy and $\widetilde \supp(\varphi)$ (or $\widetilde \supp (\varphi_H)$ or $\supp (H)$)  for the support of the implicit Hamiltonian. Note that in Banyaga's theorem we may assume the support of the Hamiltonians are in the $U_{k(j)}$ (since the complement of a small ball is always connected). 
\end{enumerate}
\end{rems} 

This implies
\begin{lem} \label{Lemma-6.9} We have the following properties :
\begin{enumerate} 
\item  \label{Lemma-6.9-a}  Let $L$ be an element in $\widehat{\LL}(M, \omega)$  and $\varphi\in \DHam_c(M,\omega)$ be such that $\gamma (\varphi(L),L)>0$. Then  $\supp (\varphi)\cap \gammasupp (L) \neq \emptyset$. 
\item  \label{Lemma-6.9-b}  Let $\widetilde L$ be an element in $\widehat{\mathcal L}(M, \omega)$  and $H$ be a compact supported Hamiltonian such that $c(\varphi_H(\widetilde L),\widetilde L)>0$. Then  $ \supp (H)\cap c-\supp (\widetilde L) = \supp (H)\cap \gammasupp (L) \neq \emptyset$. 
\end{enumerate} 
\end{lem} 
\begin{proof} 
\begin{enumerate} 
\item 
Indeed, if this was not the case, for each $x \in \supp (\varphi)$ there would be an open set $U_x$ such that for all isotopies $\psi^t$ supported in $U_x$ we have 
$\gamma (\psi^1(L),L)=0$. But by a compactness argument, we may find finitely many $x_j$ and $U_j=U_{x_j}$ ($1\leq j \leq k$) such that $\supp(\varphi)$ is covered
by the $U_j$. Then $\varphi^1$ is a product of $\psi_j$ supported in $U_{k(j)}$, but since $\gamma (\psi_j(L),L)=0$, we get by induction $$\gamma (\psi_1\circ \psi_2\circ... \circ \psi_{j-1}\circ \psi_j(L), L)=0 $$
and finally $\gamma (\varphi_H(L), L)=0$, a contradiction. 
 \item The second statement is analogous, since $\varphi_H$ is the product of $\varphi_{H_j}$ with $H_j$ supported in $U_{k(j)}$. If we had $c(\varphi_{H_j}(\widetilde L), \widetilde L)=0$ for all  $j$, then we would have $\varphi_H(\widetilde L)=\widetilde L$, a contradiction. Note that here we do not assume that the support of $H$ is small, so we may well have 
 $ \widetilde \supp (H) \neq \supp (\varphi_H) $.
 \end{enumerate} 
\end{proof} 

 \begin{prop} \label{Prop-6.7}
 Let $L_1\in \widehat{\LL} (M,d\lambda), L_2 \in \widehat{\LL}_c(M,d\lambda)$. Then $\gammasupp(L_1) \cap \gammasupp (L_2) \neq \emptyset $. In particular $\gammasupp(L)$ is not displaceable, and intersects any exact Lagrangian. If $L\in \widehat{\LL}_c(T^*,d\lambda)$  it also intersects any fiber $T_x^*N$. 
 \end{prop}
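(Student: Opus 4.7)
The proof is by contradiction, and the main technical ingredient is the following consequence of Lemma \ref{Lemma-3.6}(1) combined with the fragmentation Lemma \ref{Lemma-frag}: if $\varphi\in\DHam_c(M,\omega)$ has $\supp(\varphi)\cap\gammasupp(L)=\emptyset$ for some $L\in\widehat{\LL}(M,\omega)$, then $\varphi(L)=L$ in $\widehat{\LL}(M,\omega)$. Indeed, cover $\supp(\varphi)$ by finitely many open balls each disjoint from $\gammasupp(L)$, write $\varphi$ as a composition of Hamiltonian diffeomorphisms supported in those balls, and note that each such factor fixes $L$ by the very definition of the $\gamma$-support.

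Assume now that $\gammasupp(L_1)\cap\gammasupp(L_2)=\emptyset$. Since $\gammasupp(L_2)$ is compact (because $L_2\in\widehat{\LL}_c$) and $\gammasupp(L_1)$ is closed, choose an open neighbourhood $U$ of $\gammasupp(L_2)$ with $\overline{U}\cap\gammasupp(L_1)=\emptyset$. By the previous paragraph, every $\varphi\in\DHam_c(M,\omega)$ with support in $U$ acts as the identity on $L_1$. Using the definition of $\gammasupp(L_2)$ at any point $x\in\gammasupp(L_2)$, pick such a $\varphi=\varphi_H$ supported in a small ball contained in $U$ with $\gamma(\varphi L_2, L_2)>0$. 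Because $\varphi$ fixes $L_1$, the Hamiltonian invariance of $\gamma$ gives, for every $n\in\mathbb{Z}$,
$$ \gamma(L_1,\varphi^n L_2)=\gamma(\varphi^{-n}L_1,L_2)=\gamma(L_1,L_2), $$
and the triangle inequality then yields the uniform bound
$$ \gamma(\varphi^n L_2, L_2)\leq 2\gamma(L_1,L_2)<\infty \quad\text{for all } n\in\mathbb{Z}. $$

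The contradiction will come from showing that, for a well-chosen $H$, the sequence $\gamma(\varphi_H^n L_2, L_2)$ is in fact unbounded. Take $H\geq 0$, compactly supported in $U$, with $H\geq c_0>0$ on a neighbourhood of $\gammasupp(L_2)$. The derivative formula from Proposition \ref{Prop-5.2}(5) applied iteratively should produce a linear growth estimate $c_+(\varphi_H^n\widetilde L_2,\widetilde L_2)\geq nc_0$, because the ``generalized intersection points'' contributing to these spectral values must be localized in $\gammasupp(L_2)$, where $H$ is bounded below by $c_0$. Passing to $\gamma$, one obtains $\gamma(\varphi_H^n L_2, L_2)\to\infty$, contradicting the uniform bound above.

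The main obstacle is this last localisation step: in the completion $\widehat{\mathcal L}(M,d\lambda)$ the elements are not genuine submanifolds and the formula of Proposition \ref{Prop-5.2}(5) was originally established for smooth Lagrangians. To carry it through one must either reinterpret the derivative of $c_+$ via the persistence-module / sheaf description of Section \ref{Section-6} (showing that the endpoints of the infinite bars are driven by the action of $H$ on $\gammasupp(L_2)$), or approximate $L_2$ by a Cauchy sequence $L_2^k\in\LL(M,d\lambda)$ and verify that the intersection points governing the classical derivative formula concentrate near $\gammasupp(L_2)$ in the limit. Once that localisation is established in $\widehat{\LL}$, the linear growth of spectral invariants follows from the smooth case, completing the contradiction.
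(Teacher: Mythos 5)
Your reduction to the two claims ``$\varphi$ supported off $\gammasupp(L_1)$ fixes $L_1$'' (via fragmentation) and ``a suitable $\varphi_H$ supported near $\gammasupp(L_2)$ moves $L_2$ unboundedly'' is a reasonable plan, and your uniform bound $\gamma(\varphi^n L_2,L_2)\leq 2\gamma(L_1,L_2)$ is correct. The gap is in the second claim, and it is not merely a technical localisation issue: the quantity you are trying to make grow is the wrong one. For a Hamiltonian $H\geq 0$ with $H\geq c_0$ on a neighbourhood of $\gammasupp(L_2)$ you may perfectly well have $H\equiv c_0$ there, and then (this is Lemma \ref{Lemma-5.9} of the paper, whose proof is exactly your fragmentation argument) $\varphi_H(\widetilde L_2)=\widetilde L_2+c_0$, so $\varphi_H^n(L_2)=L_2$ and $\gamma(\varphi_H^n L_2,L_2)=0$ for all $n$. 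More generally, while $c_+(\varphi_H^n\widetilde L_2,\widetilde L_2)\geq nc_0$ does hold, $c_-(\varphi_H^n\widetilde L_2,\widetilde L_2)$ grows at the same rate, and $\gamma=c_+-c_-$ is blind to the translation $T_c$ on branes; so ``passing to $\gamma$'' destroys the linear growth and no contradiction with your uniform bound can be reached. The elaborate localisation of ``generalized intersection points'' you propose is therefore chasing a statement that is false for the Hamiltonians you allow.

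The repair is to run the whole argument at the level of branes and $c_+$ rather than $\gamma$, and the only input needed is Lemma \ref{Lemma-5.9} (not a new localisation result). Since $H\equiv 0$ on a neighbourhood of $\gammasupp(L_1)$, that lemma gives $\varphi_H^{-n}(\widetilde L_1)=\widetilde L_1$ exactly (as a brane, not just modulo shift), whence
$c_+(\varphi_H^n\widetilde L_2,\widetilde L_2)\leq c_+(\varphi_H^n\widetilde L_2,\widetilde L_1)+c_+(\widetilde L_1,\widetilde L_2)=c_+(\widetilde L_2,\widetilde L_1)+c_+(\widetilde L_1,\widetilde L_2)$ uniformly in $n$; on the other hand the same lemma applied to a Hamiltonian $K\leq H$ equal to $c_0$ near $\gammasupp(L_2)$, together with the monotonicity of Proposition \ref{Prop-5.2}, gives $c_+(\varphi_H^n\widetilde L_2,\widetilde L_2)\geq nc_0$, a contradiction. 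This is essentially the paper's proof, which does it in one step: taking $H$ equal to a constant $a<0$ near $\gammasupp(L_2)$ and supported off $\gammasupp(L_1)$, Lemma \ref{Lemma-5.9} and the triangle inequality force $c_+(\varphi_H(\widetilde L_1),\widetilde L_1)\geq -a>0$, contradicting Lemma \ref{Lemma-3.6}. Finally, note that you have not addressed the last assertion of the proposition (intersection with every fiber $T^*_xN$), which in the paper is deduced from the first statement by comparing $\gammasupp(L)$ with $\gra(t\,df)$ for $f$ having all its critical points in a small ball.
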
 
 \begin{rem} 
 Of course, unless we know somme singular support which does not contain -or is not a $C^0$-limit- of exact smooth Lagrangians, this does not add anything to the known situation, that any two closed exact Lagrangians intersect, a consequence of the Fukaya-Seidel-Smith theorem (see \cite{Fukaya-Seidel-Smith}).   
 \end{rem} 
The proof of Proposition \ref{Prop-6.7} will make use of the following Lemma
 \begin{lem} \label{Lemma-5.9}
 Let $\widetilde L \in \widehat{\mathcal L} (M,d\lambda)$ and $H$ be a Hamiltonian equal to a constant $a$ in a neighbourhood of $\gammasupp (L)$. Then $\varphi_H(\widetilde L)=\widetilde L+a$ (hence $\varphi_H(L)=L$).
 \end{lem}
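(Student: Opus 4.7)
The plan is to reduce to the case $a=0$ via a shift. Set $K := H - a$. Since $a$ is constant, $X_K = X_H$, so $\varphi_K = \varphi_H$ as symplectomorphisms; however, the induced brane actions differ by the shift $T_a$ of Definition~\ref{Def-5.1}, as a direct computation using the formula for $H\#f_L$ in Proposition~\ref{Prop-5.2}(2) shows. Thus it suffices to prove the equivalent statement: if $K|_{\gammasupp(L)}=0$, then $\varphi_K(\widetilde L)=\widetilde L$ in $\widehat{\mathcal L}(T^*N)$.

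The main step is to approximate $K$ by Hamiltonians supported away from $\gammasupp(L)$. For $\varepsilon>0$, continuity of $K$ and $K|_{\gammasupp(L)}=0$ yield a neighbourhood $U_\varepsilon$ of $\gammasupp(L)$ on which $|K|<\varepsilon$; choose a smooth cutoff $\chi_\varepsilon$ equal to $1$ on a smaller neighbourhood $V_\varepsilon$ of $\gammasupp(L)$ and supported in $U_\varepsilon$, and set $K_\varepsilon:=(1-\chi_\varepsilon)K$. Then $K_\varepsilon\equiv 0$ on $V_\varepsilon$, $\supp(K_\varepsilon)\cap\gammasupp(L)=\emptyset$, and $\|K-K_\varepsilon\|_\infty\leq\varepsilon$. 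Cover $\supp(K_\varepsilon)$ by finitely many small displaceable balls $B_j$ disjoint from $\gammasupp(L)$ and apply Lemma~\ref{Lemma-frag} to write $\varphi_{K_\varepsilon}$ as a composition of Hamiltonian maps $\psi_j$ whose generating Hamiltonians are supported in $B_j$. A further fragmentation inside each $B_j$, combined with Definition~\ref{Def-support}, gives $\gamma(\psi_j(L),L)=0$, hence $\psi_j(L)=L$ in $\widehat\LL(T^*N)$; then Proposition~\ref{Prop-2.4}(\ref{Prop-2.4c}) yields $\psi_j(\widetilde L)=\widetilde L+c_j$ for some $c_j\in\mathbb{R}$. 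Iterating, $\psi_j^k(\widetilde L)=\widetilde L+kc_j$, so $|kc_j|\leq c_\pm(\psi_j^k)\leq c(B_j)<+\infty$ by displaceability (Proposition~4.12 of \cite{Viterbo-STAGGF}), forcing $c_j=0$. Composing over $j$ gives $\varphi_{K_\varepsilon}(\widetilde L)=\widetilde L$.

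To finish, a Hofer-type continuity estimate bounds $c(\varphi_K(\widetilde L),\varphi_{K_\varepsilon}(\widetilde L))$ by $\|K-K_\varepsilon\|_{1,\infty}\leq\varepsilon$, so $c(\varphi_K(\widetilde L),\widetilde L)\leq\varepsilon$ for all $\varepsilon>0$, whence $\varphi_K(\widetilde L)=\widetilde L$. Shifting by $a$ then yields $\varphi_H(\widetilde L)=\widetilde L+a$, and hence $\varphi_H(L)=L$ under $\widehat{\mathrm{unf}}$. The main obstacle is showing that each $c_j$ vanishes individually, not merely that their sum does after composition: this requires the iterate-and-bound trick combined with the uniform finiteness $c_\pm(\psi_j^k)\leq c(B_j)$ from displaceability, and would fail if the fragments could only be shown to cancel collectively. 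A secondary subtlety arises when $\gammasupp(L)$ is not compact, where ensuring $|K|<\varepsilon$ uniformly on $U_\varepsilon$ demands a more localized version of the approximation step.
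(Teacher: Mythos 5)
Your argument is correct in substance and rests on the same pillars as the paper's proof: reduce to a Hamiltonian vanishing on $\gammasupp(L)$, then use fragmentation (Lemma~\ref{Lemma-frag}), the definition of the $\gamma$-support, Proposition~\ref{Prop-2.4}~(\ref{Prop-2.4c}) and the displaceability/iteration trick to show the flow acts trivially. The two genuine differences are these. First, the paper handles the constant $a$ by introducing an auxiliary Hamiltonian $H_1\equiv a$ on a domain containing all the approximating $L_k$, computing $\varphi_{H_1}(\widetilde L_k)=\widetilde L_k+a$ explicitly and passing to the limit, whereas you absorb the constant formally into the shift $T_a$; your route avoids having to choose a domain containing the whole approximating sequence. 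Second, the paper applies Lemma~\ref{Lemma-3.6}~(\ref{Lemma-3.6-b}) directly to the difference Hamiltonian $K$, which only \emph{vanishes on} $\gammasupp(L)$ — and vanishing on a closed set does not make $\supp(K)$ disjoint from it, since the two closed sets can touch. Your cutoff $K_\varepsilon$, which is genuinely supported away from a neighbourhood of $\gammasupp(L)$, together with the Hofer-continuity estimate $c(\varphi_K(\widetilde L),\varphi_{K_\varepsilon}(\widetilde L))\leq \mathrm{const}\cdot\varepsilon$, closes exactly this gap; this is a real refinement, and the continuity estimate you invoke does follow from Proposition~\ref{Prop-5.2}(5) by comparison with constant Hamiltonians, extended to the completion by isometry.

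There is one technical wrinkle you should repair: when $a\neq 0$ the function $K=H-a$ is \emph{not} compactly supported (it equals $-a$ at infinity), and neither is $K_\varepsilon=(1-\chi_\varepsilon)K$ outside $U_\varepsilon$. Consequently $\supp(K_\varepsilon)$ cannot be covered by finitely many balls, and the brane action $K\#f_L$ is only defined in the paper for $K\in\HH_c$; the fragments $\psi_j$ would reconstruct the flow of $K_\varepsilon$ only up to the constant $-a$ at infinity, i.e.\ up to an extra $T_{\pm a}$, which would spoil your bookkeeping. The fix is easy: when $a\neq 0$ one has $\gammasupp(L)\subset\{H=a\}\subset\supp(H)$, hence $\gammasupp(L)$ is compact, and one can replace the constant $a$ by $a\rho$ with $\rho$ a compactly supported cutoff equal to $1$ on a large ball containing $\supp(H)$ (this is in effect the paper's $H_1$), so that the difference Hamiltonian is compactly supported and still vanishes near $\gammasupp(L)$ after your cutoff. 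With that adjustment, and up to the sign of $a$ (which is a convention issue already present between the statement of the lemma and the formula for $H\#f_L$ in Proposition~\ref{Prop-5.2}), your proof is complete.
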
 
 \begin{proof} 
 The conclusion is obvious if $L$ is in $\mathcal L (T^*N)$ by definition of the action of $\DHam_c(T^*N)$ on $\mathcal L (T^*N)$. 
 It is also obvious if $a=0$, since then $H$ is supported in the complement of $c-\supp(L)$ hence $\varphi_H(L)=L$ by 
Lemma \ref{Lemma-6.9}. 
 Now fix $k$ such that  $c(L,L_k) < \varepsilon $ and assume $H_1=a$ on a neighbourhood of $L_k\cup c-\supp(L)$. Then 
 $$c(T_aL_k,\varphi_{H_1}(L))=c(\varphi_{H_1}(L_k),\varphi_{H_1}(L))=c(L_k,L)< \varepsilon 
 $$
 On the other hand $\varphi_{H_1}^{-1}\varphi_H$ is generated by the Hamiltonian $K(t,z)=H(t,\varphi_H^t(z))-H_1(t,\varphi_H^t(z))$ which vanishes on $c-\supp (L)$, so $\varphi_{H_1}^{-1}\varphi_H(L)=\varphi_K(L)=L$, hence $\varphi_H(L)=\varphi_{H_1}(L)$. As a result we have 
 $$c(T_aL_k,\varphi_{H}(L))=c(T_aL_k,\varphi_{H_1}(L)) < \varepsilon $$ so taking the limit for $k$ we get  $c(T_aL,\varphi_{H}(L)=0$ so $\varphi_{H}(L)=T_aL$. 

\end{proof}
\begin{rem} 
Let $H$ be a Hamiltonian equal to $a$ on $\gammasupp (L)$ (but not necessarily in a neighbourhood). Then $H$ is a $C^0$-limit of a sequence $(H_k)_{k\geq 1}$ of Hamiltonians equal to $a$ in a neighbourhood of $\gammasupp(L)$. Therefore $\varphi_{H_k}(\widetilde L)=\widetilde L+a$ and since $\varphi_{H_k} \overset{\gamma}\longrightarrow \varphi_H$ we get $\varphi_H(\widetilde L)=\widetilde L+a$. 
\end{rem} 
 \begin{proof} [Proof of Proposition \ref{Prop-6.7}]
Let $\widetilde L_1, \widetilde L_2$ be elements in $\widehat{\mathcal L}(M,d\lambda)$ having image by $\widehat{\rm unf}$ equal to $L_1,L_2$. Assume their $\gamma$-supports are disjoint and the $\gamma$-support of $L_2$ is compact. 
Let $H$ be a compact supported Hamiltonian, and assume $H$  has support in the complement of $\gammasupp(L_1)$ and equals $a<0$ in a neighbourhood of $\gammasupp(L_2)$. Then 
\begin{gather*} c_+ (\varphi_H^1(\widetilde L_1),\widetilde L_1)\geq c_+(\varphi_H^1(\widetilde L_1), \widetilde L_2) - c_+(\widetilde L_1, \widetilde L_2)= \\ c_+(\widetilde L_1,\varphi_H^{-1}(\widetilde L_2)) - c_+(\widetilde L_1, \widetilde L_2)=-a
\end{gather*} 
since $\varphi_H^{-1}(\widetilde L_2)=\widetilde {L_2}-a$ by the previous Lemma.  As a result $\supp (H)$ intersects $\gammasupp (L_1)$ and this implies that $\gammasupp(L_1)\cap \gammasupp (L_2)\neq \emptyset$. 

It is easy to prove that for $L \in \LL_c(T^*N)$, we have that $\gammasupp(L)$ intersects any vertical fiber $T_x^*N$. Indeed, if this was not the case, we could find a small ball $B(x, \varepsilon )$ such that $T^*(B(x, \varepsilon )) \cap \gammasupp(L) =\emptyset$. 
Now if $f$ is a smooth function such that all critical points of $f$ are in $B(x, \varepsilon )$, then for any bounded set $W$ contained in the complement of $T^*(B(x, \varepsilon ))$ we have $\gra (t df) \cap W=\emptyset $ for $t$ large enough, so $\gra(tdf) \cap L=\emptyset$.  But this contradicts our first statement. 

\end{proof} 

\begin{rem}  Of course, unless we know some singular support which does not contain -or is not a $C^0$-limit- of exact smooth Lagrangians, Proposition \ref{Prop-6.7} does not add anything to the known situation, that any two closed exact Lagrangians in $T^*N$ intersect, a consequence of the Fukaya-Seidel-Smith (see \cite{Fukaya-Seidel-Smith}).
\end{rem} 
 \begin{Question}  What can $\gammasupp (L)$ be ? 
 \end{Question}
One of the goals of this paper is to partially answer this question. 
  \begin{example}
  Let $f\in C^0(N, {\mathbb R})$. Then $\gammasupp (\gra (df)) = \partial f$ where $\partial f$ is the subdifferential defined by Vichery in \cite{Vichery} (see \cite{ AGHIV} for a proof). Therefore $\partial f$ intersects any exact Lagrangian $L$. If $L$ is isotopic to the zero section, 
  then $L$ has a \GFQI  and  $\partial f \cap L$ is given by the critical points of $S(x,\xi)-f(x)$. 
  
  \end{example}

In \cite{Humiliere-completion} section 2.3.1. a different notion of support was presented:
\begin{defn} [H-support, \cite{Humiliere-completion}] \index{$H$-support} A point $x\in M$ is in the complement of the  $H$-support of $L \in \widehat {\LL}(M, \omega)$ if there is a sequence of smooth Lagrangians $(L_k)_{k\geq 1}$ converging to $L$ and a neighbourhood $U$ of $x$ such that $L_k\cap U = \emptyset$. 
\end{defn} 
We shall first prove that our definition of the support yields an {\it a priori} smaller set than Humili{\`e}re's. 
 We shall actually prove something slightly more general.
 \begin{defn}  Let $X_{k}$ be a sequence of subsets in the topological space $Z$. We define  its topological upper limit  as
\begin{align*}
  \limsup_{j} X_{j}&=\bigcap_n \overline{\bigcup_{j\geq n}X_{j}} \\
  &=\left\{x\in Z \mid \exists (x_j)_{j\geq 1}, \;x_j\in X_j\;  \text{for infinitely many}\; j, \; \lim_j x_j=x\right \}
\end{align*}
and its topological lower limit as 
$$\liminf_j X_j=\left\{x\in Z \mid \exists (x_j)_{j\geq 1}, x_j\in X_j, \lim_j x_j=x\right \}$$
\end{defn} 
Note that if $\liminf_j X_j=\limsup_j X_j$ then this is the topological limit, which for a compact metric space coincides with the Hausdorff limit (see \cite{Kechris}, p.25-26), but we shall not need this here. Also 
$x\in \liminf_jX_j$ if and only if $\lim_j d(x,X_j)=0$. Note that this limit could in principle be empty, even if the $X_j$ are non-empty. 
An easy result is now
\begin{prop} \label{Prop-lim-support}
Let $(L_k)_{k\geq 1}$ be  sequence in $\widehat {\LL} (M, \omega)$ of Lagrangians such that $\gamma-\lim_k (L_k)=L$. Assume $\gammasupp (L_k)\cap U=\emptyset$ for infinitely many $k$. Then $$\gammasupp(L)\cap U=\emptyset$$
In other words
$$ \gammasupp (L) \subset \liminf_k (\gammasupp (L_k))$$
 \end{prop}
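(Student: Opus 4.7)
The key ingredient is Lemma \ref{Lemma-3.6}(\ref{Lemma-3.6-a}): if a smooth Hamiltonian diffeomorphism $\varphi \in \DHam_c(M,\omega)$ has $\supp(\varphi) \cap \gammasupp(L_k) = \emptyset$, then $\gamma(\varphi(L_k), L_k) = 0$. Combined with the fact that $\varphi$ acts as a $\gamma$-isometry on $\widehat{\LL}(M,\omega)$ (the action extends from $\LL(M,\omega)$ by density since it is an isometry there), this will let us pass the vanishing property to the limit.

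First I would reduce the problem to the displacement statement. Pick $x \in U$ and choose any neighborhood $V$ of $x$ with $V \subset U$. Let $\varphi \in \DHam_c(M,\omega)$ be supported in $V$, hence in $U$. For infinitely many $k$ we have $\supp(\varphi) \cap \gammasupp(L_k) = \emptyset$, so by Lemma \ref{Lemma-3.6}(\ref{Lemma-3.6-a})
\[
\gamma(\varphi(L_k), L_k) = 0.
\]
The triangle inequality and the isometry property then give
\[
\gamma(\varphi(L), L) \leq \gamma(\varphi(L), \varphi(L_k)) + \gamma(\varphi(L_k), L_k) + \gamma(L_k, L) = 2\gamma(L_k, L),
\]
which tends to $0$ along the subsequence. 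Hence $\gamma(\varphi(L), L) = 0$ for every $\varphi$ supported in $V$, so $x \notin \gammasupp(L)$. As $x \in U$ was arbitrary, $\gammasupp(L) \cap U = \emptyset$.

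For the inclusion $\gammasupp(L) \subset \liminf_k \gammasupp(L_k)$, I argue by contraposition. If $x \notin \liminf_k \gammasupp(L_k)$, then $d(x, \gammasupp(L_k))$ does not tend to $0$, so there exist $\varepsilon > 0$ and infinitely many $k$ with $\gammasupp(L_k) \cap B(x,\varepsilon) = \emptyset$. Applying the first part with $U = B(x,\varepsilon)$ gives $x \notin \gammasupp(L)$.

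The only delicate point is ensuring that an arbitrary smooth $\varphi \in \DHam_c(M,\omega)$ acts continuously on $\widehat{\LL}(M,\omega)$, so that $\varphi(L) = \lim_k \varphi(L_k)$ in $\gamma$; this is immediate because $\varphi$ preserves $\gamma$ on $\LL(M,\omega)$ and therefore extends by uniform continuity to the $\gamma$-completion. Everything else is a direct combination of Lemma \ref{Lemma-3.6} with the triangle inequality.
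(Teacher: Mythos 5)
Your proof is correct and follows essentially the same route as the paper: the paper's terse "this implies that $\gamma(\varphi(L_k),L_k)=0$, and passing to the limit" is precisely your combination of Lemma \ref{Lemma-3.6}(\ref{Lemma-3.6-a}) with the triangle inequality and the isometric action of $\varphi$ on the completion, and the contraposition for the $\liminf$ statement is identical. You have merely made explicit the steps the paper leaves implicit.
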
 
\begin{proof} 
For the first statement let $\varphi$ be supported in $U$. Then for $k$ in the subsequence $U\cap \gammasupp(L_k)=\emptyset$. This implies that $\gamma (\varphi(L_k),L_k)=0$, and passing to the limit, that $\gamma (\varphi(L),L)=0$. 

Now if $x\notin \liminf_k (\gammasupp (L_k))$, there must be a subsequence such that 
$d(x, \liminf_k (\gammasupp (L_k)))> \varepsilon _0>0$. But then on this subsequence $B(x, \varepsilon_0/2)\cap \gammasupp(L_k)=\emptyset$ 
and applying the previous result, we have $B(x, \varepsilon_0/2)\cap \gammasupp(L)=\emptyset$, so $x \notin \gammasupp(L)$. 
\end{proof} 
For the case where $L$ and the $L_{k}$ are  smooth, this remark was previously made by  Seyfaddini and the author (see \cite{Vichery}) and is also a consequence of lemma 7 in \cite{Hum-Lec-Sey2}
From  Proposition \ref{Prop-lim-support} we immediately conclude
\begin{cor} \label{Cor-3.15}
We have for $L \in \widehat {\LL}(M, \omega)$ the inclusion
$$ \gammasupp (L) \subset H-\supp (L)$$
\end{cor}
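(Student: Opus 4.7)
The plan is to prove the contrapositive: if $x \notin H\text{-}\supp(L)$, then $x \notin \gammasupp(L)$. By the definition of $H$-support, there exists a sequence of smooth Lagrangians $(L_k)_{k\geq 1}$ with $\gamma$-$\lim_k L_k = L$ and a neighborhood $U$ of $x$ such that $L_k \cap U = \emptyset$ for every $k$.

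The key observation is that for a smooth Lagrangian $L'$ one has the inclusion $\gammasupp(L') \subset L'$. Indeed, if $y \notin L'$, one can choose a neighborhood $V$ of $y$ disjoint from $L'$; then any Hamiltonian $\varphi$ supported in $V$ satisfies $\varphi(L') = L'$, hence $\gamma(\varphi(L'), L') = 0$, showing that $y \notin \gammasupp(L')$. Applied to our sequence, this gives $\gammasupp(L_k) \cap U \subset L_k \cap U = \emptyset$ for every $k$.

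At this point the corollary is immediate from Proposition \ref{Prop-lim-support}: since $\gammasupp(L_k) \cap U = \emptyset$ holds for infinitely many (in fact all) $k$, we conclude $\gammasupp(L) \cap U = \emptyset$, so $x \notin \gammasupp(L)$. There is no real obstacle here; the proposition does all the work, and the only step to verify is the trivial half of the identity $\gammasupp(L') = L'$ for a smooth $L'$, which is only needed in the easy direction (points outside $L'$ are outside the $\gamma$-support).
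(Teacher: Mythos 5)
Your proof is correct and follows the same route as the paper, which deduces the corollary directly from Proposition \ref{Prop-lim-support}; the only addition is that you spell out the (easy) inclusion $\gammasupp(L_k)\subset L_k$ for the smooth approximating Lagrangians, which the paper treats as immediate. That step is needed to pass from $L_k\cap U=\emptyset$ to the hypothesis $\gammasupp(L_k)\cap U=\emptyset$ of the proposition, and your direct argument for it avoids any circularity with Proposition \ref{Prop-1.3}.
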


Our definition of support has one  advantage compared to  Humili{\`e}re's definition :
if $W\cap H-\supp(L)=\emptyset$, we do not know whether there is a sequence $(L_k)_{k\geq 1}$ $\gamma$-converging to $L$ and such that $L_k\cap W= \emptyset$, we only know that $W$ can be covered by sets $W_j$ such that for each $j$ there is a sequence $(L_k^j)_{k\geq 1}$ such that $\gamma-\lim_k L_k^j=L$ and $L_k^j\cap W_j=\emptyset$. On the other hand if $W \cap \gammasupp (L)=\emptyset$, we know that for all $\varphi$ supported in $W$ we have $\gamma (\varphi(L),L)=0$. This still leaves open the following
\begin{Question} Do we have equality in Corollary \ref{Cor-3.15}, in other words
is any $ L\in \widehat {\LL}(M, \omega)$ the limit of a sequence $(L_{k})_{k\geq 1}$ contained in a neighbourhood of $ \gammasupp (L) $ ? 

\end{Question}

Note that the Hamiltonians we need to consider to determine whether a point is in the support can be restricted to a rather small family. 
Let $\chi(r)$ be a non-negative smooth  function function equal to $1$ on $[-1/2, 1/2]$ and supported in $[-1,1]$. We set $H^\chi_{z_0, \varepsilon }(z)=\rho ( \frac{1}{ \varepsilon }d(z,z_0)$) and $\varphi_{z_0, \varepsilon }^t$ to be the flow associated to $H_{z_0, \varepsilon }^\chi$.
Since for any $\varphi$ supported in $B(z_0, \varepsilon/2 )$ we can find a positive constant $c$ so that $\varphi \preceq \varphi_{z_0, \varepsilon }^c$, that is $c_-(\varphi_{z_0, \varepsilon }^c\circ \varphi^{-1})=0$, we have for a lift $\widetilde L$ of $L$, 
$c_+(\varphi(\widetilde L),\widetilde L) \leq c_+(\varphi_{z_0, \varepsilon }^c(\widetilde L), \widetilde L)$, so $0< c_+(\varphi(\widetilde L),\widetilde L)$ implies $0 < c_+(\varphi_{z_0, \varepsilon }^c(\widetilde L), \widetilde L)$. Note that if $c_-(\varphi(\widetilde L),\widetilde L)<0$ we have 
$0<c_+(\widetilde L, \varphi(\widetilde L))= c_+(\varphi^{-1}(\widetilde L),\widetilde L)$, so applying the same argument to $\varphi^{-1}$ we almost get
\begin{prop}[Criterion for the $\gamma$-support]\label{Prop-3.8}
A point $z$ is in the $\gamma$-support of $\widetilde L \in \widehat {\mathcal {L}} (M, \omega)$ if and only if we have 
$$ \forall \varepsilon >0,\; \; c_+(\varphi_{z_0, \varepsilon }(\widetilde L),\widetilde L) )>0$$ where $\varphi_{z_0, \varepsilon }$ is the time-one map associated to $H_{z_0, \varepsilon }^\chi$.
 \end{prop}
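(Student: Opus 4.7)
The argument is essentially assembled in the paragraph preceding the statement; my plan is to fill in the two missing pieces: the passage from $\gamma(\varphi(L),L)>0$ to $c_+(\varphi(\widetilde L),\widetilde L)>0$ in the ``only if'' direction, and the reverse passage in the ``if'' direction, the latter via the iteration/displaceability trick that was just used in the proof identifying $c$-$\supp$ with $\gammasupp$.

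For the implication $z_0\in\gammasupp(L)\Rightarrow c_+(\varphi_{z_0,\varepsilon}(\widetilde L),\widetilde L)>0$, I would fix $\varepsilon>0$ and pick a Hamiltonian map $\varphi$ supported in $B(z_0,\varepsilon/2)$ with $\gamma(\varphi(L),L)>0$. Writing $\gamma(\varphi(L),L)=c_+(\varphi(\widetilde L),\widetilde L)-c_-(\varphi(\widetilde L),\widetilde L)$ for a suitable choice of lift (Proposition~\ref{Prop-5.2}(4)), at least one of the two quantities on the right has a definite sign; using $c_-(\varphi(\widetilde L),\widetilde L)=-c_+(\varphi^{-1}(\widetilde L),\widetilde L)$ after normalization, and the fact that $\varphi^{-1}$ has the same support as $\varphi$, I may replace $\varphi$ by $\varphi^{-1}$ if necessary to arrange $c_+(\varphi(\widetilde L),\widetilde L)>0$. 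Since $H^{\chi}_{z_0,\varepsilon}\equiv 1$ on $B(z_0,\varepsilon/2)\supset\supp(\varphi)$, a generating Hamiltonian for $\varphi$ is dominated pointwise by $cH^{\chi}_{z_0,\varepsilon}$ for some $c>0$, and Proposition~\ref{Prop-5.2}(5) then yields
$$0<c_+(\varphi(\widetilde L),\widetilde L)\;\le\;c_+(\varphi_{z_0,\varepsilon}^{c}(\widetilde L),\widetilde L),$$
which is the required positivity (at the time $c$ along the flow $\varphi_{z_0,\varepsilon}^{t}$).

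For the converse, suppose $c_+(\varphi_{z_0,\varepsilon}^{c}(\widetilde L),\widetilde L)>0$ for some $c>0$; since $\varphi_{z_0,\varepsilon}^{c}$ is supported in $B(z_0,\varepsilon)$, it suffices to show $\gamma(\varphi_{z_0,\varepsilon}^{c}(L),L)>0$. If this failed, Proposition~\ref{Prop-2.4}(\ref{Prop-2.4c}) would give $\varphi_{z_0,\varepsilon}^{c}(\widetilde L)=\widetilde L+a$ with $a=c_+(\varphi_{z_0,\varepsilon}^{c}(\widetilde L),\widetilde L)>0$, and iterating, $\varphi_{z_0,\varepsilon}^{kc}(\widetilde L)=\widetilde L+ka$ for all integers $k\ge 1$. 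But $B(z_0,\varepsilon)$ is displaceable, so by \cite[Prop.~4.12]{Viterbo-STAGGF} (invoked exactly as in the immediately preceding proof) we have $c_+(\varphi_{z_0,\varepsilon}^{kc})\le c(B(z_0,\varepsilon))<+\infty$, while $c_+(\varphi_{z_0,\varepsilon}^{kc}(\widetilde L),\widetilde L)=ka\to +\infty$, a contradiction. Hence $\gamma>0$ and $z_0\in\gammasupp(L)$.

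The only substantive ingredient is thus the displaceability/iteration argument ruling out $c_+>0$ together with $\gamma=0$; the remainder is monotonicity of spectral invariants plus the $\varphi\leftrightarrow\varphi^{-1}$ switch needed to handle both signs in $\gamma=c_+-c_-$. I expect the bookkeeping about normalizations of the lift $\widetilde L$ (and the corresponding ambiguity in $c_\pm$) to be the only place where one must be careful.
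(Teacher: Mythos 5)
Your ``if'' direction is correct and is essentially the paper's own argument (it re-runs the iteration/displaceability trick from the proof that $c$-$\supp(\widetilde L)=\gammasupp(L)$, which you could also simply have cited). The problem is in the ``only if'' direction: you stop exactly where the paper's \emph{preceding paragraph} stops, namely at ``$c_+(\varphi_{z_0,\varepsilon}^{c}(\widetilde L),\widetilde L)>0$ at the time $c$ along the flow'', for a constant $c$ that depends on the dominating bound for the particular $\varphi$ you chose. But the proposition asserts positivity for the fixed, normalized map $\varphi_{z_0,\varepsilon}=\varphi_{z_0,\varepsilon}^{1}$; the paper explicitly says ``the only point missing is to show that we can take $c=1$'', and its entire written proof consists of that reduction. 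Your parenthetical ``(at the time $c$\dots)'' acknowledges the discrepancy without resolving it, and your converse direction even quietly re-reads the criterion as ``for some $c>0$'', which is a weaker statement than the one to be proved.

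The missing step does require an argument, because $t\mapsto c_+(\varphi^{t}_{z_0,\varepsilon}(\widetilde L),\widetilde L)$ is monotone non-decreasing (as $H^{\chi}_{z_0,\varepsilon}\geq 0$), so positivity at a large time $c$ does not by itself give positivity at time $1$. The paper closes it by combining monotonicity with subadditivity: if $c_+(\varphi^{1}_{z_0,\varepsilon}(\widetilde L),\widetilde L)=0$, then for an integer $n\geq c$,
$$ c_+(\varphi_{z_0,\varepsilon}^{c}(\widetilde L),\widetilde L)\;\leq\; c_+(\varphi_{z_0,\varepsilon}^{n}(\widetilde L),\widetilde L)\;\leq\;\sum_{j=1}^{n} c_+\bigl(\varphi_{z_0,\varepsilon}^{j}(\widetilde L),\varphi_{z_0,\varepsilon}^{j-1}(\widetilde L)\bigr)\;=\;n\, c_+(\varphi^{1}_{z_0,\varepsilon}(\widetilde L),\widetilde L)\;=\;0,$$
using the triangle inequality and the symplectic invariance $c_+(\varphi^{j}(\widetilde L),\varphi^{j-1}(\widetilde L))=c_+(\varphi^{1}(\widetilde L),\widetilde L)$; this contradicts the positivity at time $c$ that you established. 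Adding this paragraph makes your ``only if'' direction complete; the rest of your write-up is sound.
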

 \begin{proof} 
 The only point missing is to show that we can take $c=1$. But let $n$ be an integer such that $c<n$. Assume we had $c_+(\varphi_{z_0, \varepsilon }(\widetilde L), \widetilde L)=0$. Then, since $\varphi_{ z_0, \varepsilon }^{n-c}\succeq \Id$, we have
 $$ c_+(\varphi_{z_0, \varepsilon }^c(\widetilde L), \widetilde L) \leq c_+(\varphi_{z_0, \varepsilon }^n(\widetilde L), \widetilde L) \leq \sum_{j=1}^n c_+(\varphi_{ z_0, \varepsilon }^j(\widetilde L), \varphi_{ z_0, \varepsilon }^{j-1}(\widetilde L)) =n 
 c_+(\varphi_{z_0, \varepsilon }(\widetilde L), \widetilde L)=0$$ This contradicts our assumption that $ c_+(\varphi_{z_0, \varepsilon }^c(\widetilde L), \widetilde L)>0$. 
 \end{proof} 
 Let $A\subset M_1\times \overline{M_2}, B\subset  M_2\times \overline{M_3}$. We set
 $A\circ B$ to be the projection of  $\left ( A\times B \right ) \cap \left (M_1\times \Delta_{M_2}\times M_3\right )$ on
 $M_1\times M_3$. 
We then have
\begin{prop} \label{Prop-6.21}
We have
 \begin{enumerate} 
 \item \label{Prop-6.21-c} For $L$ a smooth Lagrangian in $(M, \omega)$ we have $\gammasupp(L)=L$. 
\item \label{Prop-6.21-a} $\gammasupp (L)$ is non-empty
\item \label{Prop-6.21-b}
For $\psi$ a symplectic map,  $\gammasupp(\psi(L))=\psi (\gammasupp(L))$
\item \label{Prop-6.21-d}  $\gammasupp (L_1\times L_2) \subset \gammasupp(L_1)\times \gammasupp (L_2)$ for $L_i\in \widehat{\mathfrak L}(M_i,d\lambda_i)$
\item \label{Prop-6.21-f} Let $K$ be coisotropic in $(M, \omega)$  and $\mathcal K$ its isotropic foliation. If $K/\mathcal K=P$ is a symplectic manifold, and $L\in {\widehat\LL} (M,\omega)$ having reduction $L_{K} \in {\widehat\LL} (P, \omega_{K})$, then $$\gammasupp(L_{K})\subset (\gammasupp (L))_{K}$$ where for a coisotropic submanifold $K$ having a well defined reduction (i.e. its quotient by the isotropic foliation is a smooth manifold) and for $X\subset M$ we set $X_K=X\cap K/{\mathcal K} $ (where ${\mathcal K}$ is the equivalence relation defined by the isotropic foliation). 
\item  \label{Prop-6.21-e} Let $\Lambda_1, \Lambda_2$ be correspondences, that is elements in $\widehat{\mathfrak L}(M_1\times \overline{M_2})$ and  $\widehat{\mathfrak L}(M_2\times \overline{M_3})$ respectively. 
Then provided $\Lambda_1\circ \Lambda_2 \in \widehat{\mathfrak{L}} ( M_1\times \overline{M_3})$  it satisfies  $\gammasupp (\Lambda_1\circ \Lambda_2) \subset \gammasupp(\Lambda_1) \circ \gammasupp (\Lambda_2)$
\end{enumerate} 
\end{prop} 
\begin{rem} 
The condition ``$\Lambda_1\circ \Lambda_2 \in \widehat{\mathfrak{L}} ( M_1\times \overline{M_3})$'' means that there exists sequences $(L_1^\nu)_{\nu \geq 1}, (L_2^\nu)_{\nu \geq 1}$ such that  $\gamma-\lim L_j^\nu =\Lambda_j$, the $L_j^\nu\circ L_2^\nu$ are exact Lagrangian branes and $\gamma-\lim L_j^\nu\circ L_2^\nu$ converges in  $\widehat{\mathfrak{L}}$. Its limit is then denoted  $\Lambda_1\circ \Lambda_2$ and it is easy to check that the limit does not depend on the choice of the sequences $(L_1^\nu)_{\nu \geq 1}, (L_2^\nu)_{\nu \geq 1}$. 
\end{rem} 
\begin{proof} 
 For (\ref{Prop-6.21-c}),  the  inclusion $\gammasupp(L)\subset L $ follows from Corollary \ref{Cor-3.15}. The converse can be reduced to the case of the zero section by using a Darboux chart $U$ in which $(U,L\cap U)$ is identified with $( B^{2n}(0,r), B^{2n}(0,r)\cap {\mathbb R}^n)$. But then if $f=0$ outside a neighbourhhood of $z$ and the oscillation of $f$ is  $\varepsilon^2 , \vert \nabla f \vert \leq \varepsilon $ 
we can locally deform the zero section to 
$\Gamma_{df}$
in a neighbourhood of $z$, and obtain $L'$ such that $\gamma (L',L)\geq \varepsilon ^2$. 
 \begin{figure}[ht]
\centering\begin{overpic}[width=5cm]{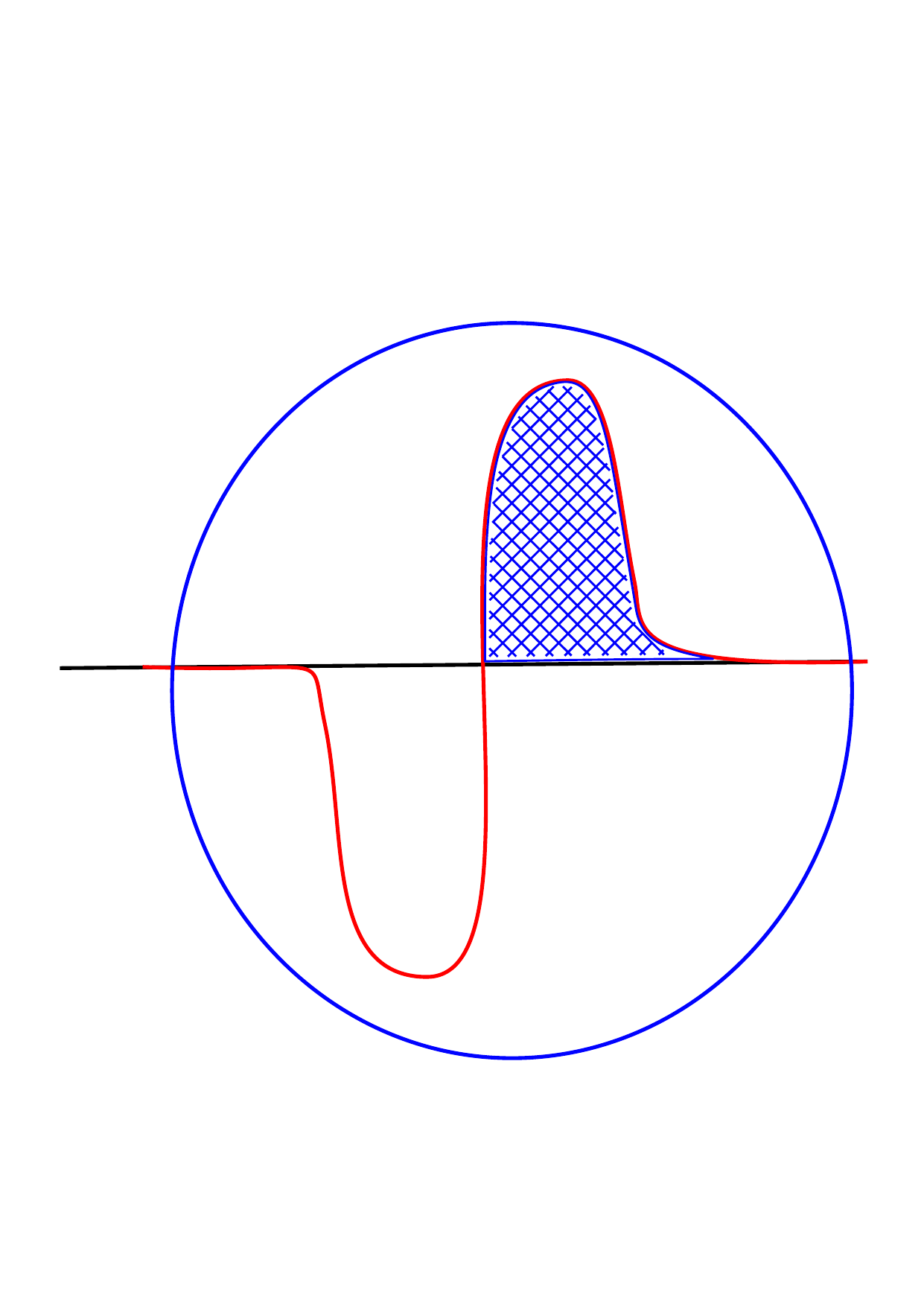}
  \put (40,40) {$ L$} 
   \put (70,55) {$\myRed {L'}$ } 
 \end{overpic}
\caption{ $L$ and $L'=\varphi(L)$ with $\varphi$ supported in the interior of the disc. The hatched region has area $ \varepsilon^{2}$ }
\label{fig-1n}
\end{figure}

For (\ref{Prop-6.21-a}) this follows from Proposition \ref{Prop-6.7}. 

For (\ref{Prop-6.21-b}), we have $\gamma (\varphi \psi(L), \psi(L))>0 \Longleftrightarrow \gamma (\psi^{-1}\varphi\psi (L), L ) >0$ and since $\supp (\psi^{-1}\varphi\psi)=\psi^{-1}( \supp (\varphi))$ this proves our statement. 

For (\ref{Prop-6.21-d}), we
note that if $\gamma (\varphi(L),L)>0$ we either have $c_+(\varphi(L),L)>0$ or $c_-(\varphi(L),L)>0$. Changing $\varphi$ to $\varphi^{-1}$, we can always assume 
$c_+(\varphi(L),L)>0$. 
Then suppose there is a $\psi$ supported in $U_1\times M$ such that $c_+(\psi(L_1\times L_2), L_1\times L_2)>0$. Then we can find $\varphi_1$ supported in $U_1$ such that  $$\varphi_1\times \Id \succeq \psi$$  just replace the Hamiltonian generating $\psi$, $K(z_1,z_2)$ by $H_1(z_1)$ such that $K(z_1,z_2) \leq H(z_1)$ (note that $K$ is compact supported). Then 
\begin{gather*} c_+(\psi(L_1\times L_2), L_1\times L_2) \leq c_+((\varphi_1\times \Id)(L_1\times L_2), L_1\times L_2)=\\ c_+(\varphi_1(L_1),L_1)+ c_+(L_2,L_2)=c_+(\varphi_1(L_1),L_1)=0
\end{gather*} 
 a contradiction. 
 So we proved that $U_1\cap \supp(L_1)=\emptyset$ implies $U_1\times M \cap \supp (L_1\times L_2)=\emptyset$. The same holds by exchanging the two variables, so we get $$\supp (L_1\times L_2) \subset \supp (L_1)\times \supp (L_2)$$ 
 
 For  (\ref{Prop-6.21-f}) let $z\in \gammasupp(L_{K})$ so there exists $\varphi\in \DHam_{c}(B(z, \varepsilon ))$ such that $\gamma(\varphi(L_K),L_K)>0$. We want to prove that $z\in (\gammasupp(L))_K$.
 
  Let $U_{ \varepsilon }$ be the preimage of $B(z, \varepsilon )$ in $K$ and $\tilde\varphi\in \DHam(M,\omega)$ be a lift of $\varphi$, i.e.  $\widetilde\varphi$ preserves  $K$  and projects to $\varphi$. If $\varphi$ is the flow of $H$ we set $\widetilde H$ to be an extension of $H\circ \pi$ (where $\pi: K \longrightarrow K/\mathcal K$ is the projection) and set $\widetilde \varphi$ to be the flow of $\widetilde H$. Now the neighbourhood of the leaf $I=\mathcal K_{z}$ can be identified to a neighbourhood of 
 $I\times \{z\}$ in $T^{*}I\times B(z, \varepsilon )$. Then by the reduction inequality \cite{Viterbo-book}, Prop 7.60 and the equality 
 $(\widetilde\varphi(L))_{K}=\varphi(L_{K})$, we have
 $$\gamma (\widetilde\varphi(L),L) \geq \gamma (\varphi(L_{K}),L_{K})$$
 This inequality holds {\it a priori} in the smooth case, but extends to the completion by continuity. 
 Indeed, if $L^{\nu}$ $\gamma$-converges to $L$, the sequence $L_{K}^{\nu}$ $\gamma$-converges to $L_{K}$ and 
$ \gamma (\varphi(L^{\nu}_{K}),L^{\nu}_{K})>\delta_{0}>0$ implies $$ \gamma (\widetilde\varphi(L^{\nu}),L^{\nu})\geq  \gamma (\varphi(L^{\nu}_{K}),L^{\nu}_{K})>\delta_{0}>0$$ that is 
$\gamma ((\widetilde\varphi(L),L)>\delta_{0}>0$. Therefore by Proposition \ref{Lemma-6.9} this implies $U_\varepsilon\cap \gammasupp(L)=\supp(\widetilde \varphi) \cap \gammasupp(L)\neq \emptyset$ which implies $\gammasupp(L)_K \cap B(z, \varepsilon )\neq \emptyset$ and finally $z\in \gammasupp(L)_K$.

For  (\ref{Prop-6.21-e}) we notice that $\Lambda_1\circ \Lambda_2$ is the composition of the product  $\Lambda_1\times \Lambda_2$ and a reduction of $M_1\times \overline{M_2}\times M_2 \times \overline{M_3}$  by $M_1\times \Delta_{M_2}\times \overline{M_3}$, where $\Delta_{M_2}$ is the diagonal of $M_2$.

 Thus it is enough to prove that the $\gamma$-support of the reduction is contained in the reduction of the $\gamma$-support, which we just proved. 
 
\end{proof} 

Let us consider the group $ \mathcal {H}_\gamma (M, \omega)$ \index{$ \mathcal {H}_\gamma (M, \omega)$} of homeomorphisms preserving $\gamma$. In particular since $\gamma$ is $C^0$-continuous (see \cite{BHS-C0}) this group contains the group of homeomorphisms which are $C^0$-limits of symplectic diffeomorphisms,  denoted ${\Homeo} (M, \omega)$\index{${\Homeo} (M, \omega)$}, usually called the group of  symplectic homeomorphisms. We have $ {\Homeo}(M, \omega)\subset \widehat{\mathscr{DHam}}(M, \omega)$
\begin{cor} \label{Cor-6.23}
Let $\psi \in \mathcal {H}_\gamma (M, \omega)$. Then we have 
$$\gammasupp (\psi(L))=\psi( \gammasupp(L))$$
\end{cor}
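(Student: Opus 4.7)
The plan is to imitate the argument of Proposition~\ref{Prop-1.3}(\ref{Prop-1.3-b}), which treats the case where $\psi$ is a symplectic diffeomorphism. Since $\psi^{-1}\in \mathcal{H}_\gamma(M,\omega)$ as well, applying the statement to $\psi^{-1}$ and to the Lagrangian $\psi(L)$ reduces matters to a single inclusion, say
$\psi(\gammasupp(L)) \subset \gammasupp(\psi(L))$.

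For that inclusion, I would take $z\in \gammasupp(L)$ and an arbitrary neighborhood $V$ of $\psi(z)$. Since $\psi$ is a homeomorphism, $U:=\psi^{-1}(V)$ is a neighborhood of $z$, so by Definition~\ref{Def-support} there is a Hamiltonian $\varphi\in \DHam_c(U)$ with $\gamma(\varphi(L),L)>0$. Because $\psi\in\mathcal{H}_\gamma$ acts as a $\gamma$-isometry on $\widehat\LL(M,\omega)$ (extended by continuity from its action on smooth Lagrangians), one has the key identity
$$
\gamma\bigl((\psi\varphi\psi^{-1})(\psi L),\,\psi L\bigr)
  =\gamma\bigl(\psi(\varphi L),\psi L\bigr)
  =\gamma(\varphi L,L)>0.
$$
The homeomorphism $\eta:=\psi\varphi\psi^{-1}$ is supported in $\psi(U)=V$ and belongs to $\mathcal{H}_\gamma(M,\omega)$ as a conjugate of a Hamiltonian by a $\gamma$-preserving homeomorphism. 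If $\eta$ happened to be Hamiltonian, this would finish the argument.

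The main obstacle is precisely that conjugation by a general $\psi\in\mathcal{H}_\gamma$ does not preserve Hamiltonian diffeomorphisms, so the map $\eta$ is only a $\gamma$-isometry with the correct support, not an element of $\DHam_c(V)$. The natural way around this is to show that Definition~\ref{Def-support} is unchanged if one quantifies over all elements of $\mathcal{H}_\gamma$ supported in a neighborhood of $x$, rather than only over Hamiltonian diffeomorphisms. The ``$\supset$'' direction of that equivalence is trivial; the other uses Lemma~\ref{Lemma-frag} (fragmentation), together with Lemma~\ref{Lemma-5.9} (a Hamiltonian constant on $\gammasupp(L)$ fixes $L$ in $\widehat\LL$), to show that the condition ``no Hamiltonian supported in a neighborhood of $\psi(z)$ $\gamma$-displaces $\psi(L)$'' is equivalent to ``$\psi L$ is a $\mathcal{H}_\gamma$-fixed point for the action of all homeomorphisms of $\mathcal{H}_\gamma$ supported in that neighborhood''. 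Once this equivalent formulation is available, the $\eta$ produced above immediately witnesses $\psi(z)\in\gammasupp(\psi(L))$, and the reverse inclusion follows by the symmetry argument already noted.

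I expect the hardest step to be precisely this reformulation of the $\gamma$-support in purely $\mathcal{H}_\gamma$-equivariant terms; once it is in hand, the rest of the proof is a two-line consequence of $\gamma$-invariance of $\psi$ combined with the fact that $\psi$ is a homeomorphism (so that open sets, neighborhoods and supports transport correctly).
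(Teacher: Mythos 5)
You have correctly located the obstruction in the naive conjugation argument, but the fix you propose is itself the missing proof, and the tools you invoke do not deliver it. The claimed equivalence --- that $x\notin\gammasupp(L)$ if and only if every element of $\mathcal{H}_\gamma(M,\omega)$ supported near $x$ fixes $L$ --- is essentially as hard as the corollary itself, and neither Lemma \ref{Lemma-frag} nor Lemma \ref{Lemma-5.9} applies to it: the fragmentation lemma decomposes \emph{Hamiltonian isotopies} into pieces with small Hamiltonian support, and Lemma \ref{Lemma-5.9} concerns the flow of a \emph{Hamiltonian function} constant on $\gammasupp(L)$. A general $\eta=\psi\varphi\psi^{-1}\in\mathcal{H}_\gamma$ supported in $V$ is neither generated by a Hamiltonian nor accessible to fragmentation; moreover its action on $\widehat\LL(M,\omega)$ is only defined as a composition of abstract isometric extensions, so there is no a priori link between its support as a homeomorphism of $M$ and triviality of its action on an $L$ whose $\gamma$-support avoids $V$. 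As written, the proposal therefore has a genuine gap at its central step.

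The paper avoids conjugation altogether and approximates $\psi$ itself: it takes smooth symplectomorphisms $\psi_j$ converging to $\psi$ in $C^0$, hence in $\gamma$ by \cite{BHS-C0}, so that $\psi_j(L)\to\psi(L)$. The smooth case, Proposition \ref{Prop-1.3}~(\ref{Prop-1.3-b}), gives $\gammasupp(\psi_j(L))=\psi_j(\gammasupp(L))$, and the semicontinuity of the $\gamma$-support under $\gamma$-limits (Proposition \ref{Prop-lim-support}) yields
$$\gammasupp(\psi(L))\subset\liminf_j\psi_j(\gammasupp(L))=\psi(\gammasupp(L)),$$
after which the reverse inclusion follows, exactly as you observed, by applying the same to $\psi^{-1}$. (Strictly speaking this argument uses that $\psi$ is a $C^0$-limit of symplectomorphisms, i.e.\ lies in ${Homeo}(M,\omega)$, not merely in $\mathcal{H}_\gamma$.) If you want to salvage your route you would in any case have to restrict to such $\psi$, at which point the approximation argument is both shorter and complete; I recommend adopting it.
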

\begin{proof} 
A point $x$ is in $\gammasupp(\psi(L))$ if and only if for any neighbourhood $U$ of $x$ we can find $\rho\in \DHam_c(U)$ such that 
$\gamma(\rho(\psi(L),\psi(L))>0$. Since $\psi$ preserves $\gamma$ this is equivalent to $\gamma(\psi^{-1}\circ \rho\circ \psi(L), L) >0$. 
Since $\supp ((\psi^{-1}\circ \rho\circ \psi) = \psi^{-1}(U)$, we have $\psi(U)\cap \gammasupp(L)\neq \emptyset$.  Now given $V$ a neighbourhood of $\psi^{-1}(x)$ we can find a neighbourhood $U$ of $x$ such that $\psi^{-1}(U)\subset V$. Thus  $\psi^{-1}(x) \in \gammasupp(L)$
hence $x\in \psi(\gammasupp(L))$. 
This yields one inclusion. 
Applying the same argument to $\psi^{-1}$, gives the other inclusion. 
\end{proof} 
\begin{rem} 
 When $L$ is smooth then $\DHam_{c}(M,\omega)$ acts transitively on $\gammasupp (L)$, and even better, the restriction map $\DHam_{c} (M, \omega) \longrightarrow \Diff_0(L)$ is onto. This is not at all the case in general as the following example shows. 
\end{rem} 

 \begin{example} 
 Let us consider the sequence of smooth Lagrangians in $T^*S^1$ represented below
  \begin{figure}[H]\centering
 \begin{overpic}[width=5cm]{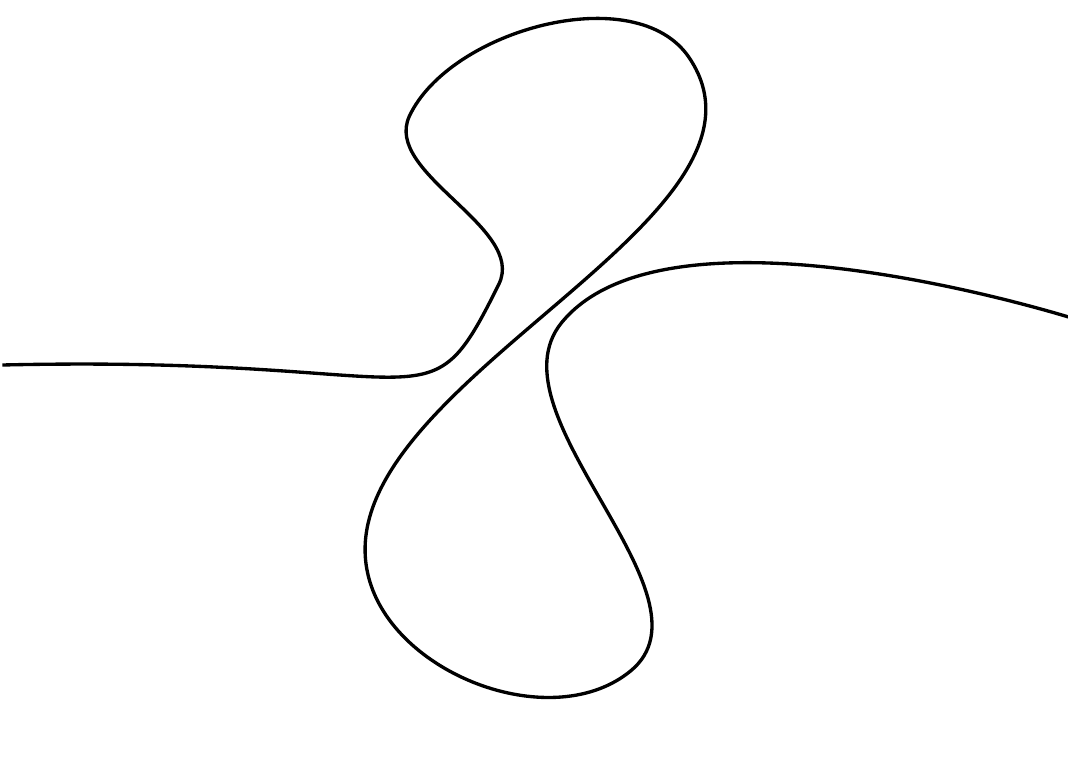}
  \put (20,30) {$L_k$} 
 \end{overpic}
\begin{overpic}[width=5cm]{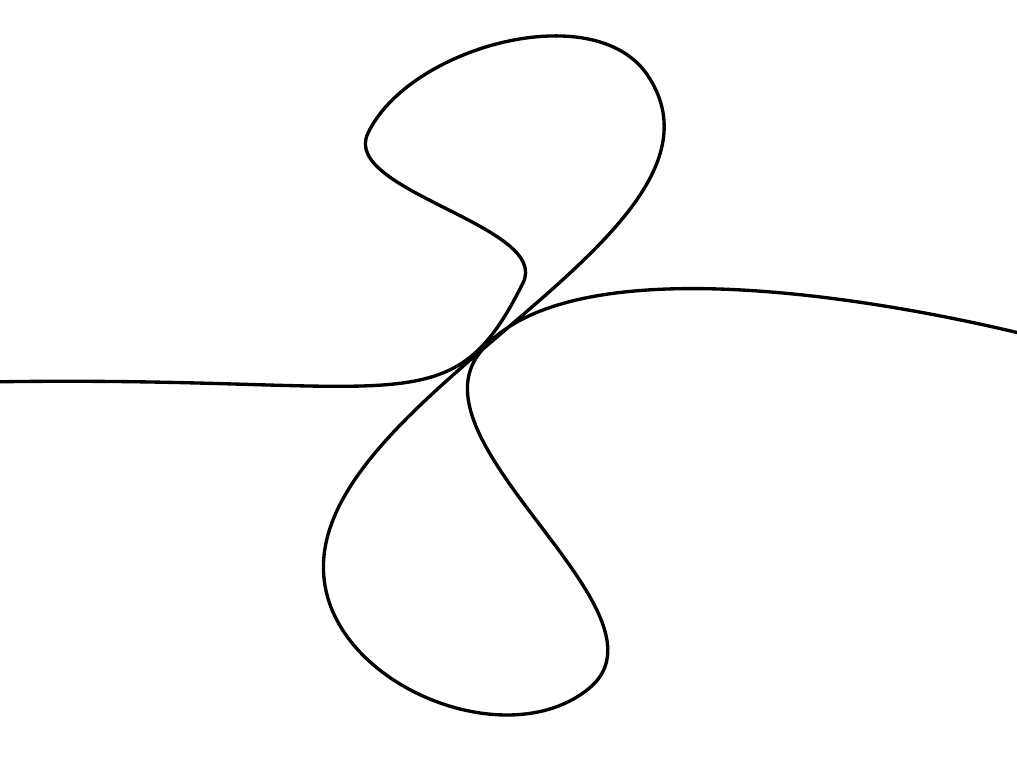}
  \put (20,30) {$\Lambda$} 
 \end{overpic}
\caption{The sequence $(L_k)_{k\geq 1}$ and the  $\gamma$-support of the limit $\Lambda$ of the sequence $(L_k)_{k\geq 1}$ }
\label{fig-2n}
\end{figure}
\begin{enumerate} 
\item 
Quite clearly there is a singular point, so $\DHam_{c}(M,\omega)$ cannot act transitively on $\gammasupp(\Lambda)$.

\item We can arrange that $\Lambda$ is symmetric with respect to the involution $(x,p)\mapsto (x,-p)$, i.e. $\overline {\gammasupp(\Lambda)}=\gammasupp(\Lambda)$. However $\overline \Lambda \neq \Lambda$ because
as we see from Figure \ref{fig-2n-c}, $\gamma (L_k, \overline L_k)$ remains bounded from below (by twice the area of the loop). As a result the elements $\Lambda$ and $\overline \Lambda$ in  $\widehat{\mathcal L}(T^*N)$  have the same $\gamma$-support while being distinct. 
  \begin{figure}[H]\centering
 \begin{overpic}[width=5cm]{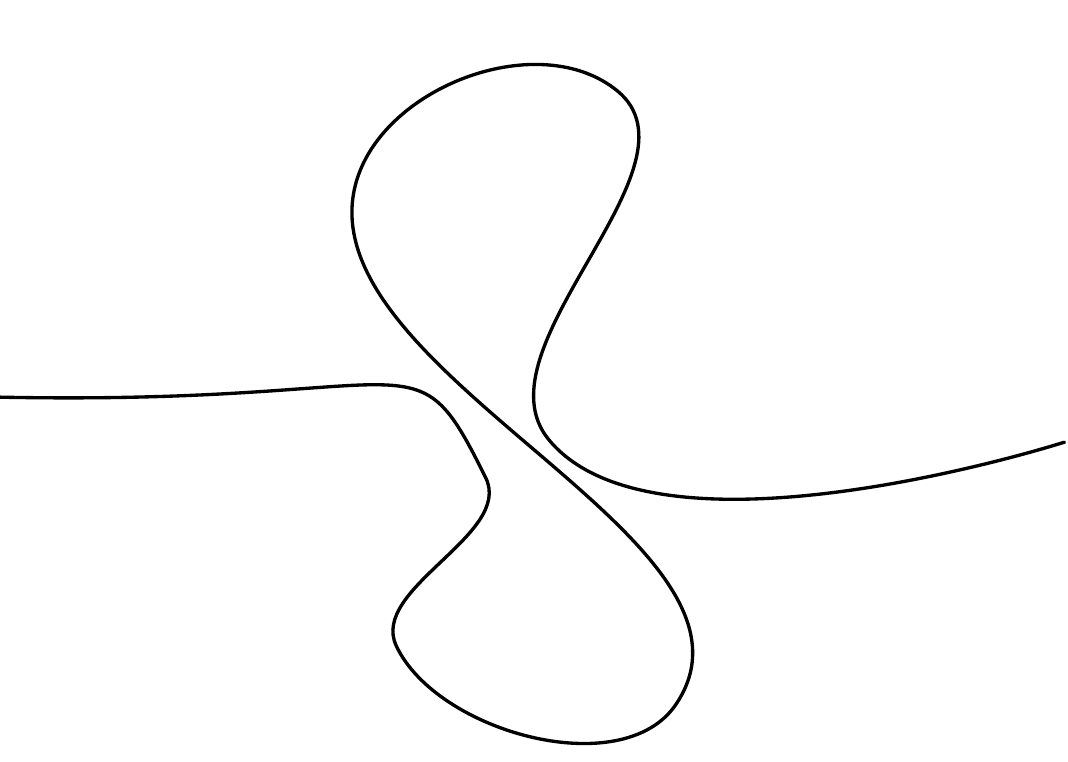}
  \put (20,20) {$\overline L_k$} 
 \end{overpic}
 \caption{The sequence $(\overline L_k)_{k\geq 1}$}
 \label{fig-2n-c}
 \end{figure}
\item We conjecture that the set of elements in $\mathfrak L(T^{*}S^{1})$ such that $\gammasupp(L)=\gammasupp(\Lambda)$ is $\{ \Lambda, \overline \Lambda\}$
\end{enumerate} 
  \end{example} 
    
  Finally note that we can define a kind of density of $L\in {\mathfrak L}(M, \omega)$ as follows
  \begin{defn} 
  Let $L \in {\mathfrak L}(M, \omega)$. We define $$\rho_{\varepsilon}(z,L)=\sup \left\{\gamma(\varphi(L),L) \mid \varphi \in \DHam_{c} (B(z, \varepsilon))\right \}$$ 
  and $\rho(z,L)$ to be the limit $$\limsup_{ \varepsilon \to 0} \frac{1}{-2\log (\varepsilon )}\log \rho_{ \varepsilon }(z,L)$$
  \end{defn} 
  It is easy to show that  for $\psi \in \DHam_{c} (M, \omega)$ we have 
  $\rho (\psi(z),\psi(L))=\rho(z,L)$ so that for a smooth Lagrangian, $\rho(z,L)=1$, since this is the case for a Lagrangian vector space in $ {\mathbb R}^{2n}$. 
  We leave the proof of the following to the reader
  \begin{prop} 
  Let $(L_{k})_{k\geq 1}$ be a converging sequence in $ \widehat {\LL} (M, \omega)$ having limit $L$. Assume there exists $ \varepsilon, \delta >0$  such that  $\rho_{ \varepsilon }(z,L_{k}) \geq \delta$ for all $k$. Then, $z \in \gammasupp(L)$ and $\rho_{ \varepsilon }(z,L)\geq \delta$. 
  
  \end{prop}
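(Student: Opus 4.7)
The proof should be a straightforward triangle-inequality argument combined with the isometry property of $\gamma$ under Hamiltonian diffeomorphisms. First I would unpack the hypothesis: for each $k$ and each $\eta>0$, by the definition of $\rho_\varepsilon(z,L_k)$ as a supremum we can pick a Hamiltonian map $\varphi_k^\eta\in\DHam_c(B(z,\varepsilon))$ with $\gamma(\varphi_k^\eta(L_k),L_k)\geq\delta-\eta$. The point is then to transfer this lower bound from $L_k$ to $L$.

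The key estimate is the triangle inequality together with the fact that Hamiltonian diffeomorphisms preserve $\gamma$, so $\gamma(\varphi_k^\eta(L_k),\varphi_k^\eta(L))=\gamma(L_k,L)$. This gives
\[
\gamma(\varphi_k^\eta(L_k),L_k)\ \leq\ \gamma(\varphi_k^\eta(L_k),\varphi_k^\eta(L))+\gamma(\varphi_k^\eta(L),L)+\gamma(L,L_k)\ =\ 2\gamma(L_k,L)+\gamma(\varphi_k^\eta(L),L).
\]
Since $L_k\to L$ in $\gamma$, we may choose $k=k(\eta)$ large enough that $\gamma(L_k,L)<\eta/4$, and then $\gamma(\varphi_k^\eta(L),L)\geq\delta-3\eta/2$. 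Because $\varphi_k^\eta\in\DHam_c(B(z,\varepsilon))$, this implies $\rho_\varepsilon(z,L)\geq\delta-3\eta/2$ for every $\eta>0$, yielding $\rho_\varepsilon(z,L)\geq\delta$, which is the second half of the conclusion.

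For the claim $z\in\gammasupp(L)$, I would apply the same argument inside any smaller ball $B(z,\varepsilon')$ with $\varepsilon'\leq\varepsilon$: the hypothesis $\rho_{\varepsilon}(z,L_k)\geq\delta$ in fact persists under shrinking the radius in the sense that the preceding computation can be rerun with $\varphi_k^\eta$ chosen supported in $B(z,\varepsilon')$ whenever such a choice realizes positive $\rho_{\varepsilon'}(z,L_k)$. More simply, because we have just shown $\rho_\varepsilon(z,L)\geq\delta>0$, there exists some $\varphi\in\DHam_c(B(z,\varepsilon))$ with $\gamma(\varphi(L),L)>0$; by Lemma \ref{Lemma-3.6} (\ref{Lemma-3.6-a}), $\supp(\varphi)\cap\gammasupp(L)\neq\emptyset$, and by rerunning the approximation with neighborhoods shrinking around $z$ one forces the intersection point to be $z$ itself. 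The main subtlety, and essentially the only nontrivial step, is this passage from the triangle inequality bound to conclude that $z$ (and not merely some nearby point) lies in $\gammasupp(L)$; everything else is bookkeeping with the definition of $\rho_\varepsilon$ and the isometric action of $\DHam$ on $\gamma$.
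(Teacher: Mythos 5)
The paper offers no proof of this proposition (it is explicitly left to the reader), so I can only judge your argument on its own terms. Your first half is correct and is surely the intended argument: since each $\varphi\in\DHam_c(B(z,\varepsilon))$ acts on $\widehat\LL(M,\omega)$ as a $\gamma$-isometry, the triangle inequality gives $\gamma(\varphi(L_k),L_k)\le 2\gamma(L_k,L)+\gamma(\varphi(L),L)$, hence $\rho_\varepsilon(z,L)\ge\rho_\varepsilon(z,L_k)-2\gamma(L_k,L)\ge\delta-2\gamma(L_k,L)$ for every $k$, and letting $k\to\infty$ yields $\rho_\varepsilon(z,L)\ge\delta$. (Taking the supremum over $\varphi$ before passing to the limit makes the auxiliary $\eta$ unnecessary.) This is the same estimate the paper uses elsewhere, e.g.\ in the lemma inside the proof of Theorem \ref{Thm-7.12}.

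The second half is where the gap is, and you have correctly flagged the dangerous step without actually carrying it out. From $\rho_\varepsilon(z,L)\ge\delta>0$ and Lemma \ref{Lemma-3.6} (\ref{Lemma-3.6-a}) you only learn that $\gammasupp(L)$ meets $\overline{B(z,\varepsilon)}$ for the \emph{one} radius $\varepsilon$ appearing in the hypothesis; this locates a point of the support within distance $\varepsilon$ of $z$, not $z$ itself. Your proposed fix --- rerunning the argument with $\varphi$ supported in $B(z,\varepsilon')$, $\varepsilon'<\varepsilon$, ``whenever such a choice realizes positive $\rho_{\varepsilon'}(z,L_k)$'' --- is circular: the hypothesis gives no lower bound on $\rho_{\varepsilon'}(z,L_k)$ for any $\varepsilon'<\varepsilon$, and none can hold in general. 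Indeed, take the constant sequence $L_k=L=0_N$ and $z\notin 0_N$ with $B(z,\varepsilon)\cap 0_N\ne\emptyset$: then $\rho_\varepsilon(z,L_k)\ge\delta$ for all $k$ and some $\delta>0$ (move the zero section inside $B(z,\varepsilon)$ as in Figure \ref{fig-1n}), yet $z\notin\gammasupp(L)=0_N$. So the conclusion $z\in\gammasupp(L)$ genuinely does not follow from the single-radius hypothesis; the statement must be read with the hypothesis holding for every radius $\varepsilon'\in\,]0,\varepsilon]$ (with $\delta=\delta(\varepsilon')$ allowed to depend on the radius but uniform in $k$). Under that reading your first computation, applied radius by radius, gives $\rho_{\varepsilon'}(z,L)>0$ for all small $\varepsilon'$, which is precisely the definition of $z\in\gammasupp(L)$, and no further argument is needed.
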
 
\section{The \texorpdfstring{$\gamma$-coisotropic}{gamma-coisotropic} subsets in a symplectic manifold: definitions and first properties}\label{Section-Coisotropic}
We now define the  notion of $\gamma$-coisotropic subset in a symplectic manifold. Of course this notions will coincide with usual coisotropy in the case of smooth submanifolds. 
 We start with a new definition that will play a central role in this paper. After this paper was written, we realized that the analogue of $\gamma$-coisotropic- with $\gamma$ replaced by the Hofer distance (and under the name of ``locally rigid'') had already been defined in \cite{Usher}. Using such a notion, the property that for a submanifold, the equivalence between local rigidity and being coisotropic was stated by Usher and he proved that the Humili{\`e}re-Leclercq-Seyfaddini   theorem (\cite{Hum-Lec-Sey2}) follows from the definition and its invariance by symplectic homeomorphism. 
\subsection{Basic definitions}
\begin{defn} \label{Def-coisotropic}\index{$\gamma$-coisotropic}
Let $V$ be a subset of $(M, \omega)$ and $x\in V$. We shall say that $V$ is {\bf non-$\gamma$-coisotropic at $x$} if for any ball $B(x, \varepsilon )$ there exists a ball  $B(x,\eta) \subset B(x, \varepsilon )$ and a sequence $(\varphi_k)_{k\geq 1}$ of Hamiltonian maps supported in $B(x, \varepsilon )$ such that $\gamma-\lim \varphi_k=\Id$ and we have  $\varphi_k(V)\cap B(x, \eta )=\emptyset$.

In other words a  set  $V$ is {\bf $\gamma$-coisotropic at $x\in V$} if there exists $ \varepsilon >0$ such that for any ball $B(x,\eta)$ with $0< \eta< \varepsilon $ there is a $\delta (\eta)>0$ such that for all $\varphi \in \DHam_{c}(B(z, \varepsilon ))$ such that  %
$\varphi(V) \cap B(x, \eta)=\emptyset$  we have $\gamma (\varphi)> \delta(\eta)$. 

We shall say that $V$ is {\bf $\gamma$-coisotropic} if it is non-empty and $\gamma$-coisotropic at each $x\in V$. It is nowhere $\gamma$-coisotropic if each point $x\in V$ is non-$\gamma$-coisotropic. 

\end{defn} 

\begin{rem} 
\begin{enumerate} 
\item  A set is $\gamma$-coisotropic at a point if ``locally displacing it away' from itself'' in a neighbourhood of that point cannot be achieved by an arbitrarily small -as measured by $\gamma$- Hamiltonian diffeomorphisms. 
\item 
A variation of this definition assumes in the definition of non-$\gamma$-coisotropic, that $\varphi(V)=V'$ is fixed. This would be better in some instances, but the {\bf locally hereditary} property, explained  below becomes non-obvious (if at all true). 
Note that our definition should make the empty set to be $\gamma$-coisotropic, however we explicitly excluded this case. 
\item Note that we do not need to define the spectral distance on $(M,\omega)$ to define $\gamma$-coisotropic subsets. Indeed, we only need to consider elements in 
$\DHam_{c}(B(x, \varepsilon))$ for $ \varepsilon $ small enough. But for this, using a Darboux chart, it is enough to have $\gamma$ defined in $\DHam_{c}( {\mathbb R}^{2n})$. 
\end{enumerate} 
 \end{rem} 
\begin{examples}
\begin{enumerate} 
\item A point in $ {\mathbb R}^{2}$ is non-$\gamma$-coisotropic. Let $H(q,p)$ be such that $ \frac{\partial H}{\partial q_{1}}(0,p)$ is  sufficiently large, but $H$ is $C^{0}$-small. Then the flow of $H$ takes the origin outside of $B(0, \eta)$. By truncation, we can do this while being compact supported in $B(0, \varepsilon )$. 
\item If $I=[0,1]$ in $ {\mathbb R}^{2}$ it is easy to see that interior points of the interval are $\gamma$-coisotropic, while the boundary points are not. Thus $I$ is not $\gamma$-coisotropic. 
\item It is not hard to see that $\{(0,0)\} \times {\mathbb R}^{2n-2}$ is not $\gamma$-coisotropic, and neither is $I\times \{0\}\times {\mathbb R}^{2n-2}$. However this last set
is $\gamma$-coisotropic at the point $(x,0,...,0)$ if and only if $0<x<1$. More generally $D^{n+r}(1)\times\{0\}$ is $\gamma$-coisotropic for $1\leq r\leq n$ but not for $r=0$.
\item A smooth Lagrangian submanifold without boundary  is $\gamma$-coisotropic. 
  \end{enumerate} 
 \end{examples}
 \begin{Question}
Let $V$ be a $\gamma$-coisotropic subset. Does $V$ have positive displacement energy ? Even better, is it true that $\gamma(V)=\inf \{\gamma (\psi) \mid \psi(V)\cap V=\emptyset \} >0$ (see \cite{Bolle-1, Ginzburg-coisotropic, Tonnelier-these} for results in this direction in the smooth case). Note that the answer is in general negative (see \cite{Usher}). 
\end{Question}
One of the goals of this paper is to show a number of natural occurrences of $\gamma$-coisotropic sets, starting  from the $\gamma$-support of elements of $\widehat{\LL}(M,\omega)$ (see Theorem \ref{Thm-7.12} and \cite{Guillermou-Viterbo}).

We denote by $\mathcal L_\gamma(M, \omega)$ the set of images of smooth Lagrangians by elements of $ \mathcal {H}_\gamma (M, \omega)$ and by $\mathcal S_{2}(M, \omega)$ the set of images of  symplectic submanifolds (not necessarily closed) of codimension greater or equal to $2$ by elements of $ \mathcal {H}_\gamma (M, \omega)$. Note that these are subsets (and even topological submanifolds) of $M$, which is not the case for elements in $\widehat {{\LL}} (M,\omega)$.

The following is quite easy:
\begin{prop} \label{Prop-7.5}
We have the following properties
\begin{enumerate} 
\item \label{Prop-7.5-1} Being $\gamma$-coisotropic is invariant by $ \mathcal {H}_\gamma (M, \omega)$, hence by ${\Homeo} (M, \omega)$ (and also by conformally symplectic homeomorphisms). 
\item \label{Prop-7.5-2} Being $\gamma$-coisotropic is a {\bf local property} in $M$. It only depends on a neighbourhood of $V$ in $(M, \omega)$.
\item \label{Prop-7.5-3}  If $X,Y$ are $\gamma$-coisotropic then $X\cup Y$ is $\gamma$-coisotropic. 
\item \label{Prop-7.5-4} $\gamma$-coisotropic implies locally rigid (in the sense of \cite{Usher}).
\item \label{Prop-7.5-5} Being $\gamma$-coisotropic  is {\bf locally hereditary} in the following sense : if through every point $x \in V$ there is a $\gamma$-coisotropic subset $V_x\subset V$ containing $x$, then $V$ is $\gamma$-coisotropic. In particular if through any point of $V$ there is a coisotropic or a local Lagrangian submanifold,  then $V$ is $\gamma$-coisotropic. If  any point in $V$ has a neighbourhood contained in an element of $\mathcal S_2(M, \omega)$ then $V$ is non-$\gamma$-coisotropic. 

\end{enumerate} 
\end{prop} 
\begin{proof} 
The first two statements as well as (\ref{Prop-7.5-3})  are obvious from the definition. The assertion (\ref{Prop-7.5-4}) follows immediately from the inequality $\gamma \leq d_H$ between Hofer distance and spectral norm. For (\ref{Prop-7.5-4}),  the only non-obvious statement is that a smooth Lagrangian germ is coisotropic at any interior point,  while a positive codimension symplectic submanifold is everywhere non-$\gamma$-coisotropic. The first part of the fifth statement

For a Lagrangian,  if $B(x_0, 1 )$ is a ball and $L\cap B( x_{0},1 )= {\mathbb R}^n \times \{0\}$ and $L'\cap B(x_{0},1)= \emptyset$, then $\gamma (L, L') \geq \pi/2$. Indeed it is easy to construct a Hamiltonian isotopy supported in $B(x_{0},1)$ such that $\gamma (L_t,L)= \pi/2$ 
 and $L_t=L$ outside $B(1)$ (see Figure \ref{fig-1n}).

 As a result $$\pi/2=c_+ (L_1, L')\leq c_+ (L_1, L')+c_+(L',L) \leq c_+ (L, L')-c_-(L,L')=\gamma (L,L') $$
But this implies $ \gamma (\varphi) \geq \gamma (L',L) \geq \pi/2$ and this concludes the proof in the Lagrangian case, since by a conformal map we can always reduce to this case. 
 
Now let $S$ be symplectic of codimension $2$, we want to prove that $S$ is nowhere coisotropic. Up to taking a subset of $S$ and after a symplectic change of coordinates (and possibly a dilation), we may assume that $S, U$ are identified locally to  
 $$S_0=\left \{(0,0, \bar q, \bar p) \in D^{2}(1) \times D^{2n-2}(1) \right \} \subset D^{2}(1) \times D^{2n-2}(1)=U$$ 
 where $D^{2}(r)$ (resp. $D^{2n-2}(r)$) are the symplectic balls of radius $r$ in $({\mathbb R}^2, \sigma_{2})$ (resp.in $({\mathbb R}^{2n-2}, \sigma_{2n-2})$).
 Consider the isotopy $$t \mapsto ( t a(\bar q, \bar p), 0, \bar q, \bar p)$$ where $a(\bar q, \bar p)= A( \vert \bar q \vert ^2+ \vert \bar p \vert ^2)$ and $A$ is a compactly supported function  bounded by $1$, and equal to $1$ in $D^{2n-2}(1)$. This is a Hamiltonian isotopy generated by $H( q_1,p_1, \bar q, \overline p)=\chi (\overline q, \overline p)a(\widehat q, \widehat p)$, where $(\overline q, \overline p)\in  {\mathbb R}^{2k}$ and $(\widehat q, \widehat p)\in  {\mathbb R}^{2n-2k}$,
  sending $S_0$ to $S_1$, where $$S_1 =\left \{(1,0, \bar q, \bar p) \in D^{2k}(1) \times D^{2n-2k}(1) \right \}$$ and in particular $S_0\cap S_1=\emptyset$. 
 Let $$H(q_1,\bar q, p_1,\bar p)= \chi(q_1,p_1)\cdot a(\bar q, \bar p)$$ be such that  $\frac{\chi(q_1,0)}{\partial q_1} =0$ and $\frac{\chi(q_1,0)}{\partial p_1} =1$ for $ \vert q_1\vert \leq 1$ and  
such that $ \Vert \chi \Vert_{C^0} \leq \varepsilon $. The flow of $H$ is given by 
  \begin{displaymath}   \left \{
  \begin{array}{ll}
 \dot q_1 &= \frac{\partial \chi}{\partial p_1}(q_1(t), p_1(t))  A (\vert \bar q(t)\vert ^2+  \vert \bar p (t) \vert ^2)\\
 \dot p_1&=  - \frac{\partial \chi}{\partial q_1}(q_1(t), p_1(t)) A (\vert \bar q(t)\vert ^2+  \vert \bar p (t) \vert ^2)\\
 \dot {\bar q} (t) &= 2\chi(q_1(t),p_1(t)) A'(\vert \bar q(t)\vert ^2+  \vert \bar p (t) \vert ^2)\bar p (t)\\
 \dot {\bar p}(t)&=-2\chi(q_1(t),p_1(t)) A'(\vert \bar q(t)\vert ^2+  \vert \bar p (t) \vert ^2)\bar q (t)\\
\end{array}
\right .  \end{displaymath} 

The last two equations imply that $ \vert \bar q \vert ^2+ \vert \bar p \vert ^2$ is constant, hence $a( \bar q(t), \bar p(t))$ is constant. 
If we start from $p_1=q_1=0$, we have $p_1(t)=0$ and $q_1(t)= t A (\vert \bar q(0) \vert ^2+ \vert \bar p (0) \vert ^2)$. 
As a result we have $\varphi_H^1(S_0)=S_1$. Since $ \Vert H \Vert _{C^0} \leq \varepsilon $ this proves our claim. 
\end{proof} 

A first consequence is 
\begin{prop} \label{Prop-7.6}
Let $V$ be a smooth submanifold. Then it is $\gamma$-coisotropic if and only if it is coisotropic in the usual sense, i.e. for all points $x\in V$ we have  $(T_xV)^\omega \subset T_xV$.
\end{prop}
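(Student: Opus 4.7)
The plan is to derive both implications from Proposition \ref{Prop-7.5}: the ``locally hereditary'' clause handles the easy direction, and the ``locally contained in a symplectic submanifold of codimension $\geq 2$'' clause yields the converse after a local construction.

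For the direction smooth coisotropic $\Rightarrow$ $\gamma$-coisotropic, I would argue that through every point $x$ of a smooth coisotropic $V$ passes a Lagrangian germ contained in $V$. Using the classical coisotropic normal form (Darboux-type coordinates in which $V=\{p_1=\ldots=p_c=0\}$ with $c=\mathrm{codim}\,V\leq n$), the germ $\{p_1=\ldots=p_n=0\}$ is a Lagrangian contained in $V$ through $x$. Smooth Lagrangians are $\gamma$-coisotropic by the argument in the proof of Proposition \ref{Prop-7.5}, so the locally hereditary property of Proposition \ref{Prop-7.5} immediately yields that $V$ is $\gamma$-coisotropic.

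For the converse I would argue contrapositively: assuming $V$ is smooth but $(T_xV)^\omega \not\subset T_xV$ at some $x\in V$, my aim is to exhibit a smooth symplectic submanifold $V'\subset M$ of codimension $\geq 2$ containing a neighborhood of $x$ in $V$; the last clause of Proposition \ref{Prop-7.5} would then give that $V$ is non-$\gamma$-coisotropic at $x$. I would construct $V'$ first infinitesimally and then promote it to a smooth submanifold. Setting $W=T_xV$ and $K=W\cap W^\omega$, non-coisotropy gives $\dim K < 2n-\dim W$; moreover both $\dim K$ and $2n-\dim W$ have the parity of $\dim W$ (the former because $\omega|_W$ has even rank, the latter because $2n$ is even), so in fact $\dim K \leq 2n-\dim W -2$. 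Splitting $W=K\oplus S$ with $S$ symplectic, I would pick $f_1,\ldots,f_{\dim K}$ in the symplectic complement $S^\omega$ dual to a basis $e_1,\ldots,e_{\dim K}$ of $K$; this is possible because $K\subset S^\omega$ is isotropic and $2\dim K\leq \dim S^\omega$. The subspace $U=W+\mathrm{span}(f_i)$ then splits $\omega$-orthogonally as $S\oplus\mathrm{span}(e_i,f_j)$, both summands symplectic, and is therefore symplectic of dimension $\dim W+\dim K\leq 2n-2$.

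To promote $U$ to an actual submanifold, I would extend a local parametrization $\phi\colon (W_0,0)\to (V,x)$ to $\phi'(u,t)=\exp_{\phi(u)}\bigl(\sum_i t_i F_i\bigr)$, where the $F_i$ are arbitrary smooth local vector field extensions of the $f_i$ and the exponential is taken with respect to any auxiliary Riemannian metric. The image $V'$ satisfies $T_xV'=U$, contains $V$ locally because $\phi'(u,0)=\phi(u)$, and is symplectic near $x$ by openness of non-degeneracy of $\omega|_{TV'}$; after shrinking, $V'$ represents an element of $\mathcal{S}_2(M,\omega)$ whose intersection with a neighborhood of $x$ contains $V$. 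The main obstacle is really the linear-algebraic piece, and especially the parity step that forces $\dim K\leq 2n-\dim W-2$ rather than merely $2n-\dim W-1$, which is what ensures codimension at least two and thereby puts us into the scope of the last clause of Proposition \ref{Prop-7.5}; the smoothing step is then routine.
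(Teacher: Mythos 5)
Your proposal is correct and follows essentially the same route as the paper: the forward implication via the coisotropic normal form, a Lagrangian germ inside $V$, and the locally hereditary clause of Proposition \ref{Prop-7.5}; the converse via the linear decomposition $T_xV=K\oplus S$ ($K$ the kernel of $\omega|_{T_xV}$, $S$ symplectic), enlarging $K$ to a symplectic subspace inside $S^\omega$ to get a symplectic $U\supset T_xV$ of codimension $\geq 2$, and then realizing it as a codimension-$\geq 2$ symplectic submanifold containing $V$ locally so the last clause of Proposition \ref{Prop-7.5} applies. Your explicit parity argument makes precise a point the paper leaves implicit (there, codimension $\geq 2$ follows simply because the symplectic subspace $D(x)$ is even-dimensional and proper), but the substance of the two proofs is the same.
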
 
\begin{proof} 
Assume $C$ is coisotropic. Locally $C$ can be identified to $$ \left\{ (x_1,...,x_n, p_1,...,p_k,0,...,0) \mid x_j, p_j \in {\mathbb R} \right \}$$ This contains the Lagrangian $p_1=....=p_n=0$ which we proved to be $\gamma$-coisotropic, hence $C$ is $\gamma$-coisotropic. 

Conversely assume $V$ is smooth but not coisotropic. Then locally we can embed $V$ in a codimension $2$ symplectic submanifold as follows : according to the Decomposition theorem (\cite{Viterbo-book} thm 2.14), we can write $T_xV=I\oplus S$ where $I$ is isotropic, $S$ symplectic, and there exists $K$, uniquely defined by the choice of $S$, such that $(K\oplus I)$ is symplectic and contained in $S^\omega$. Moreover $K\oplus I \oplus S=D(x)$ is symplectic, so choosing a continuously varying $S$, the same will hold for  $D$. If $V$ is not coisotropic then $D(x)\neq T_xM$. We thus have a symplectic distribution $D$ near $x_0$ such that $T_xV \subset D(x)$. Since being symplectic is an  open condition, $D(x)$ will be symplectic in a neighbourhood of $x_0$. Then, using the exponential map, we may find a symplectic manifold $W$ defined in a neighbourhood of $V$ such that $T_xW=D(x)$. Since we proved that a codimension $2$ symplectic submanifold is non-coisotropic, and being $\gamma$-coisotropic is locally hereditary, we conclude that $V$ is not $\gamma$-coisotropic. 
\end{proof}

\subsection{Links with other notions of coisotropy}
There are other definitions of coisotropic   subsets\footnote{In the litterature, synonyms of coisotropic are ``involutive'', ``locally-rigid'', etc.}. 
\begin{defn} [Poisson coisotropic]
We shall say that the set $V$ is {\bf Poisson coisotropic} if $\mathcal P_V=\left \{f \in C^\infty(M) \mid f=0\; \text{on}\; V\right \}$ is closed for the Poisson bracket. 
In other words if $f,g$ vanish on $V$ so does $\{f,g\}$. 
\end{defn}

Finally we define, following Bouligand (see \cite{Bouligand}),  for a subset  $V$ in a smooth manifold,  two cones :
\begin{defn} The {\bf paratingent cone} of a set $V$ at $x$ is 
$$C^+(x,V)= \left \{ \lim_n c_n(x_n-y_n) \mid x_n, y_n\in V, c_n \in {\mathbb R}\; \lim_nx_n=\lim_n y_n=x, \lim_n c_n=+\infty \right \}$$
The {\bf contingent cone} of a set $V$ at $x$ is 
$$C^-(x,V)= \left \{ \lim_n c_n(x_n-x)\mid x_n \in V, c_n \in {\mathbb R}, \; \lim_nx_n=x,  \lim_n c_n=+\infty \right \}$$
\end{defn} 
Clearly $C^-(x,V) \subset C^+(x,V)$. Note that $C^+(x,V)$ is invariant by $v \mapsto -v$, while it is not necessarily the case for $C^-(x,V)$. We then have the following definition, for which we refer to Kashiwara and Schapira

\begin{defn} [Cone-coisotropic, see \cite{K-S}, theorem 6.5.1 p. 271]
We shall say that $V$ is {\bf cone -coisotropic} if whenever a hyperplane $H$ is such that $C^+(x,V) \subset H$ then the symplectic orthogonal of $H$, $H^\omega$ is contained in $ C^-(x,V)$. 
\end{defn} 
Note that what we call here cone-coisotropic is called {\bf involutivity} in \cite{K-S}. 
\begin{figure}[H]
\centering
 \begin{overpic}[width=6cm]{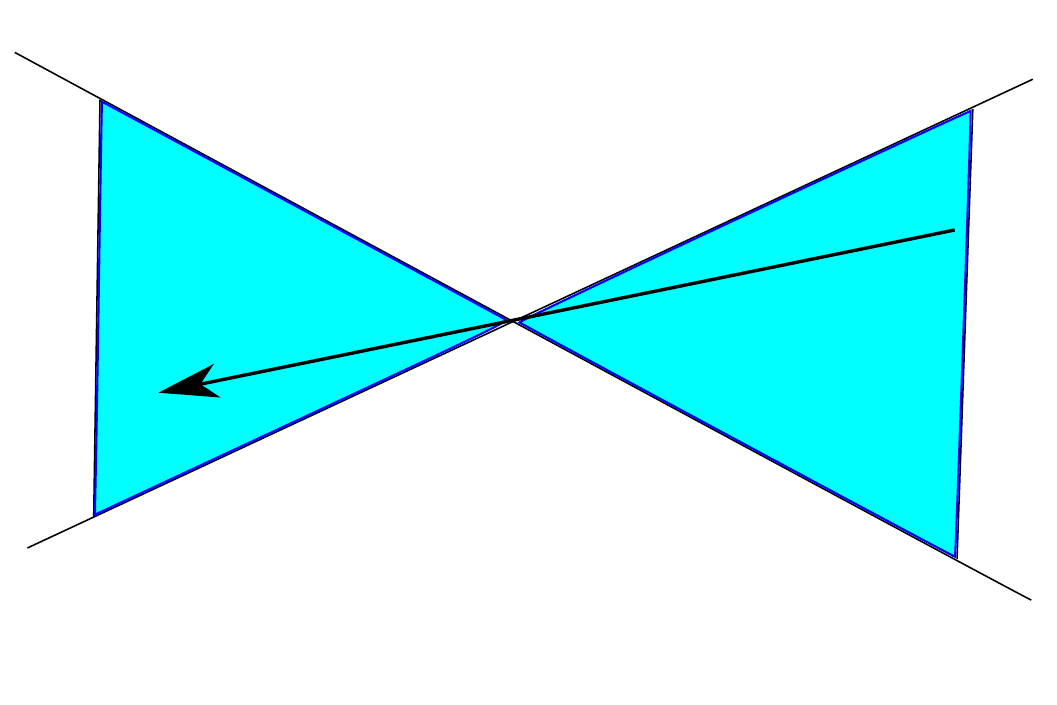}
 \put (75,35) {$\myBlue C$} 
  \put (12,45) {$\myBlue C$} 
   \put (10,35) {$ {H^\omega}$} 
 \end{overpic}
\begin{overpic}[width=6cm]{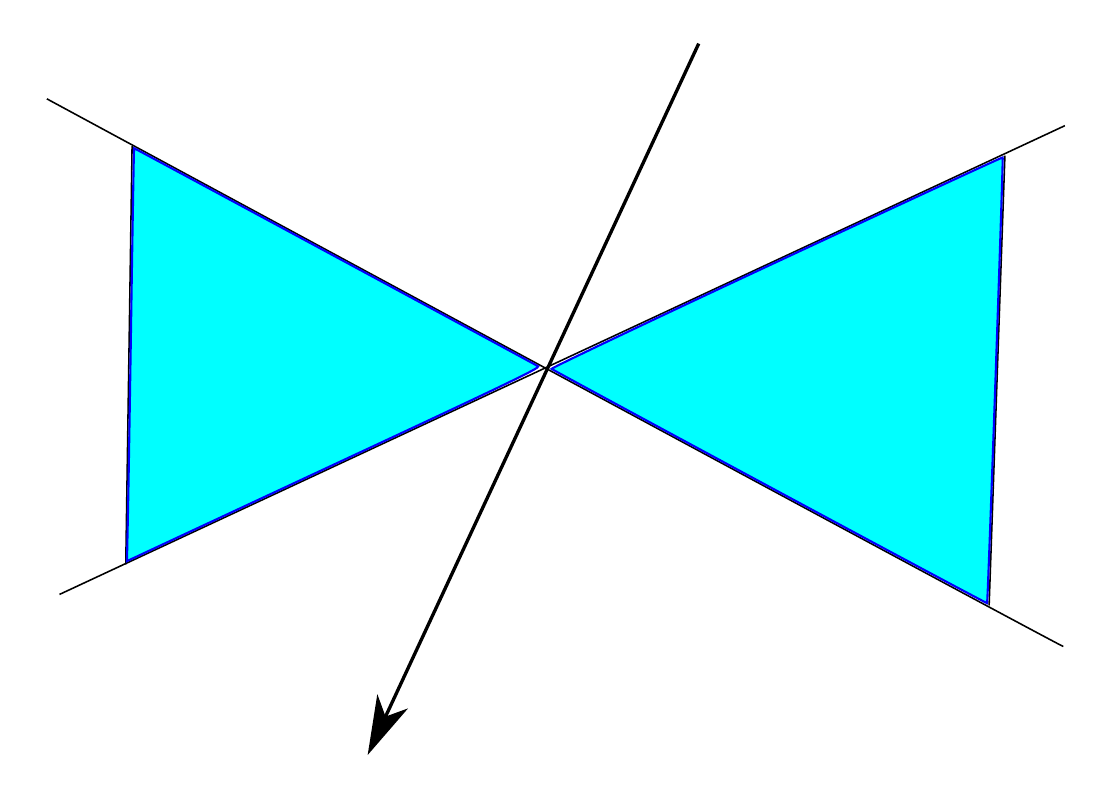}
 \put (75,35) {$\myBlue C$} 
  \put (12,35) {$\myBlue C$} 
   \put (27,10) {$ {H^\omega}$} 
 \end{overpic}
\caption{Two cones $C \subset H$. The one on the left is coisotropic, the one on the right is not.}
\label{fig-3n}
\end{figure}
It is an elementary fact that in both cases a smooth submanifold is Poisson coisotropic or cone coisotropic if and only if it is coisotropic in the usual sense. 
We  proved in \cite{Guillermou-Viterbo}  the first part of  
\begin{prop} \label{Prop-various-coisotropic}
For a subset $V$ in $(M, \omega)$ we have the following implications
$$ \text{$\gamma$-coisotropic} \Longrightarrow \text{cone-coisotropic} \Longrightarrow \text{Poisson coisotropic} $$
\end{prop}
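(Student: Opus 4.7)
The plan is, since the first implication ($\gamma$-coisotropic $\Rightarrow$ cone-coisotropic) is already established in \cite{Guillermou-Viterbo}, to concentrate on the second implication: cone-coisotropic $\Rightarrow$ Poisson-coisotropic. The dynamical content I will extract from cone-coisotropy is that, for any smooth $f$ vanishing on $V$, the Hamiltonian vector field $X_f(x)$ lies in the contingent cone $C^-(x,V)$; a first-order Taylor expansion of a second function $g\in\mathcal P_V$ along a sequence realizing this limit will then force $\{f,g\}(x)=0$.

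More precisely, fix $x\in V$ and $f,g\in\mathcal P_V$. If $df(x)=0$ (or $dg(x)=0$) then $X_f(x)=0$ and $\{f,g\}(x)=\omega_x(X_f,X_g)=0$ trivially, so I may assume both differentials are non-zero. First I check $C^+(x,V)\subset H_f:=\ker df(x)$: given $v=\lim c_n(x_n-y_n)$ with $x_n,y_n\in V$, $x_n,y_n\to x$ and $c_n\to+\infty$, the mean value theorem produces $\xi_n$ on the segment $[y_n,x_n]$ with $f(x_n)-f(y_n)=df(\xi_n)(x_n-y_n)$; since the left-hand side vanishes and $\xi_n\to x$, passing to the limit in $0=df(\xi_n)(c_n(x_n-y_n))$ yields $df(x)(v)=0$. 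The cone-coisotropy hypothesis applied to $H_f$ then gives $H_f^\omega\subset C^-(x,V)$, and $H_f^\omega$ is exactly the line spanned by $X_f(x)$, so $X_f(x)\in C^-(x,V)$.

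By definition of $C^-$, I can find $x_n\in V$ and $c_n\to+\infty$ with $c_n(x_n-x)\to X_f(x)$; in particular $|x_n-x|\to 0$ while $c_n|x_n-x|$ stays bounded. Expanding $g$ at $x$ and using $g(x_n)=0=g(x)$ gives
\[
0 = dg(x)(x_n-x) + o(|x_n-x|).
\]
Multiplying by $c_n$: the term $c_n\,o(|x_n-x|) = (c_n|x_n-x|)\cdot o(1)$ tends to zero, while $c_n\,dg(x)(x_n-x) = dg(x)(c_n(x_n-x)) \to dg(x)(X_f(x))$, hence $dg(x)(X_f(x))=0$, i.e.\ $\{f,g\}(x)=0$. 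Since $x\in V$ was arbitrary, $\mathcal P_V$ is closed under the Poisson bracket. The main conceptual point is recognizing that the cone-coisotropy hypothesis, applied to the distinguished hyperplane $\ker df(x)$, already packages exactly the information that the Hamiltonian direction $X_f(x)$ is infinitesimally tangent to $V$; everything else is a routine first-order Taylor calculation, with no regularity assumption on $V$ beyond what the cones $C^\pm$ already encode.
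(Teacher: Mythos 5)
Your proof is correct, and for the implication ``cone-coisotropic $\Longrightarrow$ Poisson coisotropic'' it takes a genuinely different route from the paper's (both treatments, reasonably, defer the first implication to \cite{Guillermou-Viterbo}). The paper argues by contraposition: if $\{f,g\}(x_0)\neq 0$ for some $f,g$ vanishing on $V$, then $S=f^{-1}(0)\cap g^{-1}(0)$ is, near $x_0$, a codimension-two \emph{symplectic} submanifold containing $V$, and in adapted coordinates $S=\{q_n=p_n=0\}$ the hyperplane $H=\{q_n=0\}$ contains $C^+(x_0,V)\subset T_{x_0}S$ while $H^\omega=\mathbb{R}\,\partial/\partial p_n$ is not contained in $T_{x_0}S\supset C^-(x_0,V)$; this reuses the same ``codimension-two symplectic submanifolds are nowhere coisotropic'' model computation that the paper exploits elsewhere for $\gamma$-coisotropy. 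You instead argue directly: the mean value theorem gives $C^+(x,V)\subset\ker df(x)$ for every $f\in\mathcal P_V$ with $df(x)\neq 0$, cone-coisotropy applied to that hyperplane forces $X_f(x)\in C^-(x,V)$, and a first-order Taylor expansion of $g$ along a sequence realizing this contingent direction yields $dg(x)(X_f(x))=0$, i.e.\ $\{f,g\}(x)=0$ (the boundedness of $c_n|x_n-x|$, which you correctly note, is what makes the error term die). The underlying mechanism is of course the same --- the test hyperplane is $\ker df$ and its symplectic orthogonal is the Hamiltonian direction --- but your version is more elementary (no construction of $S$, no adapted coordinates), treats the degenerate case $df(x)=0$ explicitly (the paper sidesteps it since $\{f,g\}(x_0)\neq 0$ forces $df(x_0)$ and $dg(x_0)$ to be independent), and isolates a statement of independent interest, namely that Hamiltonian vector fields of functions vanishing on a cone-coisotropic set are contingent to it; the paper's version buys a cleaner geometric picture consistent with its recurring use of symplectic submanifolds as certificates of non-coisotropy.
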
 
\begin{proof} [Proof of the second implication  of the Proposition]

We argue by contradiction. Assume there are two functions $f,g$ vanishing on $V$ such that $\{f,g\}\neq 0$. Let $x_0$ be a point such that $\{f,g\}(x_0)\neq 0$.
Consider the set $S=f^{-1}(0)\cap g^{-1}(0)$ near $x_0$. Then since $df(x_0), dg(x_0)$ are non-zero and linearly independent $S$ is a codimension $2$ submanifold near $x_0$ and since $X_f, X_g$ are normal to $S$ and have non-zero symplectic product, we see that $(T_{x_0}S)^\omega = \langle X_f(x_0), X_g(x_0) \rangle$ is symplectic, so $S$ is symplectic and we may conclude  that $V$ is not cone-coisotropic, since in local coordinates, $S$ is given by 
$\{q_n=p_n=0\}$ and $T_{x_{0}}S \subset \{q_n=0\}$ but $ \frac{\partial}{\partial p_n} \notin T_{x_{0}}S$ and since in this case $T_{x_{0}}S=C^+(x_{0},S)=C^-(x_{0},S)$ this proves that $S$ is not cone-coisotropic. 
 \end{proof} 
 
 There are Poisson-coisotropic sets which are not cone-coisotropic. In \cite{Guillermou-Viterbo} we give an example of a cone-coisotropic set which is not $\gamma$-coisotropic. 
 \begin{example} 
 Let $V=\{(q,p) \mid p=0, q\geq 0\}$. It is Poisson coisotropic, as this is obvious on $V\cap \{q>0\}$ and the set of points where $V$ is Poisson coisotropic is closed. But $V$ is not cone-coisotropic at $(0,0)$ because $C^+(0,V)= {\mathbb R} \frac{\partial }{\partial q} $ while
$C^-(0,V)= {\mathbb R}_+ \frac{\partial }{\partial q}$ so the cone condition is violated. 
 
 \end{example} 
Our main result in this section is

\begin{thm}[Main Theorem] \label{Thm-7.12}
The $\gamma$-support satisfies the following:
\begin{enumerate} 
\item \label{Thm-7.12-1} For any $L \in \widehat {\LL}(M,\omega)$, $\gammasupp (L)$ is $\gamma$-coisotropic. 
\item \label{Thm-7.12-2}(Peano Lagrangian) \index{Peano Lagrangian} For any $r$ in $\{1,..., n\}$ we may find $L \in \widehat {\LL}_c(T^*N)$ such that $\supp(L)$  is equal to the $\gamma$-coisotropic set  $0_N\cup K_r$ where
$$K_r= \left \{ (q,p) \mid \vert q \vert \leq 1, \vert p \vert \leq 1, p_{r+1}=...=p_n=0\right \}$$
\item  \label{Thm-7.12-3}There exists $L\neq L'$ in $\widehat{\mathfrak L}(T^*N)$ such that $\gammasupp(L)=\gammasupp(L')$.
\end{enumerate} 
\end{thm}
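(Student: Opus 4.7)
The plan has three essentially independent parts, one for each assertion.

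For \textbf{(1)}, I would reduce the $\gamma$-coisotropy of $V=\gammasupp(L)$ at a point $x\in V$ directly to Definition \ref{Def-coisotropic}. The key idea: for each $\eta$ small, the defining property of the $\gamma$-support produces some $\psi_\eta\in\DHam_c(B(x,\eta))$ with $a(\eta):=\gamma(\psi_\eta L, L)>0$; this will be the quantity whose positivity gives the required $\delta(\eta)$. Given any $\varphi\in\DHam_c(B(x,\varepsilon))$ with $\varphi(V)\cap B(x,\eta)=\emptyset$, Proposition \ref{Prop-1.3}(\ref{Prop-1.3-b}) shows that $\gammasupp(\varphi L)=\varphi(V)$ is disjoint from $\supp(\psi_\eta)$, so the contrapositive of Lemma \ref{Lemma-3.6}(\ref{Lemma-3.6-a}) forces $\psi_\eta(\varphi L)=\varphi L$ in $\widehat\LL(M,\omega)$. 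A triangle inequality combined with the $\psi_\eta$-invariance of $\gamma$ then yields
\[
a(\eta)=\gamma(\psi_\eta L,L)\le\gamma(\psi_\eta L,\psi_\eta\varphi L)+\gamma(\psi_\eta\varphi L,L)=\gamma(L,\varphi L)+\gamma(\varphi L,L)\le 2\gamma(\varphi),
\]
so $\delta(\eta):=a(\eta)/3$ verifies the criterion.

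For \textbf{(2)}, I would first observe that $K_r$ is itself $\gamma$-coisotropic: through each interior point passes the Lagrangian leaf obtained by fixing $(p_1,\dots,p_{r-1})$, so local hereditarity (Proposition \ref{Prop-7.5}(5)) applies. The construction of $L\in\widehat\LL_c(T^*N)$ with $K_r\subset\gammasupp(L)$ is the main work: following a Peano-space-filling-curve analogy, I would build a $\gamma$-Cauchy sequence $(L_k)_{k\geq 1}$ of smooth exact Lagrangians by iterated self-similar perturbations along the isotropic directions $p_1,\dots,p_{r-1}$, using the two-dimensional model of Figure \ref{fig-2n} fiberwise over a $2^{-k}$-mesh of $K_r$. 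The guiding estimate is that a $C^0$-deformation of height $\delta$ supported on a ball of radius of order $\sqrt\delta$ has $\gamma$-cost of order $\delta^2$ (essentially the area of the hatched region in Figure \ref{fig-1n}), which makes the parameters tunable so that $\sum_k\gamma(L_k,L_{k+1})<\infty$ while every point of $K_r$ sees a nontrivial surviving oscillation at every scale. Then Proposition \ref{Prop-3.8}, together with Proposition \ref{Prop-lim-support}, will yield $K_r\subset\gammasupp(\lim_kL_k)$. The hard part is organizing the iteration so that $\gamma$-summability and $C^0$-density in $K_r$ hold simultaneously; this is the technical heart of the construction.

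For \textbf{(3)}, I would use the two-dimensional example of Figure \ref{fig-2n}. Concretely, I would choose $(L_k)$ converging in $\gamma$ to some $\Lambda\in\widehat\LL(T^*N)$ whose limiting shape is symmetric under the involution $\sigma(q,p)=(q,-p)$, and set $\overline\Lambda:=\lim_k\sigma(L_k)=\sigma(\Lambda)$. Equivariance of the $\gamma$-support under symplectic diffeomorphisms (Proposition \ref{Prop-1.3}(\ref{Prop-1.3-b})) together with the $\sigma$-symmetry of the limit gives $\gammasupp(\overline\Lambda)=\sigma(\gammasupp(\Lambda))=\gammasupp(\Lambda)$. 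On the other hand, as Figure \ref{fig-2n-c} shows, $\gamma(L_k,\sigma(L_k))\ge 2A>0$ uniformly in $k$, where $A$ is the fixed area of the loop; passing to the limit yields $\gamma(\Lambda,\overline\Lambda)\ge 2A>0$, hence $\Lambda\neq\overline\Lambda$ in $\widehat\LL(T^*N)$.
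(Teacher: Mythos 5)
Your part (1) is correct and is essentially the contrapositive of the paper's argument: where the paper assumes non-$\gamma$-coisotropy, produces $\varphi_k\to\Id$ with $\varphi_k(V)\cap B(z,\eta)=\emptyset$, and derives $\supp(L)\cap B(z,\eta)=\emptyset$ from Proposition \ref{Prop-lim-support}, you run the implication forward and extract an explicit $\delta(\eta)=a(\eta)/3$ from Lemma \ref{Lemma-3.6} and the isometry property of $\psi_\eta$. This is a legitimate, arguably more quantitative, variant. Your part (3) is also essentially right and reproduces the example of Figure \ref{fig-2n}; the paper's own proof of \ref{Thm-7.12-3} instead reuses the Peano construction with the first ``tongue'' pointing in two different directions, so that $\gamma(L,L')\ge\varepsilon_0-\sum_{j\ge1}\varepsilon_j>0$ while both supports are dense in $K$. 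One small point in your version: $\sigma(q,p)=(q,-p)$ is \emph{anti}-symplectic, so Proposition \ref{Prop-1.3}~(\ref{Prop-1.3-b}) does not literally apply; you need the observation that $L\mapsto -L$ is nonetheless a $\gamma$-isometry (Proposition \ref{Prop-3.6}~(\ref{Prop-3.6-ii})) and conjugates $\DHam_c(B(z,\varepsilon))$ to $\DHam_c(\sigma(B(z,\varepsilon)))$, after which the equivariance of $\gammasupp$ follows by the same argument as in Proposition \ref{Prop-1.3}.

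The genuine gap is in part (2), which you yourself flag as ``the technical heart'' and then do not carry out. Two things are missing. First, the survival estimate: it is not enough that every point of $K_r$ ``sees a nontrivial oscillation at every scale'' during the construction; you must show that the oscillation planted at stage $k$ is still detected by the \emph{limit} $L_\infty$. In the paper this is the inequality
$$\gamma(\varphi_k(L_\infty),L_\infty)\;\ge\;\gamma(\varphi_k(L_{k+1}),L_{k+1})-2\gamma(L_{k+1},L_\infty)\;\ge\;\tfrac{\varepsilon_k}{5}-\tfrac{2}{19}\varepsilon_k\;>\;\tfrac{\varepsilon_k}{11},$$
which forces the geometric decay $\varepsilon_{j+1}<\varepsilon_j/20$; mere summability of $\sum_j\varepsilon_j$, which is what your ``tunable parameters'' guarantee, does not suffice, since the tail $\sum_{j>k}\varepsilon_j$ must be small \emph{compared to} $\varepsilon_k$. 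Second, each stage must produce a perturbation whose $\gamma$-cost is $<\varepsilon_k$ while the resulting Lagrangian admits a localized displacement of energy $\ge\varepsilon_k/5$ near the prescribed point, \emph{and} which avoids the finitely many points already treated; this is exactly Lemma \ref{Lemma-7.14} (the ``tongue'' reaching a prescribed $z$ through a thin Lagrangian bidisc $B^n(\varepsilon)\times B^n(1/3)$), and your fiberwise-Peano heuristic with the $\delta$ versus $\delta^2$ scaling does not substitute for it. Note also that the paper does not build the case $1\le r<n$ directly: it constructs the $r=n$ case in $T^*N^r$ and then invokes the product formula $\gammasupp(L_1\times L_2)=\gammasupp(L_1)\times L_2$ of Proposition \ref{Prop-7.17} (whose proof requires both a nontrivial intersection claim and transitivity of the stabilizer of $L_2$), a step your outline does not address.
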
 

\begin{proof} [Proof of Theorem \ref{Thm-7.12} (\ref{Thm-7.12-1})]
Assume  $V=\supp(L)$ is non-$\gamma$-coisotropic at $z\in V$. Then there exists a sequence of Hamiltonian maps  $\varphi_k$ going to $\Id$ (again for $\gamma$), supported in $B(z, \varepsilon )$ and such that $\varphi_k(V)\cap B(z,\eta) = \emptyset$. 
Now on one hand
 $\gamma-\lim \varphi_{k} (L)= L$ since  $\gamma-\lim_{ k } \varphi_ k = \Id$. Thus $\gamma-\lim_k \varphi_{k} (L_k)=L$ has support $\supp(L)$. On the other hand
  $\varphi_k(L)$  has support $\varphi_k (\supp (L))$ and since  $\varphi_k (\supp (L))\cap B(z,\eta) = \emptyset$,  Proposition \ref{Prop-lim-support} implies that $\supp (L) \cap B(z,\eta)=\emptyset$. 
 We thus get a contradiction.
\end{proof} 
For  (\ref{Thm-7.12-2}),  we call such Lagrangians ``Peano Lagrangians'' because their supports, like the Peano curve, exhibit surprising dimensional properties while being associated to a Lagrangian.
Let us  consider the ``cube'' $K_r$ as above. 
Any embedding of $D^n$ in $N$ yields such a cube. Our goal is first to construct $L \in  \widehat {\LL}(T^*D^n)$ such that $\supp(L) \supset K_r$. 

\begin{thm} \label{Theorem-1}
There exists an element $L \in {\LL} (T^*N)$ with support satisfying $ \supp (L)=K_n\cup 0_N $. 
\end{thm} 
\begin{proof} 
Consider $L=\psi (0_N)$ where $\psi$ is supported in $K=K_n$. Let $A$ be a finite set in $\mathring K\setminus L$, $z$ a point in $\mathring K\setminus L$ not contained in $A$, $U \subset K\setminus (L\cup A)$ be the symplectic image of the product of two Lagrangian balls 
$\sigma : B^n(\varepsilon)\times B^n( \frac{1}{3})  \longrightarrow K\setminus {L\cup A}$. Notice that $B^n(\varepsilon)\times B^n( \frac{1}{3})$ is symplectically isotopic to  $B^{n}\left (\sqrt \frac{\varepsilon }{3}\right )\times B^{n}\left (\sqrt \frac{\varepsilon }{3}\right )$. We assume $z=\sigma (0)$ and set  $\Delta=\sigma (B^n ( \varepsilon )\times \{0\})$ be the image of the first Lagrangian ball. 
\begin{lem} \label{Lemma-7.14}
Let $L,K,A, U, \Delta$ as above. Then there is a constant $c$ such that for all $0< \varepsilon <c$, there is a symplectic isotopy $\rho$ such that 
\begin{enumerate} 
\item \label{Listprop1} $\rho$ is supported in $\mathring K\setminus A$
\item \label{Listprop2} $\rho(L)\cap U=\Delta$
\item \label{Listprop3} $\gamma (\rho) <\varepsilon $  hence $\gamma (\rho(L), L) <\varepsilon$
\item \label{Listprop4} there exists $\varphi$ supported in $U$ such that 
$\gamma (\varphi(\rho(L)),\rho(L)) > \frac{ \varepsilon }{5}$ 
\end{enumerate} 
\end{lem} 
\begin{proof} 
First of all we may apply an isotopy sending $L$ to the zero section. Then, since $A$ is discrete, we may push all its points by a symplectic isotopy in a neighbourhood of the boundary of $K$ and at the same time move  $z$ to $q_1=...=q_n=0, p_1= \frac{1}{2}, ...,p_n= \frac{1}{2}$, since $K\setminus 0_N$ is connected (except in dimension $2$, in which case we have two connected components, and $z$ is either $(0, \frac{1}{2})$ or $(0, - \frac{1}{2})$). As a result we may assume that $z$ is the center of a translate of  $B^n(\varepsilon)\times B^n( \frac{1}{3})$ contained in $K$ and avoiding both $A$ and $0_N$. We claim that we can move the zero section by a Hamiltonian isotopy $\tau_ \varepsilon $ generated by a Hamiltonian supported in $K$, with norm  $ \Vert H \Vert _{C^{0}} \leq \varepsilon $ and such that
$$\tau_ \varepsilon  (0_N)\cap \left (B^n(\varepsilon)\times B^n\left ( \frac{1}{3}\right )\right )=z+ \left (B^n(\varepsilon)\times \{0\}\right ) $$
Taking $\rho=\tau_ \varepsilon$, this proves (\ref{Listprop1}), (\ref{Listprop2}), (\ref{Listprop3}). 
Finally to prove (\ref{Listprop4}), it is enough to 
construct a Hamiltonian isotopy supported in $B^n(\varepsilon)\times B^n\left ( \frac{1}{3}\right )$ as is done in the proof of Proposition \ref{Prop-6.21}, see Figure \ref{fig-1n}, such that if $L\cap \left ( B^n(\varepsilon)\times B^n\left ( \frac{1}{3}\right )\right ) =B^n(\varepsilon)\times \{0\}$ we have $\gamma (\varphi(L), L) \geq \frac{ \varepsilon }{5}$. For this it is enough to replace $B^n(  \varepsilon )\times \{0\}$ by the graph of $df$ where $f$ is supported in $B^n(  \varepsilon )$ and $ \vert df \vert < \frac{1}{3} $ and $\osc f > \frac{\varepsilon 
}{5}$.  This induces a Hamiltonian isotopy such that $\gamma (\varphi(L),L) > \frac{ \varepsilon }{5}$.  
\end{proof} 
 \begin{figure}[H]
 \centering
 \begin{overpic}[width=6cm]{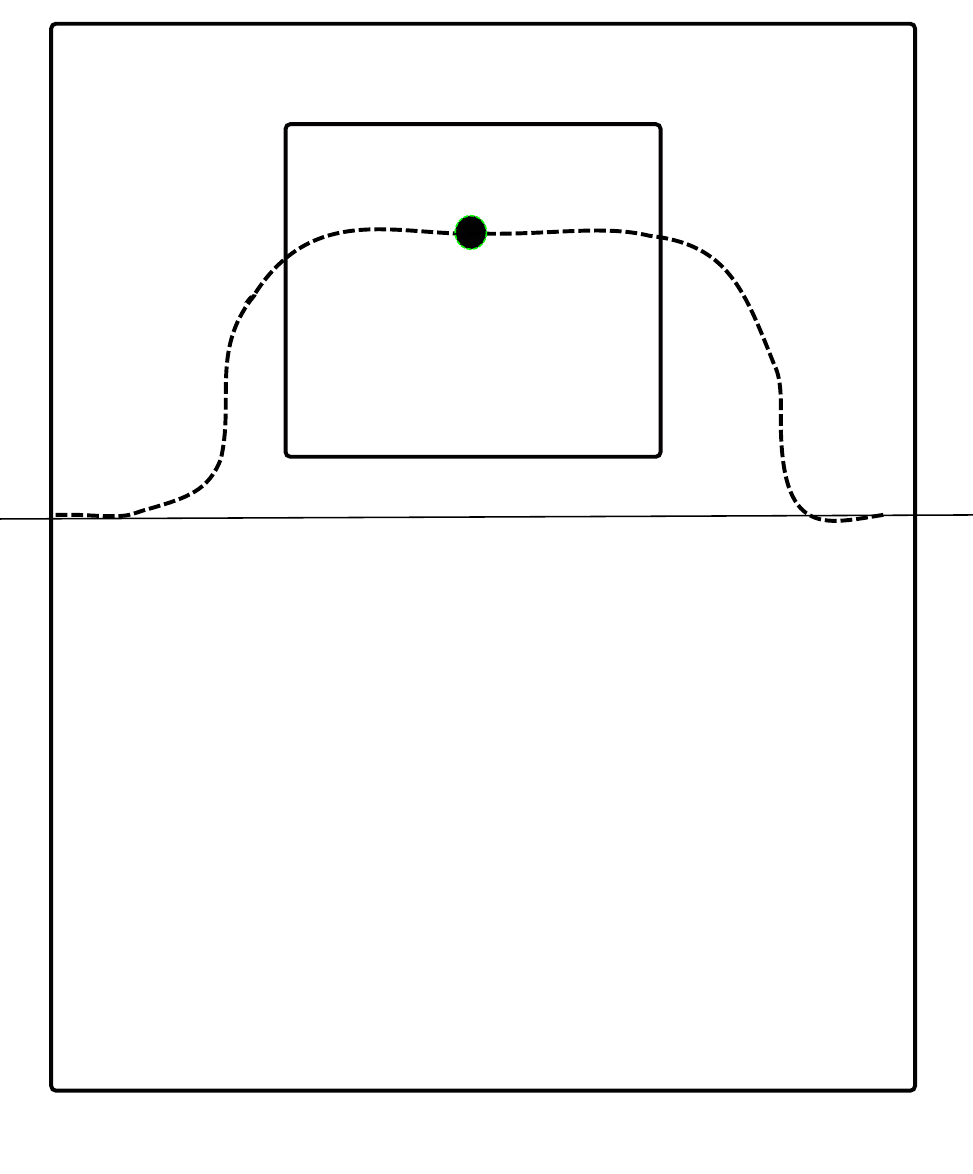}
 \put (80,35) {$K$} 
 \put (65,65){$\rho(L)$}
  \put (20,85) {$U$} 
   \put (40,85) {$z$} 
 \end{overpic}
 \caption{Illustration of Lemma \ref{Lemma-7.14}}\label{Figure-5}
\end{figure}

\begin{figure}[H]
\begin{overpic}[width=4.5cm]{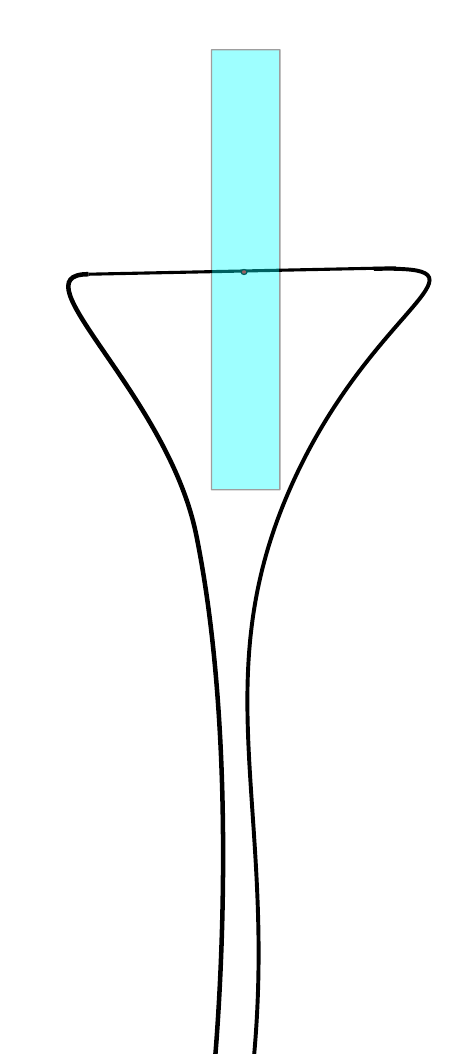}
\put (27,92) {$\myGreen U$} 
   \put (20,72) {$\myRed z$ } 
 \end{overpic}
 \hspace{2cm}
\begin{overpic}[width=4.2cm]{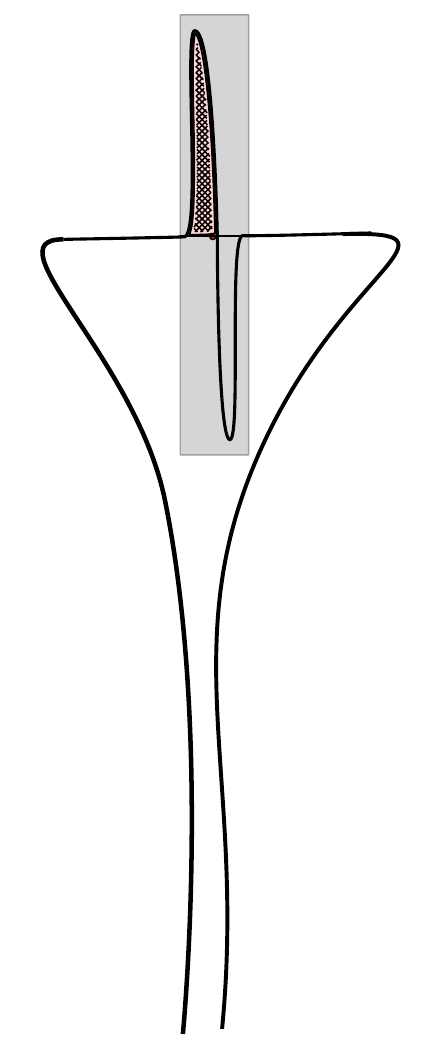}
 \put (17,73) {$\myRed z$} 
  \put (24,87) {$\myGreen U$} 
  \put (8, 93) {$\rho_{k}(\Delta)$}
 \end{overpic}
\caption{The Lagrangian  $\rho(0_N)\cap U$ and the path $\rho_{k}(\Delta)$. The hatched region has area less than $ \varepsilon_k $.}
\label{fig-4n}
\end{figure}

\begin{proof} [Proof of Theorem \ref{Theorem-1}]
Let now $(z_j)_{j\geq 1}$ be a dense subset in $\mathring K$. We apply the above Proposition inductively with $L=L_k, A_k=\{z_1,...,z_k\}, \varepsilon = \varepsilon _k$. We shall determine the sequence $ (\varepsilon _k)_{k\geq 1}$ later. 
We thus get  sequences $(\rho_k)_{k\geq 1}, (\varphi_k)_{k\geq 1}$ such that properties (\ref{Listprop1})-(\ref{Listprop4}) of Lemma \ref{Lemma-7.14} hold. In other words we have
\begin{enumerate} 
\item \label{Listprop1b} $\rho_k$ is supported in $\mathring K\setminus A_k$
\item \label{Listprop3b}  $\gamma (\rho_k(L_k), L_k) < \varepsilon_k$
\item \label{Listprop4b} there exists $\varphi_k$ supported in $U_k$ such that 
$\gamma (\varphi_k(\rho_k(L_k)),\rho_k(L_k)) > \frac{ \varepsilon_k }{5}$ 
\end{enumerate} 

We then set $L_{k+1}=\rho_k(L_k)$. According to Property (\ref{Listprop4}) this sequence will be $\gamma$-Cauchy if the series  $\sum_j \varepsilon _j$ converges. More precisely $\gamma (L_k, L_\infty) < \sum_{j=k}^{+\infty} \varepsilon_j$. Now assume the sequence $ (\varepsilon _j)_{j\geq 1}$ satisfies $\varepsilon _{j+1} < \frac{\varepsilon _j}{20}$ then denoting by $L_\infty$ the limit of $L_k$, we have 
$$\gamma(L_{k+1},L_\infty)< \varepsilon_{k+1} \sum_{j=0}^{+\infty} \frac{1}{20^j} = \frac{20}{19} \varepsilon_{k+1} < \frac{1}{19} \varepsilon_{k} $$ 
Then we claim
\begin{lem} 
Consider the sequence $L_k$ just defined and let $L_\infty$ be its $\gamma$-limit.  Then we have for all  $k$ large enough
$\gamma(\varphi_k(L_\infty), L_\infty) >  \frac{1 }{11} \varepsilon _k$.
\end{lem}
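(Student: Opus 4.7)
The plan is to combine the isometry of the Hamiltonian action on $\widehat{\mathcal L}(T^*N)$ with the triangle inequality, and to use the convergence rate enforced by the geometric decay $\varepsilon_{k+1}<\varepsilon_k/20$.

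First I would estimate the distance from $L_{k+1}=\rho_k(L_k)$ to $L_\infty$. By construction $\gamma(L_j,L_{j+1})<\varepsilon_j$ for each $j$, so a telescoping application of the triangle inequality gives
\[
\gamma(L_{k+1},L_\infty)\le\sum_{j=k+1}^{\infty}\varepsilon_j<\varepsilon_{k+1}\sum_{j=0}^{\infty}\frac{1}{20^j}=\frac{20}{19}\varepsilon_{k+1}<\frac{1}{19}\varepsilon_k.
\]
This is exactly the estimate already recorded in the text.

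Next I would invoke that any fixed $\varphi_k\in\DHam_c(M,\omega)$ acts by isometries for $\gamma$, and that this isometric action extends continuously to the Humili\`ere completion $\widehat{\mathcal L}(M,d\lambda)$ (this is a direct consequence of the very definition of the completion, since $\gamma$ is invariant by the smooth action and the action extends by uniform continuity). In particular $\gamma(\varphi_k(L_{k+1}),\varphi_k(L_\infty))=\gamma(L_{k+1},L_\infty)$. Together with the key displacement estimate $\gamma(\varphi_k(L_{k+1}),L_{k+1})>\tfrac{\varepsilon_k}{5}$ coming from Lemma \ref{Lemma-7.14} (\ref{Listprop4}), the triangle inequality
\[
\gamma(\varphi_k(L_{k+1}),L_{k+1})\le\gamma(\varphi_k(L_{k+1}),\varphi_k(L_\infty))+\gamma(\varphi_k(L_\infty),L_\infty)+\gamma(L_\infty,L_{k+1})
\]
yields
\[
\gamma(\varphi_k(L_\infty),L_\infty)\ge\gamma(\varphi_k(L_{k+1}),L_{k+1})-2\gamma(L_{k+1},L_\infty)>\frac{\varepsilon_k}{5}-\frac{2}{19}\varepsilon_k=\frac{9}{95}\varepsilon_k.
\]
Since $9\cdot 11=99>95$, we have $\frac{9}{95}>\frac{1}{11}$, so $\gamma(\varphi_k(L_\infty),L_\infty)>\frac{\varepsilon_k}{11}$, as required; no smallness of $k$ is actually needed, the inequality holds for all $k$ in the construction.

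The only delicate point—and the one I would write out most carefully—is the isometry statement $\gamma(\varphi_k(\widetilde L_1),\varphi_k(\widetilde L_2))=\gamma(\widetilde L_1,\widetilde L_2)$ on $\widehat{\mathcal L}(M,d\lambda)$. On the dense subspace $\mathcal L(M,d\lambda)$ this is immediate from the definition of the spectral invariants and the compatibility of $\varphi_k$ with Floer data; continuity of $\varphi_k$ on the completion, together with the fact that both sides are $\gamma$-continuous in the two arguments, extends the equality to all of $\widehat{\mathcal L}(M,d\lambda)$. Everything else is a bookkeeping exercise in the geometric-series estimate.
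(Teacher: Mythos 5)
Your argument is correct and is essentially identical to the paper's own proof: the same triangle inequality centered at $\varphi_k(L_{k+1})$, the same use of the isometry of the $\varphi_k$-action, the same bound $\gamma(L_{k+1},L_\infty)<\tfrac{1}{19}\varepsilon_k$ from the geometric decay, and the same arithmetic $\tfrac{1}{5}-\tfrac{2}{19}=\tfrac{9}{95}>\tfrac{1}{11}$. Your added remark that the isometry extends to the completion by uniform continuity is a reasonable point to make explicit, though the paper takes it for granted.
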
   
\begin{proof}
Let us indeed use the triangle inequality to compute
\begin{displaymath} 
\gamma(\varphi_k(L_{k+1}),L_{k+1}) \leq \gamma(\varphi_k(L_{k+1}), \varphi_k(L_\infty)) +\gamma(\varphi_k(L_\infty), L_\infty)+ \gamma( L_\infty, L_{k+1}) 
\end{displaymath} 
so that 
\begin{displaymath} 
\gamma(\varphi_k(L_\infty), L_\infty)\geq \gamma(\varphi_k(L_{k+1}),L_{k+1}) -\gamma(\varphi_k(L_{k+1}), \varphi_k(L_\infty))-\gamma( L_\infty, L_{k+1}) 
 \end{displaymath}  
 and since $\varphi_k$ is an isometry for $\gamma$ we have 
\begin{gather*} 
\gamma(\varphi_k(L_\infty), L_\infty)\geq \gamma(\varphi_k(L_{k+1}),L_{k+1}) -2\gamma( L_\infty, L_{k+1}) \geq \\ \frac{\varepsilon _k}{5}- \frac{2}{19} \varepsilon_k> \frac{9}{95} \varepsilon_k > \frac{1}{11} \varepsilon_k 
 \end{gather*} 
\end{proof} 
\noindent {\it End of the proof of Theorem \ref{Theorem-1}}
We now consider the case where the sequence $(z_k)_{k\geq 1}$ is just the sequence of points with rational coordinates, and $U_k$ are neighbourhoods of $z_k$ making a basis of open sets for the topology of $K$. Then $\supp (L_\infty)$ is a closed set, meeting all the $\supp(\varphi_k)$ and since $\gamma (\varphi_k(L_\infty), L_\infty) >0$, it meets  all  the $U_k$ . This implies that $\supp(L_\infty)$ is dense in $K$, hence contains $K$.  
\end{proof} 
\noindent {\it End of the proof of Theorem \ref{Thm-7.12}.}
We start with (\ref{Thm-7.12-2}).  
To conclude the proof in  the case $1\leq r<n$,  we first use
\begin{prop}\label{Prop-7.17}
Let $L_{1} \in \widehat{\mathfrak L}(M_{1},\omega_{1})$ and $L_{2}\in {\mathfrak L}(M_{2},\omega_{2})$. Then we have
$$\gammasupp (L_{1}\times L_{2})=\gammasupp (L_{1})\times L_{2}$$
\end{prop}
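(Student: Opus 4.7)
The inclusion $\gammasupp(L_1 \times L_2) \subseteq \gammasupp(L_1) \times L_2$ is immediate from Proposition~\ref{Prop-1.3}: item (\ref{Prop-1.3-d}) gives $\gammasupp(L_1 \times L_2) \subseteq \gammasupp(L_1) \times \gammasupp(L_2)$, and item (\ref{Prop-1.3-c}) applied to the smooth $L_2$ identifies $\gammasupp(L_2)$ with $L_2$. For the reverse inclusion $\gammasupp(L_1) \times L_2 \subseteq \gammasupp(L_1 \times L_2)$, the plan is in two steps: first produce at least one point of $\gammasupp(L_1 \times L_2)$ in each fibre above $z_1 \in \gammasupp(L_1)$ by a coisotropic reduction argument; then spread that point over the whole fibre using the transitive action on $L_2$ of its stabiliser in $\DHam_c(M_2)$.

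For the first step, the submanifold $C := M_1 \times L_2$ is coisotropic in $M_1 \times M_2$, with characteristic leaves $\{z_1\} \times L_2$ and symplectic reduction $C/\mathcal{K} \cong M_1$; the reduction of $L_1 \times L_2$ by $C$ is $L_1$. Given $z_1 \in \gammasupp(L_1)$ and $\varepsilon > 0$, pick $\varphi \in \DHam_c(B(z_1, \varepsilon))$ with $\gamma(\varphi(L_1), L_1) > 0$. Via a Weinstein identification of a neighbourhood of $L_2$ in $M_2$ with a neighbourhood of $0_{L_2}$ in $T^*L_2$, lift the generator $H_1$ of $\varphi$ to $\widetilde{H}(z_1, q, p) = H_1(z_1)\rho(p)$ where $\rho$ is a fibrewise bump equal to $1$ near $\{p = 0\}$ and compactly supported. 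The resulting $\widetilde{\varphi} \in \DHam_c(M_1 \times M_2)$ has support in $B(z_1, \varepsilon) \times W$ for a tubular neighbourhood $W$ of $L_2$, preserves $C$, and descends to $\varphi$ on $M_1$. The reduction inequality used in the proof of Proposition~\ref{Prop-1.3}(\ref{Prop-1.3-f}) then yields $\gamma(\widetilde{\varphi}(L_1 \times L_2), L_1 \times L_2) \geq \gamma(\varphi(L_1), L_1) > 0$, and Lemma~\ref{Lemma-3.6}(\ref{Lemma-3.6-a}) forces $\supp(\widetilde{\varphi})$ to meet $\gammasupp(L_1 \times L_2)$. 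Combined with the already proved inclusion $\gammasupp(L_1 \times L_2) \subseteq M_1 \times L_2$, this produces a point $(a_\varepsilon, b_\varepsilon) \in \gammasupp(L_1 \times L_2)$ with $a_\varepsilon \in B(z_1, \varepsilon)$ and $b_\varepsilon \in L_2$; letting $\varepsilon \to 0$, and using the compactness of $L_2$ together with the closedness of $\gammasupp$, one extracts a limit $(z_1, z_2^*) \in \gammasupp(L_1 \times L_2)$ for some $z_2^* \in L_2$.

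For the second step, a standard Weinstein construction shows that the subgroup $\{\sigma \in \DHam_c(M_2) : \sigma(L_2) = L_2\}$ acts transitively on each connected component of $L_2$: on the Weinstein chart $T^*L_2$, a compactly supported time-dependent Hamiltonian of the form $H(t, q, p) = \langle p, v_t(q) \rangle \chi(p)$, with $\chi$ equal to $1$ near $p = 0$, has flow $(q, 0) \mapsto (\phi^t_v(q), 0)$ on $L_2$, so generates such a $\sigma$ realising on $L_2$ the time-one flow of any prescribed compactly supported time-dependent vector field on $L_2$. Picking such a $\sigma$ with $\sigma(z_2^*) = z_2$, the diffeomorphism $\Id_{M_1} \times \sigma$ lies in $\DHam_c(M_1 \times M_2)$ and preserves $L_1 \times L_2$ setwise, so Proposition~\ref{Prop-1.3}(\ref{Prop-1.3-b}) gives $\gammasupp(L_1 \times L_2) = (\Id \times \sigma)(\gammasupp(L_1 \times L_2))$, hence $(z_1, z_2) = (\Id \times \sigma)(z_1, z_2^*) \in \gammasupp(L_1 \times L_2)$. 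This yields $\{z_1\} \times L_2 \subseteq \gammasupp(L_1 \times L_2)$ when $L_2$ is connected; the general case is obtained by applying the first step separately in each connected component of $L_2$.

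The main technical obstacle is the extension of the reduction inequality from Proposition~\ref{Prop-1.3}(\ref{Prop-1.3-f}) to our $L_1 \in \widehat{\LL}(M_1)$ in the completion rather than to smooth Lagrangians. This should follow the pattern of the proof of Proposition~\ref{Prop-1.3}(\ref{Prop-1.3-f}): a Cauchy sequence $L_1^\nu \to L_1$ in $\widehat{\LL}(M_1)$ yields a Cauchy sequence $L_1^\nu \times L_2 \to L_1 \times L_2$ in $\widehat{\LL}(M_1 \times M_2)$ via the Künneth-type identity $\gamma(L_1^\nu \times L_2, L_1^\mu \times L_2) = \gamma(L_1^\nu, L_1^\mu)$ (which simultaneously gives sense to $L_1 \times L_2$ as an element of the completion), and the reductions of $L_1^\nu \times L_2$ are $L_1^\nu$, which already $\gamma$-converge to $L_1$; passage to the limit in the smooth-Lagrangian reduction inequality then delivers the required bound.
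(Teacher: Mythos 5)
Your proof is correct and has the same overall skeleton as the paper's: prove (i) that $\gammasupp(L_1\times L_2)$ meets $\{z_1\}\times L_2$ for every $z_1\in\gammasupp(L_1)$, and (ii) that $\gammasupp(L_1\times L_2)$ is invariant under $\Id\times\sigma$ for $\sigma$ preserving $L_2$, then combine the two via the transitivity of the stabiliser of $L_2$ on $L_2$. Where you genuinely diverge is in step (i). The paper does not pass through coisotropic reduction at all: it takes the product map $\varphi_1\times\Id$ with $\varphi_1$ supported in $B(z_1,\varepsilon)$ and invokes the K\"unneth-type identity $\gamma(\varphi_1(L_1)\times L_2,\,L_1\times L_2)=\gamma(\varphi_1(L_1),L_1)>0$ (Corollary 7.53 of \cite{Viterbo-book}), so Lemma \ref{Lemma-3.6} immediately places a point of $\gammasupp(L_1\times L_2)$ in $B(z_1,\varepsilon)\times M_2$, and one concludes as you do by letting $\varepsilon\to 0$. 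Your route instead builds a lift $\widetilde\varphi$ of $\varphi_1$ adapted to the coisotropic $M_1\times L_2$ via a Weinstein neighbourhood and appeals to the reduction inequality, which you then must extend to the completion; this works (it is the same pattern as the proof of Proposition \ref{Prop-1.3}(\ref{Prop-1.3-f})), but it is heavier than necessary, and note that in the end you still invoke the K\"unneth identity to make sense of $L_1\times L_2$ in $\widehat\LL(M_1\times M_2)$ and to pass to the limit — at which point that same identity would have given you step (i) directly, with the added benefit of an equality rather than an inequality. A small further point in your favour: you justify the transitivity of the stabiliser of $L_2$ (fibrewise-linear Hamiltonians in a Weinstein chart) and flag the connected-components issue, both of which the paper leaves implicit.
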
 
\begin{proof} 
We already proved the inclusion $\gammasupp (L_{1}\times L_{2})\subset \gammasupp (L_{1})\times \gammasupp (L_{2})=\gammasupp(L_{1})\times L_{2}$. The opposite inclusion follows from the following two claims
\begin{enumerate} 
\item \label{Prop-7.17-1} $\gammasupp (L_{1}\times L_{2})\cap \{z_{1}\}\times L_{2 } \neq \emptyset$ for all $z_{1}\in \gammasupp(L_{1})$
\item \label{Prop-7.17-2} $\gammasupp (L_{1}\times L_{2})$ is invariant by $\id\times \varphi$ for any $\varphi \in \DHam (M_{2},\omega_{2})$ such that $\varphi(L_{2})=L_{2}$
\end{enumerate} 
Let us first show that these two claims imply the Proposition. Indeed, for $(z_{1},z_{2})\in \gammasupp (L_{1})\times  L_{2}$, we have by (\ref{Prop-7.17-1}) that there
exists $z'_{2}$ such that $(z_{1},z'_{2})\in  \gammasupp (L_{1}\times L_{2})$. Now by (\ref{Prop-7.17-2}), choosing $\varphi$ such that $\varphi(z'_{2})=z_{2}$, which is possible since the Hamiltonian maps preserving $L_{2}$ act transitively on $L_{2}$, we get that $(z_{1},z_{2})\in  \gammasupp (L_{1}\times L_{2})$. 
For (\ref{Prop-7.17-1}), let $\varphi_{1}$ be supported in $B(z_{1}, \varepsilon )$ such that $\varphi(L_{1},L_{1})>0$. Then we have 
$\gamma (\varphi_{1}\times \Id)(L_{1}\times L_{2}), L_{1}\times L_{2})=\gamma(\varphi_{1}(L_{1})\times L_{2},L_{1}\times L_{2})=\gamma(\varphi(L_{1}),L_{1})>0$, where the last  equality follows from corollary 7.53 from \cite{Viterbo-book}. This implies that $\supp (\varphi\times \Id)$ intersects $\gammasupp(L_{1}\times L_{2})$, and letting $ \varepsilon $ go to $0$ we get $$\gammasupp(L_{1}\times L_{2})\cap \{z_1\}\times M_2\neq \emptyset$$ Since $\gammasupp (L_{1}\times L_{2})\subset \gammasupp (L_{1})\times L_{2}$, we have 
$$ \gammasupp(L_1\times L_2)\cap \{z_1\}\times M_2\subset \gammasupp (L_{1})\times L_{2}\cap \{z_1\}\times M_2 =\{z_1\}\times L_2$$
Therefore  $\gammasupp(L_1\times L_2)\cap \{z_1\}\times L_2\neq \emptyset$.

Claim  (\ref{Prop-7.17-1}) then follows from the fact that $\id\times \varphi$ preserves $L_1\times L_2$ hence preserves $\gammasupp (L_1\times L_2)$. 
\end{proof} 
We  may now conclude the proof of the case   $1\leq r<n$. Consider $L \subset T^*N$ where $N$ has dimension $r$ such that $\gammasupp(L_1)=0_N\cup [-1,1]^{2r}$. Let $M$ be $(n-r)$-dimensional and containing an embedding of $[-1,1]^{n-r}$ (this is of course always the case). Then 
$\gamma -\supp (L_1\times 0_M)=\gammasupp(L)\times 0_M=(0_L\cup [-1,1]^{2r})\times 0_M \supset 0_{L\times M} \cup K_r$.

As for (\ref{Thm-7.12-3}),  we can construct two sequences $(L_k)_{k\geq 1}$ and $(L'_k)_{k\geq 1}$ having $\gamma$-limits $L, L'$ as above so that $\gammasupp (L)=\gammasupp (L')$ and $L\neq L'$. Indeed, we may create the first ``tongue'' in different directions, so that $\gamma (L_1, L'_1) \geq \varepsilon_0$  and if we choose the $ \varepsilon_k$ such that  $\varepsilon_0 > \sum_{j=1}^{+\infty} \varepsilon _j$  we get $\gamma (L,L') > \varepsilon _0- \sum_{j=1}^{+\infty} \varepsilon_j$. Thus the support does not, in general,  determine a unique element in $\widehat{\mathfrak L}(M,\omega)$. 
\end{proof} 
\begin{rem} 
It is easy to see that $\supp(L_\infty)\subset 0_N\cup K$ so that $\supp(L_\infty)=0_N\cup K$. 
\end{rem}

\begin{rems} 
\begin{enumerate} 
\item Similarly we can find $L \in \widehat{\LL}(T^*N)$ such that $\gammasupp(L) = 0_N \cup T^*D^n(1)$.
\end{enumerate} 
\end{rems} 

\begin{Question}
Can a coisotropic submanifold ``containing no Lagrangian'', for example such that the coisotropic foliation has a dense leaf, be the $\gamma$-support of an element $L \in \widehat {\LL}(T^*N)$ ? 
One should be careful about the meaning of ``Lagrangian'' as this should be understood in a weak sense, since for a $C^0$ function $f$, the graph $\gra (df)$ will usually  not contain a smooth Lagrangian. 
\end{Question}

We consider a sequence of $\gamma$-coisotropic sets, $V_{k}$. If the $V_{k}$ are compact and contained in a bounded set, then, up to taking a subsequence,  they have a Hausdorff limit and  setting $V= \lim_{k }V_{k}$ we may ask whether $V$ is $\gamma$-coisotropic. The answer is obviously negative : take $V_{k}$ to be sphere of center $0$ and  radius $ \frac{1}{k}$. Then $V_{k}$ has for limit $\{0\}$ which is not coisotropic. However the following question is more sensible

\begin{Question}
Let $L_{k}$ be a sequence in $\widehat{\mathfrak L}(M, \omega)$ and $V_{k}=\gammasupp(L_{k})$. Assume $V=\lim_{k} V_{k}$ where the limit is a Hausdorff limit. 
Does $V$ contain a $\gamma$-coisotropic set ? 
\end{Question}
Note that Figure \ref{Fig-8} gives an example of a sequence such that the limit is not $\gamma$-coisotropic. Also the sequence cannot collapse to a set of Hausdorff dimension less than  $n$ because of the intersection properties of the $\gamma$-supports (Proposition \ref{Prop-6.7}). 
\begin{rems}  
\begin{enumerate} 
\item For the connection between Hausdorff convergence and $\gamma$-convergence (for Lagrangians) in the presence of Riemannian constrains we refer to \cite{Chasse}. 
\item Note that if all $\gamma$-supports contain an exact Lagrangian, then Proposition \ref{Prop-6.7} becomes obvious. Of course this can not be true in the usual sense, i.e. all $\gamma$-supports do not contain a smooth Lagrangians  : if $f$ is a $C^{0}$ function, $\gra (df)$ cannot contain a smooth Lagrangian.  On the other hand if this is not the case, then we get a really new class of subsets, invariant by symplectic isotopy,  having intersection properties. 
\end{enumerate} 
\end{rems} 

Note that one could hope that if $u_L$ is the graph selector associated to $L$ we have $\gammasupp(\gra(du_L)) \subset \gammasupp(L)$. This is however not the case as we see from
the following example
 \begin{example}\label{Example-selector}
 Let $u(x)= \vert x \vert $. It is then easy to see by using smooth approximations of $u$ that $\gammasupp(\gra(du)) $ is the union of $\{(x,-1) \mid x\leq 0\}$, $\{(x,1) \mid x\geq 0\}$ and $\{0\}\times [-1,1]$. However $u$ is the graph selector for the Lagrangian represented on Figure \ref{Figure-selector}
(see \cite{Chaperon-HJ, Viterbo-Ottolenghi, Viterbo-Montreal})
 \begin{figure}[H]\centering
\includegraphics[width=10cm]{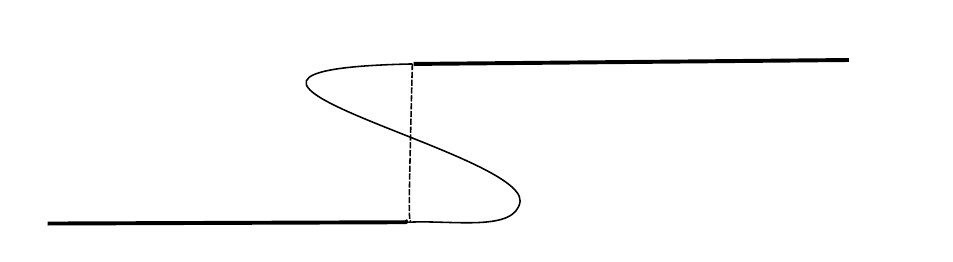}
\caption{ The Lagrangian from Exemple \ref{Example-selector}}\label{Figure-selector}
\end{figure}\label{fig-5}
 \end{example}

However it is not difficult to  prove 
 that $$\gammasupp(\gra(du_L)) \subset \Conv_p(\gammasupp(L))$$
where $\Conv_p$ is the $p$-convex hull, that is $$\Conv_p(X)= \left\{ (q,p)\in T^*N \mid \exists (p_1,...p_r)\in T_q^*N, t_j\geq 0, \sum t_j=1 , p= \sum_{j=1}^n t_j p_j\right\}$$
\begin{Question}
Does $\gammasupp (du_L)\cap \gammasupp (L)$ contain the extremal points of $ \Conv_p(\gammasupp(L))$ ?
\end{Question}

\section{Regular Lagrangians }\label{Section-Regular}
Let $(M,\omega)$ be a  symplectic manifold of dimension $2n$. 
Having established that $\gamma$-supports are always $\gamma$
-coisotropic, a natural question arises: when does a $\gamma$-support have minimal dimension, i.e., when is it an $n$-dimensional Lagrangian? Elements of $\widehat{\LL}(M,\omega)$ with this property deserve special attention.

\begin{defn} 
An element $L$ in $\widehat{{\LL}}(M,\omega)$ is said to be  {\bf regular} if $\supp (L)$ is a smooth $n$-dimensional submanifold. Such a manifold is then Lagrangian by Theorem \ref{Thm-7.12} and Proposition \ref{Prop-7.6}. It is {\bf topologically regular} if $\supp (L)$ is a topological $n$-dimensional submanifold.
\end{defn} 
\begin{Conjecture}
A regular Lagrangian coincides with an element of $\LL(M,\omega)$. More precisely if $L\in \widehat{{\LL}}(M,\omega)$ is such that $\gammasupp(L)=V$ is an smooth $n$-manifold, then $V$ is an exact Lagrangian and $L=V$.\end{Conjecture}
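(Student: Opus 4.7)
The plan is to establish the conjecture by a sheaf-theoretic route, proceeding in three stages: first verify that $V$ is automatically an (exact) Lagrangian, then reduce to a statement about the zero section in $T^{*}V$, and finally apply the bi-Lipschitz sheaf quantization of \cite{Guillermou-Viterbo, AGHIV}.

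For the first stage, Theorem \ref{Thm-7.12}(\ref{Thm-7.12-1}) shows that $V=\gammasupp(L)$ is $\gamma$-coisotropic, and Proposition \ref{Prop-7.6} upgrades this (using smoothness) to classical coisotropy. Since $\dim V = n = \tfrac{1}{2}\dim M$, $V$ is automatically Lagrangian. Exactness is a separate matter: in $T^{*}N$ it follows from Kragh--Abouzaid, while in general one approximates $\widetilde L$ by a $c$-Cauchy sequence $(L_{k}, f_{L_{k}})$ of exact branes and tries to extract a uniform control of the $f_{L_{k}}$ near $V$ forcing $[\lambda_{\mid V}]=0$. This is likely where the restriction to a specific class of manifolds enters.

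Second, use the Weinstein neighborhood theorem to identify an open neighborhood $U$ of $V$ with a neighborhood of the zero section in $T^{*}V$, sending $V$ to $0_{V}$. Since the $\gamma$-support is a local notion (via Lemma \ref{Lemma-3.6} combined with fragmentation, Lemma \ref{Lemma-frag}), and any test Hamiltonian for $\gammasupp(L)$ may be taken supported arbitrarily close to $V$, the element $L$ should correspond canonically to an element $L'\in \widehat{\LL}(T^{*}V)$ with $\gammasupp(L')=0_{V}$. The problem then reduces to the following local claim: \emph{if $L'\in \widehat{\LL}(T^{*}V)$ satisfies $\gammasupp(L')=0_{V}$, then $L' = 0_{V}$}.

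Third, apply the extended sheaf quantization $\widehat Q : \widehat{\mathcal L}(T^{*}V) \to D^{b}(V\times {\mathbb R})$ of \cite{Guillermou-Viterbo}, which is bi-Lipschitz (hence injective). By \cite{AGHIV}, $\mathring{SS}(\widehat Q(\widetilde{L'})) = \gammasupp(L') = 0_{V}$, so the conic singular support of $\mathcal F_{L'}:=\widehat Q(\widetilde{L'})$ in $T^{*}(V\times {\mathbb R})$ is contained in the union of the zero section and $0_{V}\times T^{*}_{c}{\mathbb R}$ for some common critical value $c\in {\mathbb R}$. Non-characteristic propagation (\cite{K-S}, Corollary 5.4.19) forces $\mathcal F_{L'}$ to be locally constant along $V$ over each slab $V\times (-\infty,c)$ and $V\times (c,+\infty)$. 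Combined with the boundary conditions of Theorem \ref{Thm-Quantization}(\ref{TQ2}), namely $\mathcal F_{L'} = 0$ near $V\times \{-\infty\}$ and $\mathcal F_{L'} = k_{V}$ near $V\times \{+\infty\}$, this gives $\mathcal F_{L'} \simeq k_{V\times [c,+\infty)}[m]$ for some shift $m$, which is precisely $\widehat Q$ applied to the zero section with primitive $c$ and grading shift $m$. Injectivity of $\widehat Q$ then yields $\widetilde{L'} = \widetilde{0_{V}} + c$ in $\widehat{\mathcal L}(T^{*}V)$, and Proposition \ref{Prop-2.4}(\ref{Prop-2.4c}) gives $L' = 0_{V}$ in $\widehat{\LL}(T^{*}V)$.

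The main obstacle is the second stage: one must verify that the global completion $\widehat{\LL}(M,\omega)$ admits a well-defined localization to $\widehat{\LL}(U)$ for the Weinstein neighborhood $U$, compatible with both the $\gamma$-metric and the $\gamma$-support. This is plausible using fragmentation together with the fact that test Hamiltonians can be supported near $V$, but it is not explicitly established in the paper and is presumably the source of the restriction to a particular class of manifolds. A secondary difficulty is proving exactness of $V$ in generality; outside $T^{*}N$ one needs a genuine argument that the primitives $f_{L_{k}}$ of a Cauchy sequence extract a limiting primitive on $V$, which seems to require either strong constraints on $(M,\omega)$ or a considerably more delicate convergence analysis.
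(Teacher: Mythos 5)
This statement is stated in the paper as an open \emph{Conjecture}: the paper offers no proof of it, and only records (as Theorem \ref{Conj-regular}, imported from \cite{AGHIV-2}, a paper written afterwards) the special case where $M=T^*N$ \emph{and} $\gammasupp(L)$ is assumed exact. Your proposal is therefore not being measured against an argument in the paper; it is a strategy sketch, and the two steps you yourself flag as ``plausible but not established'' are precisely where the conjecture remains open.

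Concretely: your Stage 2 (localizing $L$ to the Weinstein neighbourhood $U$ of $V$ so as to produce $L'\in\widehat{\LL}(T^*V)$ with $\gammasupp(L')=0_V$) is exactly the open Question the paper raises after Corollary \ref{Cor-3.15} — whether an element of $\widehat{\LL}(M,\omega)$ is the limit of a sequence contained in a neighbourhood of its $\gamma$-support — and the paper explicitly warns (after Conjecture \ref{Conjecture-8.18}) that ``even though $L$ is the limit of smooth $L_k$, we cannot claim that the $L_k$ are contained in a neighbourhood of $\gammasupp(L)$.'' Fragmentation controls test Hamiltonians, not the approximating Lagrangians, so it does not produce the restriction map $\widehat{\LL}(M,\omega)\to\widehat{\LL}(U)$ you need. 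Your Stage 3 also has a gap independent of Stage 2: knowing $\mathring{SS}(\widehat Q(\widetilde{L'}))=0_V$ only constrains the fibre component over $V$; it says $SS(\mathcal F_{L'})\cap\{\tau>0\}\subset\{(q,0,t,\tau)\}$ with no control on the $t$-coordinate, so there is no ``common critical value $c$'' for free — the spectrum could a priori be any closed subset of ${\mathbb R}$ (e.g.\ a Cantor set, as happens for genuine completion elements), and then $\mathcal F_{L'}$ need not be $k_{V\times[c,+\infty)}[m]$. Ruling this out is the substance of the result of \cite{AGHIV-2}, so invoking it here would be circular. Finally, exactness of $V$ outside the cotangent setting is, as you note, unaddressed; in the partial result the paper does state, exactness is a \emph{hypothesis}, not a conclusion. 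So the proposal correctly identifies a promising sheaf-theoretic route, but it does not close any of the gaps that make the statement a conjecture.
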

It follows from \cite{AGIV} (written after this paper was submitted) that 
\begin{thm}[ Regular Lagrangians in $T^*N$ (\cite{AGIV})]\label{Conj-regular}
If $L$ is regular in $T^*N$ and $\gammasupp(L)$ is exact, then $L=\gammasupp(L)$.
\end{thm}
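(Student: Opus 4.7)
The plan is to work sheaf-theoretically via the quantization map $\widehat Q: \widehat{\mathcal L}(T^*N) \to D^b(N\times\mathbb R)$ discussed in the remark following Proposition~\ref{Prop-4.5}, exploiting the identity $\gammasupp(\widetilde L) = \mathring{SS}(\widehat Q(\widetilde L))$ proved in \cite{AGHIV}. Set $V := \gammasupp(L)$; by Theorem~\ref{Thm-7.12} and Proposition~\ref{Prop-7.6} it is a smooth Lagrangian, and by hypothesis exact, so $V \in \mathcal L(T^*N)$ admits the Guillermou sheaf $\mathcal F_V \in \mathcal T_0(N)$ of Theorem~\ref{Thm-Quantization}. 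Writing $\mathcal F_L := \widehat Q(\widetilde L)$, both sheaves lie in $\mathcal T_0(N)$ and their singular supports off the zero section coincide with the homogenization $\widehat V$ of $V$.

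The core step is to prove that $\mathcal F_L$ is pure along $\widehat V$ in the Kashiwara--Schapira sense. Approximating $\widetilde L$ by smooth Lagrangian branes $\widetilde L_k$, the sheaves $\mathcal F_{L_k}$ are pure by Theorem~\ref{Thm-Quantization}, (\ref{TQ2}), and the task is to show this property survives in the $\gamma$-limit in a neighbourhood of each point of $\widehat V$, which is where the smoothness of $V$ must enter essentially. Once purity is secured, the classification of simple sheaves along a smooth conic Lagrangian (\cite{K-S}, Chapter~VII, refined in \cite{Guillermou}) gives that $\mathcal F_L$ is isomorphic to $\mathcal F_V$ up to a degree shift and a twist by a rank-one local system on $\widehat V$. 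Connectedness of $V$ and the normalization $\mathcal F \simeq k_N$ near $+\infty$ built into $\mathcal T_0(N)$ force the local system to be trivial and the shift to vanish, so $\mathcal F_L \simeq \mathcal F_V$. The bi-Lipschitz property of $\widehat Q$ (also in the remark after Proposition~\ref{Prop-4.5}) then promotes this to $\widetilde L = \widetilde V$ up to a $T_c$-shift, and applying $\widehat{\rm unf}$ yields $L = V$ in $\widehat\LL(T^*N)$.

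The main obstacle is the purity claim. The identity from \cite{AGHIV} pins down only the underlying set $SS(\mathcal F_L) \cap \dot T^*(N\times\mathbb R)$, not its microlocal structure, so a priori $\mathcal F_L$ could be a non-trivial extension of shifted simple sheaves along $\widehat V$, and such extensions can survive in the $\gamma$-completion without enlarging the support. Ruling this out requires using the smoothness of $V$ in a serious way, for instance by combining a microlocal cut-off near a point of $\widehat V$ with a local Hamiltonian reduction sending $V$ to the zero section (where Theorem~\ref{Thm-Quantization}, (\ref{TQ4}) identifies $\mathcal F_L$ with a shifted $k_N$) and showing this local normalization is preserved by $\gamma$-limits. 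This is the content of \cite{AGHIV-2}, whose theorem therefore yields the stated conclusion.
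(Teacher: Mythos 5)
The paper does not actually prove Theorem \ref{Conj-regular}: the sentence immediately preceding it reads ``It follows from \cite{AGHIV-2} (written after this paper was posted) that\dots'', so the statement is imported wholesale from that reference and no argument appears in this text. Your proposal is therefore not competing with a proof in the paper but with a citation. Judged on its own terms, your outline is a reasonable reconstruction of the expected strategy: pass to $\mathcal F_L=\widehat Q(\widetilde L)$, use $\gammasupp(\widetilde L)=\mathring{SS}(\widehat Q(\widetilde L))$ from \cite{AGHIV} to identify the singular support off the zero section with $\widehat V$, establish purity along the smooth conic Lagrangian $\widehat V$, invoke the uniqueness statement of Theorem \ref{Thm-Quantization} (\ref{TQ4}) to get $\mathcal F_L\simeq\mathcal F_V$, and conclude via the bi-Lipschitz property of $\widehat Q$. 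But, as you yourself acknowledge, the crux --- that purity survives passage to the $\gamma$-limit --- is precisely the content of \cite{AGHIV-2}; your argument is thus a framing of that citation rather than a self-contained proof, which puts it on the same footing as the paper itself.

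Two points in the part you do spell out deserve care. First, purity (microlocal germs concentrated in a single degree) is strictly weaker than simplicity (rank one); to apply (\ref{TQ4}) or the classification of simple sheaves along a smooth conic Lagrangian you must also control the rank of the microstalk, which you would extract from the normalization $\mathcal F=k_N$ near $+\infty$ together with the connectedness of $V$ --- and connectedness itself needs a word, since a priori a smooth $n$-dimensional $\gammasupp(L)$ could be disconnected (disjoint closed exact Lagrangians are ruled out by Proposition \ref{Prop-6.7}, so this is fixable but not free). Second, the final step $\mathcal F_L\simeq\mathcal F_V\Rightarrow L=V$ uses injectivity of $\widehat Q$ on the completion, i.e.\ the lower Lipschitz bound asserted in the remark quoting \cite{Guillermou-Viterbo}; that is legitimate but is yet another external input, so it is worth stating explicitly that your chain of reasoning rests on \cite{AGHIV}, \cite{AGHIV-2} and \cite{Guillermou-Viterbo} simultaneously.
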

\begin{rems} \begin{enumerate} 
\item
This means that the Lagrangian $L \in \widehat{\LL}(T^*N)$ coincides with $\gammasupp (L) \in {\LL}(T^*N) \subset \widehat{\LL}(T^*N)$. 
\item Assuming that $\gammasupp(L)$ is exact means the following:  $\gammasupp(L)$ is assumed to be a smooth submanifold, since it is $\gamma$-coisotropic, it must be Lagrangian, and we then assume $\lambda$ is exact on it. 
\item Before the proof in \cite{AGIV} was available, this was proved in special cases in the second ArXiv version of the present paper (as Theorem 8.6), using Theorem 6.3 from \cite{Viterbo-inverse-reduction}.  
 \end{enumerate} 
\end{rems} 
Following this theorem, it makes sense to set
\begin{defn} 
A {\bf topological Lagrangian} in $(M^{2n}, \omega)$  is a $C^0$-submanifold of dimension $n$,  $V$, such that  there exists $L\in \widehat{\LL}(M, \omega)$ with $V= \gammasupp(L)$.
\end{defn} 

\begin{rem} 
 There are  other possible definitions for topological Lagrangians. For example 
we could define a topological Lagrangian as an $n$-dimensional topological manifold that is $\gamma$-coisotropic. 
 \end{rem}

It follows from  \cite{BHS-C0}, where it is proved that $C^0$-convergence implies $\gamma$-convergence, that if $\varphi_k$ is a sequence of smooth symplectic maps converging $C^0$ to $\varphi$, then $\varphi =\gamma-\lim_k\varphi_k$, so $\varphi(L)=\gamma-\lim \varphi_{k}(L)$ and from Corollary \ref{Cor-3.15}
$\gammasupp (\varphi(L))=\varphi(L)$ so that $\varphi(L)$ is a topological Lagrangian. 
\begin{Question}
How far can we  extend this result ?  In particular is the property $\dim (\supp (L))=n$ enough  if we only assume $L$ is a topological manifold ? Or if we assume $\supp(L)$ contains no proper coisotropic ? 
Or if $\supp(L)$ is minimal for inclusion among the supports of elements in ${\LL}(T^*N)$ ? 
\end{Question}
It is probably useful at this point to remind the reader that smooth Lagrangians are indeed minimal among $\gamma$-coisotropic sets. 
  \begin{prop}[see \cite{Guillermou-Viterbo}, proposition 9.13]\label{Prop-12.16}
  
  Let $L= \varphi(L_0)$ where $L_0$ is smooth Lagrangian and $\varphi \in  \mathcal {H}_\gamma (M, \omega)= \widehat\DHam (M,\omega)\cap \Homeo(M)$. Then any closed proper subset of $L$ is not $\gamma$-coisotropic.
   \end{prop}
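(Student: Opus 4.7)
The plan is to exploit the $\mathcal{H}_{\gamma}$-invariance of Proposition~\ref{Prop-7.5}(\ref{Prop-7.5-1}) and then to displace the proper subset from a small ball by a Hamiltonian flow that preserves the ambient smooth Lagrangian $L_{0}$ and whose $C^{0}$-norm can be made arbitrarily small by concentrating it in a thin $p$-slab.

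First, since $\gamma$-coisotropy is $\mathcal{H}_{\gamma}$-invariant, replacing $V$ by $\varphi^{-1}(V)$ reduces the statement to the case $L=L_{0}$ smooth and $W:=V$ a proper closed subset of $L_{0}$. Since $L_{0}\setminus W$ is open and non-empty, pick $y_{0}\in L_{0}\setminus W$, set $r=d(y_{0},W)>0$, and choose $x\in W$ with $|x-y_{0}|=r$. I claim $W$ is not $\gamma$-coisotropic at this $x$. The crucial geometric fact is that for every $t\in(0,r]$ the radially-rescaled point $y_{t}:=x+(t/r)(y_{0}-x)$ still has $x$ as its nearest point of $W$, because $B^{L_{0}}(y_{t},t)\subset B^{L_{0}}(y_{0},r)\setminus\{x\}$; so $x$ admits an exterior anchor $y_{t}$ at any prescribed distance $t\leq r$, together with an empty ball $B^{L_{0}}(y_{t},t)\subset L_{0}\setminus W$ at its tip.

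Given $\varepsilon>0$, fix $t\leq\min(\varepsilon/C,r)$ for an appropriate constant $C$, set $y=y_{t}$, $v=x-y$, $\eta=t/4$. In Weinstein coordinates $(q_{1},q',p_{1},p')$ near $x$ with $L_{0}=\{p=0\}$, $y=0$, $x=te_{1}$, consider
\[ H_{k}(q,p)=\alpha(q_{1})\,\beta(q')\,\langle p,v\rangle\,\chi_{k}(|p|),\]
where $\alpha,\beta$ are bump functions equal to $1$ on sufficiently wide neighborhoods of $[0,t]$ in $q_{1}$ and of $0$ in $q'$ respectively, and $\chi_{k}$ is a radial bump with $\chi_{k}(0)=1$, $\chi_{k}'(0)=0$, supported in $\{|p|\leq R_{k}\}$ with $R_{k}\downarrow 0$. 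Since $H_{k}(q,0)=0$ and $\partial_{q}H_{k}(q,0)=0$, the vector field $X_{H_{k}}$ is tangent to $L_{0}$, so $L_{0}$ is preserved; on $L_{0}$ the flow reduces to $\dot{q}_{1}=\alpha(q_{1})\beta(q')|v|,\ \dot{q}'\equiv 0$. The core verification is that $\varphi_{H_{k}}^{-1}(B^{L_{0}}(x,\eta))\subseteq B^{L_{0}}(y,\eta)$: conservation of $q'$ forces preimages to lie in $\{|q'|<\eta\}\subset\{\beta=1\}$, and if $\alpha$ is chosen wide enough the $q_{1}$-trajectory remains in $\{\alpha=1\}$ throughout $[0,1]$, so the time-$1$ map is the pure translation $q\mapsto q+v$ on this slab and the preimage sits inside $B^{L_{0}}(y,\eta)\subset B^{L_{0}}(y_{0},r)\subset L_{0}\setminus W$. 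Hence $\varphi_{H_{k}}(W)\cap B^{M}(x,\eta)=\emptyset$, while $\|H_{k}\|_{C^{0}}\leq R_{k}|v|\to 0$ gives $\gamma(\varphi_{H_{k}})\leq 2\|H_{k}\|_{C^{0}}\to 0$, and the supports lie in $B^{M}(x,\varepsilon)$ for $R_{k}$ small enough.

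The most delicate point is the geometric bookkeeping of the previous step, ensuring that the time-$1$ flow really is the pure translation $q\mapsto q+v$ on the preimage slab; this dictates how wide the $q_{1}$-cutoff $\alpha$ must be relative to $|v|=t$, and is where one must check that trajectories starting in the preimage do not leak out of $\{\alpha=1\}$ before time $1$. Once this is in hand, the smallness of $\gamma$ and the support condition follow automatically from concentrating the Hamiltonian in a $p$-slab of thickness $R_{k}\downarrow 0$.
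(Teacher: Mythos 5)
Note first that the paper does not actually prove this statement: it is quoted verbatim from \cite{Guillermou-Viterbo}, Proposition 9.13, so there is no in-paper argument to compare yours against. On its own terms your proof is correct, and it is the standard ``thin slab'' argument, the same family of constructions the paper itself uses in Proposition \ref{Prop-7.5} (a Hamiltonian that vanishes to first order on $\{p=0\}$, so that $L_0$ is preserved, cut off in $\{|p|\le R_k\}$ so that $\|H_k\|_{C^0}\le R_k|v|\to 0$ while the induced translation on $L_0$ stays fixed). The reduction to $L=L_0$ via Proposition \ref{Prop-7.5}(\ref{Prop-7.5-1}) is legitimate, the nearest-point trick correctly produces, for every small $t$, an anchor $y_t$ at distance $t$ from some $x\in W$ with $B^{L_0}(y_t,t)\cap W=\emptyset$, and the preimage computation $\varphi_{H_k}^{-1}\bigl(B^{L_0}(x,\eta)\bigr)\subset B^{L_0}(y,\eta)$ is the right way to rule out points of $W$ entering $B(x,\eta)$ from anywhere, not just from near $y$. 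The bookkeeping you flag (width of $\alpha$ relative to $|v|=t$, monotonicity of $q_1$ along trajectories, replacing the ambient ball $B^M(x,\eta)$ by a smaller one whose trace on $L_0$ lies in the intrinsic ball) is routine and closes as you describe.

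Two small points worth making explicit. First, the argument tacitly assumes $L_0$ is connected: otherwise a whole connected component would be a proper closed subset which \emph{is} $\gamma$-coisotropic, and your segment from $x$ to $y_0$ inside $L_0$ would not exist; connectedness is the paper's implicit convention, but you should say you are using it. Second, when you write $d(y_0,W)$ and then place $y_t$ on the segment from $x$ to $y_0$, you should fix once and for all whether $d$ is the ambient or the intrinsic distance on $L_0$ (the intrinsic one, with $y_t$ on a minimizing geodesic, is what makes $d(y_t,W)\ge t$ and $y_t\in L_0$ simultaneously true); near $x$ the two are comparable, so only the constants $C$ and the choice $\eta=t/4$ need adjusting.
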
 
 
 Here is another related question
\begin{Question}
For  $L_1, L_2$ such that $L_1$ is smooth, $L_2 \in \widehat{{\LL}}(T^*N)$ and $L_1 \subsetneq \gammasupp (L_2)$, what is the relation between $L_1$ and $L_2$ ? 
\end{Question}

We  may generalise this to the following
\begin{Question}
For which sets $V$ is the set of $L\in \widehat{\mathfrak L}(T^{*}N)$ such that $\gammasupp(L)=V$ is compact ?
\end{Question}

 \begin{rem} 
 Let $(L_{k})$ be a sequence in ${\mathfrak L}(T^{*}N)$  and assume it converges for the Hausdorff topology to a smooth submanifold $L$. Necessarily $L$ must have dimension at least $n$ since otherwise for $k$ large enough, $L_{k}$ would not intersect some vertical fibre. Assume it has dimension exactly $n$, we  then we claim that $L$ is Lagrangian. Indeed, $L_{k}\times S^{1} \subset T^{*}N \times T^{*}S^{1}$ converges in the Hausdorff topology  to $L\times 0_{S_{1}}$. By \cite{Laudenbach-Sikorav-IMRN}, if $L$ is not Lagrangian, $L\times 0_{S_{1}}$ is displaceable by a Hamiltonian isotopy and so will be $L_{k}\times 0_{S^{1}}$ for $k$ large enough. But this is impossible.  
 So $L$ is Lagrangian, and moreover it is exact by \cite{Membrez-Opshtein}. 
 Now remember the Conjecture from \cite{SHT}, proved in some special cases in \cite{Shelukhin-Zoll, shelukhin-sc-viterbo, Guillermou-Vichery, Viterbo-inverse-reduction}
 \begin{Conjecture}[Geometrically bounded implies spectrally bounded]\label{Conj-Viterbo-conjecture}
 Let $N$ be a closed Riemannian manifold. There exists a constant $C_N$ such that for all exact Lagrangian $L$ contained in $$DT^*L=\left \{(q,p) \in T^*N \mid \vert p \vert _g \leq 1 \right \}$$ we have
 $\gamma(L)\leq C_N$. 
 \end{Conjecture}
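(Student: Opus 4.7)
The plan is to work inside the sheaf-theoretic framework of Section~\ref{Section-6}. Via the quantization $L\mapsto \cF_L$ and Proposition~\ref{Prop-4.5}, the quantity $\gamma(L)=c_+(L,0_N)-c_-(L,0_N)$ equals the difference of the endpoints $a_{j_+}-a_{j_-}$ of the two extremal infinite bars in the barcode decomposition (Proposition~\ref{Prop-Decomposition-2}) of $(Rt)_*R\hHom^{\cstar}(\cF_{0_N},\cF_L)$. The hypothesis $L\subset DT^*N$ forces $\widehat L=SS(\cF_L)\cap\dot T^*(N\times\mathbb R)$ to lie in the cone $\{|p|_g\leq \tau\}$, so the problem is recast as: bound uniformly the length of the persistence barcode of a sheaf whose microsupport is constrained to a fixed conical region depending only on the Riemannian metric~$g$.

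First I would reduce the statement to bounding $c_+(L,0_N)$ and $-c_-(L,0_N)$ separately. Let $H_g$ be a smoothing of $|p|_g$, compactly supported on $\{|p|_g\leq 2\}$. Since $0_N\subset\{H_g=0\}$ and $L\subset\{H_g\leq 1\}$, there is a compactly supported Hamiltonian isotopy $\varphi_K^t$ with $\varphi_K^1(0_N)\supset L$ in the sense that both Lagrangians sit in the same sublevel set; using Proposition~\ref{Prop-5.2}(5) and the Kislev--Shelukhin inequality cited in the remarks following Definition~\ref{Def-compact-support}, one obtains the one-sided a priori bound $|c_\pm(L,0_N)|\leq \mathrm{const}(g)\cdot(1+\mathrm{osc}(u_L))$, where $u_L$ is the graph selector. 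So the problem reduces to bounding the oscillation of the selector, or equivalently the total length of the barcode, uniformly over $L\subset DT^*N$.

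The second and more serious step would be to bound the selector. I would try to cover $DT^*N$ by finitely many Darboux cubes $K_1,\dots,K_m$ (with $m=m(N,g)$) and apply fragmentation (Lemma~\ref{Lemma-frag}) to the isotopy $\varphi$ carrying $0_N$ to $L$. Each piece is supported in a displaceable cube, so by Proposition~\ref{Prop-6.7} and the displacement-energy estimate used in the proof of Proposition~\ref{Prop-1.3}\eqref{Prop-1.3-d}, each factor contributes at most $c(K_i)<+\infty$ to $\gamma$. In favourable cases (e.g.\ Zoll manifolds, tori, or any $N$ where one has a Liouville fibration with bounded-diameter fibres in the $\gamma$-metric, as exploited in \cite{Shelukhin-Zoll,shelukhin-sc-viterbo,Viterbo-inverse-reduction}), one can arrange the $K_i$ and bound $m$ uniformly.

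The main obstacle, and the reason the statement remains a conjecture, is that the fragmentation number $m$ inherently depends on $\varphi$, not on $L$ alone, because exact Lagrangians in $DT^*N$ can be arbitrarily $C^0$-wild (for example limits of long ``tongues'' as constructed in the proof of Theorem~\ref{Thm-7.12}\eqref{Thm-7.12-2}). What one really needs is an intrinsic bound on the barcode of $\cF_L$ for $\cF_L\in\mathcal T_0(N)$ with $SS(\cF_L)\subset\{|p|_g\leq\tau\}$, something like a microlocal Poincar\'e inequality: the sum of bar lengths should be controlled by $\mathrm{vol}_g(DT^*N)$ times a topological constant of $N$. Producing such an inequality — or, equivalently, showing that the set of $\widehat Q(L)$ for $L\subset DT^*N$ is $\gamma$-precompact in $D^b(N\times\mathbb R)$ — seems to me to be the essential missing ingredient, and I would not expect a short proof via the methods developed earlier in this paper.
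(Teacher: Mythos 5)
The statement you were asked about is not a theorem of the paper: it is stated as Conjecture~\ref{Conj-Viterbo-conjecture} (``geometrically bounded implies spectrally bounded''), the paper offers no proof of it, and only refers to partial results in special cases elsewhere. Your proposal is therefore correctly calibrated in its conclusion — you do not claim a proof, and you identify essentially the right obstruction: any argument that first produces a Hamiltonian isotopy $\varphi$ carrying $0_N$ to $L$ and then estimates $\gamma$ through $\varphi$ (by fragmentation, displacement energies of cubes, or monotonicity of $c_\pm$ in the Hamiltonian) is doomed, because the complexity of $\varphi$ is not controlled by the hypothesis $L\subset DT^*N$. That is exactly why the statement is open.

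That said, two of your intermediate reductions are not merely incomplete but go in the wrong direction, and you should be aware of this. First, the inequality $|c_\pm(L,0_N)|\leq \mathrm{const}(g)\cdot(1+\osc(u_L))$ is unjustified and essentially reverses the known relation: since $c_-(L)\leq c(1_x,L)\leq c_+(L)$ for every $x$, one has $\osc(u_L)\leq\gamma(L)$, so ``reducing to bounding the oscillation of the selector'' is at best a restatement of the conjecture, not a reduction of it. Second, the step invoking Proposition~\ref{Prop-5.2}(5) presupposes a comparison $H\leq K$ between Hamiltonians generating specified isotopies; the containment $L\subset\{H_g\leq 1\}$ gives no such comparison, because $L$ is not a level or graph of anything a priori, and ``$\varphi_K^1(0_N)\supset L$ in the sense that both sit in the same sublevel set'' has no content for the monotonicity argument. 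Your final diagnosis — that what is needed is an intrinsic, isotopy-free bound on the barcode of $\cF_L$ in terms of the microsupport constraint $SS(\cF_L)\subset\{|p|_g\leq\tau\}$ — is a reasonable formulation of the missing ingredient, and is consistent with the sheaf-theoretic approaches in the cited special cases; but nothing in the present paper supplies it.
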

 \end{rem} 

This naturally extends to 
\begin{Conjecture}\label{Conjecture-8.18}
 There exists a constant $C_N$ such that for any 
 $L \in \widehat{\mathfrak L}(T^*N)$ with $\gammasupp(L) \subset DT^*N$, we have $$\gamma (L) \leq C_N$$
 \end{Conjecture}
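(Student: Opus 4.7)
The plan is to reduce Conjecture \ref{Conjecture-8.18} to the smooth Viterbo conjecture (Conjecture \ref{Conj-Viterbo-conjecture}) by combining density of smooth Lagrangians in $\widehat{\LL}(T^*N)$ with a fibrewise rescaling of the cotangent bundle. The crucial input is a positive answer to the open question formulated just after Corollary \ref{Cor-3.15}, namely whether every $L\in\widehat{\LL}(T^*N)$ is the $\gamma$-limit of a sequence $(L_k)_{k\geq 1}\subset\LL(T^*N)$ contained in an arbitrarily small neighbourhood of $\gammasupp(L)$.

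Assume this is known and fix $\varepsilon>0$. Choose a neighbourhood $U_\varepsilon$ of $DT^*N$ with $U_\varepsilon\subset\{|p|_g\leq 1+\varepsilon\}$, and take smooth exact Lagrangians $L_k\subset U_\varepsilon$ with $\gamma(L_k,L)\to 0$; the triangle inequality forces $\gamma(L_k)\to\gamma(L)$. Consider the fibre dilation $h_\lambda(q,p)=(q,\lambda p)$, which satisfies $h_\lambda^*(p\,dq)=\lambda\, p\,dq$ and therefore carries exact Lagrangians to exact Lagrangians while multiplying action values (and hence $c_\pm$ and $\gamma$) by $\lambda$. Choosing $\lambda=1/(1+\varepsilon)$, one has $h_\lambda(L_k)\subset DT^*N$, so Conjecture \ref{Conj-Viterbo-conjecture} yields $\gamma(h_\lambda(L_k))\leq C_N$ and therefore $\gamma(L_k)\leq (1+\varepsilon)C_N$. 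Passing to the limit $k\to\infty$ and then $\varepsilon\to 0$ gives $\gamma(L)\leq C_N$, which is the desired bound.

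The main obstacle is the approximation step itself, which is the open density question recalled above. The mere existence of a Cauchy sequence of smooth Lagrangians is not enough: one needs to ensure that such a sequence can be taken inside any preassigned neighbourhood of $\gammasupp(L)$, whereas a priori wandering pieces of $L_k$ far from $DT^*N$ could in principle cancel out in $\gamma$. One conceivable route would be a surgery argument, truncating or removing such pieces and showing that their excision is negligible for $\gamma$. An alternative is sheaf-theoretic: via the Guillermou--Viterbo bi-Lipschitz quantization $\widehat Q:\widehat{\mathcal L}(T^*N)\to D^b(N\times\mathbb R)$, which identifies $\gammasupp(L)$ with $\mathring{SS}(\mathcal F_L)$, one could try to approximate $\mathcal F_L$ in $D^b(N\times\mathbb R)$ by complexes of the form $\mathcal F_{L_k}$ with $SS(\mathcal F_{L_k})$ close to $SS(\mathcal F_L)$, and then quantize back to smooth Lagrangians. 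In either formulation, what must be established is a microlocal form of density for the Humilière completion, which would immediately upgrade the smooth Viterbo conjecture to its completion-valued counterpart.
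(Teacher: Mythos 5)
The statement you are addressing is stated in the paper as a \emph{conjecture}, and the paper offers no proof of it; it only remarks that it is established ``for a certain class of manifolds'' in \cite{Viterbo-inverse-reduction}. Your proposal is likewise not a proof: it is a conditional reduction resting on two open problems, and you say so yourself. The reduction you describe --- approximate $L$ by smooth exact Lagrangians confined to a small neighbourhood of $\gammasupp(L)$, rescale fibrewise into $DT^*N$, apply the smooth bound, and pass to the limit --- is exactly the route the paper itself contemplates and dismisses in the remark immediately preceding the conjecture: ``even though $L$ is the limit of smooth $L_k$, we cannot claim that the $L_k$ are contained in a neighbourhood of $\gammasupp(L)$.'' So the obstacle you flag (the density question posed after Corollary \ref{Cor-3.15}) is precisely the known obstruction, and nothing in your text closes it. The mechanical parts of your argument are fine: $h_\lambda$ is conformally symplectic, rescales all spectral invariants by $\lambda$, and preserves exactness, and $|\gamma(L_k)-\gamma(L)|\leq 2\gamma(L_k,L)$ justifies the limit.

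Two further caveats. First, even granting the approximation step, your argument invokes Conjecture \ref{Conj-Viterbo-conjecture}, which is itself open in general and proved only for special $N$; so at best you would obtain Conjecture \ref{Conjecture-8.18} for that restricted class of manifolds, which is essentially the content of the existing result in \cite{Viterbo-inverse-reduction} rather than the full conjecture. Second, your suggested repairs (a surgery argument excising the wandering pieces of $L_k$, or a sheaf-theoretic approximation of $\mathcal F_L$ with controlled singular support followed by a quantization back to smooth Lagrangians) are reasonable directions but are not carried out; in particular the ``quantize back'' step would require knowing that the approximating objects in $D^b(N\times\mathbb R)$ are of the form $\mathcal F_{L_k}$ for genuine smooth Lagrangians, which is itself a hard open problem. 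In short: the gap is real, it is the one the paper already identifies, and the statement remains a conjecture.
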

 Note that this does not immediately follow from the conjecture in the smooth case:  even though $L$ is the limit of smooth $L_k$, we cannot claim that the $L_k$ are contained in a neighbourhood of $\gammasupp(L)$. 
 The above conjecture is proved for a certain class of manifolds in \cite{Viterbo-inverse-reduction}. 
 
 Now we claim 
 
 \begin{prop} \label{Prop-8.19}
 Let $\varphi \in \widehat \DHam_{c} (T^*T^{n})$ such that $\gammasupp(\Gamma(\varphi))=\Delta_{T^{*}T^{n}}$. Then $\varphi=\Id$.
 \end{prop}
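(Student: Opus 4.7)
The plan is to test $\varphi$ against the zero section $0_{T^n}$, show that $\varphi(0_{T^n})$ is forced to equal $0_{T^n}$ in $\widehat{\LL}(T^*T^n)$ via the regular Lagrangian theorem, and then conclude using the standard identification between the Hamiltonian and the Lagrangian spectral norms on a cotangent bundle.

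First, regarding $\Gamma(\varphi) \in \widehat{\mathfrak L}(T^*T^n \times \overline{T^*T^n})$ as a correspondence $T^*T^n \leftrightarrow T^*T^n$ and $0_{T^n}$ as a correspondence $T^*T^n \leftrightarrow \mathrm{pt}$, I would apply Proposition \ref{Prop-1.3}\,(\ref{Prop-1.3-e}) to obtain
\[
\gammasupp\bigl(\varphi(0_{T^n})\bigr) \;\subset\; \gammasupp(\Gamma(\varphi)) \circ \gammasupp(0_{T^n}) \;=\; \Delta_{T^*T^n} \circ 0_{T^n} \;=\; 0_{T^n}.
\]
By Theorem \ref{Thm-7.12}\,(\ref{Thm-7.12-1}) the left-hand side is $\gamma$-coisotropic, and Proposition \ref{Prop-6.7} forces it to be non-empty. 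Since Proposition \ref{Prop-12.16} rules out any proper closed subset of the smooth exact Lagrangian $0_{T^n}$ being $\gamma$-coisotropic, we conclude $\gammasupp(\varphi(0_{T^n})) = 0_{T^n}$.

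Second, $\varphi(0_{T^n}) \in \widehat{\LL}(T^*T^n)$ is then regular with $\gamma$-support equal to the smooth exact Lagrangian $0_{T^n}$. Applying Theorem \ref{Conj-regular} (which applies because $N=T^n$ is closed and $0_{T^n}$ is exact), I obtain $\varphi(0_{T^n}) = 0_{T^n}$ as elements of $\widehat{\LL}(T^*T^n)$. In particular, $\gamma\bigl(\varphi(0_{T^n}),\, 0_{T^n}\bigr) = 0$.

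Third, I would invoke the classical identification
\[
\gamma(\varphi) \;=\; \gamma\bigl(\varphi(0_N),\, 0_N\bigr),
\]
valid for $\varphi \in \DHam_c(T^*N)$ via the PSS isomorphism relating Hamiltonian and Lagrangian Floer theory for the zero section. Since the spectral distance is continuous under $\gamma$-limits on both sides, this identity extends to $\widehat{\DHam}_c$. Specializing to $N = T^n$ yields $\gamma(\varphi) = 0$, and therefore $\varphi = \Id$ in $\widehat{\DHam}_c(T^*T^n)$.

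\textbf{Main obstacle.} The delicate point is the final step: transferring the identification $\gamma(\varphi) = \gamma(\varphi(0_N),0_N)$ from the smooth group to the Humilière completion. Concretely, one must verify that the action of $\widehat{\DHam}_c(T^*T^n)$ on $\widehat{\LL}(T^*T^n)$ is well-defined, which follows from the $1$-Lipschitz property of the action of $\DHam_c$ on $\LL$ together with the universal property of the completion, and that both sides of the PSS identity are continuous in the respective metrics. A secondary point that requires care is the precise meaning of the composition $\Gamma(\varphi) \circ 0_{T^n}$ (and the resulting inclusion on $\gamma$-supports) for $\Gamma(\varphi)$ in the completion rather than smooth; this should be handled by passing to a Cauchy sequence approximating $\varphi$ and invoking Proposition \ref{Prop-lim-support}.
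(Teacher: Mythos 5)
Your first two steps are fine and close in spirit to the paper, but the argument breaks at the final step. The identity $\gamma(\varphi)=\gamma(\varphi(0_N),0_N)$ is false: only the inequality $\gamma(\varphi)\geq\gamma(\varphi(0_N),0_N)$ holds. Any $\varphi\neq\Id$ generated by a Hamiltonian supported away from the zero section satisfies $\varphi(0_N)=0_N$ (so the right-hand side vanishes) while $\gamma(\varphi)>0$; there is no PSS-type statement identifying the Hamiltonian spectral norm with the Lagrangian spectral displacement of a single fixed Lagrangian. So knowing $\varphi(0_{T^n})=0_{T^n}$ (or even $\varphi(L)=L$ for every smooth exact $L$, which your correspondence argument would also give) does not let you conclude $\varphi=\Id$: for an element of the completion, which is not a map, there is no "two transverse Lagrangians pin down a point" argument, and acting trivially on all Lagrangians is not obviously equivalent to $\gamma(\varphi)=0$.

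The paper circumvents this by applying the regular-Lagrangian theorem to the graph itself rather than to its action on test Lagrangians. Concretely, it uses the $\mathbb Z^n$-symplectic covering $T^*(\Delta_{T^*T^n})\to T^*T^n\times\overline{T^*T^n}$ (this is where the torus is essential: the Weinstein identification of a neighbourhood of the diagonal becomes a global covering), lifts $\Gamma(\varphi)$ to a unique $\widetilde\Gamma(\varphi)$, and observes that the lift is a $\gamma$-isometry because holomorphic strips, being simply connected, correspond bijectively upstairs and downstairs. Hence $\varphi\in\widehat\DHam_c(T^*T^n)$ determines $\widetilde\Gamma(\varphi)\in\widehat{\mathcal L}(T^*(\Delta_{T^*T^n}))$ with $\gammasupp(\widetilde\Gamma(\varphi))$ equal to the zero section; Theorem \ref{Conj-regular} then gives $\widetilde\Gamma(\varphi)=\Delta$, and since $\varphi\mapsto\widetilde\Gamma(\varphi)$ is the completion of an isometric embedding it is injective, whence $\varphi=\Id$. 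If you want to salvage your outline, you must replace your third step by this graph-level argument (or some other mechanism recovering $\varphi$ from data it genuinely determines).
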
 
  \begin{proof} Consider the ${\mathbb Z}^{n}$ symplectic covering from $T^{*}(\Delta_{T^{*}T^{n}})$ to $T^*T^{n} \times \overline{T^{*}T^{n}}$. 
  Let $\Gamma(\varphi)$ be the graph of $\varphi$. For $\varphi \in \DHam_{c} (T^*T^{n})$, $\Gamma(\varphi)$ has a unique lifting $\widetilde \Gamma(\varphi)$ to  $T^{*}(\Delta_{T^{*}T^{n}})$ (note that $\Gamma(\varphi)$ and  $\widetilde \Gamma(\varphi)$ are diffeomorphic to $T^{*}T^{n}$).
   Now the projection of the covering yields a diffeomorphism between  $\widetilde \Gamma(\varphi)$ and  $ \Gamma(\varphi)$
  and\footnote{The following equality is easy to see in the context of Floer cohomology, since there is a one-to-one correspondence between holomorphic strips on the base and on the covering space, as strips are simply connected.}  $$\gamma (\widetilde \Gamma(\varphi_{1}), \widetilde \Gamma(\varphi_{2}))= \gamma (\Gamma(\varphi_{1}),\Gamma(\varphi_{2}))$$
 This implies  that an element $\varphi \in \widehat \DHam_{c} (T^*T^{n})$ defines a unique $\widetilde \Gamma(\varphi)$ in $\widehat {\mathcal L}(T^{*}(\Delta_{T^{*}T^{n}}))$. 
  Now  $\gammasupp(\widetilde \Gamma(\varphi))=\Delta_{T^{*}T^{n}}$ so, according to Theorem \ref{Conj-regular},  we have $\widetilde \Gamma(\varphi)=\Delta_{T^*T^n}$. But this means\footnote{The above equality implies that the map $\widehat \DHam (T^*T^{n}) \longrightarrow \widehat {\mathcal L}(T^{*}(\Delta_{T^{*}T^{n}}))$ is obtained by taking the completion of the  isometric embedding $\DHam_{c} (T^*T^{n}) \longrightarrow {\mathcal L}(T^{*}(\Delta_{T^{*}T^{n}}))$. It is therefore injective : two elements having the same graph are equal !}
  that $\varphi=\Id$.  
 
   \end{proof} 
  
  The same argument yields
    \begin{cor} 
  \begin{enumerate} 
  \item 
  Let $\varphi \in \widehat \DHam(T^{2n})$ such that $\gammasupp(\Gamma(\varphi))=\Delta_{T^{2n}}$. Then $\varphi=\Id$.
  \item The center of $ \widehat\DHam_{c} (T^{*}T^{n})$ is $\{\Id\}$. %
  \end{enumerate} 
  \end{cor} 
  \begin{proof} 
  The first result is obtained using the covering map 
  \begin{gather*} T^{*}\Delta_{T^{2n}} \longrightarrow T^{2n}\times \overline {T^{2n}}\\
  (x,y,X,Y) \mapsto (x+\frac{Y}{2}, y - \frac{X}{2}, x-\frac{Y}{2}, y +\frac{X}{2})
  \end{gather*}
   and repeating the above argument.  
    For the second, we denote by $\widetilde {\Gamma}(\varphi)$ the lift to $T^{*}\Delta_{T^{*}T^n}$ of $\Gamma (\varphi) \subset T^{*}T^n\times 
    \overline{T^{*}T^n}$. Because 
  $\gamma (\widetilde \Gamma(\varphi_{1}), \widetilde \Gamma(\varphi_{2}))= \gamma (\Gamma(\varphi_{1}),
    \Gamma(\varphi_{2}))$ an element 
    $\varphi \in \widehat{\DHam} (T^{*}T^n)$ also yields an element 
    $\widetilde {\Gamma}(\varphi) \in \widehat{\mathcal  L}(T^*\Delta_{T^{*}T^n})$.
  
 We see that if $\varphi \in  \widehat\DHam_c (T^{*}T^n)$ commutes with all  of $\widehat\DHam_c (T^{*}T^n)$, then for all $\psi \in \DHam (T^{*}T^n)$ we have $({\psi\times \psi}) (\widetilde\Gamma(\varphi))=\widetilde\Gamma(\varphi)$.
 But the only subset
 of the $\mathbb Z^n$ covering of $T^*T^n\times \overline{T^*T^n}$ which is invariant by all maps of the type 
 ${\psi\times \psi}$ is either $\Delta_{T^*T^n}$ or $T^*T^n\times \overline{T^*T^n}$ itself. The second case is impossible since $\varphi=\Id$ outside a compact set, so $\widetilde\Gamma(\varphi)$ coincides with the diagonal at infinity. So we must have $\widetilde\Gamma(\varphi)=\Delta_{T^*T^n}$ and by Proposition \ref{Prop-8.19} we have $\varphi=\Id$.
   
  \end{proof} 
  \begin{rem} 
  In general, we do not know if there exists $\varphi \in \widehat\DHam (M,\omega)$ such that $\gammasupp(\Gamma(\varphi))=M\times \overline M$. This justifies the next definition. 
  \end{rem} 
  \begin{defn} The set $\left \{\varphi \in \widehat\DHam (M,\omega) \mid \gammasupp (\Gamma(\varphi)) =\Delta_{M}\right \}$ is denoted $\mathcal N_{min}(M,\omega)$ is actually a normal closed subgroup in $\widehat\DHam (M,\omega)$. The same holds for 
  $\mathcal N_{max}(M,\omega)=\left \{\varphi \in \widehat\DHam (M,\omega) \mid \gammasupp (\Gamma(\varphi)) =M\times \overline M \right \}$
  \end{defn} 
  Note that the closeness follows from the fact that if $\varphi_{k} \longrightarrow \varphi$ and $\gammasupp (\varphi_{k})= \Delta_{M}$ then  $\gammasupp (\varphi)\subset \Delta_{M}$ but since $\Delta_{M}$ is Lagrangian, and no proper subset of a Lagrangian is $\gamma$-coisotropic (by \cite{Guillermou-Viterbo}, proposition 9.13, see Proposition \ref{Prop-12.16})  , we must have 
 $ \gammasupp(\varphi)=\Delta_{M}$. 
 
We have the following 
  \begin{Conjecture}\label{Conjecture-8.22}
  For any symplectic manifold $M$ the only element $\varphi \in \widehat\DHam_{c}(M,\omega)$ such that $\gammasupp(\Gamma(\varphi))=\Delta_{M}$ is
  $\varphi=\Id$, i.e. $\mathcal N_{min}(M,\omega )=\{ \Id\}$. Moreover $\mathcal N_{max}(M,\omega)=\emptyset$.  
  As a result the center of $\widehat\DHam_{c}(M,\omega)$ is $\{\Id\}$. 
  
  \end{Conjecture}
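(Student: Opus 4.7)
The plan is to mimic the argument of Proposition~\ref{Prop-8.19}, replacing the global cotangent covering (available only for $T^*T^n$ and $T^{2n}$) with a Weinstein tubular neighborhood of the diagonal, and then invoking Theorem~\ref{Conj-regular} on regular Lagrangians. The center statement will then follow exactly as in the corollary to Proposition~\ref{Prop-8.19}.

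More precisely, by the Weinstein neighborhood theorem, there exists a neighborhood $\mathcal{U}$ of $\Delta_M$ in $(M\times\overline{M},\omega\ominus\omega)$ and a symplectomorphism $\Psi:\mathcal{V}\to\mathcal{U}$, where $\mathcal{V}$ is a neighborhood of the zero section in $T^*M$, taking $0_M$ to $\Delta_M$. Let $\varphi\in\widehat{\DHam}_c(M,\omega)$ satisfy $\gammasupp(\Gamma(\varphi))=\Delta_M$. The key step is to represent $\Gamma(\varphi)\in\widehat{\mathcal{L}}(M\times\overline{M})$ by an element $\widetilde{L}\in\widehat{\mathcal{L}}(T^*M)$ with $\gammasupp(\widetilde{L})=0_M$, via the localization procedure implicit in $\Psi^{-1}$. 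Once this is done, Theorem~\ref{Conj-regular} applies: since $0_M$ is a smooth exact Lagrangian, $\widetilde{L}=0_M$ in $\widehat{\mathcal{L}}(T^*M)$, so $\Gamma(\varphi)=\Delta_M$ in $\widehat{\mathcal{L}}(M\times\overline{M})$ and $\varphi=\Id$. For the center statement: if $\varphi$ commutes with every $\psi\in\DHam_c(M,\omega)$, then $(\psi\times\psi)\bigl(\gammasupp(\Gamma(\varphi))\bigr)=\gammasupp(\Gamma(\varphi))$ for all such $\psi$, forcing $\gammasupp(\Gamma(\varphi))\subset\Delta_M$ (the only subsets of $M\times\overline{M}$ invariant under the diagonal action of $\DHam_c$ other than the whole space); by Theorem~\ref{Thm-7.12}~(\ref{Thm-7.12-1}) this set is $\gamma$-coisotropic, and by Proposition~\ref{Prop-12.16} no proper closed subset of the Lagrangian $\Delta_M$ is $\gamma$-coisotropic, forcing equality and hence $\varphi=\Id$ by the main statement.

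The main obstacle is the localization/representation step. To transfer $\Gamma(\varphi)$ from $\widehat{\mathcal{L}}(M\times\overline{M})$ to $\widehat{\mathcal{L}}(T^*M)$ via $\Psi^{-1}$, one needs a Cauchy sequence of smooth Hamiltonian graphs $\Gamma(\varphi_k)\to\Gamma(\varphi)$ that eventually lies in the Weinstein neighborhood $\mathcal{U}$. This amounts to a positive answer to the open question posed after Definition~\ref{Def-compact-support}: whether an element of $\widehat{\mathcal{L}}_c$ with $\gamma$-support in a prescribed set $W$ can be approximated by smooth Lagrangians contained in arbitrarily small neighborhoods of $W$. Furthermore one must verify that the spectral distance on such localized Lagrangians is comparable under $\Psi$ to the spectral distance in $T^*M$; this is plausible using the sheaf-theoretic description of Section~\ref{Section-6} but requires justification since $\Psi$ is only locally defined.

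A possible way to bypass the approximation difficulty is to work entirely on the sheaf side: use the extension $\widehat{Q}:\widehat{\mathcal{L}}(T^*M)\to D^b(M\times\mathbb{R})$ from \cite{Guillermou-Viterbo} and the identification $\gammasupp(\widetilde{L})=\mathring{SS}(\widehat{Q}(\widetilde{L}))$ from \cite{AGHIV}; if an analogous quantization can be constructed for $\widehat{\mathcal{L}}$ in a Weinstein neighborhood of an arbitrary Lagrangian (using a microlocal linearization near $\Delta_M$), then the sheaf associated to $\Gamma(\varphi)$ must have reduced singular support exactly $\Delta_M$ and therefore, by uniqueness (Theorem~\ref{Thm-Quantization}~(\ref{TQ4})), must be the constant sheaf corresponding to $\Delta_M$ itself, yielding $\varphi=\Id$ directly.
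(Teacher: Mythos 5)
The statement you are trying to prove is labelled a \emph{Conjecture} in the paper (Conjecture \ref{Conjecture-8.22}); the author proves it only for $T^*T^n$ and $T^{2n}$ (Proposition \ref{Prop-8.19} and its corollary) and explicitly leaves the general case open. Your proposal does not close it either, and to your credit you say so: the ``localization/representation step'' on which everything hinges is precisely the open Question posed after Corollary \ref{Cor-3.15} (equivalently after Definition \ref{Def-compact-support}), namely whether an element of the completion can be $\gamma$-approximated by smooth Lagrangians contained in an arbitrarily small neighbourhood of its $\gamma$-support. Without that, there is no way to push $\Gamma(\varphi)$ through the Weinstein chart $\Psi$ into $\widehat{\mathcal L}(T^*M)$: the approximating sequence $\Gamma(\varphi_k)$ is only known to $\gamma$-converge, and $\gamma$-convergence gives no control whatsoever on where the $\Gamma(\varphi_k)$ sit in $M\times\overline{M}$. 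This is exactly why the author's proof of Proposition \ref{Prop-8.19} uses a \emph{global} symplectic covering $T^*(\Delta_{T^*T^n})\to T^*T^n\times\overline{T^*T^n}$ (available only in the torus/cotangent-of-torus cases) rather than a Weinstein neighbourhood: the covering lets one lift \emph{every} graph $\Gamma(\varphi)$, not just those close to the diagonal, and gives an isometry of spectral distances via the correspondence of holomorphic strips. A merely local symplectomorphism does not even allow you to define the transported element, let alone compare spectral metrics.

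Two further points. First, even granting the localization, Theorem \ref{Conj-regular} is stated for $T^*N$ with $N$ a \emph{closed} manifold and requires the support to be an exact Lagrangian; for a general symplectic $M$ (possibly non-compact, possibly not a cotangent bundle) the target $T^*M$ is outside its scope, and the general ``regular Lagrangian'' statement is itself only a Conjecture in the paper. Second, your sheaf-theoretic bypass presupposes a quantization functor for completions in a Weinstein neighbourhood of an arbitrary Lagrangian in an arbitrary $M$, which is not available from Theorem \ref{Thm-Quantization} (that result lives on $T^*N$) and whose construction would be a substantial new result, not a routine verification. The center argument at the end is fine and matches the paper's corollary, but it is conditional on the main statement. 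In short: the architecture of your reduction is reasonable and mirrors how one would hope to generalize Proposition \ref{Prop-8.19}, but the proposal proves nothing beyond what the paper already establishes, and the two steps you flag as ``requiring justification'' are genuinely open problems, not gaps that can be filled by routine work.
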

  Of course a stronger conjecture would be that $\widehat\DHam (M,\omega)$ is a simple group.

\section{On the effect of limits and reduction}
 As we know in the smooth case the property of being coisotropic is closed for the $C^1$ topology, and according to \cite{Hum-Lec-Sey2} even in the $C^0$ case (actually slightly less :  to apply \cite{Hum-Lec-Sey2} we need that the sequence $C_k$ of coisotropic to be given by $\varphi_k(C)$ where the sequence $(\varphi_k)_{k\geq 1}$ $C^0$ converges to $\varphi\in \Homeo (M, \omega)$). Also being coisotropic  is a property preserved by symplectic reduction. 

\begin{prop} 
There exists a sequence of Lagrangians such that its Hausdorff limit is non-$\gamma$-coisotropic. 
\end{prop}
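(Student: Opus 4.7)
The plan is to exhibit an explicit elementary example in which a sequence of smooth Lagrangians collapses in the Hausdorff sense onto a set of strictly smaller dimension, which has already been shown not to be $\gamma$-coisotropic.

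Work in $(\mathbb{R}^2, dp\wedge dq)$. For each integer $k\geq 1$, let $L_k\subset \mathbb{R}^2$ be the round circle of radius $1/k$ centered at the origin. Each $L_k$ is a smooth closed connected $1$-dimensional submanifold of a $2$-dimensional symplectic manifold, so it is automatically Lagrangian, and by Proposition \ref{Prop-7.6} it is $\gamma$-coisotropic. Since every point of $L_k$ lies at distance exactly $1/k$ from the origin, the sequence $(L_k)_{k\geq 1}$ Hausdorff-converges to the singleton $\{0\}$.

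It remains to recall that $\{0\}$ is not $\gamma$-coisotropic. This was noted in the examples following Definition \ref{Def-coisotropic}: given any ball $B(0,\varepsilon)$ and any smaller ball $B(0,\eta)$, one can pick a Hamiltonian $H(q,p)$ of the form $H = \chi(q)\,h(p)$ with $\chi$ a bump equal to $1$ on $B(0,\eta)$ and supported in $B(0,\varepsilon)$, and with $\partial H/\partial q(0,p) = \chi'(0)h(p)$, or more simply a horizontal shear Hamiltonian depending only on $p$ near the origin, truncated to be compactly supported in $B(0,\varepsilon)$. By choosing $h$ with $\|h\|_{C^0}$ arbitrarily small but slope $h'(0)$ large enough, the time-one flow $\varphi_H^1$ has arbitrarily small $\gamma$-norm (it is a $C^0$-small symplectomorphism, so its $\gamma$-norm is controlled by its $C^0$-norm thanks to \cite{BHS-C0}) while $\varphi_H^1(0)\notin B(0,\eta)$. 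Thus for each $\varepsilon>0$ one produces a sequence $(\varphi_k)_{k\geq 1}$ in $\DHam_c(B(0,\varepsilon))$ with $\gamma(\varphi_k)\to 0$ and $\varphi_k(\{0\})\cap B(0,\eta)=\emptyset$, violating the definition of $\gamma$-coisotropy at $0$.

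Combining the two observations, the Hausdorff limit $\{0\}$ of the sequence of smooth $\gamma$-coisotropic Lagrangians $(L_k)_{k\geq 1}$ is not $\gamma$-coisotropic, which proves the proposition. There is no real obstacle: the example is designed precisely to exploit the fact that Hausdorff convergence allows the dimension of the limit to drop, which no stability result in the spirit of \cite{Hum-Lec-Sey2} can prevent. The companion question mentioned in the preceding discussion (whether one still gets $\gamma$-coisotropy when the limiting sets are Hausdorff limits of $\gamma$-supports of elements of $\widehat{\mathfrak L}(M,\omega)$, rather than of arbitrary smooth Lagrangians) is genuinely harder, but for the present proposition the simple shrinking-circle example suffices.
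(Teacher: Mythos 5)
Your example is correct for the statement as literally written: circles of radius $1/k$ in $(\mathbb{R}^2,dp\wedge dq)$ are smooth closed Lagrangians, hence $\gamma$-coisotropic by Proposition \ref{Prop-7.5}, their Hausdorff limit is the point $\{0\}$, and the shear argument you give (which is Example (1) after Definition \ref{Def-coisotropic}) shows a point is nowhere $\gamma$-coisotropic; note that for the smallness of $\gamma(\varphi_k)$ you do not need \cite{BHS-C0}, the elementary bound of $\gamma$ by the Hofer norm, hence by $\Vert H\Vert_{C^0}$, already suffices. However, this is a genuinely different, and much more degenerate, example than the paper's, and in fact the paper already records essentially your example in Section \ref{Section-Coisotropic} (spheres of radius $1/k$ collapsing to $\{0\}$) when discussing Hausdorff limits of general $\gamma$-coisotropic sets. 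The proof given in the paper instead takes exact curves $L_n$ (Hamiltonian images of the zero section) developing a thin tongue that collapses onto a half-line: the Hausdorff limit then contains the original Lagrangian together with a half-line, and the failure of $\gamma$-coisotropy occurs only at the endpoint of that half-line, not through total dimensional collapse. This buys two things your circles cannot: the $L_n$ are exact and the sequence is $\gamma$-Cauchy, so it has a limit in $\widehat{\LL}$, and the subsequent remark uses precisely the non-$\gamma$-coisotropy of the half-line's endpoint to conclude that the half-line is absent from the $\gamma$-support of the $\gamma$-limit. Your circles bound area, so they lie outside the class of exact Lagrangians on which the spectral metric of the paper is defined and admit no such $\gamma$-limit; they settle the literal statement but not the finer phenomenon the proposition is meant to illustrate.
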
 
 \begin{proof} 
Indeed let us consider the sequence represented on Figure \ref{fig-4n} below. The Hausdorff limit of the curve is a half-line. But a half-line is not $\gamma$-coisotropic at its endpoint, since it is easy to move $p=0, q\geq 0$ by $H(q,p)$ such that $ \frac{\partial H}{\partial p}(q,0)>1$   outside the unit ball and $H$ is arbitrarily small (or check that it is  not cone-coisotropic). 
\end{proof}  
Note  that for the same reason the red curve does not belong to the support of the $\gamma$-limit of the $L_n$ (it is easy to see that the sequence  $(L_n)_{n\geq 1}$ is $\gamma$-Cauchy) since it is not $\gamma$-coisotropic.  
\begin{figure}[H]
\center\includegraphics[width=5cm]{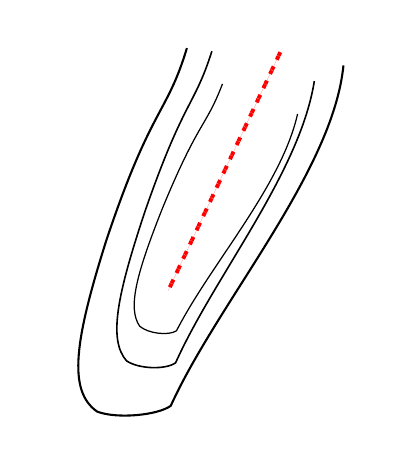}
 \put (-55,45) {$L_{n}$} 
\caption{ A Hausdorff limit (dotted line) of Lagrangians that is not $\gamma$-coisotropic.}
\label{Fig-8}
\end{figure}

\subsection{Reduction and \texorpdfstring{$\gamma$-support}{gammasupport}}
Now let us examine the effect of reduction on $\mathcal L(T^{*}N)$. For example consider $V$ a closed submanifold in $X$. The symplectic reduction $L_V=L\cap T_{V}^{*}X)/\simeq$
where $$(q,p_{1})\simeq (q,p_{2}) \Leftrightarrow p_{1}-p_{2}=0 \;\text{on}\; T_{q}V$$ of
$L \in \mathcal L(T^{*}X)$ is well-defined for $L$ transverse to $T_{V}^{*}X$  and such that the projection is an embedding. Then $L_{V}\in \mathcal L(T^{*}V)$, and  transversality, the set of $L$  for which $L_V$ is well-defined and embedded is open for the $C^{1}$-topology. We denote by $\mathcal L_{V}(T^{*}X)$ the set of such  Lagrangians. 

\begin{prop} Let  $V$ be a closed submanifold in $X$. The map $\bullet_{V}: L \mapsto L_{V}$ defined on $\mathcal L_{V}(T^{*}X)$ with image in $\mathcal L (T^{*}V)$ extends to a (well-defined) map 
$\widehat \bullet _{V}: \widehat{\mathcal L}_{V}(T^{*}X) \longrightarrow \widehat{\mathcal L}(T^{*}V) $ where  $\widehat{\mathcal L}_{V}(T^{*}X)$ is the closure of 
$\mathcal L_{V}(T^{*}X)$ in $\widehat{\mathcal L}(T^{*}X) $. 
 \end{prop}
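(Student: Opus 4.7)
The plan is to show that the reduction map $\bullet_V : \mathcal{L}_V(T^*X) \to \mathcal{L}(T^*V)$ is $1$-Lipschitz for the metric $c$; the extension to the closure $\widehat{\mathcal{L}}_V(T^*X)$ in the completion is then formal, as for any Lipschitz map defined on a dense subset of a complete metric space.

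First we check that the map is genuinely well defined at the level of branes. For $L\in \mathcal{L}_V(T^*X)$, the projection $\pi : T_V^*X \to T^*V$ has affine fibers on which the Liouville form $\lambda$ vanishes, so $\lambda_{\mid T_V^*X}$ pushes forward to $\lambda_{T^*V}$. Consequently the primitive $f_L$ descends through $\pi_{\mid L\cap T_V^*X}$ to a primitive $f_{L_V}$ of $\lambda_{\mid L_V}$; since $L$ is transverse to $T_V^*X$, the grading descends as well, so $L_V \in \mathcal{L}(T^*V)$.

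The key estimate is the reduction inequality, in the two-sided form
\[ c_-(L^1, L^2) \leq c_-(L_V^1, L_V^2) \leq c_+(L_V^1, L_V^2) \leq c_+(L^1, L^2), \]
which yields $c(L_V^1, L_V^2) \leq c(L^1, L^2)$ directly from the definition of $c$. This is the brane version of the reduction inequality for $\gamma$ already invoked in the proof of Proposition \ref{Prop-1.3}(\ref{Prop-1.3-f}) (compare \cite{Viterbo-book}, Prop.~7.43), specialized to the coisotropic $T_V^*X$ whose characteristic foliation is exactly $\ker(d\pi)$.

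With the Lipschitz bound in hand the extension is immediate: if $\widetilde{L} \in \widehat{\mathcal{L}}_V(T^*X)$ is the $c$-limit of a Cauchy sequence $(\widetilde{L}^k)$ in $\mathcal{L}_V(T^*X)$, then $(\widetilde{L}_V^k)$ is Cauchy in $\mathcal{L}(T^*V)$, hence has a limit in $\widehat{\mathcal{L}}(T^*V)$. Independence of the approximating sequence follows from $c(\widetilde{L}_V^k, \widetilde{M}_V^k) \leq c(\widetilde{L}^k, \widetilde{M}^k) \to 0$ for any two Cauchy sequences with the same limit, and the resulting map $\widehat{\bullet}_V$ is again $1$-Lipschitz by passing to the limit.

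The main technical point, and the only genuine obstacle, is the reduction inequality itself: the $\gamma$-version is classical and used elsewhere in the paper, but the two-sided estimate on the half-invariants $c_\pm$ requires tracking the action filtration through the reduction and verifying that the primitive $f_L$ descends consistently with the Floer-theoretic normalization on $L_V$. This is essentially contained in the existing literature on spectral invariants and symplectic reduction, and presents no new difficulty once the brane structure on $L_V$ has been set up correctly.
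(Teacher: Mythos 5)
Your proof is correct and follows essentially the same route as the paper: establish that reduction is $1$-Lipschitz via the reduction inequality for spectral invariants, then invoke the general fact that a Lipschitz map on a dense subset extends to the closure in the completion. If anything you are slightly more careful than the paper's own proof, which cites only the $\gamma$-version $\gamma((L_1)_V,(L_2)_V)\leq\gamma(L_1,L_2)$ even though $\widehat{\mathcal L}$ is the $c$-completion, whereas you correctly isolate the two-sided estimate on $c_\pm$ needed for the metric $c$ on branes and check that the primitive and grading descend.
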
 
 \begin{proof} 
 Given a map $f$ defined on a subset $C$ of a metric space $(A,d_{A})$,   to $(B,d_{B})$ and $f$ is Lipschitz, it extends to a continuous map from the closure of $C$ to the completion of $B$, $(\widehat B, \widehat d_{B})$ since $f$ sends Cauchy sequences to Cauchy sequences. Now  we just need to apply this to $C= {\mathcal L}_{V}(T^{*}X)$, the subset of elements in ${\mathcal L}(T^{*}X)$ having a $V$-reduction and $B$ the space ${\mathcal L}(T^{*}V)$, both endowed with the metric $\gamma$. The proof is concluded by invoking proposition 5.2 in \cite{Viterbo-inverse-reduction} which claims
 $$\gamma ((L_{1})_{V}, (L_{2})_{V}) \leq \gamma(L_{1},L_{2})$$
 \end{proof} 
 
 We shall by abuse of language denote by $L_{V}$ the reduction of $L \in   {\mathcal L}(T^{*}X)$ to $\widehat{\mathcal L}(T^{*}V)$.  
Now let $C$ be a subset in $T^*(X\times Y)$ and $C_x\subset T^*Y$ the reduction of $C$ by $\{x\}\times Y$ that is $C\cap T_x^*X\times T^*Y/T_x^*X\simeq T^*Y$. 
 \begin{rem} 
 While the elements of $\mathcal L_V(T^*X)$ all yield by reduction, an embedded Lagrangian in $\mathcal L(T^*V)$, it is possible that an element of $\widehat{\mathcal L}_V(T^*V)$ reduces to a non-embedded Lagrangian in $T^*V$. This if for example the case for a $C^0$ limit of Lagrangians, but one could hope for more interesting examples.	
 \end{rem}

 \begin{prop} 
For $L \in \widehat \LL_{V} (T^*X)$,  we have   $$\gammasupp (L_V) \subset  (\gammasupp (L))_V$$   
 \end{prop}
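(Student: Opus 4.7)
The plan is to adapt the proof of Proposition \ref{Prop-1.3}(f), exploiting that the reduction inequality $\gamma(L^1_V, L^2_V) \le \gamma(L^1, L^2)$ has just been extended to $\widehat{\mathcal L}_V$ in the preceding proposition, together with the $\gamma$-continuity of the operation $\bullet_V$. The coisotropic at play is $K = T_V^*(X\times Y)$ with reduction $\pi : K \to T^*V$, whose leaves are the cotangent fibers of $X\times Y$ along $V$.

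Fix $y \in \gammasupp(L_V)$. For each $\varepsilon > 0$, Proposition \ref{Prop-3.8} produces a compactly supported Hamiltonian $H_\varepsilon$ on $T^*V$, supported in $B(y, \varepsilon)$, with $c_+(\varphi_{H_\varepsilon}(L_V), L_V) > 0$. Using a Weinstein-type identification of a neighborhood of a leaf $I$ of the coisotropic foliation with a neighborhood of the zero section in $T^*I \times B(y, \varepsilon)$, I lift $H_\varepsilon$ to $\widetilde H_\varepsilon(q_I, p_I, z') = H_\varepsilon(z')\,\chi(p_I)\,\eta(q_I)$, with $\chi(0)=1$, $\chi'(0)=0$, and $\eta$ equal to $1$ on a large compact subset of $I$ and vanishing outside a slightly larger one. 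The flow $\widetilde\varphi_{H_\varepsilon}$ then preserves $K$ where $\eta = 1$ and reduces there to $\varphi_{H_\varepsilon}$, so that
$$
\gamma(\widetilde\varphi_{H_\varepsilon}(L), L) \;\ge\; \gamma\bigl((\widetilde\varphi_{H_\varepsilon}(L))_V, L_V\bigr) \;=\; \gamma(\varphi_{H_\varepsilon}(L_V), L_V) \;>\; 0,
$$
provided $\eta$ is chosen large enough to contain the relevant portion of $L \cap K$. Lemma \ref{Lemma-3.6} then yields a point $z_\varepsilon \in \gammasupp(L) \cap \supp(\widetilde H_\varepsilon)$, and a compactness argument as $\varepsilon \to 0$ extracts a limit $z_0 \in \gammasupp(L) \cap K$ with $\pi(z_0) = y$, giving $y \in (\gammasupp(L))_V$.

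The main obstacle is the non-compactness of the leaves $I$, which forces the auxiliary cutoff $\eta$ and requires checking that the reduction identity $(\widetilde\varphi_{H_\varepsilon}(L))_V = \varphi_{H_\varepsilon}(L_V)$ still holds. This is where the hypothesis $L \in \widehat{\mathcal L}_V$ is essential: I approximate $L$ by smooth $L^\nu \in \mathcal L_V(T^*(X\times Y))$, choose $\eta$ so that $\{\eta = 1\}$ contains $L^\nu \cap K$ for sufficiently large $\nu$, and pass to the limit using the $\gamma$-continuity of $\bullet_V$, exactly in the spirit of the end of the proof of Proposition \ref{Prop-1.3}(f).
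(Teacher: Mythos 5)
Your architecture is the paper's own: localize near a leaf, take the test Hamiltonian of Proposition \ref{Prop-3.8} on the reduced space, lift it to a Hamiltonian (nearly) constant along the leaves, transfer positivity of $\gamma$ via the reduction inequality, and invoke Lemma \ref{Lemma-3.6} to find a point of $\gammasupp(L)$ in the support of the lift before letting $\varepsilon\to 0$. However, the step you yourself flag as the main obstacle does not work as written. You truncate the lift along the leaf by $\eta(q_I)$ and claim one can choose $\{\eta=1\}$ to contain $L^\nu\cap K$ for all large $\nu$. But $\gamma$-convergence carries no geometric control on where the approximating Lagrangians sit: for instance $L^\nu=\gra(df_\nu)$ with $\Vert f_\nu\Vert_{C^0}\to 0$ and $\Vert df_\nu\Vert_{C^0}\to\infty$ converges to the zero section while $L^\nu\cap T^*_V X$ escapes to infinity along the leaves (the conormal directions). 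So no single compact $\{\eta=1\}$ works; and if you let $\eta$ depend on $\nu$, the lifted map $\widetilde\varphi$ depends on $\nu$ too, which destroys the final application of Lemma \ref{Lemma-3.6} — that lemma requires one fixed compactly supported $H$ with $c(\varphi_H(\widetilde L),\widetilde L)>0$ for the limit $\widetilde L$ itself. Where $\eta\neq 1$ the flow neither preserves $K$ nor reduces to $\varphi_{H_\varepsilon}$, so the identity $(\widetilde\varphi(L^\nu))_V=\varphi_{H_\varepsilon}(L^\nu_V)$ genuinely fails there.

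The paper's resolution is to drop $\eta$ altogether: in the local model $V=\{x=0\}\subset\mathbb R^n$ the lift is $\chi(\vert x\vert^2)\chi(z^2+p_z^2)$, independent of the leaf coordinate $p_x$, so the flow preserves $K=T_V^*X$ everywhere and $(\widetilde\varphi(L'))_V=\varphi_0(L'_V)$ holds for every $L'\in\mathcal L_V$ with no boundedness hypothesis; one then passes to the $\gamma$-limit with a single fixed $\widetilde\varphi$, whose support is a neighbourhood of the whole fiber $T_0^*\mathbb R\times B(z_0,\varepsilon)$, and applies Lemma \ref{Lemma-3.6}. In your Weinstein-neighbourhood formulation this amounts to taking $\widetilde H_\varepsilon(q_I,p_I,z')=H_\varepsilon(z')\chi(p_I)$ with no cutoff in $q_I$; the price is that the support is unbounded along the leaf, which is the same (tacit) concession the paper makes when it invokes Lemma \ref{Lemma-3.6} for such a Hamiltonian. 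With that modification your argument coincides with the paper's.
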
 
 \begin{proof}This follows from Proposition \ref{Prop-6.21} (\ref{Prop-6.21-f}).
\end{proof}

\begin{prop} Let $\mathcal V(X)$ be the set of compact submanifolds of $X$ endowed with the $C^r$topology ($r>1$). 
	The map $\mathcal V(X)\times \widehat {\mathcal L}_{c}(T^*X) \longrightarrow  \widehat {\mathcal L}_V(T^*X) $ given by $(V,L) \longrightarrow \gammasupp(L_V)$ is lower semicontinuous, where defined, in the following sense. For any sequences $(L_j)_{j\geq 1}, (V_j)_{j\geq 1}$ converging (for the $C^\infty$ topology) to $L,V$ we have (assuming all reductions are well-defined)
	$$\gammasupp (L_V)\subset \liminf_j \gammasupp \left (({L_j})_{V_j}\right )$$
\end{prop}
\begin{proof} 
We shall prove the map $(L,V) \mapsto L_V$ (when defined) is locally Lipschitz in both variables. First the reduction inequality (\cite[Prop. 5.1]{Viterbo-STAGGF})  implies
$\gamma(L_V,L'_V)\leq \gamma(L,L')$. Now if $d_{C^r}(V,V')$ is small,  there exists a $C^r$-small diffeomorphism $\psi$ isotopic to the identity such that
$\psi(V)=V'$ with $\psi$ $C^r$-small. Then $\psi$ is the time-one of a $C^{r-1}$-small vector field, $Z$ and the Hamiltonian $H_Z(q,p)=\langle p , X(q)\rangle$ has time one flow given by $\Psi(q,p)=(\psi(q), p\circ d\psi(q))$. If $\gammasupp(L)$ is contained in $\{(q,p) \mid \vert p \vert \leq R\}$ truncating the Hamiltonian, we get a  $C^{r-1}$-small Hamiltonian $H_{Z,R}$  such that its time-one flow $\Psi_R$ satisfies $(\Psi_R(L))_{V}=L_{V'}$ so that 
$$\gamma(L_V,L_{V'})=\gamma (\Psi_R(L)_V, L_V)\leq \gamma (\Psi_R(L), L)\leq \gamma (\Psi_R(L)_V) \leq  \Vert H_Z^R \Vert_{C^0} \leq R \varepsilon \dispdot $$
\end{proof}

\subsection{Reduction of \texorpdfstring{$\gamma$-coisotropic}{gamma-coisotropic} sets}
Another natural question is the following: given a closed set $V$, describe the set $C_V^0$ of $\gamma$-coisotropic points (resp. $Y_V^{0}$ of non-$\gamma$-coisotropic points). 
First the example of $V=[0,1]\subset \mathbb R^2$ where $C_V^0=]0,1[$ shows that $C_V^0$ is not necessarily closed. It is not necessarily open either, as can be seen  where $V$ is the union of a Lagrangian torus in $\mathbb R^2$ and a curve touching the torus, in which case $C_V^0$ is the torus, so it is closed.

We need the following characterization of $\gamma$-coisotropic points.

\begin{defn} 
We denote by $C_V^ \varepsilon $ the set of points $z$ such that there exists a positive function $\delta : {\mathbb R} \longrightarrow {\mathbb R}_+^* $ such that for any $\varphi\in \DHam_c(B(z, \varepsilon ))$ such that $\varphi(B(z,\eta))\cap V=\emptyset$ we have $\gamma(\varphi) > \delta(\eta)$. 
We set $Y_V^{\varepsilon, \eta} $ to be  the set of points $z$ in $V$ such that there exists a sequence $(\varphi_k)_{k\geq 1}$ in $\DHam_c(B(z, \varepsilon ))$with $\gamma-\lim_k\varphi_k=\Id$ and $\varphi_k(B(z,\eta))\cap V=\emptyset$. 
Finally we set $C_V^0=\bigcup_{ \varepsilon >0} C_V^\varepsilon $ to be the set of $\gamma$-coisotropic points and 
$Y_V^0=\bigcap_{ \varepsilon >0}\bigcup_{0< \eta< \varepsilon }Y_V^{\varepsilon, \eta} $ the set of non-$\gamma$-coisotropic points. 
\end{defn}

We may now state
\begin{prop} 
The set $C_V^0$ of $\gamma$-coisotropic points of the closed set $V$ is an $F_\sigma$, that is a countable union of closed sets. 
\end{prop} 
\begin{proof} We shall prove the equivalent statement that $Y_V^0$ the complement of $C_V^0$ is a $G_\delta$. 

Note that for $0< \alpha  $,  $Y^{ \varepsilon , \eta}$ is contained in $Y^{ \varepsilon + \alpha , \eta}$ and $Y^{ \varepsilon  , \eta-\alpha}$. 

 Note also that $Y_V^\varepsilon =\bigcup_{0<\eta< \varepsilon } Y_V^{ \varepsilon , \eta}$ is the complement of $C_V^\varepsilon $  and 
$Y_V^0$ is the complement of $C_V^0$ in $V$. 
Finally if  $z\in Y_V^{\varepsilon, \eta} $ and $ \vert z-z' \vert <\alpha < \eta$ we have $z'\in Y_V^{ \varepsilon +\alpha, \eta- \alpha}$. 
Then  for any $ \varepsilon < \varepsilon '$ and $z \in Y_V^\varepsilon $, a neighbourhood of $z$ is contained in $Y_V^{ \varepsilon '}$ since we must have $z\in Y_V^{ \varepsilon , \eta}$ for some $\eta>0$ and then $B(z,\alpha) \in Y_V^{ \varepsilon ', \eta-\alpha}\subset Y_V^{ \varepsilon '}$ provided $\alpha < \min \{ \varepsilon '- \varepsilon , \eta/2\}$. 
We then set $W^\varepsilon =\bigcup_{0<\eta< \varepsilon } B(Y_V^{ \varepsilon ,\eta}, \eta/2)$ and we have 
$$Y_V^\varepsilon =\bigcup_{0<\eta < \varepsilon } Y_V^{ \varepsilon , \eta} \subset W^\varepsilon =\bigcup_{0<\eta< \varepsilon } B(Y_V^{ \varepsilon ,\eta}, \eta/2) \subset Y_V^{2 \varepsilon }$$
Note that by definition $W^ \varepsilon $ is open, so $\bigcap_{ \varepsilon >0} W^\varepsilon $ is a $G_\delta$, since the intersection can be taken on the countable set $ \varepsilon \in \{ 1/n, n \in \mathbb N^*\}$.
But by the previous sequence of inclusions, we get
$$Y_V^0=\bigcap _{ \varepsilon >0} Y_V^\varepsilon \subset  \bigcap _{ \varepsilon >0} W^\varepsilon \subset 
\bigcap _{ \varepsilon >0} Y_V^{2 \varepsilon }=Y_V^0$$
We may thus conclude that $Y_V^0$ is a $G_\delta$. 
\end{proof} 

 \begin{rem}
 Even though being an $F_\sigma$  is not a very precise information, it excludes some sets as the set of coisotropic points. For example the complement of a countable dense subset of $V$ can not be the set of $\gamma$-coisotropic points. 
 \end{rem} 
 
Let us remind the reader that a {\bf nowhere dense} set is a set such that its closure has empty interior. A  {\bf meager} subset is a countable union of nowhere dense sets.  A set is {\bf nowhere meager} if its intersection with any open set is not meager. In a complete space, Baire's theorem implies that a meager set has empty interior and a nowhere meager set is everywhere dense. A set is {\bf comeager} if it is the complement of a meager set, or a countable intersection of sets of dense interior. 

 For $\gamma$-coisotropic sets, we have 
\begin{prop} \label{Prop-9.4}
Let $C$ be a $\gamma$-coisotropic set in $T^{*}(X\times Y)$. Then its symplectic reduction $C_x$ is $\gamma$-coisotropic or empty for $x$ in a nowhere-meager subset of $X$. 
\end{prop}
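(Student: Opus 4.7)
The plan is to argue by contradiction. Suppose the set $\Sigma=\{x\in X:C_x\ \text{is not $\gamma$-coisotropic}\}$ is not nowhere dense; then $\overline{\Sigma}$ contains an open ball $U\subset X$ in which $\Sigma$ is dense. The strategy is to pick a good $x_0\in\Sigma\cap U$, witness the failure of $\gamma$-coisotropy of $C_{x_0}$ at a point $(y_0,\eta_0)$ by a sequence $\psi_k$ of Hamiltonian maps with $\gamma(\psi_k)\to 0$, and lift this sequence to $T^*(X\times Y)$ so that it exhibits a failure of $\gamma$-coisotropy of $C$ at a preimage point $z_0\in C$, contradicting the hypothesis.

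First I would apply the Baire category theorem inside $U$ to uniformize the failure data. For each $x\in\Sigma\cap U$ we have a witness $(y_x,\eta_x)\in C_x$, rationals $\varepsilon_x>\eta_x'>0$, and a sequence $\psi_k^x\in\DHam_c(B_{T^*Y}((y_x,\eta_x),\varepsilon_x))$ with $\gamma(\psi_k^x)\to 0$ and $\psi_k^x(C_x)\cap B_{T^*Y}((y_x,\eta_x),\eta_x')=\emptyset$. Partitioning $\Sigma\cap U$ by the basis element of $T^*Y$ containing $(y_x,\eta_x)$ and the rational pair $(\varepsilon_x,\eta_x')$, Baire category provides an open sub-ball $U'\subset U$ and a fixed triple $(V_0\subset T^*Y,\varepsilon_0,\eta_0')$ such that the $x\in U'$ whose witnesses lie in $V_0$ with those rational parameters form a dense subset of $U'$.

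Next I would pick such an $x_0$, a preimage $z_0=(x_0,y_0,p_0,\eta_0)\in C$ of $(y_{x_0},\eta_{x_0})\in C_{x_0}\cap V_0$, and write $\psi_k=\psi_k^{x_0}$ with Hamiltonian generator $H_k$ on $T^*Y$. For a sequence $\delta_k\to 0$, let $\chi_{\delta_k}$ be a smooth radial bump on $T^*X$ supported in $B_{T^*X}((x_0,p_0),\delta_k)$ and equal to $1$ on $B_{T^*X}((x_0,p_0),\delta_k/2)$, and set $\tilde H_k(x,p_x,y,p_y)=\chi_{\delta_k}(x,p_x)H_k(y,p_y)$ with $\tilde\varphi_k=\varphi_{\tilde H_k}^1$. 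Then (a) $\supp(\tilde\varphi_k)\subset B_{T^*(X\times Y)}(z_0,\delta_k+\varepsilon_0)$; (b) the radial choice of $\chi_{\delta_k}$ kills its gradient at $(x_0,p_0)$, so the flow preserves the leaf $\{(x_0,p_0)\}\times T^*Y$ and there acts as $\id\times\psi_k$; (c) the set $\supp(\tilde H_k)$ has displacement energy $O(\delta_k^2)$ in $T^*(X\times Y)$ (it suffices to displace the $T^*X$-factor), so by Proposition~4.12 of \cite{Viterbo-STAGGF} we obtain $\gamma(\tilde\varphi_k)=O(\delta_k^2)\to 0$.

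The hard part will be to show that $\tilde\varphi_k(C)\cap B(z_0,\eta_1)=\emptyset$ for some fixed $\eta_1>0$ and all $k$ large. Property (b) handles points of $C$ on the leaf $x=x_0,\ p_x=p_0$ via the failure $\psi_k(C_{x_0})\cap B_{T^*Y}((y_0,\eta_0),\eta_0')=\emptyset$ once $\eta_1\leq\eta_0'$. The delicate case is off-leaf points $w=(x,y,p_x,p_y)\in C\cap B(z_0,\eta_1)$ with $(x,p_x)\neq (x_0,p_0)$, whose image depends on Hamiltonian transport through the transition region where $\chi_{\delta_k}$ varies; there the sliced failure data for $C_{x_0}$ does not directly apply to $C_x$. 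This is exactly where the excluded nowhere-dense subset comes from: one restricts $x_0$ to avoid the meagre locus where the slicing of $C$ by the coisotropics $T^*_xX\times T^*Y$ is degenerate (for instance where $C$ exhibits tangency in the $T^*X$-directions), and uses the density of $\Sigma\cap U'$ in $U'$ to choose $x_0$ that simultaneously realises the uniform failure and lies outside this bad set. For such $x_0$, the shrinking of $\delta_k$ confines the off-leaf contribution enough to guarantee displacement on all of $C\cap B(z_0,\eta_1)$, producing the desired contradiction with the $\gamma$-coisotropy of $C$ at $z_0$.
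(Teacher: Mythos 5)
Your setup (contradiction, Baire uniformization of the witness data, lifting a displacing sequence from the reduced space to the total space) is the right general shape, and the uniformization step is a reasonable way to make precise what the paper leaves implicit. But the lifting construction has a genuine gap that the final paragraph of your proposal acknowledges without actually closing. Your map $\tilde\varphi_k$ is generated by $\chi_{\delta_k}(x,p_x)H_k(y,p_y)$ with $\chi_{\delta_k}$ supported in $B_{T^*X}((x_0,p_0),\delta_k)$. Any point of $C\cap B(z_0,\eta_1)$ whose $(x,p_x)$-coordinate lies outside that shrinking support is \emph{fixed} by $\tilde\varphi_k$ and therefore remains in $B(z_0,\eta_1)$; so the displacement claim fails unless $C$ near $z_0$ is contained in the single leaf $\{(x_0,p_0)\}\times T^*Y$. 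Note that this already kills points with $x=x_0$ but $p_x\neq p_0$: the reduction $C_{x_0}$ identifies all values of $p_x$, so the witness $\psi_k$ must be implemented on the whole fibre $T^*_{x_0}X$, not near one point of it. Shrinking $\delta_k$ makes this worse, not better, and no genericity assumption on the slicing of $C$ (``avoiding tangencies in the $T^*X$-directions'') can repair it, since the obstruction is simply that your lift has too small a support in the $(x,p_x)$-variables while the ball to be cleared has fixed radius.

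The mechanism that actually works, and that the paper uses, replaces the bump-function cutoff by three ingredients. First, an extension lemma (Proposition 8.3 / Corollary 8.10 of \cite{Viterbo-inverse-reduction}): from $\varphi_x$ on the reduced space one gets $\Phi_x$ supported in $B^{2n-2}(R)\times\left]-r,r\right[\times{\mathbb R}$ --- unbounded in the fibre direction $p_n$, which is exactly what is needed to act as $\varphi_x$ on \emph{every} leaf over an interval of base points --- with the non-trivial estimate $\gamma(\Phi_x)\leq C_n\,\gamma(\varphi_x)$; this bound does not follow from a displacement-energy argument on the support, since that support is unbounded. Second, upper semicontinuity of $x'\mapsto C_{x'}$ for the Hausdorff distance shows that $\Phi_x$ also clears the nearby slices $C_{x'}$ for $x'$ in a small interval $I_x$, which is how the density of the bad set (rather than a single $x_0$) is exploited. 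Third, one covers the relevant compact range of $x$ by finitely many such intervals, composes the corresponding $\Phi_j$ together with shear maps handling the overlaps, and controls $\gamma$ of the composition via the lemma on maps supported in symplectically separated domains. Without the first ingredient in particular, the passage from ``$C_{x}$ is displaceable at small $\gamma$-cost'' to ``$C$ is displaceable at small $\gamma$-cost'' does not go through.
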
 
\begin{rem} 
Notice that we cannot expect $C_x$ to be $\gamma$-coisotropic for all $x$. Indeed, take $X=\mathbb R$ and $C$ to be a smooth coisotropic manifold such that the coordinate $x$ has an isolated maximum with value $x_0$. Then $C_{x_0}$ is a point, hence not $\gamma$-coisotropic.  
\end{rem} 
We will need three lemmata. 
\begin{lem} \label{Lem-9.7}
Let $\varphi \in \DHam_c (B^{2n-2}(R))$ and $0<r'<r$. Then there is $\Phi \in \DHam_c (B^{2n-2}(R) \times ]-r,r[\times {\mathbb R})$ and a constant $C_{n}$ depending only on $n$ such that 
\begin{enumerate} 
\item $\gamma (\Phi)\leq C_{n} \gamma (\varphi)$
\item $\Phi(u,q_{n},p_{n})=(\varphi(u), q_{n}, \chi(u,q_{n},p_{n}))$ for $q_{n}\in [-r',r'[, p_{n}\in {\mathbb R} $
where $\chi$ is compact supported in $B^{2n-2}(R)\times ]-r,r[ \times \mathbb R)$ and equals $1$ near $B^{2n-2}(R)\times \{0\} \times \mathbb R$.
\end{enumerate} 
\end{lem}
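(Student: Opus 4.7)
I would construct $\Phi$ as the time-one flow of a Hamiltonian of product type, and then bound its spectral norm via symplectic reduction. Pick a compactly supported Hamiltonian $H\colon [0,1]\times B^{2n-2}(R)\to\mathbb{R}$ generating $\varphi$, and a smooth cutoff $\rho\colon\mathbb{R}\to[0,1]$ with $\rho\equiv 1$ on $[-r',r']$ and $\supp\rho\subset\,]-r,r[$. Set $\widetilde H(t,u,q_n,p_n)=H(t,u)\,\rho(q_n)$. Being $p_n$-independent, its flow satisfies $\dot u=\rho(q_n)X_H(t,u)$, $\dot q_n=0$, $\dot p_n=-\rho'(q_n)H(t,u)$, so $q_n$ is preserved. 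On the slab $\{q_n\in[-r',r']\}$ one has $\rho\equiv 1$ and $\rho'\equiv 0$, so the flow specialises to $(u,q_n,p_n)\mapsto(\varphi_H^t(u),q_n,p_n)$; setting $\Phi:=\varphi_{\widetilde H}^1$ therefore gives property (2) with $\chi(u,q_n,p_n)=p_n$ on that slab. To ensure genuine compact support of $\Phi$ in the $p_n$-direction I would further multiply $\widetilde H$ by a cutoff $\psi_P(p_n)\in[0,1]$ equal to $1$ on $[-P,P]$ and supported in $[-P-1,P+1]$; since $|\dot p_n|\leq \|\rho'\|_\infty\, \|H(t,\cdot)\|_\infty$ is a priori bounded along the flow, taking $P$ large enough makes the cutoff invisible to the relevant part of the flow, so neither (2) nor the estimate below is affected.

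For property (1), I would invoke the inverse reduction inequality of \cite{Viterbo-inverse-reduction} (Thm.~4.16). Because $\widetilde H$ is $p_n$-independent, $\Phi$ descends under the symplectic reduction by each coisotropic slice $\{q_n=x_0\}$ (with leaves in $\partial/\partial p_n$) to a Hamiltonian diffeomorphism $\Phi_{x_0}$ of $\mathbb{R}^{2n-2}$, namely the time-one map of $\rho(x_0)H(t,u)$, equivalently (after time-rescaling) $\varphi_H^{\rho(x_0)}$. Thus $\Phi_{x_0}=\varphi$ for every $x_0\in[-r',r']$, and for general $x_0$ the reduction $\Phi_{x_0}$ lies on the Hamiltonian path generated by $H$. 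The inverse reduction inequality then yields a dimensional constant $C_n$ such that
\[
\gamma(\Phi)\;\leq\;(1+C_n)\sup_{x_0\in\,]-r,r[}\gamma(\Phi_{x_0})\;=\;(1+C_n)\sup_{s\in[0,1]}\gamma(\varphi_H^s).
\]

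The hard part will be bounding $\sup_{s\in[0,1]}\gamma(\varphi_H^s)$ by a universal multiple of $\gamma(\varphi)$. This quantity is not monotone in $s$ for an arbitrary generating Hamiltonian (loops can produce bumps along the path), so the argument must be combined with a judicious choice of $H$: one picks a representative whose associated Hamiltonian path is quasi-geodesic in the spectral metric, so that $\sup_{s\in[0,1]}\gamma(\varphi_H^s)\leq C\gamma(\varphi)$ for some universal $C$. Such representatives always exist by standard manipulations of Hamiltonian paths in $\gamma$-norm, and combining with the previous display gives $\gamma(\Phi)\leq C_n\gamma(\varphi)$, with $C_n$ depending only on $n$. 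The bulk of the technical work is thus concentrated in this last step; the construction itself and the verification of (2) are immediate from the product structure of $\widetilde H$.
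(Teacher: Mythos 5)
The paper offers no argument of its own here (the proof is a one-line citation of Proposition 8.3 of \cite{Viterbo-inverse-reduction}), so the only question is whether your self-contained argument closes. Your construction of $\Phi$ as the time-one map of $\widetilde H=H(t,u)\rho(q_n)$ and the verification of (2) are fine, modulo the tension (already present in the statement) between compact support in $p_n$ and the normal form (2) for all $p_n\in\mathbb{R}$; the intended reading is that the generating Hamiltonian is $p_n$-independent with compactly supported $(u,q_n)$-projection, and you should \emph{not} insert the extra cutoff $\psi_P(p_n)$: once you do, $\widetilde H$ is no longer $p_n$-independent, $q_n$ is no longer preserved, and the reduction structure your second step relies on is destroyed, while for $\vert p_n\vert$ large $\Phi$ becomes the identity and (2) fails anyway.

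The genuine gap is the last step. Your reduction argument bounds $\gamma(\Phi)$ by $\sup_{s\in[0,1]}\gamma(\psi_s)$, where $\psi_s$ is the time-one map of $sH(t,\cdot)$ (not $\varphi_H^s$ when $H$ is time-dependent, but that is cosmetic). The only a priori control on these intermediate maps is $\gamma(\psi_s)\le \Vert sH\Vert_{\mathrm{Hofer}}\le \Vert H\Vert_{\mathrm{Hofer}}$, so what your argument actually proves is $\gamma(\Phi)\le C_n\, d_{\mathrm{Hofer}}(\Id,\varphi)$ — the Hofer (``locally rigid'', in Usher's sense) version of the lemma, strictly weaker than the spectral one since $\gamma\le d_{\mathrm{Hofer}}$ with no reverse inequality. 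To upgrade, you assert that one can choose a generating Hamiltonian whose path is quasi-geodesic for $\gamma$, so that $\sup_s\gamma(\psi_s)\le C\gamma(\varphi)$, calling this ``standard manipulations of Hamiltonian paths in $\gamma$-norm''. It is not standard: $\gamma$ is defined from the time-one map and is not a path metric, and whether every $\varphi$ with small spectral norm can be joined to $\Id$ by a Hamiltonian path all of whose time-$s$ maps have spectral norm $O(\gamma(\varphi))$ is, to my knowledge, open; no construction in the literature produces such paths. This is exactly why the cited Proposition 8.3 does not interpolate through the Hamiltonian flow: the extension there is arranged so that its spectral invariants are estimated directly (via generating functions), without ever seeing the intermediate time maps. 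As written, your proof establishes the lemma only with $\gamma(\varphi)$ replaced by the Hofer norm of $\varphi$.
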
 
\begin{proof} 
See \cite[Lemma 9.3]{Viterbo-inverse-reduction}. 
\end{proof} 
\begin{lem} [see \cite{Seyfaddini2015, Hum-LeR-Sey, Ganor-Tanny}]\label{Lem-9.8}
Let $(\varphi_j)_{1\leq j\leq N}$ be in $\DHam_c(U_j)$ where the $U_j$ are symplectically separated domains. Then $$\gamma(\varphi_1\circ \ldots \circ  \varphi_N) \leq 2 \max_{1\leq j \leq N} \left \{ \gamma(\varphi_j)\right \}$$
\end{lem}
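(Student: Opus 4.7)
The plan is to reduce to the case of pairwise disjoint supports, show that the filtered Floer complex of the composition decomposes as a direct sum indexed by the $U_j$, and then convert one-sided information about $c_{\pm}$ into the two-sided bound on $\gamma = c_+ - c_-$; the factor of two in the inequality is the exact cost of this last conversion.

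First I would use the symplectic separation hypothesis to produce a single compactly supported Hamiltonian diffeomorphism $\psi$ with $\psi(U_j)$ pairwise disjoint. Since conjugation is a $\gamma$-isometry both for individual diffeomorphisms and for their composition, we may replace each $\varphi_j$ by $\psi\varphi_j\psi^{-1}$, which is compactly supported in $\psi(U_j)$. After this reduction the new $\varphi_j$ have pairwise disjoint supports, so they commute (for $x \notin \psi(U_i)$ one has $\varphi_j(x) \in \{x\} \cup \psi(U_j)$, still outside $\psi(U_i)$, hence $\varphi_i \varphi_j(x) = \varphi_j(x) = \varphi_j \varphi_i(x)$), and the composition $\Phi = \varphi_1 \circ \cdots \circ \varphi_N$ is the time-one flow of the sum $H = H_1 + \cdots + H_N$.

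Second, I would argue that the action-filtered Floer complex of $H$ splits as a direct sum indexed by $j$, plus a contribution from the complement of $\bigsqcup_j \psi(U_j)$ corresponding to the trivial ``constant'' piece. The $1$-periodic orbits of $H$ that are not constant are precisely the $1$-periodic orbits of the individual $H_j$, each entirely contained in one $\psi(U_j)$, and with the same action. The real content is the localization of the Floer differential: for an almost complex structure tailored to the disjoint decomposition, $J$-holomorphic cylinders whose energy lies below a suitable threshold must remain in a single $\psi(U_j)$, by a monotonicity/isoperimetric argument. This localization is precisely the ingredient drawn from the cited works of Seyfaddini and Ganor--Tanny. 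From the direct-sum decomposition one reads off
\begin{equation*}
c_+(\Phi) = \max_j c_+(\varphi_j), \qquad c_-(\Phi) = \min_j c_-(\varphi_j).
\end{equation*}

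Third, I would conclude by elementary spectral arithmetic. Choose indices $j^+$ and $j^-$ realizing the above extrema. Since by the normalization $c_\pm(\mathrm{Id})=0$ we have $c_-(\varphi_j) \leq 0 \leq c_+(\varphi_j)$ for every $j$, it follows that $c_+(\varphi_{j^+}) \leq \gamma(\varphi_{j^+})$ and $-c_-(\varphi_{j^-}) \leq \gamma(\varphi_{j^-})$, hence
\begin{equation*}
\gamma(\Phi) = c_+(\varphi_{j^+}) - c_-(\varphi_{j^-}) \leq \gamma(\varphi_{j^+}) + \gamma(\varphi_{j^-}) \leq 2 \max_j \gamma(\varphi_j),
\end{equation*}
and the factor of $2$ is sharp in the case $j^+ \neq j^-$. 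The main obstacle is the localization of the Floer differential in the second step: it is exactly there that the symplectic separation hypothesis (rather than mere topological disjointness of closures) enters, through a uniform lower bound on the energy required for a $J$-holomorphic strip to cross between distinct $\psi(U_j)$'s. Everything else is essentially bookkeeping.
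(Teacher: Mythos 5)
Your proof is correct and follows essentially the same route as the paper: the whole weight rests on the max/min formulas $c_+(\Phi)=\max_j c_+(\varphi_j)$, $c_-(\Phi)=\min_j c_-(\varphi_j)$, after which the conversion to $\gamma$ via $c_-\leq 0\leq c_+$ is the identical arithmetic. The only difference is that the paper simply quotes these formulas as Theorem 44 of Humili\`ere--Le Roux--Seyfaddini, whereas you sketch their proof (localization of orbits and of the Floer differential); that sketch is where the real analytic content of the cited references lives, and as you rightly note it is the full separation hypothesis --- not mere disjointness after conjugation --- that makes the barricade/energy argument work.
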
 
\begin{proof} Remember that ``symplectically separated'' means  the (usual) distance between $U_{i}$ and $\psi(U_{j})$can be made arbitrarily large  by  some Hamiltonian map $\psi$. 
Applying  \cite[Thm.  44]{Hum-LeR-Sey} we get 
\begin{gather*} c_{+}(\varphi_1\circ \ldots \circ  \varphi_N) = \max_{1\leq j \leq N} \left \{ c_{+}(\varphi_j)\right \}
\\
c_{-}(\varphi_1\circ \ldots \circ  \varphi_N) = \min_{1\leq j \leq N} \left \{ c_{-}(\varphi_j)\right \}
\end{gather*} 
so taking the difference, and using  that $c_{-}(\varphi) \leq 0 \leq c_{+}(\varphi)$ we get 
$$\gamma(\varphi_1\circ \ldots \circ  \varphi_N)= \max_{1\leq j \leq N} \left \{ c_{+}(\varphi_j)\right \} - \min_{1\leq j \leq N} \left \{ c_{-}(\varphi_j)\right \} \leq 2 \max_{1\leq j \leq N} \left \{ \gamma(\varphi_j) \right \}$$
\end{proof} 
\begin{lem} \label{Lemma-product}
If $V=V' \times \mathbb R^2$ and $V$ is $\gamma$-coisotropic then $V'$ is $\gamma$-coisotropic 
\end{lem} 
\begin{proof} Assume $V'$ is not $\gamma$-coisotropic at $z$. 
Let   $\varphi'\in \DHam_c (B^{2n-2}(z; \varepsilon ))$  be such that $\varphi' (B(z;\eta))\cap V=\emptyset$. Then according to   \cite[Section 9]{Viterbo-inverse-reduction}, $\varphi'$ extends to $\varphi$ supported in $B(z;\varepsilon )\times B^2(0; \varepsilon )$ with $\gamma (\varphi)\leq C_n \gamma(\varphi')$ such that $\varphi(u, re^{i\theta})=(\varphi(u),re^{i\theta})$ for $r\leq \varepsilon /2$ (see \cite[Propostition 9.3]{Viterbo-inverse-reduction}). Then $\varphi(B^{2n}(z, \eta))\cap (V' \times \mathbb R^2)=\emptyset$.
We thus proved that if there is sequence $(\varphi'_k)_{k\geq 1}$ in $\DHam_c (B^{2n-2}(z; \varepsilon ))$, $\gamma$-converging to $\Id$ such that $\varphi'_k(B(z;\eta))\cap V=\emptyset$ then there is a sequence $(\varphi_k)_{k\geq 1}$ in $\DHam_c (B^{2n}((z,0); \varepsilon ))$ such that 
$\varphi_k(B^{2n}((z,0);\eta))\cap V=\emptyset$. In other words if $V'$ is non $\gamma$-coisotropic at $z$ then $V$ is non-$\gamma$-coisotropic at $(z,0)$. 
\end{proof} 
\begin{proof} [Proof of Proposition \ref{Prop-9.4}]
Note first that the statement is local in $x$, i.e; it is enough to prove it in a neighbourhood of $\{x_0\}\times {\mathbb R}^{2n-1}$. 

We shall prove that if the reduction $C_x$ is not $\gamma$-coisotropic for $x$ in a non-meager set, then $C$ is not $\gamma$-coisotropic. 

First, since the problem is local, and we can work by induction on the codimension of the reduction,  it is enough to deal with the case where the reduction is by a hyperplane $H_x$ with $x \in {\mathbb R} $. 
We can also assume $C\cap H_{x}$ is connected, since if $A\cup B$ is not $\gamma$-coisotropic at $x$, then both $A$ and $B$ are not $\gamma$-coisotropic at $x$ (according to Proposition \ref{Prop-7.5} (\ref{Prop-7.5-3})).

That $C$ is $\gamma$-coisotropic means: 

 for all $(z,x,p)\in C$ there exists $ \varepsilon_{(z,x,p)}>0$ and $\delta: \mathbb R \longrightarrow \mathbb R_+^*$ such that if $0<\eta < \varepsilon_{(z,x,p)}$, $\varphi\in \DHam_c (B((z,x,p); \varepsilon_{(z,x,p)})$, $\varphi(B((z,x,p); \eta)\cap C=\emptyset $ then  $\gamma (\varphi)> \delta(\eta)$. 
 
 \begin{enumerate}
 
 \item \label{Item-9.2-1} Our first task is to make $ \varepsilon_{(z,x,p)}$ bounded from below on a dense set. Indeed, set $C^\varepsilon$ be the set of points in $C$ where we can take $ \varepsilon_{(z,x,p)}> \varepsilon $. Then by assumption $C =\bigcup_ { \varepsilon>0} C^\varepsilon $, and of course the union is increasing and can be replaced by a countable union (just limiting ourselves to $ \varepsilon = \frac{1}{n}, n \in \mathbb N$). So by Baire's theorem, there exists $ \varepsilon_0 >0$ such that  $C^{\varepsilon_0}$ is not meager in $C$ (which is closed by assumption, hence complete), so there is an open set in $C$  contained in the closure  of $C^{\varepsilon_0}$. 
 \end{enumerate} 
  In the sequel we replace  $C$ by a closed set contained in this open set, so we may assume $ \varepsilon_{(z,x,p)}> \varepsilon_0$ on a dense subset. 
  \begin{enumerate}[resume]
 \item We then consider for each $(z,x,p_z,p_x)\in C$ (where $p=(p_z,p_x)\in \mathbb R^{n-1}\times \mathbb R$) the set $$C(z,x,p)=\left \{p_x-p'_x \mid (z,x,p_z,p'_x) \in C\right\} \subset \mathbb R \dispdot $$ 
 We claim that, by possibly restricting to a smaller open subset of $C$,  either  $C(z,x,p)$ contains an open set near $0$ for all $(z,x,p) \in C$ or there exists $0<\alpha< \delta < \varepsilon_0/2$ such that $C(z,x,p)\cap [\alpha, \delta]=\emptyset$, again for all $(z,x,p)\in C$.  
 
 Indeed, fix $(z_0,x_0,p_0)\in C$. Then the complement of  $C(z_0,x_0,p_0)$ is open and if it  contains a segment $[\alpha, \delta]$, then by Hausdorff semicontinuity of $x \mapsto C(x)=C\cap H_x$, the same holds for $(z,x,p)$ close to $(z_0,x_0,p_0)$. Otherwise $C(z_0,x_0,p_0)$ contains an interval of the form $[0,\delta]$. Since we can choose $(z_0,x_0,p_0)$ an arbitrary point in $C$, this means that for all $(z,x,p)$ in a neighbourhood of $(z_0,x_0,p_0)$ we have an interval $[0,\delta]$ in $C(z,x,p)$. Thus locally, $C$ is of the from $C'\times [x_1-\alpha,x_1+\alpha]\times [0,\delta]$, i.e. locally of the form $C'\times \mathbb R^2$. But then $C_x=C'$ and $C'$ must be $\gamma$-coisotropic by Lemma \ref{Lemma-product}. 
 \end{enumerate} 
To prove the Proposition, we shall argue by contradiction and  assume that for all $x$, the set $C_x=C\cap H_x/\simeq$ is nowhere-$\gamma$-coisotropic. This means that for all  $z\in C_x$ and $ \varepsilon>0 $ there exists $\eta_{z,x}>0$ and a sequence $\varphi^{z,x}_k$ supported in $B(z, \varepsilon )$ such that $\gamma(\varphi_k^{z,x})$ goes to zero and $\varphi^{z,x}_k(B(z,\eta_{z,x}))\cap C_x=\emptyset$.

We then choose $ \varepsilon = \varepsilon_0/2$ and by the same argument, possibly restricting $C$, we may assume $\eta_{z,x}> \eta_0$ on a dense set in $(z,x)$.

 Our proof  is then inspired by the proof of Proposition 2.1 from \cite{Viterbo-isoperimetric}. Let $(z,x_{0},p_{0})\in C$ with $z\in {\mathbb R}^{2n-2}$ and $ x_{0}, p_{0}\in {\mathbb R} $. The idea is that depending on whether $C_x$ is non-$\gamma$-coisotropic  we shall move $C$ in the $(z,p_z)$ direction or in the $p_x$ direction to prove that $C$ is non-$\gamma$-coisotropic. 

\begin{enumerate} [resume]
\item 
Assume for $x$ in a  dense set near $x_{0}$ there is a $\eta_0>0$ and a family $(\varphi_{x,k})_{k\in \mathbb N}$ of Hamiltonian maps    in $H_x/H_x^\omega \simeq {\mathbb R}^{2n-2}$ supported in 
$B^{2n-2}(z, \varepsilon )$ such that $$\varphi^{z,x}_{k}(C_x)\cap B^{2n-2}(z_{0}, \eta_0)=\emptyset$$ and $\gamma(\varphi^{z,x}_{k})< 1/k$. 

Pick an extension of $\varphi^{z,x}_{k}$, $\Phi^{z,x}_{k}$  
given by Lemma \ref{Lem-9.7}, such that $\gamma(\Phi^{z,x}_{k}) \leq C_{n}\gamma (\varphi^{z,x}_{k}) \leq C_n/k$. We claim that for 
$\alpha^{z,x}_{k}>0$ small enough, $$\Phi^{z,x}_{k}(C) \cap  \left (B^{2n-2}(z,\eta_0/2)\times ]x-\alpha^{z,x}_{k}, x+\alpha^{z,x}_{k}[\right ) = \emptyset$$ Indeed, if this was not 
the case, we would have 
$$\varphi^{z,x}_{k}(C_{x'}) \cap B^{2n-2}(z,\eta_0/2))\neq \emptyset$$
for some $x'$ such that $\vert x-x' \vert  \leq \alpha_x$. But since $x' \mapsto C_{x'}$ is upper semi-continuous for the Hausdorff distance, i.e. $C_{x_0}\supset \lim_{x\to x_0} C_x$, this is impossible. 
So assume the set of $x$ such that $C_x$ is not $\gamma$-coisotropic is dense in $[-R,R]$. 
Then for each $x\in [-R,R]$ there is a $\Phi^{z,x}_{k}$ such that $\Phi_x(B^{2n-2}(z,\eta_0/2)\times ]x-\alpha_x, x+ \alpha_x[\times ]-R,R[)) \cap C \cap H_y = \emptyset$ for $y\in I_x$ where $I_x$ is an open neighbourhood of $x$. Moreover we have $\gamma (\Phi^{z,x}_{k})< C_n/k$. 

\item By compactness we may cover $[-R,R]$ by a finite number of such intervals, $I_1,..., I_p$ where $I_l=]x_l-\alpha_l, x_l+\alpha_l[$ and we may assume $I_l\cap I_m=\emptyset$ for $ \vert l-m \vert > 1$  and $I_l\cap I_{l+1}$ is as small as we wish. Let us set  $\Psi=\Phi_{x_1}\circ ....\circ \Phi_{x_p}$. Obviously if $y\in I_l\setminus I_m$ for all $l\neq m$ then $$\Phi \left (B^{2n-2}(z_0,\eta/2)\times \{y\}\times [-R,R]\right ) \cap C \cap H_y=\emptyset \dispdot $$ where $\eta=\inf \{ \eta_{x_l} \mid 1\leq l \leq p\}$. 
For $y\in I_l\cap I_{l+1}$ set $\psi_j$ to be the flow of the Hamiltonian 
$\chi_l (y)$ where $\chi_{l}'(y) = \delta-\alpha$ for $y\in I_l\cap I_{l+1}$ where we assume $[\alpha, \delta]$ is the interval given in
and $
\chi_{l}(y)=0$ for $y\in I_k$ $k\neq j,j+1$.  Then $ \vert \chi_l 
\vert_{C^0} \leq 2R  \vert I_l \cap I_{l+1} \vert $ (where for a subset 
$A$ of $\mathbb R$, we define  $\vert A\vert$ as the length of $A$).  
It is then easy to check that $\Phi=\Psi \circ \psi_1\circ .. \circ \psi_k$ satisfies $\Phi(B^{2n}(R)) \cap C=\emptyset$. Using Lemma \ref{Lem-9.7} and  \ref{Lem-9.8} we see that $$\gamma (\Phi) \leq 2 C_n/k+ 4R \sup_{l\in [1,p]}  \vert  I_l\cap I_{l+1} \vert \}$$

Note that $ \vert I_l\cap I_{l+1} \vert $ can be made arbitrarily small, so we may assume their lengths are all less than $1/k$. Then 
\begin{itemize}
\item $\gamma (\Phi_k) \leq \frac{2C_n+4R}{k}$ 
\item  $\Phi_k(B^{2n}((z,x_0,p_0);\eta_0/2)) \cap C= \emptyset$
\end{itemize}
 Thus $C$ is not $\gamma$-coisotropic at $(z_0,x_0,p_0)$. 
\item Since balls are Liouville domains, we may assume by \cite[Lemma 9.3]{Viterbo-inverse-reduction} that $\gamma (\Phi) \leq C_{n} \gamma (\varphi)$, so if we had a sequence $(\varphi_{k})_{k\geq 1}$ such that $\gamma (\varphi_{k})$ converges to $0$ and such that $$\varphi_k(C_x)\cap B^{2n-2}(z, \eta)=\emptyset$$

Thus if the reduction of $C$ was not $\gamma$-coisotropic, we would have a sequence $(\varphi_k)_{k\geq 1}$ with $\gamma(\varphi_{k})$ going to zero. But then
$\Phi_{k}$ is supported in $B(z, \varepsilon )\times B^{2}( \varepsilon )$ and $\Phi_{k}$ moves $C$ away from $B(z, \eta )\times B^{2}( \eta )$
and $\gamma(\Phi_{k}) \leq C_{d}\gamma (\varphi_{k})$ which converges to $0$ and $C$ is not $\gamma$-coisotropic at $z$. 
\end{enumerate} 
 \begin{figure}[ht]
 \begin{overpic}[width=8cm]{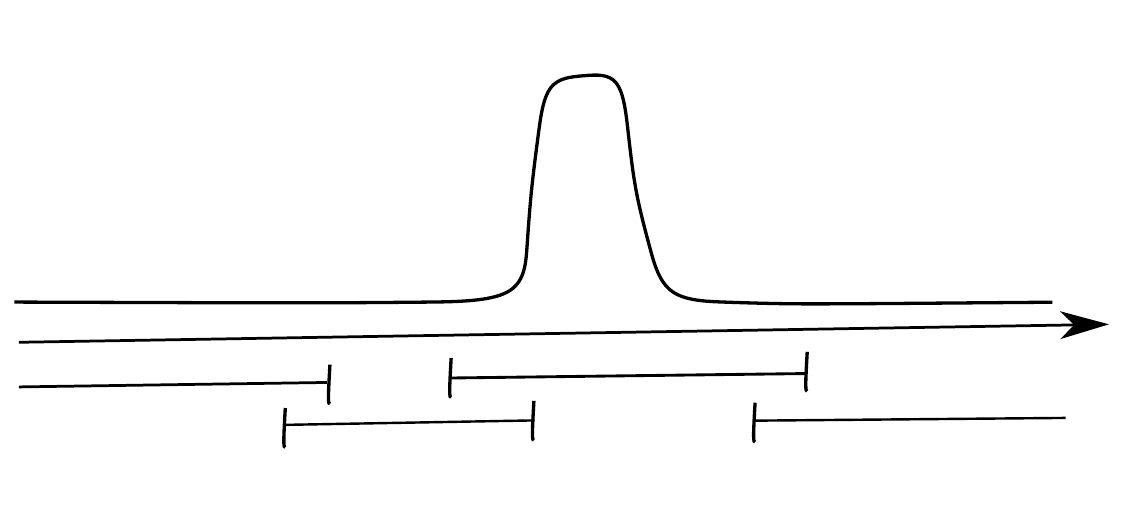}
 \put (10, 6) {$U_{l-1}$} 
 \put (30,3){$U_{l}$}
  \put (50,6) {$U_{l+1}$} 
  \put(40,32){$\chi_{l}$}
   \put (70,3) {$U_{l+2}$} 
 \end{overpic}
 \caption{Illustration of  $\chi_{l}$}\label{Figure-9}
\end{figure}

\end{proof} 

Note that $x \mapsto L_x$ is continuous for the $\gamma$-topology. However $L \mapsto \gammasupp(L)$ is not continuous for the Hausdorff topology on closed sets. It may be continuous for the $\gamma$-topology on sets that we shall define later but we have no idea on how to prove such a statement. 
Let us mention the following application
\begin{prop} \label{Prop-9.7}Let ${\mathcal H}^k$ be the $k$-dimensional Hausdorff measure. 
Let $V$ be a set in $(M^{2n}, d\lambda)$ such that ${\mathcal H}^{n}(V)=0$. Then $V$ is nowhere $\gamma$-coisotropic. 
\end{prop}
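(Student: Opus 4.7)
I would prove this by induction on $n = \dim M / 2$, combining Eilenberg's inequality for Hausdorff measure with the reduction/extension technique already developed for Proposition \ref{Prop-9.4}. The base case $n=0$ is vacuous, so suppose the result is known in dimension $2(n-1)$.

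Let $V \subset M^{2n}$ satisfy $\mathcal{H}^n(V) = 0$ and fix $x_0 \in V$. Choose a Darboux chart around $x_0$ identifying a neighbourhood with $\mathbb{R}^{2n}$ and taking $x_0 = 0$. Consider the $1$-Lipschitz projection $\pi(q_1, \ldots, q_n, p_1, \ldots, p_n) = q_1$ and the hyperplanes $H_s = \pi^{-1}(s)$. Eilenberg's inequality for $\pi$ gives
$$
\int^* \mathcal{H}^{n-1}(V \cap H_s)\, ds \;\leq\; C\, \mathcal{H}^n(V) \;=\; 0,
$$
so $\mathcal{H}^{n-1}(V \cap H_s) = 0$ for a.e.\ $s$. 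The symplectic reduction $V \cap H_s \to V_s$ (forgetting $p_1$) is $1$-Lipschitz, hence $\mathcal{H}^{n-1}(V_s) = 0$ for a.e.\ $s$. By the inductive hypothesis, $V_s \subset \mathbb{R}^{2(n-1)}$ is nowhere $\gamma$-coisotropic for $s$ in a set $E$ of full Lebesgue measure (in particular dense near $0$); when $n=1$ this degenerates to $V_s = \emptyset$ for $s \in E$.

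I would now run the construction from Proposition \ref{Prop-9.4} in contrapositive form to build, for arbitrary $\delta > 0$, an element $\Phi \in \DHam_c(B(x_0, \varepsilon))$ with $\gamma(\Phi) < \delta$ and $\Phi(V) \cap B(x_0, \eta') = \emptyset$ for some $\eta' > 0$. For each $s \in E$, let $z_0$ denote the image of $x_0$ in the reduced space; either $z_0 \notin V_s$, in which case I take $\varphi_s = \Id$, or $z_0 \in V_s$ and I use the inductive non-$\gamma$-coisotropy at $z_0$ to find $\varphi_s$ supported in $B^{2n-2}(z_0, \varepsilon)$ with $\gamma(\varphi_s) < \delta/(2C_n)$ displacing $V_s$ from $B^{2n-2}(z_0, \eta)$. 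I then extend each $\varphi_s$ to $\Phi_s$ supported in a thin slab around $H_s$ via Corollary 8.10 of \cite{Viterbo-inverse-reduction}, with $\gamma(\Phi_s) \leq C_n\, \gamma(\varphi_s) < \delta/2$. By upper semi-continuity of $s \mapsto V \cap H_s$ (working with $\overline V$ to justify this), each $\Phi_s$ displaces $V$ from $B^{2n-2}(z_0, \eta/2) \times I_s$ for an open interval $I_s \ni s$. Cover a neighbourhood of $\pi(x_0)$ by finitely many such intervals $I_{s_1}, \ldots, I_{s_p}$ and compose the $\Phi_{s_j}$ with small shears $\psi_j$ in the $p_1$-direction handling the overlaps (exactly as in the proof of Proposition \ref{Prop-9.4}); the symplectic-separation lemma bounds the $\gamma$-norm of the resulting $\Phi$ by $C\delta$. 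Since $\delta$ was arbitrary, $V$ is non-$\gamma$-coisotropic at $x_0$.

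The main obstacle is the bookkeeping in the combination step: one must control the shear terms in the overlaps uniformly as $\delta \to 0$, which forces the overlap widths to shrink and the number of intervals to grow while still keeping the symplectic-separation lemma applicable. A secondary technicality is that the definition of $\gamma$-coisotropic interacts poorly with non-closed sets (a dense set like $\mathbb Q^{2n}$ is vacuously $\gamma$-coisotropic), so one should tacitly pass to $\overline V$ before applying Eilenberg and the upper semi-continuity argument, noting that the density hypothesis $\mathcal{H}^n(V) = 0$ combined with closedness forces empty interior and makes the slice-emptiness argument robust.
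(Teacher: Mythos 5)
Your proof is correct and follows the same overall strategy as the paper's: induction on the (half-)dimension, slicing by the hyperplanes $H_s$, passing to the symplectic reductions $V_s$, and then invoking the extension machinery of Proposition \ref{Prop-9.4} to promote non-$\gamma$-coisotropy of a dense family of reductions to non-$\gamma$-coisotropy of $V$ itself. The one genuine difference is the measure-theoretic slicing input: the paper uses Marstrand's slicing theorem for Hausdorff \emph{dimension} (so its argument really runs under the hypothesis $\dim_{H}(V)<n$), whereas you use Eilenberg's inequality, which works directly from the stated hypothesis $\mathcal H^{n}(V)=0$ and yields $\mathcal H^{n-1}(V_s)=0$ for almost every $s$, exactly matching the inductive hypothesis. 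Your version is therefore slightly better adapted to the statement as written (note that $\mathcal H^{n}(V)=0$ does not imply $\dim_{H}(V)<n$); otherwise the two arguments are interchangeable, and you correctly identify that what is actually used from Proposition \ref{Prop-9.4} is the contrapositive established in its proof rather than the statement itself. Your closing technical remarks (on the overlap bookkeeping and on replacing $V$ by $\overline V$) are sensible but do not affect the validity of the argument.
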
 
\begin{proof} 
We argue by induction: assume this holds in a symplectic  space of dimension less than $2n-2$ and let us prove it in dimension $2n$. 

As this is a local result, we may assume we are in a symplectic vector space with symplectic basis $(e_{1},f_{1},...,e_{n},f_{n})$. We set for $x\in {\mathbb R}$, $H_{x}=xe_{1}+ {\mathbb R}^{2n-1}$ where ${\mathbb R}^{2n-1}=\langle f_{1},e_{2},f_{2},...,e_{n},f_{n}\rangle$. Then  the reduction of the set  $V$ at $H_{x}$ is given by 
$$(V \cap H_{x})/\langle f_{1}\rangle = (V/\langle f_{1}\rangle) \cap (H_{x}/\langle f_{1}\rangle ) \dispdot  $$
We denote in the sequel $K_{x}$ and $W$ for the hyperplane $H_{x}/\langle f_{1}\rangle$ and the subset  $V/\langle f_{1}\rangle$ in $ {\mathbb R}^{2n-1}$. 
Moreover we set $W(x)=K_{x}\cap W$ that is the reduction of $V$ at $x$. 
We assume ${\mathcal H}^n(V)=0$ which implies that 
${\mathcal H}^n(W)=0$.  

Now according to Federer (\cite[Thm. 2.10.25]{Federer} and  \cite[Prop. 6.2.3]{Mattila2})
$$\int_{\mathbb R} \mathcal H^{n-1}(W(x)) dx  \leq  C_n \mathcal H^{n}(W)\dispdot  $$

This implies that  $\mathcal{H}^{n-1}(W(x))=0$ for almost all $x$.  By induction $W(x)$ is nowhere $\gamma$-coisotropic (at least in a neighbourhood of $z_{0}$) and 
therefore there is a dense set of $x$ such that the $W(x)$ are  nowhere $\gamma$-coisotropic. 
According to Proposition \ref{Prop-9.4}, this implies that $V$ is nowhere $\gamma$-coisotropic.

\end{proof} 
\begin{rem} 
We prove in \cite{MCA-VH-CV} that for $V=\gammasupp(L)$ in $T^*N$ then the projection $V \longrightarrow N$ induces an injection in cohomology,  so of course  its dimension is at least $n$. 
\end{rem}

\section{Singularities of Hamiltonians in \texorpdfstring{$\DHam (M,\omega)$}{DHam(M,w)} }\label{Section-10}

Our goal here is to deal with Hamiltonians with singularities and understand these in the framework of the Humili{\`e}re completion. This was already explained in \cite{Humiliere-completion}, but here we show how $\gamma$-coisotropic sets enter the picture. 
 
We now formalize what it means for an element of $\widehat\DHam(M, \omega)$ to ``equal the identity'' on a subset. Since elements of the completion are equivalence classes of Cauchy sequences, we need three related notions: 
\begin{itemize}
\item  when a single element equals the identity on  $U$
\item  when a sequence converges to the identity on $U$, 
\item how to define the quotient space.
\end{itemize}
 \begin{defn}\label{Def-12.1} Let $U$ be  open and $W$ be closed  in $(M,\omega)$. 
 \begin{enumerate} 
 \item \label{Def-12.1-1} An element $\varphi\in \widehat\DHam(M, \omega)$ {\bf equals $\Id$ on $U$} if 
 $$\gammasupp(\Gamma(\varphi)) \cap (U\times M) =\gammasupp(\Gamma(\varphi)) \cap (M\times U)\subset  \Delta_M\dispdot  $$ We denote the set of such $\varphi$ by $\widehat\DHam(M,U, \omega)$. We then set $$\widehat\DHam(M,W, \omega)=\bigcup_{W\subset U}\widehat\DHam(M,U, \omega)\dispdot  $$
  \item \label{Def-12.1-2} We say that a sequence $(\varphi_k)_{k\geq 1}$ in $\widehat\DHam(M, \omega)$ {\bf  $\gamma$-converges to $\Id$ on $U$} if there exists a sequence 
 $(\psi_k)_{k\geq 1}$, of elements of $\widehat\DHam(M,U, \omega)$  such that  
 $\gamma(\varphi_k, \psi_k) \longrightarrow 0$. In other words $$\lim_k \gamma (\varphi_k, \widehat\DHam(M,U, \omega))= 0\dispdot  $$  
 \item \label{Def-12.1-3}We define $\widehat\DHam_{M}(U, \omega)$ as the set of equivalence classes of sequences $(\varphi_k)_{k\geq 1}$ such that for  any sequence $(l_k)_{k\geq 1}$ with $l_k\geq k$, the sequence $(\varphi_k \varphi_{l_k}^{-1})_{k\geq 1} $ $\gamma$-converges to $\Id $ on $W$. Two sequences $(\varphi_k)_{k\geq 1}$ and $(\psi_k)_{k\geq 1}$  are equivalent if  $(\varphi_k\psi_k^{-1})_{k\geq 1}$ $\gamma$-converges to $\Id$ on $U$. We define
$$ \widehat\DHam_{M}(W, \omega)=\bigcap_{W\subset U} \widehat\DHam_{M}(U, \omega)\dispdot  $$
 \end{enumerate} 
 \end{defn} 
\begin{rems} 
\begin{enumerate} 
\item Since in general $\omega$ is fixed we shall omit it from the notation. 
\item In (\ref{Def-12.1-2}), we do not assume that the sequence $\gamma$-converges ! 
\item If $\varphi$ is smooth, saying that $\varphi$ {\bf equals $\Id$ on $U$} or $W$ in the sense of Definition \ref{Def-12.1} is equivalent to its usual meaning since 
$\gammasupp(\Gamma(\varphi))=\Gamma(\varphi)$.
\item If $\gammasupp(\Gamma(\varphi)) \cap W\times \overline M \subset \Delta_M$ and $W$ is the closure of its interior, we must have  $$\supp(\Gamma(\varphi)) \cap W\times \overline M = \Delta_W\dispdot  $$
Indeed, if $L$ is  smooth Lagrangian, no proper subset of $L$ is $\gamma$-coisotropic (see Proposition \ref{Prop-12.16}).   
\item There is of course an embedding from $\DHam_{c}(U)$ to $\widehat \DHam_{M} (U)$ since an element in $\DHam_{c}(U)$ extends automatically to
$\DHam_{c}(M)$ and this extension equals $\Id$ on $M\setminus U$ if and only if we started from $\Id_{U}$. 
\item There is a bi-invariant metric still denoted by $\gamma$ on $\widehat\DHam(M,U)$
$$\gamma(\varphi, \Id)=\lim_{k}\gamma (\varphi_{k}, \DHam(M,U))$$ where $\varphi$ is represented by the sequence $(\varphi_{k})_{k\geq 1}$. The existence of the limit follows from the inequality 
\begin{gather*} \left \vert \gamma (\varphi_{k}, \widehat\DHam(M,W, \omega))- \gamma (\varphi_{l}, \widehat\DHam(M,U)) \right \vert \leq \\ \gamma (\varphi_{k}\varphi_{l}^{-1}, \widehat\DHam(M,U))
\end{gather*}  so $\gamma (\varphi_{k}, \widehat\DHam(M,U))$ is a Cauchy sequence in $ {\mathbb R} $.
There is then a
topology  on $\widehat \DHam_{M} (W)$ given by $\varphi_{k} \overset{\gamma} \longrightarrow \varphi$ 
if and only if this holds in   $\widehat\DHam_{M}(K, \omega)$ for some neighbourhood $U$ of $W$. 
  It is easy to check that this limit does not depend on the choice of the sequence and that the embedding of $\DHam_{c}(U)$ in $\widehat \DHam_{M} (U)$ yields an isometric embedding, hence an isometric embedding $\widehat \DHam(U) \longrightarrow \widehat \DHam_{M} (U)$. 
  \begin{Question}
  Is the map $\widehat \DHam(U) \longrightarrow \widehat \DHam_{M} (U)$ bijective ? 
  \end{Question}
\end{enumerate} 
\end{rems} 
 \begin{prop} \label{Prop-14.3}
We have the following properties
 \begin{enumerate} 
 \item \label{Prop-14.3-1} The set $\widehat\DHam(M,U)$  is a closed subgroup  in $\widehat\DHam(M, \omega)$ 
 \item \label{Prop-14.3-2}
 We have an exact sequence 
 $$1 \longrightarrow \widehat\DHam(M,U) \longrightarrow  \widehat\DHam(M) \longrightarrow \widehat \DHam_{M} (U) \longrightarrow 1 \dispdot  $$
 \item \label{Prop-14.3-3} 
 If an element $\varphi\in \widehat\DHam(M, \omega)$ equals  $\Id$ on $U$ and $V$ then it is equal to $\Id$ on $V\cup W$. 
If  a sequence $(\varphi_k)_{k\geq 1}$ $\gamma$-converges to $\Id$ on $U$ and $V$ then it $\gamma$-converges to $\Id$ on $U\cup V$. In other words
$$\widehat\DHam(M,U)\cap \widehat\DHam(M,V)=\widehat\DHam(M,U\cup V)\dispdot  $$
 \item \label{Prop-14.3-4} If $\psi \in \DHam(M)$ sends $U$ to $U'$, then $\varphi \mapsto \psi\varphi\psi^{-1}$ sends $\widehat\DHam(M,U)$ to $\widehat\DHam(M,U')$.
 \item \label{Prop-14.3-5}
  If Conjecture \ref{Conjecture-8.22} holds for $M$ (in particular for $M=T^{*}T^{n}$ or $T^{2n}$), an element $\varphi\in \widehat\DHam(M, \omega)$ equals $\Id$ on $M$, then $\varphi=\Id$ in $\widehat\DHam(M,\omega)$.
 \item \label{Prop-14.3-6}  If Conjecture \ref{Conjecture-8.22} holds for $M$ and if the sequence $(\varphi_k)_{k\geq 1}$ $\gamma$-converges to $\Id$ on $M$, then it  $\gamma$-converges to $\Id$ in the usual sense. 
 \end{enumerate}  \end{prop}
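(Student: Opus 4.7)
The plan is to reduce each claim to a statement about $\gammasupp(\Gamma(\varphi))\subset M\times\overline M$ and to use three basic tools developed earlier: the composition inclusion $\gammasupp(\Lambda_1\circ\Lambda_2)\subset\gammasupp(\Lambda_1)\circ\gammasupp(\Lambda_2)$ (Proposition \ref{Prop-1.3}(\ref{Prop-1.3-e})), the upper semicontinuity $\gammasupp(L)\subset\liminf_k\gammasupp(L_k)$ of Proposition \ref{Prop-lim-support}, and the rigidity of Lagrangians among $\gamma$-coisotropic sets (Proposition \ref{Prop-12.16}), which forces any $\gamma$-coisotropic subset of $\Delta_M$ to be all of $\Delta_M$.

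For (\ref{Prop-14.3-1}), symmetry of the defining condition $\gammasupp(\Gamma(\varphi))\cap(U\times M)=\gammasupp(\Gamma(\varphi))\cap(M\times U)\subset\Delta_M$ under $(x,y)\mapsto(y,x)$ shows that $\widehat\DHam(M,U)$ is closed under $\varphi\mapsto\varphi^{-1}$. Given $\varphi,\psi\in\widehat\DHam(M,U)$, I would apply Proposition \ref{Prop-1.3}(\ref{Prop-1.3-e}) to $\Gamma(\varphi)\circ\Gamma(\psi^{-1})$: any $(x,y)$ in this composed $\gamma$-support with $x\in U$ comes from some $(x,z)\in\gammasupp(\Gamma(\varphi))$ and $(z,y)\in\gammasupp(\Gamma(\psi^{-1}))$, and the first membership forces $z=x\in U$, after which the second forces $y=z=x$. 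Closedness follows from Proposition \ref{Prop-lim-support}: if $\varphi_k\to\varphi$ in $\widehat\DHam$ and $x\in U$, then any $(x,y)\in\gammasupp(\Gamma(\varphi))$ is a limit of $(x_k,y_k)\in\gammasupp(\Gamma(\varphi_k))$ with eventually $x_k\in U$, hence $y_k=x_k$, and so $y=x$. Claim (\ref{Prop-14.3-3}) is immediate since the defining inclusion over $U\cup V$ is equivalent to its validity over both $U$ and $V$ separately, and (\ref{Prop-14.3-4}) follows from $\Gamma(\psi\varphi\psi^{-1})=(\psi\times\psi)(\Gamma(\varphi))$ together with the invariance $\gammasupp((\psi\times\psi)\cdot)= (\psi\times\psi)\gammasupp(\cdot)$ from Proposition \ref{Prop-1.3}(\ref{Prop-1.3-b}) and $(\psi\times\psi)(\Delta_M)=\Delta_M$.

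For (\ref{Prop-14.3-2}), I would map $\varphi\in\widehat\DHam(M)$ to the equivalence class of the constant sequence $(\varphi)$ in $\widehat\DHam_M(U)$; the kernel consists exactly of those $\varphi$ such that $(\varphi)$ is equivalent to $(\Id)$, i.e. those for which $\gamma(\varphi,\widehat\DHam(M,U))=0$, and by the closedness established in (\ref{Prop-14.3-1}) this means $\varphi\in\widehat\DHam(M,U)$. For surjectivity onto $\widehat\DHam_M(U)$, represent a class by $(\varphi_k)$; the defining Cauchy-modulo-$\widehat\DHam(M,U)$ condition means $\gamma(\varphi_k,\varphi_l\cdot\widehat\DHam(M,U))\to 0$ as $k,l\to\infty$, so by choosing inductively $\psi_k\in\widehat\DHam(M,U)$ with $\gamma(\varphi_k\psi_k^{-1},\varphi_{k-1}\psi_{k-1}^{-1})\to 0$, the sequence $\varphi_k\psi_k^{-1}$ becomes Cauchy in the complete space $\widehat\DHam(M)$ and its limit $\varphi$ is a preimage of the class.

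For (\ref{Prop-14.3-5}), if $\varphi$ equals $\Id$ on $M$, then $\gammasupp(\Gamma(\varphi))\cap(M\times M)\subset\Delta_M$, so the full $\gamma$-support is contained in $\Delta_M$. By Theorem \ref{Thm-7.12}(\ref{Thm-7.12-1}) it is also $\gamma$-coisotropic, and since no proper closed subset of the Lagrangian $\Delta_M$ is $\gamma$-coisotropic (Proposition \ref{Prop-12.16}), we get $\gammasupp(\Gamma(\varphi))=\Delta_M$; Conjecture \ref{Conjecture-8.22} then yields $\varphi=\Id$. Consequently $\widehat\DHam(M,M)=\{\Id\}$, and (\ref{Prop-14.3-6}) follows: if $(\varphi_k)$ $\gamma$-converges to $\Id$ on $M$ there are $\psi_k\in\widehat\DHam(M,M)=\{\Id\}$ with $\gamma(\varphi_k,\psi_k)\to 0$, i.e. $\gamma(\varphi_k,\Id)\to 0$. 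The delicate step here, and indeed the only place where a conjecture enters, is the rigidity statement that a $\gamma$-support equal to $\Delta_M$ forces $\varphi=\Id$; everywhere else the proof is a bookkeeping exercise with the tools already in place.
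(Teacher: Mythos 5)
Your treatment of (\ref{Prop-14.3-1}), (\ref{Prop-14.3-3})--(\ref{Prop-14.3-6}) and of the kernel computation in (\ref{Prop-14.3-2}) matches the paper's argument in substance, and in places is more complete: the paper only proves closedness of $\widehat\DHam(M,U)$ (via Proposition \ref{Prop-lim-support}, exactly as you do) and never spells out the subgroup property, which you correctly derive from Proposition \ref{Prop-1.3}(\ref{Prop-1.3-e}) and the symmetry of the defining condition under the swap $(x,y)\mapsto(y,x)$. Likewise your proof of (\ref{Prop-14.3-5}) -- $\gammasupp(\Gamma(\varphi))\subset\Delta_M$, then $\gamma$-coisotropy of supports plus Proposition \ref{Prop-12.16} to force equality with $\Delta_M$, then the Conjecture -- is precisely the argument the paper uses elsewhere (in the corollary to Proposition \ref{Prop-8.19}) and compresses here into one line.

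The genuine gap is in your surjectivity argument for (\ref{Prop-14.3-2}). First, a local error: choosing $\psi_k$ so that $\gamma(\varphi_k\psi_k^{-1},\varphi_{k-1}\psi_{k-1}^{-1})\to 0$ does not make $(\varphi_k\psi_k^{-1})$ Cauchy -- consecutive distances tending to zero is weaker than the Cauchy condition (think of partial sums of the harmonic series); you would at least need summable bounds, and even then the inductive choice of $\psi_k$ requires moving a correcting element of $\widehat\DHam(M,U)$ from one side of $\varphi_{k-1}$ to the other, which conjugates it out of $\widehat\DHam(M,U)$ unless $\varphi_{k-1}$ preserves $U$. Second, and more importantly, surjectivity is exactly the delicate point here: the paper's own proof only constructs the map and identifies its kernel, and the whole of the subsequent discussion of $f$-coisotropic sets rests on the fact that elements of $\widehat\DHam_M(\cdot)$ need \emph{not} be restrictions of elements of $\widehat\DHam(M,\omega)$. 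So the right-hand exactness should not be dispatched by a generic completion argument; treat it as unproved (or as holding only at the level the paper actually establishes) rather than as ``bookkeeping''.
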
 
 \begin{proof}
 \begin{enumerate} 
 \item 
For the first statement,  indeed, if $(\varphi_k)_{k\geq 1}$ is a $\gamma$-converging sequence such that $\gammasupp(\varphi_k) \cap U \times \overline M \subset \Delta_M$, then, since  according to Proposition \ref{Prop-lim-support}
 we have
 $ \gammasupp (\varphi) \subset \lim_k \gammasupp(\varphi_k)$ we deduce $$\gammasupp (\varphi) \cap U \times \overline M \subset \Delta_M\dispdot  $$
 
\item Let $(\varphi_{k})_{k\geq 0}$ be a $\gamma$-Cauchy sequence in $\DHam(M)$. Then  $\varphi_k \varphi_{l_k}^{-1} $ converges to $\Id$ on $M$, hence on $U$ which defines the map $$\widehat\DHam(M,\omega) \longrightarrow \widehat \DHam_{M} (U)\dispdot  $$
 If such a sequence is in the kernel, this means that 
$(\varphi_{k})_{k\geq 0}$ converges to $\Id$ on $W$, i.e. there is a sequence $(\psi_{k})_{k\geq 1}$ such that $\psi_{k}=\Id$ on $U$ and $\gamma (\varphi_{k}, \psi_{k})$ goes to $0$ as $k$ goes to $+\infty$. But then $$\gamma-\lim_{k} \psi_{k}= \gamma-\lim_{k}\varphi_{k}=\varphi$$ and since 
$$\gammasupp(\Gamma(\psi_{k}))\cap M\times U=\gammasupp(\Gamma(\psi_{k}))\cap W\times U\subset\Delta_{M}$$ and 
$\gammasupp(\Gamma(\psi)) \subset \lim_{k}\gammasupp(\Gamma(\psi_{k}))$ we have $\varphi=\Id$ on $U$, i.e. $\varphi \in  \widehat\DHam(M,U)$. 
\item  Indeed if $$\gammasupp (\Gamma (\varphi)) \cap M\times U=\gammasupp (\Gamma (\varphi)) \cap U \times M\subset \Delta_{M}$$ and  $$\gammasupp (\Gamma (\varphi)) \cap M\times V=\gammasupp (\Gamma (\varphi)) \cap  V\times M\subset \Delta_{M}$$ then 
$$\gammasupp (\Gamma (\varphi)) \cap M\times (U\cup V)\subset\Delta_{M}\dispdot  $$
\item Is obvious from the definition
\item If $\gammasupp(\Gamma(\varphi))=\Delta_{M}$, we have $\varphi=\Id$. 
\item Same as above
\end{enumerate} 

 \end{proof}

\begin{rem} 
We can also work in the general case, i.e. without assuming $M$ satisfies Conjecture \ref{Conjecture-8.22}, by replacing $\widehat {\DHam}_{c}(M,\omega)$ by  the quotient $\widehat {\DHam}_{c}(M,\omega)/ \mathcal N_{\Delta}(M,\omega)$. 
\end{rem} 
We now set 
 
 \begin{defn} 
 Let $V$ be a locally closed subset in $(M, \omega)$. We shall say that $V$ is $f$-coisotropic at $x\in V$ if for any  closed neighbourhood $U$ of $x$ in $M$ there is an element $\varphi$ in $\widehat{\DHam}_{M}(M\setminus (U\cap V))$ such that $\varphi$ is not the restriction of an element in $ \widehat{\DHam}(M, \omega)$.
 
 We say that $V$ is nowhere $f$-coisotropic, if for all $x$ in $V$, $V$ is not $f$-coisotropic at  $x$. 
 \end{defn} 
 
 Even though in a different formulation,  Humili{\`e}re proved in \cite{Humiliere-completion} that if $\dim(V)<n$ then $V$ is nowhere $f$-coisotropic. 
 
 Note that the definition of $f$-coisotropic  is reminiscent of that of a domain of holomorphy : this is a domain such that there exist holomorphic functions on 
 $\Omega$ that cannot be extended to  a bigger set. Such sets also have a local  geometric characterization as being Levi convex. 
   Here the objects that cannot be extended are element of $\widehat{\DHam}(V, \omega)$ and the extension is to $\widehat{\DHam}(M, \omega)$.

 \begin{prop} \label{Prop-14.4}
 If $V$ is nowhere $\gamma$-coisotropic, any element  in $\widehat{\DHam}_{M}(M\setminus V)$ defines a unique element in $\widehat{\DHam}(M, \omega)/{\mathcal N}_{\Delta}$. In particular any $H \in C^0(M\setminus V)$ defines a unique element in $\widehat{\DHam}(M, \omega)/{\mathcal N}_{\Delta}$.
 \end{prop}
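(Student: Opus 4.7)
The plan is to reduce the statement to establishing the key inclusion $\widehat\DHam(M,M\setminus V) \subset \mathcal N_\Delta$. Assuming $V$ closed (so $M\setminus V$ is open), the exact sequence of Proposition \ref{Prop-14.3} (\ref{Prop-14.3-2}) identifies $\widehat\DHam_M(M\setminus V)$ with the quotient $\widehat\DHam(M,\omega)/\widehat\DHam(M,M\setminus V)$. Once the inclusion $\widehat\DHam(M,M\setminus V) \subset \mathcal N_\Delta$ is proved, the projection $\widehat\DHam(M,\omega) \twoheadrightarrow \widehat\DHam(M,\omega)/\mathcal N_\Delta$ factors through $\widehat\DHam_M(M\setminus V)$, producing the well-defined assignment $\widehat\DHam_M(M\setminus V) \to \widehat\DHam(M,\omega)/\mathcal N_\Delta$ asserted by the proposition.

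For the key inclusion, fix $\psi \in \widehat\DHam(M,M\setminus V)$, so by Definition \ref{Def-12.1} we have $\gammasupp(\Gamma(\psi)) \setminus \Delta_M \subset V \times V$. Suppose for contradiction that some $(x_0,y_0) \in \gammasupp(\Gamma(\psi))$ satisfies $x_0 \neq y_0$; then $x_0, y_0 \in V$. Since $V$ is non-$\gamma$-coisotropic at $x_0$, one can choose $\varepsilon>0$ small enough that $y_0 \notin B(x_0, \varepsilon)$, together with $0 < \eta < \varepsilon$ and a sequence $\rho_k \in \DHam_c(B(x_0,\varepsilon))$ with $\gamma(\rho_k) \to 0$ and $\rho_k(V) \cap B(x_0,\eta) = \emptyset$.

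Consider the conjugates $\rho_k \psi \rho_k^{-1}$. Bi-invariance of $\gamma$ yields
\[
\gamma(\rho_k \psi \rho_k^{-1}, \psi) = \gamma([\rho_k,\psi]) \leq 2\,\gamma(\rho_k) \longrightarrow 0.
\]
The diagonal maps $(\rho_k,\rho_k)$ are symplectomorphisms of $(M\times\overline M, \omega\ominus\omega)$ sending $\Gamma(\psi)$ to $\Gamma(\rho_k\psi\rho_k^{-1})$, so Proposition \ref{Prop-1.3} (\ref{Prop-1.3-b}) gives
\[
\gammasupp(\Gamma(\rho_k\psi\rho_k^{-1})) = (\rho_k,\rho_k)(\gammasupp(\Gamma(\psi))) \subset \Delta_M \cup \rho_k(V)\times \rho_k(V).
\]
Combined with Proposition \ref{Prop-lim-support}, this forces
\[
(x_0,y_0) \in \gammasupp(\Gamma(\psi)) \subset \liminf_k \bigl(\Delta_M \cup \rho_k(V)\times \rho_k(V)\bigr).
\]
Any sequence $(a_k,b_k)$ converging to $(x_0,y_0)$ with each term in the $k$-th set must, after passing to a subsequence, either lie in $\Delta_M$, forcing the contradiction $x_0 = y_0$, or satisfy $a_k \in \rho_k(V)$, forcing $a_k \to x_0 \in B(x_0,\eta)$ while $a_k \notin B(x_0,\eta)$. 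Hence $\gammasupp(\Gamma(\psi)) \subset \Delta_M$ and $\psi \in \mathcal N_\Delta$.

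For the last sentence, $H \in C^0(M\setminus V)$ can be exhausted by smooth compactly supported Hamiltonians $H_k$ coinciding with $H$ on an increasing family of compacts of $M\setminus V$; the construction in \cite{Humiliere-completion} ensures the flows $\varphi^1_{H_k}$ represent a well-defined element of $\widehat\DHam_M(M\setminus V)$ independent of the choice of approximation, to which the first part of the proposition attaches a unique element of $\widehat\DHam(M,\omega)/\mathcal N_\Delta$. The main obstacle in the argument is carrying the non-$\gamma$-coisotropy of $V$ at $x_0$ (a statement in $M$) across to the graph support in $M\times\overline M$; the diagonal conjugation trick $(\rho_k,\rho_k)$ achieves this by reducing the relevant obstruction to the first factor, where $\rho_k(V)$ is displaced from $x_0$ by construction.
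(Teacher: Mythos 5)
Your proof is correct and is essentially the paper's own argument: in both, the heart of the matter is to take the displacing sequence $\rho_k$ supplied by non-$\gamma$-coisotropy of $V$ at a point, conjugate by it (so that $\gamma(\rho_k\psi\rho_k^{-1},\psi)\leq 2\gamma(\rho_k)\to 0$), and invoke the semicontinuity of the $\gamma$-support (Proposition \ref{Prop-lim-support}) to transport the conclusion back to $\psi$, the remaining reductions to Proposition \ref{Prop-14.3} and the treatment of $H\in C^0(M\setminus V)$ being the same. The only difference is organizational: you derive a pointwise contradiction at an off-diagonal point of $\gammasupp(\Gamma(\psi))$, whereas the paper shows $\psi$ equals $\Id$ on $B(x,\eta/2)\cup(M\setminus V)$ for each $x\in V$ and then covers $V$ by balls --- your version dispenses with the covering step.
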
 
 Uniqueness follows from the following Lemma. 
 \begin{lem} Let $V$ be nowhere $\gamma$-coisotropic.
 If $\varphi \in \widehat{\DHam}(M, \omega)$ and $\varphi= \Id$ on $M\setminus V$  then $\varphi=\Id$ on $M$ (i.e. $\varphi \in \mathcal N_{\Delta}$).
 Similarly if $(\varphi_k)_{k\geq 1}$ $\gamma$-converges to $\Id$ on $M\setminus V$, then it converges to $\Id$ on $M$ (i.e. $\lim_{k}\gamma (\varphi_{k}, \mathcal N_{\Delta})=0$. 
 \end{lem}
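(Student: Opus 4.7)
The plan is to prove the first statement; the second then follows immediately. Write $S := \gammasupp(\Gamma(\varphi))\subset M\times\overline M$. The hypothesis that $\varphi=\Id$ on $M\setminus V$ translates, via Definition \ref{Def-12.1}(\ref{Def-12.1-1}), to
$$
S\cap\bigl((M\setminus V)\times M\bigr)\subset\Delta_M\quad\text{and}\quad S\cap\bigl(M\times(M\setminus V)\bigr)\subset\Delta_M,
$$
i.e.\ $S\setminus\Delta_M\subset V\times V$. The goal is $S\subset\Delta_M$: combined with the $\gamma$-coisotropy of $S$ (Theorem \ref{Thm-7.12}(\ref{Thm-7.12-1})) and the fact that no proper closed subset of the Lagrangian $\Delta_M$ is $\gamma$-coisotropic (Proposition \ref{Prop-12.16}), this forces $S=\Delta_M$, which is precisely $\varphi\in\mathcal N_{\Delta}$.

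Proceed by contradiction: suppose there exists $(x,y)\in S$ with $x\neq y$, which forces $x,y\in V$. By Theorem \ref{Thm-7.12}(\ref{Thm-7.12-1}) applied in $(M\times\overline M,\omega\ominus\omega)$, the set $S$ is $\gamma$-coisotropic at $(x,y)$: there are $\varepsilon_0>0$ and, for each $\eta'<\varepsilon_0$, a constant $\delta(\eta')>0$ such that any $\Psi\in\DHam_c(B((x,y),\varepsilon_0))$ with $\Psi(S)\cap B((x,y),\eta')=\emptyset$ satisfies $\gamma(\Psi)\geq\delta(\eta')$. The strategy is to manufacture a sequence $\Psi_k$ of such displacements with $\gamma(\Psi_k)\to 0$, yielding a contradiction. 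After shrinking $\varepsilon_0$ so that $B(x,2\varepsilon_0)$ and $B(y,2\varepsilon_0)$ are disjoint in $M$ (in particular $B((x,y),\varepsilon_0)\cap\Delta_M=\emptyset$), use that $V$ is non-$\gamma$-coisotropic at $x$ to pick $\eta\in(0,\varepsilon_0]$ and $\varphi_k\in\DHam_c(B(x,\varepsilon_0))$ with $\gamma(\varphi_k)\to 0$ and $\varphi_k(V)\cap B(x,\eta)=\emptyset$.

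The heart of the argument is to produce, through the fibered extension machinery of \cite{Viterbo-inverse-reduction} (in the spirit of Proposition~8.3 and Corollary~8.10, already invoked before Proposition \ref{Prop-9.4}), a compactly supported extension $\Psi_k\in\DHam_c(B(x,\varepsilon_0)\times B(y,\varepsilon_0))$ satisfying $\gamma(\Psi_k)\leq C_n\gamma(\varphi_k)\to 0$ and coinciding with $\varphi_k\times\Id$ on an inner region $B(x,\varepsilon_0)\times B(y,\varepsilon_0/2)$. Granted such an extension, fix $\eta'\leq\min(\eta,\varepsilon_0/2)$ and verify $\Psi_k(S)\cap B((x,y),\eta')=\emptyset$: for $(a,b)\in S$ with $\Psi_k(a,b)\in B((x,y),\eta')$, the second coordinate of $\Psi_k(a,b)$ lies in $B(y,\eta')\subset B(y,\varepsilon_0/2)$, and the structure of the extension (identity on the second factor in the inner region, displacement in the annular region controlled away from $B(y,\eta')$) forces $b\in B(y,\varepsilon_0/2)$ and $\Psi_k(a,b)=(\varphi_k(a),b)$; disjointness of $B(x,\varepsilon_0)$ and $B(y,\varepsilon_0)$ then gives $(a,b)\notin\Delta_M$, hence $a\in V$, and therefore $\varphi_k(a)\notin B(x,\eta)\supset B(x,\eta')$, contradicting $\varphi_k(a)\in B(x,\eta')$.

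The sequence statement follows at once: if $(\psi_k)\subset\widehat\DHam(M,M\setminus V)$ witnesses the $\gamma$-convergence of $(\varphi_k)$ to $\Id$ on $M\setminus V$, the first part of the lemma yields $\psi_k\in\mathcal N_{\Delta}$, whence $\gamma(\varphi_k,\mathcal N_{\Delta})\leq\gamma(\varphi_k,\psi_k)\to 0$. The hardest step will be the fibered extension: a naive product cutoff $H_k(z)\chi(w)$ generates motion in the $w$-direction bounded by $\Vert H_k\Vert_{C^0}$ rather than by $\gamma(\varphi_k)$, so does not secure $\gamma$-smallness of $\Psi_k$; only the $\gamma$-efficient extension from \cite{Viterbo-inverse-reduction} simultaneously delivers compact support near $(x,y)$ and $\gamma(\Psi_k)\to 0$.
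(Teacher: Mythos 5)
Your overall strategy is genuinely different from the paper's: you work in $M\times\overline M$, reduce to showing that $S=\gammasupp(\Gamma(\varphi))$ cannot be $\gamma$-coisotropic at an off-diagonal point of $V\times V$, and try to displace $S$ near $(x,y)$ by maps supported in $B(x,\varepsilon_0)\times B(y,\varepsilon_0)$ with small $\gamma$. The paper instead stays in $M$: it takes the displacing isotopies $\psi_k$ of $V$ with $\gamma(\psi_k)\to 0$, conjugates $\varphi\mapsto\psi_k\varphi\psi_k^{-1}$, uses bi-invariance to see that the conjugates $\gamma$-converge to $\varphi$ while each one equals $\Id$ on $\psi_k(M\setminus V_{\varepsilon_k})\supset B(x,\eta/2)$, and then invokes closedness of the subgroups $\widehat\DHam(M,U)$ (Proposition \ref{Prop-14.3}) to conclude $\varphi=\Id$ on $B(x,\eta/2)$, finishing by a covering argument. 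That route needs no localization of the support of the displacing map in the product, which is exactly where your argument runs into trouble.

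The gap is the ``fibered extension'': you need $\Psi_k$ compactly supported in $B(x,\varepsilon_0)\times B(y,\varepsilon_0)$, with $\gamma(\Psi_k)\le C_n\gamma(\varphi_k)$, \emph{and} equal to $\varphi_k\times\Id$ on $B(x,\varepsilon_0)\times B(y,\varepsilon_0/2)$ with controlled behaviour on the annular region. The results you cite from \cite{Viterbo-inverse-reduction} (Prop.~8.3, Cor.~8.10, quoted before Proposition \ref{Prop-9.4}) do not deliver this: they extend across a single $(q_n,p_n)$-factor, not a full $2n$-dimensional ball factor; even on the inner region the extension has the form $(u,q_n,p_n)\mapsto(\varphi(u),q_n,\chi(u,q_n,p_n))$, so it is \emph{not} the identity on the second factor (the conjugate coordinate is altered); and outside the inner region there is no formula at all, so nothing prevents a point $(a,b)$ with $b$ in the transition region from being mapped into $B((x,y),\eta')$. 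Hence your verification that $\Psi_k(S)\cap B((x,y),\eta')=\emptyset$ does not go through as written, and the $\gamma$-smallness of a cutoff cannot be obtained naively either (as you yourself note, a product cutoff only controls the Hofer norm via $\Vert H_k\Vert_{C^0}$, which need not be small). Unless you prove such a $\gamma$-efficient, product-supported extension with identity behaviour on the second factor — a statement at least as hard as the lemma itself — the argument is incomplete. The rest of your set-up (the translation $S\setminus\Delta_M\subset V\times V$, the use of Theorem \ref{Thm-7.12}(\ref{Thm-7.12-1}) and Proposition \ref{Prop-12.16}, and the deduction of the sequence statement from the pointwise one) is fine.
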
 
 \begin{proof} Let  $x\in V$ and  $\psi_k$ be a sequence such that $\gamma-\lim_k \psi_k=\Id$ and $\psi_k(V)\subset M\setminus B(x,\eta)$. We denote by $V_{ \varepsilon }$ an $ \varepsilon$-neighbourhood of $V$. Then there exists $ \varepsilon _{k}$ such that 
 $\psi_k (M\setminus V_{ \varepsilon_{k}}) \supset B(x, \eta/2)$ so that $\varphi \mapsto \psi_k \varphi \psi_k^{-1}$ sends 
$ \widehat{\DHam}(M, M\setminus V_{ \varepsilon_{k}})$ to $ \widehat{\DHam}(M, B(x,\eta/2))$. But since $\gamma-\lim_{k}\psi_{k}=\Id$ and $\varphi \in \widehat{\DHam}(M, M\setminus V_{ \varepsilon_{k}})$ for all $k$,  then 
$ \varphi \in \widehat{\DHam}(M, B(x,\eta/2))$, hence $\varphi \in  \widehat{\DHam}(M, B(x,\eta/2)\cup (M\setminus V))$. As a result $\varphi$ equals $\Id$ on 
$M\setminus V \cup B(x,\eta/2)$. By taking a covering of $V$ by open balls and iterating this argument, we get that $\varphi =\Id$ on $M$ that is $\varphi \in  \widehat{\DHam}(M, M)= \mathcal N_{\Delta}$.  
 
 \end{proof} 
 \begin{proof} [Proof of Proposition \ref{Prop-14.4}]
 Indeed, $(\varphi_k)_{k\geq 1}$ is Cauchy in $\widehat{\DHam}_{M}(M\setminus V)$ if and only if for any subsequence $(l_k)_{k\geq 1}$ going to infinity, the sequence $\varphi_k\varphi_{l_k}^{-1}$ converges to $\Id$ on $M\setminus V$. But then we just proved that the sequence converges to $\Id$ in $\widehat{\DHam}(M, \omega)$. As a result $(\varphi_k)_{k\geq 1}$ is Cauchy in 
 $\widehat{\DHam}(M, \omega)/{\mathcal N}_{\Delta}$ hence converges. 
 Finally we want to prove that $H\in C^{0}(M\setminus V)$ defines an element in $\widehat \DHam_{M} (M\setminus V)$. We can write $H$ as the $C^{0}$-limit on compact sets of a sequence $H_{k}$ of Hamiltonians in $C_{c}^{0}(M\setminus V)$ with $H_{k}=H_{l}$ on an exhausting sequence of open subsets. Clearly for any given compact  set $W\subset M\setminus V$ and $k,l$ large enough, their flow $\varphi_{k}$ satisfies $\varphi_{k}\varphi_{l}^{-1}$ equals $\Id$ on $W$, so that the sequence is Cauchy in $\widehat \DHam_{M}(W)$.  By definition it yields an element in $\widehat\DHam_{M}(M\setminus V)$. 
 \end{proof} 
 \begin{cor} 
If $V$ is $f$-coisotropic at $x$ then it is $\gamma$-coisotropic at $x$. 
 \end{cor}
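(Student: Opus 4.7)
The plan is to prove the contrapositive: if $V$ is not $\gamma$-coisotropic at $x$, then $V$ is not $f$-coisotropic at $x$. By non-$\gamma$-coisotropy at $x$ I obtain $\varepsilon > \eta > 0$ and a sequence $(\psi_j)_{j \geq 1} \subset \DHam_c(B(x,\varepsilon))$ with $\gamma\text{-}\lim_j \psi_j = \Id$ and $\psi_j(V) \cap B(x,\eta) = \emptyset$. I set $W := \overline{B(x,\eta/2)}$, a closed neighbourhood of $x$, and claim that the natural restriction map $\widehat\DHam(M,\omega) \to \widehat\DHam_M(M \setminus (W\cap V), \omega)$ is surjective modulo $\mathcal N_\Delta$, which denies $f$-coisotropy at $x$. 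The argument is a localised version of the proof of Proposition~\ref{Prop-14.4}.

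The crucial ingredient is the identity $\widehat\DHam(M, M\setminus (W\cap V), \omega) = \mathcal N_\Delta$. Let $\chi$ lie in the left-hand side, so $\chi$ equals $\Id$ on some open $U \supset M\setminus (W\cap V)$. By equivariance of $\gammasupp$ under symplectic conjugation (Proposition~\ref{Prop-1.3}~(\ref{Prop-1.3-b}) applied to $\psi_j \times \psi_j$ on $M \times \overline{M}$), the conjugate $\psi_j\chi\psi_j^{-1}$ equals $\Id$ on $\psi_j(U) \supset M \setminus \psi_j(W\cap V)$. Since $\psi_j(W\cap V) \subset \psi_j(V) \subset M \setminus B(x,\eta)$, this gives $\psi_j \chi \psi_j^{-1} \in \widehat\DHam(M, B(x,\eta), \omega)$. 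Bi-invariance of $\gamma$ yields $\gamma(\chi, \psi_j \chi \psi_j^{-1}) \leq 2\gamma(\psi_j) \to 0$ as $j\to\infty$, and since $\widehat\DHam(M, B(x,\eta), \omega)$ is closed by Proposition~\ref{Prop-14.3}~(\ref{Prop-14.3-1}), $\chi$ itself lies in it. Combined with $\chi \in \widehat\DHam(M, M\setminus (W\cap V), \omega)$, Proposition~\ref{Prop-14.3}~(\ref{Prop-14.3-3}) places $\chi$ in $\widehat\DHam(M, B(x,\eta) \cup (M\setminus (W\cap V)), \omega) = \widehat\DHam(M,M,\omega)$, which equals $\mathcal N_\Delta$ because $\gammasupp\Gamma(\chi) \subset \Delta$ combined with $\gamma$-coisotropy of $\gammasupp$ and the fact that no proper subset of the Lagrangian $\Delta$ is $\gamma$-coisotropic (Proposition~\ref{Prop-12.16}) forces $\gammasupp\Gamma(\chi) = \Delta$.

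Given this identity, the argument of Proposition~\ref{Prop-14.4} then runs verbatim: a Cauchy representative $(\varphi_k)$ of $\varphi \in \widehat\DHam_M(M\setminus (W\cap V), \omega)$ provides, for every index subsequence $(l_k)_{k\geq 1}$ with $l_k \geq k$, elements $\chi_k \in \widehat\DHam(M, M\setminus (W\cap V), \omega)$ with $\gamma(\varphi_k \varphi_{l_k}^{-1}, \chi_k) \to 0$; by the key step $\chi_k \in \mathcal N_\Delta$, so $\gamma(\varphi_k\varphi_{l_k}^{-1}, \mathcal N_\Delta) \to 0$ and $(\varphi_k)$ is Cauchy in $\widehat\DHam(M,\omega)/\mathcal N_\Delta$. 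Its limit $\tilde\varphi$ then restricts to $\varphi$ in $\widehat\DHam_M(M\setminus(W\cap V),\omega)$, so $\varphi$ is a restriction. The only genuine obstacle is the key step; once bi-invariance of $\gamma$ and closedness of $\widehat\DHam(M, B(x,\eta),\omega)$ are combined with the near-identity displacements of $V$ supplied by non-$\gamma$-coisotropy at $x$, the rest is purely formal bookkeeping transposed from the proof of Proposition~\ref{Prop-14.4}.
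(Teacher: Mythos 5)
Your proof is correct, and its core mechanism is the paper's: conjugate an element that equals $\Id$ off $W\cap V$ by the displacing sequence $\psi_j$, use bi-invariance of $\gamma$ and closedness of $\widehat\DHam(M,B(x,\eta),\omega)$ to conclude the element is $\Id$ on $B(x,\eta)$, and finish with the union property of Proposition~\ref{Prop-14.3}~(\ref{Prop-14.3-3}) and Proposition~\ref{Prop-12.16}. This is exactly the argument of the Lemma preceding the proof of Proposition~\ref{Prop-14.4}. The difference is that the paper disposes of the corollary as ``an obvious consequence of the Proposition'', whereas Proposition~\ref{Prop-14.4} assumes $V$ is \emph{nowhere} $\gamma$-coisotropic; non-$\gamma$-coisotropy at the single point $x$ does not make $W\cap V$ nowhere $\gamma$-coisotropic for any neighbourhood $W$ (for the half-line $\{p=0,\ q\geq 0\}$ at the origin, every $W\cap V$ contains interior points of the segment, which are $\gamma$-coisotropic), so the Proposition cannot be invoked verbatim for the pointwise statement. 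Your choice $W=\overline{B(x,\eta/2)}\subset B(x,\eta)$ is precisely what repairs this: since $W\cap V\subset B(x,\eta)$, a single conjugation at $x$ already yields $\chi=\Id$ on $B(x,\eta)\cup(M\setminus(W\cap V))=M$, with no need to cover the set by balls and iterate as in the Lemma. In short, the route is the paper's, but you supply the localization that the one-line deduction leaves implicit and actually requires; the remaining bookkeeping (passing from $\widehat\DHam(M,M\setminus(W\cap V),\omega)=\mathcal N_{\Delta}$ to surjectivity of the restriction map modulo $\mathcal N_{\Delta}$) is carried out at the same level of rigour as the paper's own proof of Proposition~\ref{Prop-14.4}.
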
 
 \begin{proof} 
 The  statement is an obvious consequence of Proposition  \ref{Prop-14.4}. 
 \end{proof} 
  \begin{Question} Is the converse true ? 
 \end{Question}
 From Proposition \ref{Prop-9.7} we infer the following reinforcement of Humili{\`e}re's result
 \begin{cor} 
 If $\mathcal {H}^n(V)=0$ then $V$ is nowhere $f$-coisotropic. 
 \end{cor}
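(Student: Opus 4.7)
The plan is to derive this corollary by chaining two results already established in the paper, so the argument should be short. First I would recall the elementary measure-theoretic fact that $\dim_H(V)<n$ implies $\mathcal{H}^n(V)=0$: indeed, by definition of Hausdorff dimension, if $s<n$ is such that $\dim_H(V)<s<n$, then $\mathcal{H}^s(V)=0$, and since $\mathcal{H}^n(V)\leq \mathcal{H}^s(V)$ up to normalization (or more precisely $\mathcal{H}^t(V)=0$ for all $t>\dim_H(V)$), we get $\mathcal{H}^n(V)=0$.

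Having reduced to the hypothesis of Proposition \ref{Prop-9.7}, I would apply that result directly to conclude that $V$ is nowhere $\gamma$-coisotropic, i.e.\ at every point $x\in V$ and every neighborhood of $x$, one can find Hamiltonian maps with arbitrarily small $\gamma$-norm displacing a small ball centered at $x$ from $V$.

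Finally, I would invoke the contrapositive of the preceding Corollary, which asserts that $f$-coisotropic at $x$ implies $\gamma$-coisotropic at $x$. Since $V$ is nowhere $\gamma$-coisotropic, it is nowhere $f$-coisotropic, which is exactly the claim. The only step that might look delicate is the passage through the measure-theoretic inequality, but this is completely standard, so there is no real obstacle here: the corollary is essentially a formal consequence of Proposition \ref{Prop-9.7} together with the implication \emph{$f$-coisotropic $\Rightarrow$ $\gamma$-coisotropic} established just above. In particular, this strengthens Humili\`ere's original dimension bound by allowing arbitrary (possibly non-integer) Hausdorff dimension $<n$ rather than merely topological dimension $<n$.
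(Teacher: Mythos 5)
Your proof is correct and is exactly the argument the paper intends: it states the corollary as an immediate consequence of Proposition \ref{Prop-9.7} combined with the preceding corollary ($f$-coisotropic $\Rightarrow$ $\gamma$-coisotropic), which is precisely your chain $\dim_H(V)<n \Rightarrow \mathcal{H}^n(V)=0 \Rightarrow$ nowhere $\gamma$-coisotropic $\Rightarrow$ nowhere $f$-coisotropic. Nothing to add.
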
 

\begin{prop} \label{Prop-14.6}
We have the following properties
\begin{enumerate} 
\item Being $f$-coisotropic is invariant by $ \mathcal {H}_\gamma (M, \omega)$, hence by ${\Homeo} (M, \omega)$. 
\item Being $f$-coisotropic is a {\bf local property} in $M$. It only depends on a neighbourhood of $V$ in $(M, \omega)$
\item Being $f$-coisotropic  is {\bf locally hereditary} in the following sense : if through every point $x \in V$ there is an $f$-coisotropic submanifold $V_x\subset V$, then $V$ is $f$-coisotropic. In particular if through any point of $V$ there is an element of $\LL (M, \omega)$ then $V$ is coisotropic. If  any point in $V$ has a neighbourhood contained in an element of $\mathcal S_2(M, \omega)$ then $V$ is not-$f$-coisotropic. 
\end{enumerate} 
\end{prop}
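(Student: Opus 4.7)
The proof plan parallels that of Proposition~\ref{Prop-7.5}, replacing the direct local constructions by the functoriality of the exact sequence
\[
1\to\widehat{\DHam}(M,M\setminus(W\cap V))\to\widehat{\DHam}(M,\omega)\to\widehat{\DHam}_M(M\setminus(W\cap V),\omega)\to 1
\]
from Proposition~\ref{Prop-14.3}(\ref{Prop-14.3-2}), as $W$ and $V$ vary. The three properties then become instances of naturality of this sequence under, respectively, conjugation by a $\gamma$-preserving homeomorphism, shrinking $W$, and restriction along an inclusion $V_x\subset V$.

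For (1), let $\psi\in\mathcal{H}_\gamma(M,\omega)$ and let $V$ be $f$-coisotropic at $x$. Conjugation by $\psi$ is an isometry for $\gamma$, and sends $M\setminus(W\cap V)$ to $M\setminus(\psi(W)\cap \psi(V))$. Extending Proposition~\ref{Prop-14.3}(\ref{Prop-14.3-4}) by continuity, it induces compatible group isomorphisms on both $\widehat{\DHam}(M,\omega)$ and $\widehat{\DHam}_M(M\setminus(W\cap V),\omega)$; a non-extendable witness for $(V,x,W)$ is thus mapped to a non-extendable witness for $(\psi(V),\psi(x),\psi(W))$. Statement (2) is immediate, as the definition only refers to arbitrarily small closed neighbourhoods $W$ of $x$ and to the germ of $V$ in a neighbourhood of $x$.

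For (3), assume each $x\in V$ lies in some $f$-coisotropic $V_x\subset V$. Fixing $x$ and a closed neighbourhood $W$, the inclusion $V_x\subset V$ gives $W\cap V_x\subset W\cap V$, so $M\setminus(W\cap V)\subset M\setminus(W\cap V_x)$. A sequence Cauchy in the sense of Definition~\ref{Def-12.1}(\ref{Def-12.1-3}) on the larger open set $M\setminus(W\cap V_x)$ is a fortiori Cauchy on $M\setminus(W\cap V)$, and the same monotonicity holds for the equivalence relation, yielding a canonical map
\[
r\colon \widehat{\DHam}_M(M\setminus(W\cap V_x),\omega)\longrightarrow\widehat{\DHam}_M(M\setminus(W\cap V),\omega)
\]
that commutes with the restriction maps from $\widehat{\DHam}(M,\omega)$. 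If $\varphi$ witnesses the $f$-coisotropy of $V_x$ and $r(\varphi)$ were the image of some $\Phi\in\widehat{\DHam}(M,\omega)$, then by commutativity $\Phi$ would restrict to $\varphi$ already on $M\setminus(W\cap V_x)$, contradicting non-extendability. Hence $r(\varphi)$ witnesses $f$-coisotropy of $V$ at $x$. The final two claims then follow as in Proposition~\ref{Prop-7.5}: codimension-$2$ symplectic submanifolds are nowhere $\gamma$-coisotropic, hence by the implication ``$f$-coisotropic~$\Rightarrow$~$\gamma$-coisotropic'' proved just after Proposition~\ref{Prop-14.4} they are nowhere $f$-coisotropic; while Lagrangian germs are $f$-coisotropic via a genuinely non-extendable element in $\widehat{\DHam}_M(M\setminus(W\cap L),\omega)$, obtained by a local variant of the Peano Lagrangian construction of Theorem~\ref{Thm-7.12}(\ref{Thm-7.12-2}) producing infinitely many inequivalent limits.

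The step I expect to be most delicate is the extension-by-continuity claim in (1): a general $\psi\in\mathcal{H}_\gamma(M,\omega)$ is only a homeomorphism, so conjugation by $\psi$ has to be defined directly on the completions, and must be checked to respect the equivalence relation of Definition~\ref{Def-12.1}(\ref{Def-12.1-3}). This should follow from the $\psi$-invariance of $\gamma$ together with the homeomorphism property of $\psi$, but it is not fully stated in Proposition~\ref{Prop-14.3}(\ref{Prop-14.3-4}), which is only formulated for smooth $\psi$.
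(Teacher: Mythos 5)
Your proposal follows essentially the same route as the paper: the paper disposes of (1) and (2) as immediate from the definition, and proves the hereditary property in (3) exactly as you do, by viewing the non-extendable witness for $V_x$, defined on $M\setminus(W\cap V_x)$, as an element of $\widehat{\DHam}_M(M\setminus(W\cap V))$ via the inclusion of open sets and asserting that it still cannot be extended. You are in fact more explicit than the paper about the restriction map $r$ and about the need to extend conjugation invariance from Proposition \ref{Prop-14.3} to non-smooth elements of $\mathcal{H}_\gamma(M,\omega)$, both of which the paper leaves implicit.
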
 
   \begin{proof} 
   The first two statements are obvious from the definition. For the third one, if $x\in C_x\subset V$ is $f$-coisotropic, then there is  an element in $\varphi$ in $\widehat\DHam (M\setminus (C_x\cap U))$   which does not extend to  $\widehat\DHam (M)$. But since $\varphi$ belongs to $\widehat\DHam (M\setminus (V\cap U))$, this implies that $V$ is $f$-coisotropic at $x$. 
   
   \end{proof}

\section{Appendix: The space \texorpdfstring{$\mathcal L (T^*N)$}{LL(TN)} is not a Polish space}

The question studied in this section  is due to Michele Stecconi. I  thank him for the suggestion and for his help with  the proof. 
We shall prove  
\begin{prop} 
The space $(\mathcal L (T^*N), c)$ is not a Polish space.
\end{prop}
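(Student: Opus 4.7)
The plan is to invoke the Baire category theorem: every nonempty Polish space is a Baire space, hence not meager in itself. I will therefore display $\LL(T^*N)$ as a countable union of closed, nowhere dense subsets in its own $\gamma$-topology.

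Fix a Riemannian metric $g$ on $N$ and set $K_m=\{(q,p)\in T^*N\mid |p|_g\leq m\}$ and $A_m=\{L\in \LL(T^*N)\mid L\subset K_m\}$. Since every compact Lagrangian lies in some $K_m$, we have $\LL(T^*N)=\bigcup_m A_m$. Closedness of $A_m$ in the $\gamma$-topology follows from Proposition \ref{Prop-lim-support}: if $L_j\in A_m$ and $L_j\xrightarrow{\gamma} L\in \LL(T^*N)$, then
\[L=\gammasupp(L)\subset \liminf_j\gammasupp(L_j)=\liminf_j L_j\subset K_m,\]
using Proposition \ref{Prop-1.3}(\ref{Prop-1.3-c}) for the first equality and the closedness of $K_m$ at the end.

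To show $A_m$ has empty interior, fix $L\in \LL(T^*N)$ and $\varepsilon>0$; it suffices to produce a smooth $L'$ with $\gamma(L,L')<\varepsilon$ that leaves $K_m$. Since $L$ is compact of dimension $n$ and cannot lie inside a single cotangent fibre, $\pi|_L:L\to N$ admits a regular point $q_1$, near which $L$ is a graph $\{(q,ds(q))\}$ over some chart $V$. Fix a bump $\rho$ supported in the unit ball of $\mathbb R^n$ with $\rho(0)=1$ and set $f_\delta(q)=(\varepsilon/2)\rho((q-q_1)/\delta)$, extended by zero. The base-dependent Hamiltonian $H(q,p):=f_\delta(q)$ generates $\varphi_H^1(q,p)=(q,p-df_\delta(q))$; since $\osc(H)=\varepsilon/2$ independently of $\delta$, one has $\gamma(L,\varphi_H^1(L))\leq \gamma(\varphi_H^1)\leq\osc(H)=\varepsilon/2$. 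However $\|df_\delta\|_\infty \sim \varepsilon/\delta$ blows up as $\delta\to 0$, so $\varphi_H^1(L)$ contains fibre points of norm at least $\varepsilon/(2\delta)-\sup_V|ds|_g$, which exceeds $m$ for $\delta$ sufficiently small.

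Combining the two steps, $\LL(T^*N)=\bigcup_m A_m$ is meager in itself, hence not Polish by the Baire category theorem. The main conceptual input is Proposition \ref{Prop-lim-support}, which forces the smooth $\gamma$-limit of elements of $A_m$ to remain inside $K_m$; everything else is the standard $C^0$-small, $C^1$-wild bump construction that produces a long thin finger in the fibre direction at essentially no $\gamma$-cost.
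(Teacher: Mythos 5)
Your argument is correct, but it follows a genuinely different route from the paper's. The paper does not work with all of $\LL(T^*N)$ directly: it restricts to the subset $\mathcal G(T^*N)$ of Lagrangians that are graphs of differentials of continuous functions, shows this is closed in $\LL(T^*N)$ via the isometric embedding $f\mapsto \gra(df)$ from $(C^0(N),d_{C^0})$, and then exhibits countably many open dense subsets of $\mathcal G(T^*N)$ (forcing local minima of the selector near each point of a dense sequence) whose intersection is the single point $0_N$; this uses the nontrivial regularity result that the selector of such an $L$ is smooth on an open set of full measure. You instead show that $\LL(T^*N)$ is meager in itself, writing it as $\bigcup_m A_m$ with $A_m$ the Lagrangians inside the radius-$m$ disk bundle: closedness of $A_m$ via Proposition \ref{Prop-lim-support} together with $\gammasupp(L)=L$ for smooth $L$, and empty interior via a fibrewise translation generated by $H(q,p)=f_\delta(q)$ with $\osc(H)$ small but $\|df_\delta\|_\infty$ large, using $\gamma(\varphi_H(L),L)\le\osc(H)$. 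Your approach is more elementary and self-contained (it avoids both the structure theory of $\mathcal G(T^*N)$ and the Viterbo--Ottolenghi regularity of selectors), whereas the paper's argument additionally shows that the smaller, more rigid space $\mathcal G(T^*N)$ (equivalently, graphs with the Hofer metric) already fails to be Baire. Two cosmetic remarks: the existence of a regular point of $\pi|_L$ is better justified by $\pi(L)=N$ (every compact exact Lagrangian meets every fibre, cf.\ Proposition \ref{Prop-6.7}) plus Sard, rather than by ``$L$ does not lie in a single fibre''; and since $H(q,p)=f_\delta(q)$ is not compactly supported in $p$, one should truncate it in $|p|$ outside a large ball containing $L$ and its image --- this changes neither the oscillation bound nor the action on the relevant piece of $L$. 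Neither point affects the validity of the proof.
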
 
Remember that a Baire space is a space where the conclusion of Baire's theorem holds: a countable intersection of open dense sets is dense. A topological space is a Polish space if its topology can be defined by a complete metric. Equivalently the space is a countable intersection of open dense sets in its completion. So if a space is not a Polish space,  its completion really adds a lot of points. 
 
 \begin{proof} 
Let $\widehat{\gr\circ d}: C^{0}(N, {\mathbb R} ) \longrightarrow \widehat {\mathcal L} (T^*N)$ be the  extension of the isometric embedding $\gr\circ d: (C^{\infty}(N, {\mathbb R} ), d_{C^0}) \longrightarrow (\mathcal L (T^*N), \gamma)$ given by $f\mapsto \gra (df)$ to the completions of both spaces.
Let \begin{gather*} \mathcal G (T^*N)=  \Image (\widehat{\gr \circ d})\cap \mathfrak L(T^*N)=\\  \left\{ \gra (df) \mid f \in C^0(N, {\mathbb R} ), \gra (df) \in \mathcal L(T^*N)\right\} \end{gather*} 
Thus $\mathcal G (T^*N)$ is the set of {\bf continuous} functions such that $\gra(df)$ is a {\bf smooth} Lagrangian\footnote{This is {\bf not} the set of smooth functions ! 
For example the submanifold $x=p^{3}$ in $T^{*} {\mathbb R}$ is smooth and is the graph of the differential of $f(x)= \frac{3}{4}x^{ \frac{4}{3}}$ that is not smooth.}.
Note that the image of  $\widehat{\gr\circ d}$ is closed\footnote{ Remember that $\mathcal L(T^*N)$ is a set of smooth manifolds but that $\mathcal G (T^*N) \neq \left\{ \gra(df) \mid f\in C^\infty(N, {\mathbb R} )\right \}$} , since it  is an isometry (for the natural $C^{0}$ and $c$ norms) and both spaces are complete. Then $\mathcal G (T^*N)$ is closed in $\mathcal L(T^*N)$ since it is the intersection
of the image of  $\widehat{\gr\circ d}$- which is closed in $\widehat{\mathcal L}(T^{*}N)$, as the isometric image of a complete space- and $\mathcal L(T^*N)$. As a result, if $(\mathcal L(T^*N), c)$ is Polish, so is  $\mathcal G (T^*N)$, since a closed subset of a Polish space is Polish (see \cite{Kechris}, thm 3.11,  p.17). Now let us consider the open sets
 $$U_n(x_0)=\left\{L \in \mathcal G(T^*N) \mid L=\gra (df) , \; \text{and}\; \exists t \in ]0, \frac{1}{n}[  \inf_{x\in S(x_0,t)} f(x)>f(x_0)  \right \}$$
 where $S(x_0,t)$ is the sphere of radius $t$ for some Riemannian metric on $N$. 
We claim that $U_n(x_0)$ is dense in $\mathcal G(T^*N)$. Indeed, we may modify $f$ by adding a $C^0$-small smooth function $g$ so that $f+g$ is in $U_n(x_0)$, since $\gamma (\gra (df), \gra(df+dg))= \osc (g)$. Note that $U_{n}(x_{0})$ is open, since the set of functions such that $ \inf_{x\in S(x_0,t)} f(x)>f(x_0) $ is open for the $C^{0}$-topology, hence $U_{n}(x_{0})$ is open for the $\gamma$-topology (since it coincides wiht the $C^{0}$-topology on graphs). 

 Then if $\gr(df) \in U_n(x_0)$ and $f$ is smooth in $B(x_0,\frac{1}{n})$ there must be  a local minimum $y$ of $f$ in $B(x_0, \frac{1}{n})$ so that $df(y)=0$. 
Now let $(z_k)_{k\geq 1}$ be a dense  sequence of points in $N$. We claim that
$$\bigcap_{n=1}^\infty \bigcap_{k=1}^{\infty} U_n (z_k) $$
is the zero section, since if $\gr(df)$ belongs to this intersection and is smooth on the open set  $W\subset N$ of full measure, then $df$ must vanish on some point in $B(z_k, \frac{1}{n}) $ whenever $B(z_k, \frac{1}{n})\subset W$.   But this implies that $df$ is identically zero on $W$, so $f$ is a constant. 
Our last argument uses that if $\gr(df)$ is in $\mathcal L (T^*N)$ we have that $f$ is smooth on an open set of full measure. Indeed, we proved in \cite{Viterbo-Ottolenghi} (see also the Appendix 2 in \cite{Viterbo-NCMT}) that the selector $c(1_x,L)$ is smooth on an open set of full measure, but since obviously $c(1_x,\gr (df))=f(x)$, this implies that $f$ is smooth on an open set of full measure. As a result, the intersection of the open and dense set $U_n(z_k)$ is the singleton $\{0_N\}$. 
 Thus $\mathcal G(T^*N)$  is not even a Baire space (i.e. a space where a countable intersection of open dense sets is dense) and Polish spaces are obviously Baire. 
 \end{proof} 
 Note that there is no obvious explicit description of $\mathcal G(T^*N)$ in terms of the singularities of $f$. Requiring $f$ to be smooth everywhere is too strong while only requiring $C^1$ is too weak. One possibility would be that $f$ must be  $C^1$ everywhere and smooth on an open set of full measure but even though the condition is necessary, as we saw above, we have no idea as to whether it would be sufficient. 
 \begin{rem} The same holds for $(\LL (T^*N), \gamma)$, since the image of $\mathcal G(T^*N)$ by the projection $\rm{unf}: \mathcal L(T^*N) \longrightarrow \LL (T^*N)$ that we shall denote by ${\mathfrak G}(T^*N)$ is also closed in  $\LL (T^*N)$. It can be identified with the closed subspace of $\mathcal G(T^*N)$ of $f$ such that $\int_N f(x) d\mu(x)=0$ for some Borel measure $\mu$. Moreover   $\mathcal G(T^*N)\simeq \mathfrak G(T^*N)\times \mathbb R$ so if $\mathfrak G(T^*N)$ was a Polish space so would $\mathcal G(T^*N)$. 
 
 Since on $\mathfrak G(T^*N)$ the metric $\gamma$ coincides with the Hofer metric, because $$\gamma( \gra(df),\gra(dg))=d_{Hofer}(\gra(df),\gra(dg))= \osc( f-g)$$ where $\osc(f)=\max f- \min f$ (see \cite[Theorem 2]{Milinkovic-Geodesics}). We may then conclude that  $\LL (T^*N)$ endowed with the Hofer metric is not a Polish space either. 
 \end{rem} 
\section{The set of pseudo-graphs: an example of a closed set in \texorpdfstring{$\widehat \LL (T^*N)$}{LT*N}.}

Let us now describe a closed set in $\widehat \LL (T^*N)$. 
 Remember that $FH^*(L_1,L_2;t)$,  the Floer homology of $L_1,L_2$ with action filtration below $t$ (i.e. generated by the intersection points in $L_1\cap L_2$ such that $f_{L_1}(z)-f_{L_2}(z) <t$) yields a persistence module, and as such we can associate to it a barcode. 
 
We now set
 \begin{defn} 
 A Lagrangian $L$ in $\widehat \LL (T^*N)$  is a pseudo-graph if and only if for all $x$, the barcode of $FH^*(L,V_x)$ consists of a single bar $[c(1_x,L), +\infty[$
 \end{defn} 
 \begin{prop} 
 The set of pseudo-graphs is a closed subset of $\widehat \LL (T^*N)$ and contains $\widehat{\gr\circ d}(C^{0}(N, {\mathbb R} ))$ the set of  graphs of differentials of continuous functions. 
  \end{prop}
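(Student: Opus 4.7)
The plan is to first establish closedness via bottleneck stability, then deduce containment by approximating continuous functions by smooth ones.

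\textbf{Closedness.} Suppose $(L_k)_{k\geq 1}$ is a sequence of pseudo-graphs in $\widehat{\LL}(T^{*}N)$ with $\gamma$-limit $L$. Fix $x\in N$ and let $V_x=T_x^{*}N$, which is exact. Applying the Kislev--Shelukhin inequality (as stated in the Remark following Proposition \ref{Prop-2.4}) to the pairs $(L_k,V_x)$ and $(L_l,V_x)$ yields
\[
\beta\bigl(V(L_k,V_x),V(L_l,V_x)\bigr)\leq 2\gamma(L_k,L_l),
\]
so the persistence modules $V(L_k,V_x)$ form a Cauchy sequence in the bottleneck distance and converge to the barcode $V(L,V_x)$. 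By assumption $V(L_k,V_x)$ is the single infinite bar $[c(1_x,L_k),+\infty[$. By continuity of the spectral invariant $c(1_x,\cdot)$ under $\gamma$-convergence, $c(1_x,L_k)\to c(1_x,L)$. Now I invoke the standard property of bottleneck matchings: given $\varepsilon>\beta(V(L_k,V_x),V(L,V_x))$ there exists a matching between the bars of the two modules such that matched endpoints differ by at most $\varepsilon$ and unmatched bars have length at most $2\varepsilon$. Since $V(L_k,V_x)$ contains exactly one bar, at most one bar of $V(L,V_x)$ is matched (necessarily the infinite bar at $c(1_x,L)$), and every other bar has length at most $2\beta(V(L_k,V_x),V(L,V_x))$, which tends to $0$ as $k\to\infty$. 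Hence every finite bar of $V(L,V_x)$ has length $0$ and the barcode reduces to the single infinite bar $[c(1_x,L),+\infty[$, so $L$ is a pseudo-graph.

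\textbf{Containment.} Let $f\in C^{0}(N,\mathbb{R})$ and choose smooth $f_k$ with $\|f_k-f\|_{C^{0}}\to 0$. Since $\widehat{\gr\circ d}$ is an isometry from $(C^{0}(N,\mathbb{R}),d_{C^{0}})$ to $(\widehat{\LL}(T^{*}N),\gamma)$, we have $\gamma(\gra(df_k),\widehat{\gr\circ d}(f))\to 0$. For each $k$, the smooth Lagrangian $\gra(df_k)$ intersects $V_x$ transversally at the single point $(x,df_k(x))$; with the standard primitives (zero on $V_x$, $f_k\circ\pi$ on $\gra(df_k)$) this unique generator has action $f_k(x)$, so the Floer complex $FC^{*}(\gra(df_k),V_x)$ is one-dimensional with trivial differential, and its barcode is exactly the single infinite bar $[f_k(x),+\infty[=[c(1_x,\gra(df_k)),+\infty[$. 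Thus every $\gra(df_k)$ is a pseudo-graph, and by the closedness just proved, $\widehat{\gr\circ d}(f)$ is a pseudo-graph.

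\textbf{Main obstacle.} The delicate step is the argument that rules out the appearance of new finite bars in the $\gamma$-limit. This rests crucially on the fact that the approximating barcodes contain exactly one bar, so any bar in the limit which is not the matched infinite one must be unmatched in every bottleneck-optimal matching and therefore have length controlled by $\gamma(L_k,L)$. The only subtlety is to ensure the matching property (matched endpoints close, unmatched bars short) is available in the non-constructible setting of $\widehat{\LL}(T^{*}N)$; this is exactly what the Kislev--Shelukhin formulation and the persistence module framework recalled in Section \ref{Section-6} provide.
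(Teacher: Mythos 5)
Your proposal is correct and follows essentially the same route as the paper: Kislev--Shelukhin stability gives bottleneck convergence of the persistence modules $t\mapsto FH^*(L_k,V_x;t)$, a one-bar barcode can only limit onto a one-bar barcode, continuity of $c(1_x,\cdot)$ identifies the endpoint, and the containment is reduced to the smooth case plus closedness. The only difference is one of emphasis: you spell out the bottleneck-matching argument that the paper leaves implicit, while the paper is slightly more explicit than you are about why the Kislev--Shelukhin hypotheses (in particular the PSS units, via $FH^*(L,V_x)\simeq FH^*(0_N,V_x)=\mathbb{K}\cdot u$ and conditions (1)--(4) of Proposition A.1 in the cited appendix) hold in the completed, non-constructible setting.
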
 
  \begin{proof} 
    First of all if $L_n \overset{\gamma} \longrightarrow L$,  we claim that for all $x \in N$, denoting by $V_x$ the vertical fibre $T_x^*N$,  the persistence modules $t \mapsto FH^*(L_n, V_x;t)$ converge to  $t \mapsto FH^*(L, V_x;t)$, i.e. the barcode of the persistence module $t\mapsto FH^*(L_n,V_x,t)$ converges to the barcode of $t\mapsto FH^*(L,V_x,t)$ for the bottleneck distance. This is an immediate consequence of the Kislev-Shelukhin inequality (see \cite{Ki-Sh}). Here we use the formulation in  \cite{Viterbo-inverse-reduction}, Proposition A.3, which applies 
     provided $t\mapsto FH^*(L_n,V_x,t)$  satisfies properties(1)-(3), with conditions (1)-(4) stated after Proposition A.1 in \cite{Viterbo-inverse-reduction}. 
  Now only the existence of the PSS units (i.e. Property (3)
  is non-trivial, but since $FH^*(L,V_x)=FH^*(0_N,V_x)= \mathbb K \cdot u$, this  and the conditions (1)-(4)
  are easily checked. 
  
Now if $L_n$ is a pseudo-graph, the barcode of $t \mapsto FH^*(L_n, V_x;t)$ is made of a single bar,  $[c(1_x,L_n), +\infty [$, so its limit is 
necessarily a barcode made of a single bar, and since $\lim_n c(1_x,L_n)=c(1_x,L)$ we have that the barcode of $t \mapsto FH^*(L, V_x;t)$ 
has a single bar $[c(1_x,L), +\infty [$. This proves the first part of the Proposition. 

Finally it is clear that for $f$ smooth, the graph of $df$ is a pseudo-graph. Now for $f \in C^0(N, {\mathbb R} )$, we can find a sequence of smooth functions,  $f_n$ such that 
$C^0-\lim_n f_n=f$. But this implies that $\gamma-\lim_n \gra (df_n)=\gr (df)$, and since the set of pseudo-graphs is $\gamma$-closed, this implies that $\gra (df)$ is also a pseudo-graph. 
  \end{proof} 
  \begin{prop}
 We have equality : the set of pseudo-graphs coincides with  the set of $\gra(df)$ for $f\in C^{0}(N, {\mathbb R} )$. 
  \end{prop}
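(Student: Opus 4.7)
The plan is to assign to each pseudo-graph $L$ a canonical continuous function $f$ and to prove $L = \widehat{\gr\circ d}(f)$ in $\widehat{\LL}(T^*N)$. First I would define $f : N \to {\mathbb R}$ by $f(x) = c(1_x, L)$, the unique left endpoint of the single bar $[c(1_x,L), +\infty[$ that constitutes the barcode of $FH^*(L,V_x)$; the pseudo-graph hypothesis then reads $c_+(L,V_x) = c_-(L,V_x) = f(x)$. Continuity of $f$ would follow from the $1$-Lipschitz dependence of spectral invariants on $\gamma$ (via the Kislev--Shelukhin inequality used in the preceding proposition) applied to the $\gamma$-continuous family $x \mapsto V_x$ coming from small Hamiltonian isotopies between nearby cotangent fibers.

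Next I would set $\widetilde L := \widehat{\gr\circ d}(f) \in \widehat{\LL}(T^*N)$. The previous proposition guarantees that $\widetilde L$ is a pseudo-graph, and a smooth approximation $f_n \to f$ in $C^0$ gives $\gra(df_n) \to \widetilde L$ in $\gamma$ together with $c(1_x, \gra(df_n)) = f_n(x) \to f(x)$, so the associated function of $\widetilde L$ is again $f$, i.e.\ $c_\pm(\widetilde L, V_x) = f(x)$ for every $x$. Both $L$ and $\widetilde L$ therefore produce the same spectral invariants against every vertical fiber.

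The heart of the argument is then four applications of the Floer triangle inequality $c(\alpha\cup\beta, L_1, L_3) \geq c(\alpha, L_1, L_2) + c(\beta, L_2, L_3)$ (as in Proposition \ref{Prop-triangle}) with intermediate Lagrangian $V_x$. From the triangle $L \to \widetilde L \to V_x$, the choices $(\alpha,\beta) = (\mu_N, 1_N)$ and $(1_N, 1_N)$ give
\begin{align*}
f(x) &= c_+(L, V_x) \geq c_+(L, \widetilde L) + c_-(\widetilde L, V_x) = c_+(L, \widetilde L) + f(x), \\
f(x) &= c_-(L, V_x) \geq c_-(L, \widetilde L) + c_-(\widetilde L, V_x) = c_-(L, \widetilde L) + f(x),
\end{align*}
forcing $c_\pm(L, \widetilde L) \leq 0$. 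Exchanging the roles of $L$ and $\widetilde L$ in the same triangles produces $c_\pm(\widetilde L, L) \leq 0$, and the antisymmetry $c_+(\widetilde L, L) = -c_-(L, \widetilde L)$ converts these into $c_\pm(L, \widetilde L) \geq 0$. Combining, $c_+(L, \widetilde L) = c_-(L, \widetilde L) = 0$, hence $\gamma(L, \widetilde L) = 0$ and $L = \widetilde L = \widehat{\gr\circ d}(f)$.

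The one technical point to check carefully is that the triangle inequality and the spectral invariants $c_\pm(L', V_x)$ are well-defined and continuous when $L'$ lies in the completion $\widehat{\LL}(T^*N)$ and the third Lagrangian is the non-compact fiber $V_x$. The Floer-theoretic triangle inequality for such pairs is a routine extension of the one used in Proposition \ref{Prop-triangle}, and the extension of spectral invariants to $\widehat{\LL}$ is standard by $\gamma$-continuity of barcodes, so the main difficulty lies in this bookkeeping rather than in any genuinely new symplectic input.
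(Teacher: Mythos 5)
Your reduction (compare $L$ with $\gra(df)$ where $f(x)=c(1_x,L)$, then try to show $\gamma(L,\gra(df))=0$) is the same as the paper's, but the step you call the heart of the argument fails. In the triangle $L\to\widetilde L\to V_x$ the product lands in $FH^*(L,V_x)\simeq H^*(\mathrm{pt})$, and under the identification $FH^*(L,\widetilde L)\simeq H^*(N)$ the triangle product is restriction to the point $x$ followed by multiplication; since $\mu_N$ restricts to $0$ on a point for $n\geq 1$, the class $\mu_N\cup 1_x$ vanishes and Proposition \ref{Prop-triangle} gives no inequality for the pair $(\mu_N,1_N)$. For the same reason there is no separate invariant $c_+(\cdot,V_x)$: $H^*(\mathrm{pt})$ is one--dimensional, so the only fiberwise spectral invariant is the selector $c(1_x,\cdot)$, and your ``pseudo-graph hypothesis reads $c_+(L,V_x)=c_-(L,V_x)$'' is automatic for every $L$, not a consequence of the single--bar condition. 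The only surviving triangle inequality is the one with $(1_N,1_x)$, which together with its mirror yields $c_-(L,\widetilde L)\leq 0\leq c_+(L,\widetilde L)$ --- true for any pair and compatible with any value of $\gamma(L,\widetilde L)$.

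More fundamentally, your argument only ever uses equality of selectors, $c(1_x,L)=c(1_x,\widetilde L)$ for all $x$, and that cannot suffice: Example \ref{Example-selector} exhibits a smooth Lagrangian whose selector is $u(x)=|x|$ but which is not $\gra(du)$ (they even have different $\gamma$-supports). So any correct proof must exploit the absence of \emph{finite} bars, i.e.\ the vanishing of $FH^*(L,V_x;a,b)$ in every action window, not just the position of the endpoint of the infinite bar. This is precisely what the paper's lemma does: after normalizing so that $c(1_x,L)\equiv 0$, it passes from the fiberwise vanishing $FH^*(L,V_x;a,b)=0$ for all $x$ to the global vanishing $FH^*(L,0_N;a,b)=0$ by means of the sheaf quantization $\cF_L$ and the local-to-global spectral sequence with $E_2^{p,q}=H^p\bigl(N,\Hh^q([a,b[,\cF_x)\bigr)$; running over all windows with $0\notin[a,b]$ then forces $c_\pm(L)=0$ and $L=0_N$. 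Some such mechanism for integrating the fiberwise data over $N$ is unavoidable, and the triangle inequality alone does not provide it.
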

 \begin{proof}  Indeed, let $f$ be the selector for the pseudo-graph $L$. Then $f$ is continuous, and $L-\gra (df)$ is a pseudo-graph  such that $c(1_x,L) \equiv 0$. Indeed, the operation $f\mapsto L-\gra(df)$ is well defined on $\mathcal L(T^*N)$, and $$\gamma(L-\gra (df_n), L-\gra(df_m))=\osc(f_n-f_m)\leq 2\vert f_n-f_m\vert_{C^0}$$ so the operation extends to $\widehat{\mathcal L}(T^*N)$. 

The proposition is then an obvious consequence of the following Lemma.
 \begin{lem} 
 Let $L\in \widehat{\mathcal L}(T^{*}N)$ be such that for each $x$ we have $FH^{*}(L,V_{x};a,b)=0$ for given $a<b$. Then we have 
 $FH^{*}(L,0_{N};a,b)=0$. In particular if this holds for all $a<b$ such that $0\notin [a,b]$, then $L=0_{N}$. 
 \end{lem} 
\begin{proof} 
 Let $\cF \in D^{b}(N\times {\mathbb R} )$ associated to $L$. According to \cite{Guillermou-Viterbo},  proposition 9.9, there is an extension of  the Lagrangian quantization map $Q: \mathcal L(T^{*}N) \longrightarrow D^{b}(N\times {\mathbb R})$ from \cite{Guillermou, Guillermou-Asterisque, Viterbo-Sheaves} to a map $\widehat Q: \widehat{\mathcal L}(T^{*}N) \longrightarrow D_{lc}^{b}(N\times {\mathbb R})$. In particular  
 $$FH^{*}(L_{1},L_{2}, a,b)= H^{*}(N\times [a,b[, R\Hom^{\star}(\cF_{1}, \cF_{2}))\dispdot  $$
 
   Then $H^{*}(N\times [a,b[, \cF)= FH^{*}(L,0_{N};a,b)$ since this holds for $\cF=\cF_{L}$ in the smooth case. 
There is then a spectral sequence with $E_{2}^{p,q}=H^{p}(N,  \Hh^{q}([a,b[,\cF_{x}))$, but $\Hh^{q}([a,b[,\cF_{x})=H^q(L, V_{x};a,b)$ which is zero by assumption, so $E_{2}^{p,q}=0$ and $$FH^{*}(L,0_{N};a,b)=H^{*}(N\times [a,b[, \cF)=0\dispdot $$
Finally if $FH^{*}(L,0_{N};a,b)=0$ whenever $0\notin [a,b]$, then  we must have $c_{+}(L)=c_{-}(L)=0$ and $L=0_{N}$. 
 \end{proof}  
 \end{proof} 
 \begin{rem} One can prove directly that $\widehat{\gr\circ d}(C^{0}(N, {\mathbb R} ))$ is closed in $\widehat \LL (T^*N)$. 
 Indeed, $\widehat{\gr\circ d}$ is an isometric embedding between $(C^0(N, {\mathbb R} ), C^0)$ to  $(\widehat \LL (T^*N), \gamma)$. But an isometric embedding between complete spaces must have closed image. The non-obvious statement in the Proposition is that all pseudo-graphs are actually graphs. 
 \end{rem}

\printbibliography

@article{AGIV,
	author = {Asano, Tomohiro and Guillermou, St{\'e}phane and Ike, Yuichi and Viterbo, Claude},
	date-added = {2026-01-28 17:13:44 +0100},
	date-modified = {2026-01-28 17:13:44 +0100},
	doi = {10.2969/jmsj/93799379},
	eprint = {2407.00395},
	eprinttype = {ArXiv},
	journal = {Journal of the Mathematical Society of Japan},
	number = {1},
	pages = {37-54},
	primaryclass = {math.SG},
	shorthand = {AGIV},
	title = {Regular Lagrangians are smooth Lagrangians},
	volume = {78},
	year = {2026},
	bdsk-url-1 = {https://doi.org/10.2969/jmsj/93799379}}

@misc{MCA-VH-CV,
	author = {Arnaud, Marie-Claude and Humili{\`e}re, Vincent and Viterbo, Claude},
	date-added = {2024-07-14 19:00:12 +0200},
	date-modified = {2024-07-14 19:00:12 +0200},
	eprint = {2404.00804},
	eprinttype = {arXiv},
	primaryclass = {math.SG},
	title = {Higher Dimensional Birkhoff attractors},
	year = {2024}}

@article{AGHIV,
	author = {Asano, Tomohiro and Guillermou, St{\'e}phane and Humili{\`e}re, Vincent and Ike, Yuichi and Viterbo, Claude},
	date-added = {2025-10-01 18:01:55 +0200},
	date-modified = {2025-10-01 18:01:55 +0200},
	doi = {10.5802/crmath.499},
	eprint = {2211.13945},
	eprintclass = {math.SG},
	eprinttype = {arXiv},
	journal = {Comptes Rendus Math{\'e}matiques},
	pages = {1333-1340},
	primaryclass = {math.SG},
	shorthand = {AGHIV},
	title = {The $\gamma$-support as a micro-support},
	volume = {361},
	year = {2023},
	bdsk-url-1 = {https://doi.org/10.5802/crmath.499}}

@article{Asano-Ike,
	author = {Asano, Tomohiro and Ike, Yuichi},
	date = {2020},
	date-added = {2021-11-03 11:59:01 +0100},
	date-modified = {2021-12-29 00:25:02 +0100},
	doi = {10.4310/JSG.2020.v18.n3.a1},
	eprint = {1712.06847},
	eprintclass = {math.SG},
	eprinttype = {arXiv},
	journal = {J. Symp. Geometry},
	journaltitle = {J. Symp. Geometry},
	number = {3},
	pages = {613-649},
	primaryclass = {math.SG},
	title = {Persistence-like distance on Tamarkin's category and symplectic displacement energy},
	volume = {18},
	year = {2020},
	bdsk-url-1 = {https://doi.org/10.4310/JSG.2020.v18.n3.a1}}

@article{Banyaga,
	author = {Banyaga, Augustin},
	date = {1978},
	date-added = {2021-05-20 21:32:19 +0200},
	date-modified = {2021-05-20 21:32:19 +0200},
	doi = {10.1007/bf02566074},
	journal = {Commentarii Mathematici Helvetici},
	journaltitle = {Commentarii Mathematici Helvetici},
	pages = {174-227},
	title = {Sur la structure du groupe des diff{\'e}omorphismes qui pr{\'e}servent une forme symplectique},
	url = {https://doi.org/10.1007/BF02566074},
	volume = {53},
	year = {1978},
	bdsk-url-1 = {https://doi.org/10.1007/BF02566074}}

@book{Banyaga-book,
	addendum = {New edition (2013), Springer Science and Business Media},
	author = {Banyaga, Augustin},
	date = {1997},
	date-added = {2021-05-20 21:32:08 +0200},
	date-modified = {2021-05-20 21:32:09 +0200},
	doi = {10.1007/978-1-4757-6800-8},
	publisher = {Kluwer},
	series = {Mathematics and its applications},
	title = {The structure of classsical Diffeomorphism groups},
	year = {1997},
	bdsk-url-1 = {https://doi.org/10.1007/978-1-4757-6800-8}}

@article{Barannikov,
	author = {Barannikov, S.},
	date = {1994},
	date-added = {2021-10-24 00:23:31 +0200},
	date-modified = {2021-10-24 00:23:31 +0200},
	doi = {10.1090/advsov/021/03},
	journal = {Advances in Soviet Maths},
	journaltitle = {Advances in Soviet Maths},
	pages = {96-115},
	title = {The framed {M}orse complex and its invariants},
	volume = {21},
	year = {1994},
	bdsk-url-1 = {https://doi.org/10.1090/advsov/021/03}}

@article{Birkhoff-attractor,
	addendum = {See also: Collected Math. Papers of G. D. Birkhoff, vol. II, p. 418-443, American Math. Soc. 1950 (reprinted by Dover, New York, 1968).},
	author = {Birkhoff., G. D.},
	date = {1932},
	date-added = {2023-03-20 21:20:17 +0100},
	date-modified = {2024-08-06 00:54:40 +0200},
	doi = {10.24033/bsmf.1182},
	journal = {Bull. Soc. Math. France},
	journaltitle = {Bull. Soc. Math. France},
	pages = {1-26},
	title = {Sur quelques courbes ferm{\'e}es remarquables},
	volume = {60},
	year = {1932},
	bdsk-url-1 = {https://doi.org/10.24033/bsmf.1182%20}}

@article{Bolle-1,
	author = {Bolle, Philippe},
	date = {1998},
	date-added = {2022-10-08 17:51:59 +0200},
	date-modified = {2022-10-08 17:51:59 +0200},
	doi = {10.1007/PL00004373},
	journal = {Math. Z.},
	journaltitle = {Math. Z.},
	pages = {211 -- 230},
	title = {A contact condition for p-dimensional submanifolds of a symplectic manifold ($2\leq p \leq n$)},
	volume = {227},
	year = {1998},
	bdsk-url-1 = {https://doi.org/10.1007/PL00004373}}

@book{Bouligand,
	address = {Paris},
	author = {Bouligand, Georges},
	date = {1932},
	date-added = {2021-05-20 21:32:19 +0200},
	date-modified = {2021-05-20 21:32:19 +0200},
	location = {Paris},
	publisher = {Vuibert},
	title = {Introduction {\`a} la g{\'e}om{\'e}trie infinit{\'e}simale directe},
	year = {1932}}

@article{BHS-C0,
	author = {Buhovsky, Lev and Humili{\`e}re, Vincent and Seyfaddini, Sobhan},
	date = {2021},
	date-added = {2022-03-31 13:11:02 +0200},
	date-modified = {2022-03-31 13:11:02 +0200},
	doi = {10.1007/s00208-021-02183-w},
	journal = {Mathematische Annalen},
	journaltitle = {Mathematische Annalen},
	number = {1-2},
	pages = {293-316},
	title = {The action spectrum and $C^0$ symplectic topology},
	volume = {380},
	year = {2021},
	bdsk-url-1 = {https://arxiv.org/abs/1808.09790v2}}

@article{Chaperon-HJ,
	author = {Chaperon, Marc},
	date = {1991},
	date-added = {2021-05-20 21:32:19 +0200},
	date-modified = {2022-02-22 15:13:31 +0100},
	journal = {C. R. Acad. Sci.Paris S{\'e}r. I Math},
	journaltitle = {C. R. Acad. Sci.Paris S{\'e}r. I Math},
	number = {4},
	pages = {345-348},
	title = {Lois de conservation et g{\'e}om{\'e}trie symplectique.},
	volume = {312},
	year = {1991}}

@article{Charpentier-1,
	author = {Charpentier, Marie},
	date = {1934},
	date-added = {2022-04-01 20:27:29 +0200},
	date-modified = {2022-04-01 20:27:29 +0200},
	doi = {10.24033/bsmf.1221},
	journal = {Bulletin de la S. M. F.},
	journaltitle = {Bulletin de la S. M. F.},
	pages = {193-224},
	title = {Sur quelques propri{\'e}t{\'e}s des courbes de M. Birkhoff},
	volume = {62},
	year = {1934},
	bdsk-url-1 = {https://doi.org/10.24033/bsmf.1221}}

@article{Chasse,
	author = {Chass{\'e}, Jean-Philippe},
	date = {2021},
	date-added = {2024-07-09 08:48:48 +0200},
	date-modified = {2024-07-09 08:48:48 +0200},
	doi = {10.1142/S0129167X23500246},
	eprint = {2108.00555},
	eprintclass = {math.SG},
	eprinttype = {arXiv},
	journal = {International Journal of Mathematics},
	number = {5},
	primaryclass = {math.SG},
	title = {Convergence and Riemannian bounds on Lagrangian submanifolds},
	volume = {34},
	year = {2021},
	bdsk-url-1 = {https://doi.org/10.1142/S0129167X23500246}}

@inproceedings{Chazal-CS-G-G-O,
	address = {New-York},
	author = {Chazal, Fr{\'e}d{\'e}ric and Cohen-Steiner, David and Glisse, Marc and Guibas, Leonidas J. and Oudot, Steve},
	booktitle = {Proceedings of the Twenty-fifth Annual Symposium on Computational Geometry},
	date = {2009},
	date-added = {2021-12-29 00:29:08 +0100},
	date-modified = {2021-12-29 00:29:08 +0100},
	location = {New-York},
	pages = {237-246},
	title = {Proximity of persistence modules and their diagrams},
	url = {https://hal.inria.fr/inria-00292566v4},
	volume = {SCG '09},
	year = {2009},
	bdsk-url-1 = {https://hal.inria.fr/inria-00292566v4}}

@article{Crawley-Boevey,
	author = {Crawley-Boevey, William},
	date = {2015},
	date-added = {2021-08-21 10:32:48 +0200},
	date-modified = {2021-08-21 10:32:48 +0200},
	doi = {10.1142/S0219498815500668},
	journal = {J. Algebra Appl},
	journaltitle = {J. Algebra Appl},
	pages = {1550066},
	title = {Decomposition of pointwise finite-dimensional persistence modules},
	volume = {14},
	year = {2015},
	bdsk-url-1 = {https://doi.org/10.1142/S0219498815500668}}

@article{E-L-Z,
	author = {Edelsbrunner, H. and Letscher, D. and Zomorodian, A.},
	date = {2002},
	date-added = {2021-05-20 21:32:08 +0200},
	date-modified = {2023-06-09 19:29:42 +0200},
	doi = {10.1007/s00454-002-2885-2},
	journal = {Discrete \& Computational Geometry},
	journaltitle = {Discrete \& Computational Geometry},
	pages = {511-533},
	title = {Topological Persistence and Simplification},
	url = {https://doi.org/10.1007/s00454-002-2885-2},
	volume = {28},
	year = {2002},
	bdsk-url-1 = {https://doi.org/10.1007/s00454-002-2885-2}}

@book{Federer,
	author = {Herbert Federer},
	date-added = {2024-02-16 18:06:39 +0100},
	date-modified = {2024-02-16 18:06:39 +0100},
	publisher = {Springer Berlin Heidelberg},
	series = {Classics in Mathematics},
	title = {Geometric measure theory},
	year = {1969}}

@article{Fukaya-Seidel-Smith,
	author = {Fukaya, K. and Seidel, P. and Smith, I.},
	date = {2008},
	date-added = {2021-05-20 21:32:08 +0200},
	date-modified = {2021-10-24 00:36:37 +0200},
	doi = {10.1007/s00222-007-0092-8},
	journal = {Inventiones Math.},
	journaltitle = {Inventiones Math.},
	pages = {1-27},
	title = {Exact Lagrangian submanifolds in simply-connected cotangent bundles},
	volume = {172},
	year = {2008},
	bdsk-url-1 = {https://doi.org/10.1007/s00222-007-0092-8}}

@article{Gabriel,
	author = {Gabriel, Peter},
	date = {1972},
	date-added = {2021-08-21 23:48:50 +0200},
	date-modified = {2021-08-21 23:49:59 +0200},
	doi = {10.1007/BF01298413},
	journal = {Manuscripta Math},
	journaltitle = {Manuscripta Math},
	pages = {71-103},
	title = {Unzerlegbare Darstellungen I.},
	volume = {6},
	year = {1972},
	bdsk-url-1 = {https://doi.org/10.1007/BF01298413}}

@book{Ganor-Tanny,
	author = {Ganor, Y. and Tanny, S.},
	date-added = {2021-05-20 21:32:08 +0200},
	date-modified = {2021-05-20 21:32:09 +0200},
	eprint = {2005.11096},
	eprinttype = {arxiv},
	title = {{F}loer theory of disjointly supported Hamiltonians on symplectically aspherical manifolds},
	url = {https://arxiv.org/pdf/2005.11096.pdf},
	bdsk-url-1 = {https://arxiv.org/pdf/2005.11096.pdf}}

@article{Ginzburg-coisotropic,
	author = {Ginzburg, Viktor L.},
	date = {2007},
	date-added = {2022-10-08 17:18:19 +0200},
	date-modified = {2022-10-08 17:20:24 +0200},
	doi = {10.1215/s0012-7094-07-14014-6},
	journal = {Duke Math Journal},
	journaltitle = {Duke Math Journal},
	number = {1},
	pages = {111-163},
	title = {Coisotropic Intersections},
	volume = {140},
	year = {2007},
	bdsk-url-1 = {https://doi.org/10.1215/s0012-7094-07-14014-6}}

@misc{Guillermou,
	author = {Guillermou, St{\'e}phane},
	date = {2012},
	date-added = {2021-05-20 21:32:19 +0200},
	date-modified = {2021-12-28 03:07:45 +0100},
	eprint = {1212.5818},
	eprintclass = {math.DG},
	eprinttype = {arXiv},
	primaryclass = {math.DG},
	title = {Quantization of conic Lagrangian submanifolds of cotangent bundles},
	year = {2012}}

@article{Guillermou-Asterisque,
	author = {Guillermou, St{\'e}phane},
	date-added = {2024-04-09 15:30:52 +0200},
	date-modified = {2024-04-09 15:30:52 +0200},
	journal = {Ast{\'e}risque},
	publisher = {Soci\'et\'e Math\'ematique de France},
	title = {Sheaves and symplectic geometry of cotangent bundles},
	volume = {440},
	year = {2023}}

@misc{Guillermou-Vichery,
	author = {Guillermou, St{\'e}phane and Vichery, Nicolas},
	date = {2022},
	date-added = {2022-10-08 17:54:01 +0200},
	date-modified = {2022-10-08 17:54:01 +0200},
	eprint = {2203.13700},
	eprintclass = {math.SG},
	eprinttype = {arXiv},
	primaryclass = {math.SG},
	shorthand = {GVic22},
	title = {Viterbo's spectral bound conjecture for homogeneous spaces},
	year = {2022}}

@article{Guillermou-Viterbo,
	arxiveprefix = {arXiv},
	author = {Guillermou, St{\'e}phane and Viterbo, Claude},
	date-added = {2024-07-14 18:54:42 +0200},
	date-modified = {2024-07-14 18:54:42 +0200},
	doi = {10.1007/s00039-024-00682-x},
	eprint = {2203.12977},
	eprinttype = {arXiv},
	journal = {Geometric and Functional Analysis},
	pages = {1052-1113},
	primaryclass = {math.SG},
	shorthand = {GVit22a},
	title = {The support of sheaves is $\gamma$-coisotropic},
	volume = {34},
	year = {2024},
	bdsk-url-1 = {https://doi.org/10.1007/s00039-024-00682-x}}

@article{Hofer,
	author = {Hofer, Helmut},
	date = {1985},
	date-added = {2021-05-20 21:32:19 +0200},
	date-modified = {2025-08-15 15:50:32 +0200},
	doi = {10.1016/s0294-1449(16)30394-8},
	journal = {Annales de l'IHP, {A}nalyse non-lin\'eaire},
	journaltitle = {Annales de l'IHP, {A}nalyse non-lin\'eaire},
	pages = {407-462},
	title = {Lagrangian embeddings and critical point theory},
	volume = {2},
	year = {1985},
	bdsk-url-1 = {https://doi.org/10.1016/s0294-1449(16)30394-8}}

@phdthesis{Humiliere-these,
	author = {Humili{\`e}re, Vincent},
	date-added = {2022-01-09 14:37:01 +0100},
	date-modified = {2024-02-15 11:09:31 +0100},
	school = {CMLS, {\'E}cole polytechnique, Palaiseau},
	title = {Continuit{\'e} en topologie symplectique},
	url = {https://pastel.archives-ouvertes.fr/pastel-00004108},
	year = {2008},
	bdsk-url-1 = {https://pastel.archives-ouvertes.fr/pastel-00004108}}

@article{Humiliere-completion,
	author = {Humili{\`e}re, Vincent},
	date = {2008},
	date-added = {2022-01-09 14:19:49 +0100},
	date-modified = {2022-01-09 14:20:10 +0100},
	doi = {10.24033/bsmf.2560},
	journal = {Bulletin de la Soc. Math. de France},
	journaltitle = {Bulletin de la Soc. Math. de France},
	number = {3},
	pages = {373-404},
	title = {On some completions of the space of Hamiltonian maps.},
	url = {http://www.numdam.org/item/BSMF_2008__136_3_373_0/},
	volume = {136},
	year = {2008},
	bdsk-url-1 = {http://www.numdam.org/item/BSMF_2008__136_3_373_0/},
	bdsk-url-2 = {https://doi.org/10.24033/bsmf.2560}}

@article{Hum-LeR-Sey,
	author = {Humili{\`e}re, Vincent and Le Roux, Fr{\'e}d{\'e}ric and Seyfaddini, Sobhan},
	date = {2016},
	date-added = {2021-05-20 21:32:08 +0200},
	date-modified = {2023-04-11 09:34:28 +0200},
	doi = {10.2140/gt.2016.20.2253},
	eprint = {1502.03834},
	journal = {Geometry \& Topology},
	journaltitle = {Geometry \& Topology},
	pages = {2253-2334},
	title = {Towards a dynamical interpretation of Hamiltonian spectral invariants on surfaces},
	volume = {20},
	year = {2016},
	bdsk-url-1 = {https://doi.org/10.2140/gt.2016.20.2253}}

@article{Hum-Lec-Sey2,
	author = {Humili{\`e}re, Vincent and Leclercq, R{\'e}mi and Seyfaddini, Sobhan},
	date = {2015},
	date-added = {2021-05-20 21:32:19 +0200},
	date-modified = {2022-04-08 12:21:21 +0200},
	doi = {10.1215/00127094-2881701},
	journal = {Duke Math Journal,},
	journaltitle = {Duke Math Journal,},
	pages = {767-799},
	title = {Coisotropic rigidity and $C^{0}$ -symplectic geometry},
	url = {https://projecteuclid.org/euclid.dmj/1426512107},
	volume = {164},
	year = {2015},
	bdsk-url-1 = {https://projecteuclid.org/euclid.dmj/1426512107},
	bdsk-url-2 = {https://doi.org/10.1215/00127094-2881701}}

@article{Hum-Lec-Sey3,
	author = {Humili{\`e}re, Vincent and Leclercq, R{\'e}mi and Seyfaddini, Sobhan},
	date = {2016},
	date-added = {2021-10-23 23:06:52 +0200},
	date-modified = {2022-04-08 12:21:29 +0200},
	doi = {10.24033/asens.2292},
	journal = {Annales scientifiques de l'{\'E}cole Normale Sup\'erieure, 4{\`e}me s{\'e}rie,},
	journaltitle = {Annales scientifiques de l'{\'E}cole Normale Sup\'erieure, 4{\`e}me s{\'e}rie,},
	pages = {633-668},
	title = {Reduction of Symplectic Homeomorphisms},
	volume = {49},
	year = {2016},
	bdsk-url-1 = {https://doi.org/10.24033/asens.2292}}

@book{K-S,
	author = {Kashiwara, Masaki and Schapira, Pierre},
	date = {1990},
	date-added = {2021-05-20 21:32:19 +0200},
	date-modified = {2021-05-20 21:32:19 +0200},
	doi = {10.1007/978-3-662-02661-8},
	publisher = {Springer-Verlag},
	series = {Grundlehren der Math. Wissenschaften},
	title = {Sheaves on manifolds},
	volume = {292},
	year = {1990},
	bdsk-url-1 = {https://doi.org/10.1007/978-3-662-02661-8}}

@misc{K-S-distance,
	author = {Kashiwara, Masaki and Schapira, Pierre},
	date = {2018},
	date-added = {2025-02-26 18:38:05 +0100},
	date-modified = {2025-02-26 18:38:05 +0100},
	doi = {10.1007/s41468-018-0019-z},
	eprint = {1705.00955},
	eprintclass = {math.AT},
	eprinttype = {arXiv},
	primaryclass = {math.AT},
	title = {Persistent homology and microlocal sheaf theory},
	year = {2018},
	bdsk-url-1 = {https://doi.org/10.1007/s41468-018-0019-z}}

@book{Kechris,
	author = {Kechris, Alexander S.},
	date = {1995},
	date-added = {2021-09-17 19:47:30 +0200},
	date-modified = {2021-09-17 19:49:45 +0200},
	publisher = {Springer-Verlag},
	series = {Graduate Texts in Mathematics},
	title = {Classical Descriptive Set theory},
	volume = {156},
	year = {1995}}

@article{Ki-Sh,
	author = {Kislev, Asaf and Shelukhin, Egor},
	date = {2022},
	date-added = {2021-05-20 21:32:19 +0200},
	date-modified = {2024-02-11 01:54:27 +0100},
	doi = {10.2140/gt.2021.25.3257},
	eprint = {1810.09865},
	eprintclass = {math.SG},
	eprinttype = {arXiv},
	journal = {Geometry \& Topology},
	journaltitle = {Geometry \& Topology},
	number = {7},
	pages = {3257-3350},
	primaryclass = {math.SG},
	shorthand = {Ki-Sh22},
	title = {Bounds on spectral norms and barcodes},
	volume = {25},
	year = {2022},
	bdsk-url-1 = {https://doi.org/10.2140/gt.2021.25.3257}}

@article{Kragh3,
	author = {Kragh, T.},
	date = {2013},
	date-added = {2022-11-11 14:16:16 +0100},
	date-modified = {2022-11-11 14:16:16 +0100},
	doi = {10.2140/gt.2013.17.639},
	journal = {Geometry \& Topology},
	journaltitle = {Geometry \& Topology},
	pages = {639-731},
	title = {Parametrized ring-spectra and the nearby Lagrangian conjecture. Appendix by M. Abouzaid},
	volume = {17},
	year = {2013},
	bdsk-url-1 = {https://doi.org/10.2140/gt.2013.17.639}}

@article{Kragh-Abouzaid,
	author = {Kragh, T.},
	date-added = {2023-11-22 23:12:34 +0100},
	date-modified = {2023-11-22 23:12:34 +0100},
	doi = {10.2140/gt.2013.17.639},
	journal = {Geometry \& Topology},
	pages = {639-731},
	title = {Parametrized ring-spectra and the nearby Lagrangian conjecture. Appendix by M. Abouzaid},
	volume = {17},
	year = {2013},
	bdsk-url-1 = {https://doi.org/10.2140/gt.2013.17.639}}

@article{Laudenbach-Sikorav-IMRN,
	author = {Laudenbach, Fran{\c{c}}ois and Sikorav, Jean-Claude},
	date = {1994},
	date-added = {2021-05-20 21:32:19 +0200},
	date-modified = {2021-05-20 21:32:19 +0200},
	doi = {10.1155/S1073792894000176},
	journal = {Intern.Math. Res. Notes},
	journaltitle = {Intern.Math. Res. Notes},
	pages = {161-168},
	title = {Hamiltonian disjunction and limits of lagrangian submanifolds},
	volume = {4},
	year = {1994},
	bdsk-url-1 = {https://doi.org/10.1155/S1073792894000176}}

@article{Leclercq,
	author = {Leclercq, R{\'e}mi},
	date = {2008},
	date-added = {2021-05-20 21:32:08 +0200},
	date-modified = {2021-05-20 21:32:09 +0200},
	doi = {10.3934/jmd.2008.2.249},
	journal = {Journal of Modern Dynamics},
	journaltitle = {Journal of Modern Dynamics},
	number = {2},
	pages = {249-286},
	title = {Spectral invariants in Lagrangian Floer theory},
	url = {https://www.aimsciences.org/article/doi/10.3934/jmd.2008.2.249},
	volume = {2},
	year = {2008},
	bdsk-url-1 = {https://www.aimsciences.org/article/doi/10.3934/jmd.2008.2.249},
	bdsk-url-2 = {https://doi.org/10.3934/jmd.2008.2.249}}

@article{Leclercq-Zapolsky,
	author = {Leclercq, R{\'e}mi and Zapolsky, Frol},
	date = {2018},
	date-added = {2022-05-12 22:34:01 +0200},
	date-modified = {2022-05-12 22:34:01 +0200},
	doi = {10.1142/S1793525318500267},
	journal = {Journal of Topology and Analysis},
	journaltitle = {Journal of Topology and Analysis},
	number = {3},
	pages = {627-700},
	title = {Spectral invariants for monotone Lagrangians},
	volume = {10},
	year = {2018},
	bdsk-url-1 = {https://doi.org/10.1142/S1793525318500267}}

@inbook{Mattila2,
	author = {Mattila, Pertti},
	chapter = {Hausdorff Dimension, Projections, Intersections, and Besicovitch Sets},
	date-added = {2022-05-24 15:15:59 +0200},
	date-modified = {2024-06-12 17:48:09 +0200},
	doi = {10.1007/978-3-030-32353-0_6},
	editor = {Aldroubi, A. and Cabrelli, C. and Jaffard, S. and Molter, U.},
	publisher = {Birkh{\"a}user},
	title = {New Trends in Applied Harmonic Analysis},
	volume = {2},
	year = {2019},
	bdsk-url-1 = {https://doi.org/10.1007/978-3-030-32353-0_6}}

@article{Membrez-Opshtein,
	author = {Membrez, C{\'e}dric and Opshtein, Emmanuel},
	date = {2021},
	date-added = {2022-01-19 19:44:47 +0100},
	date-modified = {2022-01-19 19:46:47 +0100},
	doi = {10.1112/S0010437X21007570},
	journal = {Compositio Mathematica},
	journaltitle = {Compositio Mathematica},
	number = {11},
	pages = {2433-2493},
	title = {$C^0$-rigidity of Lagrangian submanifolds and punctured holomorphic disks in the cotangent bundle},
	volume = {157},
	year = {2021},
	bdsk-url-1 = {https://doi.org/10.1112/S0010437X21007570}}

@article{Milinkovic-Geodesics,
	author = {Darko Milinkovi\'{c}},
	date-added = {2024-06-13 00:08:05 +0200},
	date-modified = {2024-06-13 00:15:56 +0200},
	journal = {Proceedings of the American Mathematical Society},
	number = {6},
	pages = {1843-1851},
	title = {Geodesics on the Space of Lagrangian Submanifolds in Cotangent Bundles},
	url = {https://www.ams.org/journals/proc/2001-129-06/S0002-9939-00-05851-2/S0002-9939-00-05851-2.pdf},
	volume = {129},
	year = {2001},
	bdsk-url-1 = {https://www.ams.org/journals/proc/2001-129-06/S0002-9939-00-05851-2/S0002-9939-00-05851-2.pdf}}

@incollection{Oh-spectrum1,
	address = {Boston},
	author = {Oh., Y.-G.},
	booktitle = {The breadth of symplectic and Poisson geometry. Progr. Math., vol. 232},
	date-added = {2023-11-22 23:15:10 +0100},
	date-modified = {2023-11-22 23:15:10 +0100},
	doi = {10.1007/0-8176-4419-9_18},
	pages = {525-570},
	publisher = {Birkh{\"a}user},
	series = {Progr. Math. vol. 232},
	title = {Construction of spectral invariants of {H}amiltonian paths on closed symplectic manifolds},
	volume = {232},
	year = {2005},
	bdsk-url-1 = {https://doi.org/10.1007/0-8176-4419-9_18}}

@unpublished{Viterbo-Ottolenghi,
	author = {Ottolenghi, Alberto and Viterbo, Claude},
	date = {1994},
	date-added = {2021-07-04 14:16:58 +0200},
	date-modified = {2021-07-04 16:13:41 +0200},
	title = {Solutions g{\'e}n{\'e}ralis{\'e}es pour l'{\'e}quation d'Hamilton-Jacobi dans le cas d'{\'e}volution.},
	url = {http://www.math.ens.fr/~viterbo/Ottolenghi-Viterbo.pdf},
	year = {1994},
	bdsk-url-1 = {http://www.math.ens.fr/~viterbo/Ottolenghi-Viterbo.pdf}}

@article{Schwarz,
	author = {Schwarz, Matthias},
	date = {2000},
	date-added = {2021-05-20 21:32:19 +0200},
	date-modified = {2021-05-20 21:32:19 +0200},
	doi = {10.2140/pjm.2000.193.419},
	journal = {Pacific J. Math},
	journaltitle = {Pacific J. Math},
	pages = {419-461},
	title = {On the action spectrum for closed symplectically aspherical manifolds},
	volume = {193},
	year = {2000},
	bdsk-url-1 = {https://doi.org/10.2140/pjm.2000.193.419}}

@article{Seidel-graded,
	author = {Seidel, Paul},
	date = {2000},
	date-added = {2021-12-23 11:40:02 +0100},
	date-modified = {2021-12-23 11:41:45 +0100},
	journal = {Bulletin de la S. M. F.},
	journaltitle = {Bulletin de la S. M. F.},
	number = {1},
	pages = {103-149},
	title = {Graded Lagrangian submanifolds},
	url = {http://www.numdam.org/item?id=BSMF_2000__128_1_103_0},
	volume = {128},
	year = {2000},
	bdsk-url-1 = {http://www.numdam.org/item?id=BSMF_2000__128_1_103_0}}

@book{Seidel,
	author = {Seidel, P.},
	date = {2008},
	date-added = {2021-05-20 21:32:08 +0200},
	date-modified = {2021-05-20 21:32:09 +0200},
	doi = {10.4171/063},
	publisher = {European Math. Soc},
	title = {Fukaya category and Picard-Lefschetz theory},
	year = {2008},
	bdsk-url-1 = {https://doi.org/10.4171/063}}

@article{Seyfaddini2015,
	author = {Seyfaddini, Sobhan},
	date = {2015},
	doi = {10.3934/jmd.2015.9.51},
	fjournal = {Journal of Modern Dynamics},
	issn = {1930-5311},
	journal = {J. Mod. Dyn.},
	journaltitle = {J. Mod. Dyn.},
	mrclass = {53D40 (37J05)},
	mrnumber = {3395260},
	mrreviewer = {Chun-Gen Liu},
	pages = {51--66},
	title = {Spectral killers and {P}oisson bracket invariants},
	url = {https://doi.org/10.3934/jmd.2015.9.51},
	volume = {9},
	year = {2015},
	bdsk-url-1 = {https://doi.org/10.3934/jmd.2015.9.51}}

@unpublished{Shelukhin-Zoll,
	author = {Shelukhin, Egor},
	date = {2018},
	date-added = {2021-05-20 21:32:08 +0200},
	date-modified = {2021-08-21 15:56:47 +0200},
	eprint = {1811.05552},
	eprinttype = {arXiv},
	title = {Viterbo conjecture for Zoll symmetric spaces},
	year = {2018}}

@misc{shelukhin-sc-viterbo,
	author = {Shelukhin, Egor},
	date = {2019},
	date-modified = {2021-08-21 15:52:42 +0200},
	eprint = {1904.06798},
	eprintclass = {math.SG},
	eprinttype = {arXiv},
	primaryclass = {math.SG},
	title = {Symplectic cohomology and a conjecture of Viterbo},
	year = {2019}}

@misc{Tamarkin,
	author = {Tamarkin, D.},
	booktitle = {Algebraic and Analytic Microlocal Analysis},
	date = {2008},
	date-added = {2021-12-28 03:51:47 +0100},
	date-modified = {2021-12-28 03:52:16 +0100},
	doi = {10.1007/978-3-030-01588-6_3},
	editor = {Hitrik, Michael and Tamarkin, Dmitry and Tsygan, Boris and Zelditch, Steve},
	eprint = {0809.1584},
	eprintclass = {math.SG},
	eprinttype = {arXiv},
	pages = {99-223},
	primaryclass = {math.SG},
	publisher = {Springer International Publishing},
	series = {Springer Proceedings in Mathematics and Statistics},
	title = {Microlocal condition for non-displaceablility},
	volume = {269},
	year = {2008},
	bdsk-url-1 = {https://doi.org/10.1007/978-3-030-01588-6_3}}

@phdthesis{Tonnelier-these,
	author = {Tonnelier, Beno{\^\i}t},
	date-added = {2022-10-08 17:35:42 +0200},
	date-modified = {2025-02-04 15:28:31 +0100},
	school = {{\'E}cole polytechnique, Palaiseau, France.},
	title = {D{\'e}placement des sous-vari{\'e}t{\'e}s coisotropes ; Quasi-mesures topologiques et symplectiques sur les surfaces.},
	year = {2010}}

@misc{Usher,
	author = {Usher, Michael},
	date-added = {2024-02-16 16:45:00 +0100},
	date-modified = {2024-02-16 16:45:00 +0100},
	eprint = {1912.13043},
	eprinttype = {arXiv},
	primaryclass = {math.SG},
	title = {Local rigidity, symplectic homeomorphisms, and coisotropic submanifolds},
	year = {2019}}

@phdthesis{Vichery-these,
	author = {Vichery, Nicolas},
	date-added = {2022-02-02 21:21:04 +0100},
	date-modified = {2022-02-02 21:21:04 +0100},
	school = {{\'E}cole polytechnique},
	title = {Homog{\'e}n{\'e}isation symplectique et Applications de la th{\'e}orie des faisceaux {\`a} la topologie symplectique},
	url = {https://tel.archives-ouvertes.fr/pastel-00780016/},
	year = {2012},
	bdsk-url-1 = {https://tel.archives-ouvertes.fr/pastel-00780016/}}

@misc{Vichery,
	author = {Vichery, Nicolas},
	date = {2013},
	date-added = {2022-10-18 14:12:50 +0200},
	date-modified = {2022-10-18 14:12:50 +0200},
	eprint = {1310.4845v1},
	eprintclass = {AT},
	eprinttype = {arXiv},
	primaryclass = {AT},
	title = {Homological differential calculus},
	year = {2013}}

@article{Viterbo-STAGGF,
	author = {Viterbo, Claude},
	date = {1992},
	date-added = {2021-05-20 21:32:19 +0200},
	date-modified = {2021-05-20 21:32:19 +0200},
	doi = {10.1007/bf01444643},
	journal = {Mathematische Annalen},
	journaltitle = {Mathematische Annalen},
	pages = {685-710},
	title = {Symplectic topology as the geometry of generating functions},
	volume = {292},
	year = {1992},
	bdsk-url-1 = {https://doi.org/10.1007/bf01444643}}

@article{Viterbo-isoperimetric,
	author = {Viterbo, Claude},
	date = {2000-04},
	date-added = {2021-11-07 02:17:21 +0100},
	date-modified = {2021-11-07 02:17:21 +0100},
	doi = {10.1090/S0894-0347-00-00328-3},
	journal = {Journal of the American Mathematical Society},
	journaltitle = {Journal of the American Mathematical Society},
	month = {04},
	number = {2},
	pages = {411-431},
	title = {Metric and Isoperimetric problems in symplectic geometry},
	volume = {13},
	year = {2000},
	bdsk-url-1 = {https://doi.org/10.1090/S0894-0347-00-00328-3}}

@inproceedings{Viterbo-Montreal,
	author = {Viterbo, C.},
	booktitle = {Morse Theoretic Methods in Nonlinear Analysis and in Symplectic Topology. NATO Science Series II: Mathematics, Physics and Chemistry book series (NAII, volume 217},
	date = {2006},
	date-added = {2021-05-20 21:32:08 +0200},
	date-modified = {2021-05-20 21:32:09 +0200},
	editor = {Paul Biran, Octav Cornea and Lalonde, Fran{\c c}ois},
	pages = {439-459},
	publisher = {Springer-Verlag, Netherlands},
	title = {Symplectic topology and Hamilton-Jacobi equations},
	year = {2006}}

@misc{Viterbo-Sheaves,
	author = {Viterbo, Claude},
	date = {2019},
	date-added = {2021-05-20 21:32:19 +0200},
	date-modified = {2021-12-28 03:54:27 +0100},
	eprint = {1901.09440},
	eprintclass = {math.SG},
	eprinttype = {arXiv},
	primaryclass = {math.SG},
	title = {Sheaf Quantization of Lagrangians and Floer cohomology},
	year = {2019}}

@misc{Viterbo-book,
	author = {Viterbo, Claude},
	date = {2021},
	date-added = {2021-05-20 21:32:19 +0200},
	date-modified = {2021-12-02 13:54:41 +0100},
	title = {Symplectic topology in the cotangent bundle through generating functions},
	url = {https://www.imo.universite-paris-saclay.fr/~viterbo/Cours-M2-2021/Quanti-chapters.pdf},
	year = {2021},
	bdsk-url-1 = {http://www.math.ens.fr/viterbo/Cours-M2-2021/Quanti-chapters.pdf}}

@misc{Viterbo-inverse-reduction,
	arxiveprefix = {arXiv},
	author = {Viterbo, Claude},
	date = {2022},
	date-added = {2022-07-18 15:57:04 +0200},
	date-modified = {2022-07-18 15:57:04 +0200},
	eprint = {2203.13172},
	eprintclass = {math.SG},
	eprinttype = {arXiv},
	primaryclass = {math.SG},
	title = {Inverse Reduction inequalities for spectral numbers and Applications},
	year = {2022}}

@article{SHT,
	archiveprefix = {arXiv},
	author = {Viterbo, Claude},
	date = {2023},
	date-added = {2023-02-12 16:51:51 +0100},
	date-modified = {2023-02-12 16:51:51 +0100},
	doi = {10.5802/jep.214},
	eprint = {0801.0206},
	eprintclass = {math.SG},
	eprinttype = {arXiv},
	journal = {Journal de l'{\'E}cole polytechnique},
	journaltitle = {Journal de l'{\'E}cole polytechnique},
	pages = {67-140},
	primaryclass = {math.SG},
	title = {Symplectic homogenization},
	volume = {10},
	year = {2023},
	bdsk-url-1 = {https://doi.org/10.5802/jep.214}}

@article{Viterbo-NCMT,
	author = {Viterbo, Claude},
	date-added = {2021-05-20 21:32:19 +0200},
	date-modified = {2025-12-22 11:26:35 +0100},
	doi = {10.3934/jmd.2025017},
	journal = {Journal of Modern Dynamics},
	pages = {719-753},
	title = {Non-convex Mather theory and the Conley conjecture},
	url = {https://arxiv.org/pdf/1807.09461.pdf},
	volume = {21},
	year = {2025},
	bdsk-url-1 = {https://arxiv.org/pdf/1807.09461.pdf}}

@article{Z-C,
	author = {Zomorodian, Afra and Carlsson, Gunnar},
	date = {2005},
	date-added = {2021-05-20 21:32:08 +0200},
	date-modified = {2021-10-23 23:33:24 +0200},
	doi = {10.1007/s00454-004-1146-y},
	journal = {Discrete Comput. Geom},
	journaltitle = {Discrete Comput. Geom},
	pages = {249-274},
	title = {Computing persistent homology},
	volume = {33},
	year = {2005},
	bdsk-url-1 = {https://doi.org/10.1007/s00454-004-1146-y}}
\end{document}